\newif\ifcomments
\definecolor{refkey}{gray}{.25}
\definecolor{labelkey}{gray}{.75}
\newcommand{\poly}{\operatorname{poly}}
\theoremstyle{plain} 
\newtheorem{theorem}{Theorem} 
\newtheorem{theorem*}{Theorem} 
\newtheorem{proposition}[theorem]{Proposition}
\newtheorem{lemma}[theorem]{Lemma}
\newtheorem{corollary}[theorem]{Corollary}
\newtheorem{heuristic}[theorem]{Heuristic}
\theoremstyle{definition}
\newtheorem{definition}[theorem]{Definition}
\theoremstyle{remark}
\newtheorem{remark}[theorem]{Remark}
\newtheorem{example}[theorem]{Example}
\newtheorem*{lemma*}{Lemma}
\numberwithin{theorem}{section}
\newcommand{\Gal}{\operatorname{Gal}}
\newcommand{\CC}{\mathbb{C}}
\newcommand{\ZZ}{\mathbb{Z}}
\newcommand{\QQ}{\mathbb{Q}}
\newcommand{\FF}{\mathbb{F}}
\newcommand{\Fpbar}{{\overline{\mathbb{F}}_p}}
\newcommand{\End}{\operatorname{End}}
\newcommand{\Aut}{\operatorname{Aut}}
\newcommand{\disc}{\operatorname{disc}}
\newcommand{\Einit}{E_{\operatorname{init}}}
\newcommand{\SSO}{\operatorname{SS}_\mathcal{O}^{\text{pr}}}
\newcommand{\ClO}{\Cl(\mathcal{O})}
\newcommand{\hO}{h_\mathcal{O}}
\newcommand{\Frob}{\operatorname{Frob}}
\newcommand{\Stab}{\operatorname{Stab}}
\newcommand{\Np}{\# \mathcal{G}_\ell }
\newcommand{\Sp}{\# \mathcal{S} }
\DeclareMathOperator{\Cl}{Cl}
\DeclareMathOperator{\Ell}{\mathcal{E}\ell\ell}
\let\SS\relax   \DeclareMathOperator{\SS}{SS}
\definecolor{Bittersweet}{rgb}{1.0, 0.44, 0.37}
\definecolor{KateColour}{rgb}{0.3, 0.6, 0.3}
\newcommand{\RS}[1]{\textcolor{violet}{{\sf (Renate:} {\sl{#1})}}}
\newcommand{\KL}[1]{\textcolor{cyan}{{\sf (Kristin:} {\sl{#1})}}}
\newcommand{\KS}[1]{\textcolor{KateColour}{{\sf (Kate:} {\sl{#1})}}}
\newcommand{\HT}[1]{\textcolor{cyan}{{\sf (Ha:} {\sl{#1})}}}
\newcommand{\MC}[1]{\textcolor{Bittersweet}{{\sf (Mingjie:} {\sl{#1})}}}
\newcommand{\SA}[1]{\textcolor{blue}{{\sf (Sarah:} {\sl{#1})}}}
\newcommand{\RS}[1]{}
\newcommand{\KL}[1]{}
\newcommand{\KS}[1]{}
\newcommand{\HT}[1]{}
\newcommand{\MC}[1]{}
\newcommand{\SA}[1]{}
\title{Orientations and cycles in supersingular isogeny graphs}
\author[Arpin, Chen, Lauter, Scheidler, Stange, Tran]{Sarah Arpin, Mingjie Chen, Kristin E. Lauter, Renate Scheidler, Katherine E. Stange, Ha T. N. Tran}
\date{\today}
\address{%
Mathematics Institute,
Universiteit Leiden,
Leiden, The Netherlands}
\email{S.A.Arpin@math.leidenuniv.nl}
\address{School of Computer Science, University of Birmingham, University Road West, Birmingham, UK B15 2TT}
\email{m.chen.1@bham.ac.uk}
\address{Facebook AI Research, Meta, Seattle, WA}
\email{klauter@fb.com}
\address{%
Department of Mathematics and Statistics, University of Calgary,
2500 University Drive NW, Calgary, Alberta, Canada T2N 1N4}
\email{rscheidl@ucalgary.ca}
\address{%
Department of Mathematics, University of Colorado,
Campus Box 395, Boulder, Colorado 80309-0395}
\email{kstange@math.colorado.edu}
\address{Department of Mathematical and Physical Sciences, Concordia University of Edmonton, 
7128 Ada Blvd NW, Edmonton, AB T5B 4E4, Canada}
\email{hatran1104@gmail.com}
\keywords{elliptic curve, endomorphism ring, supersingular isogeny graph, orientation}
\subjclass[2020]{Primary: 
14G50, 
94A60, 
11G05, 
14K04, 
}
\thanks{
Katherine E. Stange and Sarah Arpin were supported by NSF-CAREER CNS-1652238.  Katherine E. Stange was also supported by Simons Fellowship 822143. Ha T. N. Tran was supported by the Natural Sciences and Engineering Research Council of Canada (NSERC) (funding RGPIN-2019-04209 and DGECR-2019-00428). R. Scheidler was supported by the Natural Sciences and Engineering Research Council of Canada (NSERC) (funding RGPIN-2019-04844).  Mingjie Chen was supported by NSF grants DMS-1844206, DMS-1802161.
}
\begin{document}

\maketitle

\begin{abstract}
The paper concerns several theoretical aspects of  oriented supersingular $\ell$-isogeny volcanoes and their relationship to closed walks in the supersingular $\ell$-isogeny graph.  Our main result is a  bijection between 
the rims of the union of all oriented supersingular $\ell$-isogeny volcanoes over $\overline{\FF}_p$ (up to conjugation of the orientations), and 
\emph{isogeny cycles} (non-backtracking closed walks which are not powers of smaller walks) of the supersingular $\ell$-isogeny graph over $\overline{\mathbb{F}}_p$.  The exact proof and statement of this bijection are made more intricate by special behaviours arising from extra automorphisms and the ramification of $p$ in certain quadratic orders.  We use the bijection to count isogeny cycles of given length in the supersingular $\ell$-isogeny graph exactly as a sum of class numbers of these orders, and also give an explicit upper bound by estimating the class numbers.
\end{abstract}

\section{Introduction}

Fix primes $\ell < p$.  The supersingular $\ell$-isogeny graph $\mathcal{G}_\ell$ is the directed graph whose vertices are the $\overline{\FF}_p$-isomorphism classes of supersingular elliptic curves, and whose edges are isogenies, up to post-composition by an automorphism.  The graph is $(\ell+1)$-regular, has around $p/12$ vertices, and consists of one connected component.  However, it has few obvious symmetries:  its structure more closely resembles that of a random regular graph in its statistical behaviour, including its graph spectrum.  It is a Ramanujan graph.

The graph $\mathcal{G}_\ell$ was introduced into cryptography in 2006 in ~\cite{CharlesGorenLauter}, where the hard problem of finding paths in the graph was proposed.  In the cryptographic setting, $p$ is always taken to be a large prime of cryptographic size and $\ell$ is a small prime like $\ell = 2$ or $3$.  A key exchange protocol called SIDH based on the path-finding problem in $\mathcal{G}_\ell$ was proposed in~\cite{dFJP} and was considered for standardization in the NIST Post-Quantum Cryptography process (2017--2022).  Although SIDH has been broken \cite{castryck-sidh-attack,maino-attack-sidh, damien-attack}, so far there do not exist polynomial time algorithms (polynomial in $\log p$) to solve either the \emph{path finding problem} (to find a path joining two given supersingular curves), or the \emph{endomorphism ring problem} (to compute the endomorphism ring, either as a maximal order in a quaternion algebra or by giving a basis of endomorphisms) of a supersingular curve.  These problems are closely related to one another \cite{EHLMP_reductions,Wesolowski_IsogPathandEndoRing}, and to other cryptographic protocols such as CSIDH \cite{CSIDH} and SQISign \cite{sqisign}.

The graph $\mathcal{G}_\ell$, like any superhero worth its salt, has an `alter ego,' which is as the graph of maximal orders in a quaternion algebra $B_{p,\infty}$ ramified at $p$ and $\infty$.  For each elliptic curve vertex, the endomorphism ring of the curve is such a maximal order.  However, even viewed in this way, the graph is still its apparently disordered self.  To reveal some familiar structure with which to navigate the graph, one can focus, one at a time, on the quadratic rings embedded in the quaternion algebra.

The aforementioned structure recalls to mind the analogous graph for ordinary elliptic curves, whose endomorphism rings are merely quadratic orders.  One can form an ordinary $\ell$-isogeny graph just as one does a supersingular one:  vertices are isomorphism classes of ordinary elliptic curves, and edges are $\ell$-isogenies \cite{FouquetMorain} \cite{Isogeny_volcanoes}.  In this case, the endomorphism rings of the curves in a connected component are all orders in the same quadratic field.  Stratifying the graph according to the conductors of these orders, with larger orders at higher `altitude,' we see at the top a cycle, or `rim,' from which trees descend down the sides of the `volcano' (see Figure~\ref{fig:oriented_volcano}).  This simple, organized picture is in contrast to the complexity of the supersingular graph.  

The storyline of the present paper is that the (finite) supersingular graph $\mathcal{G}_\ell$ can be understood as the result of superimposing infinitely many volcanoes, each obtained by focusing on a single quadratic field embedded inside the quaternion algebra $B_{p,\infty}$ and its intersection with the various maximal orders.  The volcanoes in this case are not the volcanoes of the ordinary $\ell$-isogeny graph, but rather the connected components of the \emph{oriented supersingular $\ell$-isogeny graph associated to a quadratic field $K$}, denoted by $\mathcal{G}_{K,\ell}$.  This graph has recently been studied by Col\`o-Kohel \cite{colo2019orienting} and Onuki \cite{onuki2021} in the cryptographic context.  The vertices of $\mathcal{G}_{K,\ell}$ are pairs $(E, \iota)$ consisting of an isomorphism class of supersingular curves, together with an embedding $\iota\colon K \rightarrow \End^0(E)$ of a quadratic field into the endomorphism algebra of the curve, called a \emph{$K$-orientation}.  Edges are again $\ell$-isogenies.  (For the precise definitions, see Section~\ref{sec:background}.)  
  
Upon forgetting orientations, each of these volcanoes covers the supersingular $\ell$-isogeny graph.  In particular, each rim maps to a closed walk in the $\ell$-isogeny graph.

 \begin{figure}
    \centering
    \scalebox{.6}{
    \begin{tikzpicture}
\node[circle,minimum size=.8cm,draw,fill=black] (E1) at (1,3) {};
\node[circle,minimum size=.8cm,draw,fill=black] (E2) at (2,1) {};
\node[circle,minimum size=.8cm,draw,fill=black] (E3) at (3.5,4) {};
\node[circle,minimum size=.8cm,draw,fill=black] (E4) at (5,1) {};
\node[circle,minimum size=.8cm,draw,fill=black] (E5) at (6,3) {};

\node[circle,minimum size=.6cm,draw,fill=black!50] (E6) at (1,.4) {};
\node[circle,minimum size=.6cm,draw,fill=black!50] (E7) at (0.3,1.4) {};
\node[circle,minimum size=.6cm,draw,fill=black!50] (E8) at (1.2,-1.5) {};
\node[circle,minimum size=.6cm,draw,fill=black!50] (E9) at (2.8,-1.5) {};
\node[circle,minimum size=.6cm,draw,fill=black!50] (E10) at (4.2,-1.5) {};
\node[circle,minimum size=.6cm,draw,fill=black!50] (E11) at (5.8,-1.5) {};
\node[circle,minimum size=.6cm,draw,fill=black!50] (E12) at (6,.4) {};
\node[circle,minimum size=.6cm,draw,fill=black!50] (E13) at (6.7,1.4) {};
\node[circle,minimum size=.6cm,draw,fill=black!50] (E14) at (3,1.9) {};
\node[circle,minimum size=.6cm,draw,fill=black!50] (E15) at (4,1.9) {};

\draw[-, line width=1.5mm, dashed] (E1) to (E2);
\draw[-, line width=1.5mm, dashed] (E3) to (E1);
\draw[-, line width=1.5mm, dashed] (E3) to (E5);
\draw[-, line width=1.5mm, dashed] (E4) to (E5);
\draw[-, line width=1.5mm, dashed] (E4) to (E2);
\draw[-, line width=.6mm] (E1) to (E6);
\draw[-, line width=.6mm] (E1) to (E7);
\draw[-, line width=.6mm] (E2) to (E8);
\draw[-, line width=.6mm] (E2) to (E9);
\draw[-, line width=.6mm] (E4) to (E10);
\draw[-, line width=.6mm] (E4) to (E11);
\draw[-, line width=.6mm] (E5) to (E12);
\draw[-, line width=.6mm] (E5) to (E13);
\draw[-, line width=.6mm] (E3) to (E14);
\draw[-, line width=.6mm] (E3) to (E15);
\draw[dash pattern={on 12pt off 2pt on 1pt off 2pt on 1pt off 2pt on 1pt}, line width=.3mm] (E8) -- (.8,-2.5);
\draw[dash pattern={on 12pt off 2pt on 1pt off 2pt on 1pt off 2pt on 1pt}, line width=.3mm] (E8) -- (1.2,-2.5);
\draw[dash pattern={on 12pt off 2pt on 1pt off 2pt on 1pt off 2pt on 1pt}, line width=.3mm] (E8) -- (1.6,-2.5);

\draw[dash pattern={on 12pt off 2pt on 1pt off 2pt on 1pt off 2pt on 1pt}, line width=.3mm] (E9) -- (2.4,-2.5);
\draw[dash pattern={on 12pt off 2pt on 1pt off 2pt on 1pt off 2pt on 1pt}, line width=.3mm] (E9) -- (2.8,-2.5);
\draw[dash pattern={on 12pt off 2pt on 1pt off 2pt on 1pt off 2pt on 1pt}, line width=.3mm] (E9) -- (3.2,-2.5);

\draw[dash pattern={on 12pt off 2pt on 1pt off 2pt on 1pt off 2pt on 1pt}, line width=.3mm] (E10) -- (3.8,-2.5);
\draw[dash pattern={on 12pt off 2pt on 1pt off 2pt on 1pt off 2pt on 1pt}, line width=.3mm] (E10) -- (4.2,-2.5);
\draw[dash pattern={on 12pt off 2pt on 1pt off 2pt on 1pt off 2pt on 1pt}, line width=.3mm] (E10) -- (4.6,-2.5);

\draw[dash pattern={on 12pt off 2pt on 1pt off 2pt on 1pt off 2pt on 1pt}, line width=.3mm] (E11) -- (6.2,-2.5);
\draw[dash pattern={on 12pt off 2pt on 1pt off 2pt on 1pt off 2pt on 1pt}, line width=.3mm] (E11) -- (5.8,-2.5);
\draw[dash pattern={on 12pt off 2pt on 1pt off 2pt on 1pt off 2pt on 1pt}, line width=.3mm] (E11) -- (5.4,-2.5);

\draw[dash pattern={on 10pt off 2pt on 1pt off 2pt on 1pt off 2pt on 1pt}, line width=.3mm] (E15) -- (4.3,1.1);
\draw[dash pattern={on 10pt off 2pt on 1pt off 2pt on 1pt off 2pt on 1pt}, line width=.3mm] (E15) -- (4,1.1);
\draw[dash pattern={on 10pt off 2pt on 1pt off 2pt on 1pt off 2pt on 1pt}, line width=.3mm] (E15) -- (3.7,1.1);

\draw[dash pattern={on 10pt off 2pt on 1pt off 2pt on 1pt off 2pt on 1pt}, line width=.3mm] (E14) -- (3.3,1.1);
\draw[dash pattern={on 10pt off 2pt on 1pt off 2pt on 1pt off 2pt on 1pt}, line width=.3mm] (E14) -- (3,1.1);
\draw[dash pattern={on 10pt off 2pt on 1pt off 2pt on 1pt off 2pt on 1pt}, line width=.3mm] (E14) -- (2.7,1.1);

\draw[dash pattern={on 10pt off 2pt on 1pt off 2pt on 1pt off 2pt on 1pt}, line width=.3mm] (E7) -- (0,.6);
\draw[dash pattern={on 10pt off 2pt on 1pt off 2pt on 1pt off 2pt on 1pt}, line width=.3mm] (E7) -- (.2,.5);
\draw[dash pattern={on 10pt off 2pt on 1pt off 2pt on 1pt off 2pt on 1pt}, line width=.3mm] (E7) -- (.4,.4);

\draw[dash pattern={on 10pt off 2pt on 1pt off 2pt on 1pt off 2pt on 1pt}, line width=.3mm] (E6) -- (.7,-.4);
\draw[dash pattern={on 10pt off 2pt on 1pt off 2pt on 1pt off 2pt on 1pt}, line width=.3mm] (E6) -- (.9,-.5);
\draw[dash pattern={on 10pt off 2pt on 1pt off 2pt on 1pt off 2pt on 1pt}, line width=.3mm] (E6) -- (1.1,-.6);

\draw[dash pattern={on 10pt off 2pt on 1pt off 2pt on 1pt off 2pt on 1pt}, line width=.3mm] (E12) -- (6.3,-.4);
\draw[dash pattern={on 10pt off 2pt on 1pt off 2pt on 1pt off 2pt on 1pt}, line width=.3mm] (E12) -- (6.1,-.5);
\draw[dash pattern={on 10pt off 2pt on 1pt off 2pt on 1pt off 2pt on 1pt}, line width=.3mm] (E12) -- (5.9,-.6);
\draw[dash pattern={on 10pt off 2pt on 1pt off 2pt on 1pt off 2pt on 1pt}, line width=.3mm] (E13) -- (6.6,.4);
\draw[dash pattern={on 10pt off 2pt on 1pt off 2pt on 1pt off 2pt on 1pt}, line width=.3mm] (E13) -- (6.8,.5);
\draw[dash pattern={on 10pt off 2pt on 1pt off 2pt on 1pt off 2pt on 1pt}, line width=.3mm] (E13) -- (7,.6);
\end{tikzpicture}}
\caption{An oriented 3-isogeny volcano. The rim is comprised of the five black vertices. These vertices are primitively oriented by a 3-fundamental order $\mathcal{O}$. The grey vertices at altitude~1 of the volcano are primitively oriented by the suborder of $\mathcal{O}$ of index 3. The five thick dotted edges along the rim indicate horizontal oriented 3-isogenies, and the remaining black edges are ascending/descending.}
\label{fig:oriented_volcano}
\end{figure}
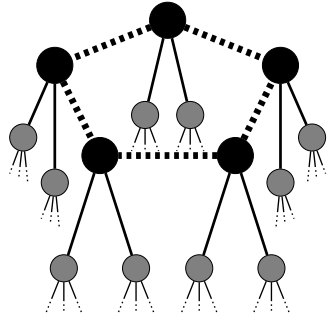

We concern ourselves with a particular type of closed walk in $\mathcal{G}_\ell$: one with no backtracking and which is not a repeat of a smaller closed walk.  We keep its direction (traversed `forward' or `backward'), but forget any choice of basepoint, and call this an \emph{isogeny cycle} (Definition~\ref{defn:isogeny-cycle}). 
The main result of this paper is that each isogeny cycle is obtained, by forgetting orientations, from exactly one oriented volcano rim (up to conjugation of the rim; see Section~\ref{sec:bij-statement} for definitions).

\begin{theorem*}[See precise version in Theorem~\ref{thm:mainbij-nobase}]
\label{thm:mainbij-intro}
Fix distinct primes $\ell < p$.  Let $r > 2$.
The isogeny cycles of length $r$ in $\mathcal{G}_\ell$ are in bijection with the rims of length $r$ of the union of all oriented supersingular $\ell$-isogeny volcanoes over $\overline{\FF}_p$, up to conjugation of the orientations.
\end{theorem*}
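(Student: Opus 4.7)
The plan is to construct an explicit map in each direction and verify they are mutually inverse, being careful with the edge cases indicated by the restriction $r > 2$.

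\emph{From rims to isogeny cycles.} Given a rim $\mathcal{R}$ of length $r > 2$ in some oriented volcano $\mathcal{G}_{K,\ell}$, I forget the $K$-orientation on each vertex and each horizontal edge. This produces a directed closed walk of length $r$ in $\mathcal{G}_\ell$. I would then verify two properties. First, non-backtracking: every horizontal $\ell$-isogeny on the rim corresponds to a fixed prime $\mathfrak{l}$ of the primitive order $\mathcal{O}$ above $\ell$, while its dual corresponds to the conjugate prime $\bar{\mathfrak{l}}$; since $\ell$ splits in $\mathcal{O}$ (as the rim has length $> 1$), these give distinct subgroups of $E[\ell]$, hence distinct unoriented edges. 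Second, the walk is not a power of a shorter walk: the length $r$ equals the order of $[\mathfrak{l}]$ in $\Cl(\mathcal{O})$, so the composition around the rim corresponds to a principal ideal $(\alpha) = \mathfrak{l}^r$ where $\alpha$ has no proper divisor in $\mathcal{O}$ that is a power of $\mathfrak{l}$.

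\emph{From isogeny cycles to rims.} Given an isogeny cycle in $\mathcal{G}_\ell$, pick a basepoint vertex $E$ and let $\phi = \phi_r \circ \cdots \circ \phi_1 \in \End(E)$ be the composition around the cycle, of degree $\ell^r$. The non-backtracking condition forces $\phi \notin \ell\End(E)$, because any factorization of an $\ell$-power endomorphism into $\ell$-isogenies is unique up to backtracking, so $\phi \in \ell\End(E)$ would produce an $[\ell]$-factor, i.e.\ a backtrack. Since $E$ is supersingular with $\ell \neq p$, one sees $\phi \notin \ZZ$ (for instance via $|\Tr\phi| \leq 2\ell^{r/2}$ and minimality), so $K := \QQ(\phi)$ is an imaginary quadratic field and the inclusion $\QQ(\phi) \hookrightarrow \End^0(E)$ extends to a $K$-orientation $\iota$. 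The primitive order $\mathcal{O} = \iota^{-1}(\End(E))$ is $\ell$-fundamental, because if $\ell$ divided the conductor $[\mathcal{O}_K : \mathcal{O}]$ then $\iota^{-1}(\phi) \in \ell\mathcal{O}_K$ would force $\phi \in \ell\End(E)$, a contradiction. Propagating $\iota$ along each $\phi_i$ via the isogenies produces a sequence $(E_i,\iota_i)$ of primitively $\mathcal{O}$-oriented curves on a rim in $\mathcal{G}_{K,\ell}$.

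\emph{Well-definedness and mutual inverses.} Changing the basepoint on the cycle cyclically rotates the reconstructed rim, giving the same rim as a subset of $\mathcal{G}_{K,\ell}$. Reversing the cycle direction replaces $\phi$ by its dual $\hat\phi$, which replaces $\iota$ by the conjugate orientation $\bar\iota$; thus, matched up correctly with the \emph{up to conjugation} identification on the rim side, the assignment descends to a well-defined map. The two maps are inverses: following around a rim and taking the composite endomorphism reproduces the canonical $\alpha \in \mathcal{O}$ with $(\alpha) = \mathfrak{l}^r$, and conversely the orientation produced from a cycle’s composition is exactly the one whose rim recovers the cycle.

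\emph{Main obstacle.} The core bijection is conceptually straightforward; the real work is in the ``special behaviours'' mentioned in the abstract. When $\ell$ ramifies in $\mathcal{O}$, the rim collapses to length one and is handled separately (this is one reason $r > 2$ is imposed). Extra automorphisms at $j \in \{0, 1728\}$ can cause multiple oriented edges to project to a single unoriented edge (or the converse), distorting the forgetful map at those vertices. Finally, when $p$ ramifies in $\mathcal{O}$, the orientation $\iota$ may be $\Gal$-fixed in a way that makes $\iota = \bar\iota$ act differently from the generic case, so that ``up to conjugation'' does not simply halve the count. Tracking these three phenomena — and proving that they only affect small $r$ or vertices at $j \in \{0, 1728\}$ — is the technical heart of the precise statement in Theorem~\ref{thm:mainbij-nobase}.
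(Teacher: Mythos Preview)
Your high-level approach matches the paper's exactly: forget orientations in one direction, compose around the cycle to obtain an endomorphism and hence an orientation in the other direction. However, there is a genuine gap that the paper devotes most of its effort to, and which you have not addressed.

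\textbf{Composition around a cycle is not well-defined.} You write ``let $\phi = \phi_r \circ \cdots \circ \phi_1 \in \End(E)$'', but in $\mathcal{G}_\ell$ an edge is an equivalence class of isogenies up to post-composition by an automorphism of the target. When the cycle passes through $j=0$ or $j=1728$, different representatives of the same edge can yield endomorphisms lying in \emph{different} quadratic fields (see the paper's Figure~\ref{fig:0121} and Example~\ref{ex:loops}). The paper resolves this by fixing a \emph{safe arbitrary assignment} (Definitions~\ref{defn:arb} and \ref{defn:arb2}): a choice of representative for every edge, subject to a compatibility condition at double edges between curves with extra automorphisms. Only after this is fixed does Lemma~\ref{lemma:unique-endo} give a well-defined composition, and the bijection itself is explicitly \emph{non-canonical}, depending on this choice. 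Your proposal treats composition as unproblematic and thus misses the main technical content of the proof; your final paragraph flags extra automorphisms as an obstacle but does not supply the mechanism that handles them.

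\textbf{A smaller error.} Your argument that $\mathcal{O}$ is $\ell$-fundamental is incorrect: ``if $\ell$ divided the conductor $[\mathcal{O}_K:\mathcal{O}]$ then $\iota^{-1}(\phi) \in \ell\mathcal{O}_K$'' does not follow (take $\mathcal{O} = \ZZ + \ell\mathcal{O}_K$ and $\phi$ corresponding to $1 + \ell\omega$). The correct argument, as in the proof of Lemma~\ref{lem:theta}, is to show that $\ZZ[\alpha]$ itself is $\ell$-fundamental (if not, then $\ell$ divides both the norm and discriminant of $\alpha$, hence its trace, hence $\alpha$ itself, contradicting non-backtracking); since $\mathcal{O} \supseteq \ZZ[\alpha]$, its conductor divides that of $\ZZ[\alpha]$. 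You also need to verify that each $\phi_i$ is \emph{horizontal} for the induced orientation, which follows from the factorisation $(\alpha) = \mathfrak{l}^r$ but is not automatic from your description.
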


The exact statement is discussed in Section~\ref{sec:bij-statement}, and a detailed example is given in Section~\ref{sec:examples}.  At the highest level, the bijection is given by a simple process.  Given a rim, one can forget the orientation information to obtain an isogeny cycle in the supersingular graph.  Conversely, given an isogeny cycle, one can compose the sequence of isogenies it comprises, to obtain an endomorphism.  This endomorphism induces an orientation, placing the initial curve on a rim.  The proof, however, spans around a third of the paper, in large part because of the existence of special cases and issues spawned by extra automorphisms and quadratic fields in which $p$ ramifies.  In particular, \emph{the bijection is not canonical} due to these considerations.  See also Remark~\ref{remark:ssgraph}, where we make some philosophical comments on the obstacles presented by extra automorphisms in the supersingular isogeny graph.

We use the bijection of Theorem~\ref{thm:mainbij-intro} to count isogeny cycles of length $r$ in $\mathcal{G}_\ell$.  The essential ingredient is that the rims of $K$-oriented $\ell$-isogeny volcanoes correspond to cycles of length $r$ in the permutation on the oriented rim vertices arising from the action of an element of order $r$ in the class group of an order of $K$.

\begin{theorem*}[See precise versions in Theorem~\ref{thm:numcycles-randwalk}, Corollary~\ref{cor:h} and Corollary~\ref{cor:cyclesbound}]
The number of isogeny cycles of length $r$ in $\mathcal{G}_\ell$ is given by 
\begin{equation}
    \label{eqn:sumintro}
    \frac{1}{r} \sum_{\mathcal{O} \in \mathcal{I}_r} \epsilon_{\mathcal{O},\ell} h_\mathcal{O},
\end{equation}
where $\mathcal{I}_r$ is a set of quadratic orders satisfying certain splitting properties for $\ell$ and $p$ specified in Section~\ref{sec:cycle-order}, and $\epsilon_{\mathcal{O},r}$ is defined in Theorem~\ref{thm:mainbij-fibre}, and satisfies $1 \le \epsilon_{\mathcal{O},r} \le 2$, with $\epsilon_{\mathcal{O},r} = 2$ whenever $p$ is unramified in $\mathcal{O}$.
The quantity \eqref{eqn:sumintro} is bounded above by
\[
\frac{2\pi e^{0.578} \log (4\ell)}{3} \, \ell^r (\log r + c) + O(\ell^{3r/4} \log r)
\]
as $r \rightarrow \infty$, where the constant $c$ is explicit, and the implied constant in the big $O$ notation can be made explicit using Corollary~\ref{cor:cyclesbound}. For $p\equiv 1\pmod{12}$, the quantity \eqref{eqn:sumintro} asymptotically approaches $\frac{\ell^{r}}{2r}$ as $r\to\infty$.
\end{theorem*}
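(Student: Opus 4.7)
The main identity follows from the bijection of Theorem~\ref{thm:mainbij-intro} by stratifying rims according to the quadratic order $\mathcal{O}$ giving the primitive orientation on the rim vertices. For a fixed such $\mathcal{O}\subset K$, the class group $\Cl(\mathcal{O})$ acts freely and transitively on the set of primitively $\mathcal{O}$-oriented supersingular curves, and the rim of each connected volcano component is exactly the orbit under the cyclic subgroup generated by a prime $\mathfrak{l}\subset\mathcal{O}$ above $\ell$. The rim length equals the order of $[\mathfrak{l}]$ in $\Cl(\mathcal{O})$, so rims of length exactly $r$ arise only from $\mathcal{O}\in\mathcal{I}_r$, and each such $\mathcal{O}$ contributes $h_\mathcal{O}/r$ rim orbits. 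To pass from rims to rims-up-to-conjugation as the bijection requires, I would compare $\iota$ with the conjugate $\bar\iota$: these produce distinct oriented rims precisely when $p$ is unramified in $\mathcal{O}$, in which case two orientations per cycle collapse to one conjugation class (weight $\epsilon_{\mathcal{O},\ell}=2$); when $p$ ramifies in $\mathcal{O}$ the orientations already coincide (weight $1$). This yields the formula \eqref{eqn:sumintro}.

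For the explicit upper bound, I would substitute an effective Louboutin-type inequality of the shape $h_\mathcal{O}\le \frac{e^\gamma}{\pi}\sqrt{|\disc\mathcal{O}|}(\log|\disc\mathcal{O}|+c')$ into the sum, where $\gamma\approx 0.5772$ is Euler's constant. Membership $\mathcal{O}\in\mathcal{I}_r$ produces a principal ideal $\mathfrak{l}^r=(\alpha)$ with $\mathrm{Nm}(\alpha)=\ell^r$, so $|\disc\mathcal{O}|\le 4\ell^r$ via the trace inequality $|\Tr\alpha|\le 2\ell^{r/2}$. The main term $\tfrac{2\pi e^{0.578}\log(4\ell)}{3}\ell^r(\log r+c)$ then arises from evaluating $\sum_{|D|\le 4\ell^r}\sqrt{|D|}\log|D|$ over fundamental discriminants (using the density $6/\pi^2$), while the $O(\ell^{3r/4}\log r)$ secondary term collects contributions from non-fundamental orders whose conductors $f$ must divide a small power of $\ell$. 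The sharp constant $e^{0.578}$ in particular forces one to use the sharp form of the class-number bound rather than the weaker Minkowski estimate; this and the bookkeeping of non-fundamental orders are the principal technical hurdles.

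For $p\equiv 1\pmod{12}$, the exceptional orders $\ZZ[i]$ and $\ZZ[\omega]$ never appear in $\mathcal{I}_r$ (since $p$ splits in them) and $p$ is unramified in every $\mathcal{O}\in\mathcal{I}_r$; hence $\epsilon_{\mathcal{O},\ell}\equiv 2$ and the count reduces to $\tfrac{2}{r}\sum_{\mathcal{O}\in\mathcal{I}_r}h_\mathcal{O}$. I would then match the leading order against the dual count of non-backtracking closed walks in $\mathcal{G}_\ell$: from each of the $\sim p/12$ vertices there are $\ell^r$ such walks of length $r$, and dividing by $r$ (basepoint) and $2$ (direction) gives $\sim \ell^r/(2r)$ isogeny cycles per vertex after normalization. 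A Möbius inversion on divisors of $r$ shows that rims of length strictly dividing $r$ contribute only lower order, since Brauer–Siegel-type estimates bound $\sum_{\mathcal{O}\in\mathcal{I}_{r/d}}h_\mathcal{O}$ by $\ell^{r/d}$ up to log factors. The main subtlety for this part is that the sum is restricted to orders satisfying the $\ell$-splitting and $p$-inertness constraints of $\mathcal{I}_r$ rather than all discriminants up to $4\ell^r$, so equidistribution of these local conditions within the discriminant range is the key analytic input needed to isolate the leading constant $\tfrac{1}{2r}$.
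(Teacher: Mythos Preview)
Your derivation of the formula~\eqref{eqn:sumintro} has two genuine errors. First, the class group does \emph{not} act freely and transitively on all primitively $\mathcal{O}$-oriented supersingular curves: when $p$ is inert in $K$, the set $\SSO$ has size $2h_\mathcal{O}$, not $h_\mathcal{O}$ (Theorem~\ref{thm:sso}), so your count of $h_\mathcal{O}/r$ rims is off by a factor of two before you even begin the conjugation analysis. Second, your dichotomy ``$\epsilon=2$ unramified, $\epsilon=1$ ramified'' is false: Theorem~\ref{thm:mainbij-fibre} shows that in the ramified case $\epsilon_{\mathcal{O},\ell}$ can lie strictly between $1$ and $2$, depending on whether self-conjugate rims occur, and the exact value involves the genus number $g_\mathcal{O}$. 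The orientations $\iota$ and $\bar\iota$ do not simply ``coincide'' when $p$ ramifies.

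Your upper bound sketch misidentifies the structure of the paper's argument. The constant $e^\gamma$ does not come from a Louboutin-type class number bound; the paper uses the cruder $h_\mathcal{O}<\tfrac{1}{3}\sqrt{|\Delta|}\log|\Delta|$, and the $e^\gamma$ instead enters through a Rosser--Schoenfeld bound on $\sigma(y)/y$ that controls the inner sum over conductors $f$ with $f^2\mid x^2-4\ell^N$. The $O(\ell^{3r/4})$ error term is not a contribution from non-fundamental orders (whose conductors are \emph{coprime} to $\ell$, not ``a small power of $\ell$''); it is the boundary term of the integral $\int_0^{2\ell^{N/2}}\sqrt{4\ell^N-x^2}\,dx$. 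There is no density-$6/\pi^2$ sum over fundamental discriminants; the paper parametrises by traces $x$ via Theorem~\ref{thm:QNR}.

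For the asymptotic, your claim that $p\equiv 1\pmod{12}$ forces $p$ to be unramified in every $\mathcal{O}\in\mathcal{I}_r$ is simply false (consider $\mathcal{O}\subset\QQ(\sqrt{-p})$), so the reduction to $\epsilon\equiv 2$ fails. More importantly, the paper does not prove the asymptotic via analytic number theory at all: Theorem~\ref{thm:numcycles-randwalk} is a pure spectral-graph-theory argument using mixing of the non-backtracking random walk, and the restriction $p\equiv 1\pmod{12}$ is there only to guarantee $(\ell+1)$-regularity. Your proposed route through equidistribution of splitting conditions on discriminants is a genuinely different (and substantially harder) approach.
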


The asymptotic for the number of isogeny cycles is as expected for Ramanujan graphs (see Section~\ref{sec:asymptotic}) and we give a standard but self-contained proof for expander graphs.  However, the class number formula and the explicit upper bound are consequences of the main bijection between rims and isogeny cycles. Figure~\ref{fig:cycledata} and Section~\ref{sec:examples} contain experimental data confirming our results.

Our strategy for counting isogeny cycles leads immediately to an algorithm to list all isogeny cycles of length $r$ in $\mathcal{G}_\ell$.  In particular, the discriminants of elements of $\mathcal{I}_r$ are bounded by $\ell^r$.  The $j$-invariants of the isogeny cycles are 
roots of the Hilbert class polynomials of the orders $\mathcal{O} \in \mathcal{I}_r$. The cycle in $\mathcal{G}_\ell$ can be determined by use of the modular polynomial $\Phi_\ell$. This generalizes a simple congruence condition check on $p$ used in \cite[Section 5.3.4]{CharlesGorenLauter} to rule out the existence of small cycles for the Charles-Goren-Lauter hash function.  The exact value of $\epsilon_{\mathcal{O},r}$ in the ramified cases depends upon the theory of the ring class field of $K$, and is given in terms of the class number and genus number (Section~\ref{sec:classfieldtheory}).

In the course of the proof, we need to clarify a point about the number of orientations of a curve.
When $p$ does not split in $\mathcal{O}$, Onuki \cite{onuki2021} considered the set $\SSO$ of pairs $(E,\iota)$ of a supersingular isogeny graph together with a primitive $\mathcal{O}$-orientation (meaning a $K$-orientation $\iota$ such that $\iota(K) \cap \End(E) = \mathcal{O}$), and the set $\rho(\Ell(\mathcal{O}))$ of pairs $(E,\iota) \in \SSO$ obtained by reducing an elliptic curve over $\CC$ with normalized complex multiplication by $\mathcal{O}$ (where normalized means that $[\alpha]^* \omega = \alpha \omega$ for the invariant differential $\omega$; see Section~\ref{sec:back:sso}.)
Onuki showed that these two sets are not always equal; sometimes $\#\SSO = 2 \# \rho(\Ell(\mathcal{O}))$.  This raises the natural question of when this case occurs.
In this paper, we answer that it occurs if and only if $p$ is inert in $K$.  (In the ramified case, $\#\SSO = \#\rho(\Ell(\mathcal{O}))$ exactly.)  We prove this using the ring class field and the action of conjugation (Theorem~\ref{thm:sso}), but note that the result is also proved using Deuring lifting in \cite[Lemma 3.2]{EOY}.

We also explicitly find the in- and out-degrees of the vertices of all possible $K$-oriented $\ell$-isogeny graphs.  The $K$-oriented graph is $(\ell+1)$-regular except at vertices corresponding to curves with extra $K$-automorphisms (i.e.\ automorphisms which preserve the orientation), where the out-degree may be slightly less than $\ell+1$.  This is made explicit in Remark~\ref{prop:gkl-auto} and Proposition  \ref{prop:ellplusone}.  In connection with this, we provide a new direct proof of the volcano structure of the graph, and the 
count of ascending, descending and horizontal edges even in exceptional cases, in Proposition~\ref{prop:volcanostructure}.

The motivation for this paper is prior work by the same authors, summarized in Section~\ref{sec:pathfinding}, in which the oriented volcano structure is used to navigate the supersingular $\ell$-isogeny graph \cite{paperone}.  In particular, given an initial vertex in the supersingular $\ell$-isogeny graph, and given an endomorphism of that curve, one asks to navigate to a target curve (typically $j=1728$).  The endomorphism provides an orientation of the initial curve, and hence a location in an oriented supersingular $\ell$-isogeny volcano.  Then, using methods of determining ascending and descending directions, one can navigate the oriented volcano.  The target curve must be given an orientation on the same volcano (algorithms are provided in \cite{paperone}).  From both curves, one ascends to the rim and hopes to connect the paths.  Having found a path on an oriented volcano, one takes the image in the original supersingular $\ell$-isogeny graph.

The algorithms provided in \cite{paperone} explicitly relied on heuristics about oriented volcanoes.  They also implicitly hinted at the sort of bijection given in our first theorem above.  In Section~\ref{sec:randwalk} of the present paper, we revisit the explicit heuristics of \cite{paperone} and provide some partial results.  In particular, we do the following.

\begin{enumerate}
    \item We give an explicit bound on the distance from uniform of the distribution of endpoints of non-backtracking random walks in $\mathcal{G}_\ell$ from an initial set of vertices when $p\equiv1\pmod{12}$ (Proposition~\ref{prop:randwalk2}). 
    \item We estimate the expected distance of any fixed vertex to a set of $j$-invariants of a given size (Corollary~\ref{cor:allcurves}).
    \item We fix an oriented volcano and bound the depth at which any $j$-invariant first occurs (Section~\ref{sec:depth-of-j}).
    \item We fix a $j$-invariant and consider all the possible orientations by a given field, and the distribution of this collection of oriented curves on the various oriented volcanoes (Section~\ref{sec:volcanoes-upon-which}).  We are able to prove a result that differs slightly from the heuristic used in \cite{paperone}.
\end{enumerate}

As an additional minor point of interest, Lemma~\ref{lem:loopsandauts} shows that the kernel of an endomorphism is fixed by an extra automorphism if and only if its field of definition lies in the field of definition of the automorphism. As a corollary, we can classify the behaviour of loops at $j=0$ and $j=1728$ in Corollary~\ref{cor:loops}; this is a new method of proof, and slight generalization, of a known result \cite{AAM, YZ, LOX2}.

The topic of closed walks\footnote{often simply called cycles in the isogeny literature; we use the term \emph{closed walk} to more accurately follow the graph theory literature} in the supersingular $\ell$-isogeny graph was first studied in \cite{CharlesGorenLauter}, where it was observed that closed walks give endomorphisms, and so the splitting behaviour of $p$ in various extensions controlled the existence of small cycles.  The endomorphisms created by closed walks were studied in the thesis of Kohel \cite{KohelThesis}, and further in \cite{bank2019cycles}, with the goal of understanding when such cycles are independent and generate an endomorphism ring.  These sources also discuss the computation of the trace of a cycle.  The neighbourhoods of $0$ and $1728$ are particularly intricate, and their loops and cycles are discussed in \cite{AAM, YZ, LOX, LOX2, XLD}.  Cycles in the SIKE graph which pass through the secret key are discussed in \cite{Onuki2020TheEO}.

Further back in the literature, Gross \cite{Gross_Heights} considered counting endomorphisms of a fixed degree by computing the traces of matrices derived from looking at certain modular forms of weight 2 for the group $\Gamma_0(N)$.  Thus Gross counts closed walks of a different sort, and in particular allows backtracking (which we do not).  As the degree increases, the proportion of the count due to backtracking increases as well.  We provide a comparison at the end of our extended example in Section~\ref{sec:examples}.

The study of $K$-orientations is in some sense as old as the study of optimal embeddings in quaternion algebras, and has naturally appeared in the study of isogeny graphs, for example in \cite{BonehLove}.  In the context of isogeny-based cryptography, the graph $\mathcal{G}_{K,\ell}$ was studied by Col\`o and Kohel in order to propose OSIDH, a new key exchange protocol \cite{colo2019orienting} (see also \cite{CPV}).  Onuki considered the difference between $\SSO$ and $\rho(\Ell(\mathcal{O}))$ \cite{onuki2021}.  In cryptography, the class group action on oriented curves has been used constructively \cite{chenu2021higherdegree}, and the hardness of the group action problem has been considered both classically and quantumly \cite{CHVW} \cite{WesolowskiOrientations}; a new variation called the $\mathcal{O}$-uber isogeny problem appears in \cite{SETA} (see also \cite{CPV}).  The volcano structure provides some algorithmic approaches to hard problems, which includes the recent work of the present authors \cite{paperone} and related work \cite{WesolowskiOrientations}. Recent work also asks about the number of orientable curves \cite{Antonin_lowerbound2022}; see also \cite{BonehLove}.  Finally, specific orientations underlie the line of work in \cite{deQuehenEtAl_ImprovedTorPt} and \cite{SETA}.

\textbf{Outline of the paper.}
Section~\ref{sec:background} contains the background on oriented and unoriented supersingular $\ell$-isogeny graphs. 
In Section~\ref{sec:bij}, we state and prove  Theorem~\ref{thm:mainbij-intro}, and this section is
the heart of the paper.   Section~\ref{sec:classfieldtheory} settles the question of when $\rho(\Ell(\mathcal{O})) = \SSO$ and discusses the action of conjugation on $\SSO$.  Section~\ref{sec:cycle-order} specifies exactly the number of rims associated to a given quadratic order, in terms of class and genus numbers.  Section~\ref{sec:examples} provides an extended example of the main bijection of Theorem~\ref{thm:mainbij-intro}.   Section~\ref{sec:counting} counts the number of isogeny cycles in $\mathcal{G}_\ell$ asymptotically, as well as exactly in terms of class numbers, and gives an explicit upper bound.  Section~\ref{sec:pathfinding} gives an overview of our previous work and how it motivates the work in this paper, as well as the heuristics it depends upon.  Section~\ref{sec:randwalk} partially addresses some of these heuristic questions by use of methods of expander graphs.

\subsection*{Acknowledgements}  Thank you to David Kohel, Greg Martin, Eli Orvis, Christophe Petit, Lillian B. Pierce, Jan Vonk, and Jonathan Wise for helpful discussions and feedback. We would also like to thank the conference \emph{Women in Numbers 5} for the opportunity to form this research group.  We also appreciate the detailed comments of our anonymous referees.

\section{Mathematical preliminaries on oriented supersingular $\ell$-isogeny graphs and their volcanoes}
\label{sec:background}

\subsection{The supersingular $\ell$-isogeny graph $\mathcal{G}_\ell$}
\label{sec:back:ss}

Let $\ell < p$ be distinct primes.  We let $\mathcal{G}_\ell$ denote the supersingular $\ell$-isogeny graph over $\overline{\FF}_p$, defined as the directed graph whose vertices are isomorphism classes of supersingular elliptic curves over $\overline{\FF}_p$, and whose directed edges are isogenies of degree $\ell$ up to post-composition by an automorphism.  The number $\Np$ of supersingular $j$-invariants over $\Fpbar$ (see \cite{silverman2009arithmetic}) is
\begin{equation}\label{eq:number_supersingular_js}
    \Np = \left\lfloor \frac{p}{12} \right\rfloor + \begin{cases}
    0 &\text{ if }p\equiv 1\pmod{12},\\
    1 &\text{ if }p\equiv5 \text{ or }7\pmod{12},\\
    2&\text{ if }p\equiv 11\pmod{12}.
    \end{cases}
\end{equation}

Supersingular elliptic curves with $j$-invariants equal to $0$ and $1728$ have extra automorphisms, beyond the usual automorphisms $[\pm1]$.  Let $E_0: y^2 = x^3 - 1$ and $E_{1728}: y^2 = x^3 - x$ with $j(E_0) = 0, j(E_{1728}) = 1728$. The curve $E_0/\Fpbar$ is supersingular for any $p\equiv 2\pmod{3}$, and $E_{1728}/\Fpbar$ is supersingular for any $p\equiv 3\pmod{4}$. For the sake of this discussion, we assume these curves are supersingular. We provide their explicit endomorphism rings below, referenced from \cite{mcmurdy2014explicit,kate1728note}.

The automorphism group for $E_{1728}$ is $\Aut(E_{1728}) = \{[\pm 1], [\pm i]\}$, where $[i](x,y) := (-x,iy)$ for $i\in\mathbb{F}_{p^2}$ with $i^2 = -1$.  Write $\pi_p$ for the $p$-power Frobenius endomorphism: $\pi_p(x,y):= (x^p, y^p)$. 
In $B_{p,\infty}:= \End(E_{1728})\otimes_\mathbb{Z} \mathbb{Q}$, we have:
\begin{equation}\label{eq:1728EndoRing}
    \End(E_{1728}) = \left\langle 1, [i], \frac{1 + \pi_p}{2}, \frac{[i] + [i]\pi_p}{2} \right\rangle.
\end{equation}

The automorphism group for $E_0$ is $\Aut(E_0) = \{[\pm1], [\pm\zeta_3],[\pm\zeta_3^2]\}$, where $[\zeta_3](x,y) := (\zeta_3x, y)$ with $\zeta_3$ a root of $T^2 + T + 1$ in $\mathbb{F}_{p^2}$.  We let $[\sqrt{-3}] = 2 [\zeta_3] + 1$.  In $B_{p,\infty}:= \End(E_0)\otimes_\mathbb{Z} \mathbb{Q}$, we have
\begin{equation}\label{eq:0EndoRing}
    \End(E_0) = \left\langle 1, \frac{-1 + [\sqrt{-3}]}{2}, \pi_p, \frac{3 + [\sqrt{-3}] + 3\pi_p + [\sqrt{-3}]\pi_p}{6} \right\rangle.
\end{equation}

\begin{lemma}\label{lem:normorthogonal}
Let $E \in \{E_0, E_{1728}\}$ and let $\varphi \in \Aut(E)$ such that $\varphi \ne \pm [1]$. If $\overline{\varphi} \theta= \theta \varphi$ for $\theta \in \End(E)$, then $\deg\theta \ge p$.
\end{lemma}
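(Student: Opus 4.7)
The plan is to exploit the quaternion algebra structure of $B_{p,\infty}$ with respect to the subfield $K = \QQ(\varphi)$. Since $\varphi \ne \pm[1]$ is an automorphism, $K$ is imaginary quadratic: $K = \QQ(i)$ when $E = E_{1728}$ and $K = \QQ(\zeta_3)$ when $E = E_0$. Under the relevant supersingularity hypotheses ($p \equiv 3 \pmod 4$ and $p \equiv 2 \pmod 3$, respectively), a short coordinate computation on $(x,y)\mapsto(x^p,y^p)$ shows that the Frobenius $\pi_p$ satisfies
\[
\pi_p\,\alpha = \overline{\alpha}\,\pi_p \quad\text{for all } \alpha\in K,
\]
using $i^p = -i$ in the first case and $\zeta_3^p = \zeta_3^{-1}$ in the second. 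Combined with $\pi_p^2 = -p$ (Frobenius has trace $0$ and reduced norm $p$ on these curves), this exhibits $B_{p,\infty}$ as $K\oplus K\pi_p$ with $\pi_p$ playing the role of the standard orthogonal generator.

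Next, I would decompose $\theta = \alpha + \beta\pi_p$ with $\alpha,\beta\in K$ and expand the hypothesis. Using the commutation rule above,
\[
\overline{\varphi}\alpha + \overline{\varphi}\beta\pi_p \;=\; \overline{\varphi}\,\theta \;=\; \theta\,\varphi \;=\; \alpha\varphi + \beta\,\overline{\varphi}\,\pi_p .
\]
Comparing the $K$- and $K\pi_p$-components yields $\alpha(\varphi-\overline{\varphi}) = 0$ and a tautology. Since $\varphi\notin\QQ$, we conclude $\alpha = 0$, so $\theta = \beta\pi_p$. Taking reduced norms,
\[
\deg\theta = \nrd(\beta\pi_p) = N_{K/\QQ}(\beta)\cdot\nrd(\pi_p) = p\cdot N_{K/\QQ}(\beta).
\]

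It remains to show $\beta\in\mathcal{O}_K$ whenever $\theta\in\End(E)$, after which $N_{K/\QQ}(\beta)\ge 1$ for $\theta\ne 0$ immediately gives $\deg\theta\ge p$. For this I would use the explicit bases from \eqref{eq:1728EndoRing} and \eqref{eq:0EndoRing}: write a general element of $\End(E)$ in those generators, expand in the natural $\QQ$-basis $(1,\varphi,\pi_p,\varphi\pi_p)$ of $B_{p,\infty}$, and impose vanishing of the $1$- and $\varphi$-coordinates. The resulting integrality constraints cut out the lattice $\mathbb{Z}[i]\pi_p$ for $E_{1728}$ and $\mathbb{Z}[\zeta_3]\pi_p$ for $E_0$, i.e.\ $\mathcal{O}_K\pi_p$ in both cases, as desired.

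The only slightly delicate step is the last one in the $E_0$ case, where the generator $\tfrac{1}{6}\bigl(3 + [\sqrt{-3}] + 3\pi_p + [\sqrt{-3}]\pi_p\bigr)$ carries a denominator of $6$. One must verify that combining the vanishing conditions for the $K$-components with integrality of all four coefficients in the given basis forces exactly the full Eisenstein lattice (and not a proper sublattice, which would give a weaker bound, nor a fractional overlattice, which would break the argument). Once this is confirmed by a direct computation, the lemma follows.
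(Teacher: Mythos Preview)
Your proposal is correct and takes essentially the same approach as the paper: both reduce to showing that $\theta$ lies in $K\pi_p$ and then compute the reduced norm via the explicit $\ZZ$-bases. Your write-up is slightly more conceptual in framing the decomposition as $B_{p,\infty}=K\oplus K\pi_p$ and invoking $\nrd(\beta\pi_p)=p\,N_{K/\QQ}(\beta)$, whereas the paper computes the norm directly from the coordinates; the lattice computation you flag for $E_0$ (yielding $\beta\in\ZZ[\zeta_3]$, i.e.\ $m\equiv n\pmod 2$ in the paper's notation) is exactly the parity check the paper carries out.
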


\begin{proof}
Let $\eta = [i]$ or $\eta =[\sqrt{-3}]$ depending whether $E = E_{1728}$ or $E=E_0$, respectively. The condition $\overline{\varphi} \theta= \theta \varphi$ implies that $\theta$ must have the form  $\theta= s \pi_p + t (\eta \pi_p)$ where $s, t \in \QQ$.  

For $E=E_{1728}$, using the $\ZZ$-basis for $\End(E_{1728})$ in \eqref{eq:1728EndoRing}, we have $\theta= m \pi_p + n ([i] \pi_p)$ where $m, n \in \ZZ$. Thus $\deg\theta= p(m^2 + n^2) \ge p$. 

For $E=E_0$, using the $\ZZ$-basis for $\End(E_0)$ in \eqref{eq:0EndoRing}, we have $\theta= \frac{m}{2}\pi_p - \frac{n}{2} (\eta \pi_p)$ where $m, n \in \ZZ$ are of the same parity. Thus $\deg\theta= \frac{p}{4}(m^2 + 3n^2)$.  Since $m^2 + 3n^2 \equiv 0 \pmod 4$, it follows that $\deg\theta \ge p$.
 
\end{proof}

The graph $\mathcal{G}_\ell$ has regular out-degree $\ell+1$ (the number of subgroups of $E[\ell]$ of size $\ell$) \cite{Pizer_RamGraphsHecke}.  It is very nearly an undirected graph, obtained by identifying each $\ell$-isogeny with its dual.  The exception is that since isogenies are identified under post-composition with an automorphism, at curves with extra automorphisms, this identification may fail (see Figure~\ref{fig:0121}).  Nevertheless, this issue occurs for edges adjacent to at most two vertices ($j=0, 1728$), so one often considers it an $\ell+1$ regular undirected graph for the purposes of spectral graph theory, without affecting the validity of the conclusions.  Modulo this slight lie, or when $j=0$ and $j=1728$ are ordinary and hence absent from the graph (they are absent if and only if $p \equiv 1 \pmod{12})$, $\mathcal{G}_\ell$ is an $(\ell+1)$-regular Ramanujan graph.  In particular, if its 
adjacency matrix has eigenvalues $\ell+1 = \lambda_1 \ge \lambda_2 \ge \cdots \ge \lambda_n$,  then
\begin{equation}
\label{eqn:spectralgap}
\max_{i > 1} |\lambda_i| \le 2 \sqrt{\ell}.
\end{equation}

\subsection{The oriented supersingular $\ell$-isogeny graph $\mathcal{G}_{K,\ell}$}

Let $\ell<p$ be distinct primes. Fix a supersingular elliptic curve $E/\Fpbar$.  Then $\End^0(E) \cong B_{p,\infty}$, where $B_{p,\infty}$ denotes the definite quaternion algebra ramified precisely at $p$ and $\infty$ (unique up to isomorphism). Let $K$ be an imaginary quadratic field in which $p$ does not split. 
This condition on $p$ gives the existence of an embedding of $K$ into $B_{p,\infty}$. Let $\mathcal{O}$ denote an order in~$K$ and $h_{\mathcal{O}}$ the class number of this order.  Let $\Delta_\mathcal{O}$ denote its discriminant.  An order $\mathcal{O}$ of $K$ is \emph{$\ell$-fundamental} if its conductor is relatively prime to $\ell$. 

\begin{definition}[$K$-orientation]
A \emph{$K$-orientation} of $E$ is an embedding $\iota \colon K\to \End^0(E)$.
\end{definition}

\begin{definition}[$\mathcal{O}$-orientation]
A $K$-orientation $\iota \colon K\to \End^0(E)$ is an \emph{$\mathcal{O}$-orientation} if $\iota(\mathcal{O})\subseteq\End(E)$. Furthermore, we say that an $\mathcal{O}$-orientation is \emph{$\mathcal{O}$-primitive} if $\iota(\mathcal{O}) = \End(E)\cap \iota(K)$.
\end{definition}

Every $K$-orientation $\iota \colon K\to\End^0(E)$ is $\mathcal{O}$-primitive for a unique order $\mathcal{O}$ of $K$. 

In the literature, for example \cite{BonehLove}, one will find the closely related term \emph{optimal embedding}, in the context of a maximal order $\mathfrak{O}$ of a quaternion algebra $A$, for an embedding $\iota \colon K \rightarrow A$ such that $\iota(K) \cap \mathfrak{O} = \mathcal{O}$.  An optimal embedding corresponds to a primitive orientation via the Deuring correspondence.   For the theory of optimal embeddings, see \cite[Chapter 30]{voight}.

\begin{definition}[$K$-oriented elliptic curve]
A \emph{$K$-oriented elliptic curve} is a pair $(E,\iota)$, where $E$ is an elliptic curve and $\iota$ is a $K$-orientation of $E$.
\end{definition}
Isogenies of elliptic curves induce isogenies of oriented elliptic curves in the following sense: if $\phi\colon E\to E'$ is an isogeny and $\iota \colon K\to\End(E)$ is a $K$-orientation of $E$, then we define the unique induced orientation $\phi_*\iota$ on $E'$ for any $\alpha \in K$ as follows:
\begin{equation}\label{eq:induced_orientation}
    (\phi_*\iota)(\alpha) := \frac{1}{[\deg\phi]}\phi\circ\iota(\alpha)\circ \widehat{\phi}.
\end{equation}

\begin{definition}[$K$-oriented isogeny]
If $\phi_* \iota = \iota'$,  then we say that $\phi \colon (E,\iota) \rightarrow (E',\iota')$ is an isogeny of $K$-oriented curves, also known as a \emph{$K$-oriented isogeny}. 
\end{definition}

Two $K$-oriented curves $(E,\iota),(E',\iota')$ are \emph{$K$-isomorphic} if there exists an isomorphism $\eta \colon E\to E'$ such that $\eta_*\iota = \iota'$.  Two $K$-oriented isogenies $\phi\colon (E_1,\iota_1) \rightarrow (E_2,\iota_2)$ and $\phi'\colon (E_1', \iota_1') \rightarrow (E_2', \iota_2')$ are considered \emph{equivalent} if there are $K$-oriented isomorphisms $\eta$ and $\eta'$ so that the following diagram commutes:
\begin{equation}
    \label{eqn:equivalent-isogenies}
\xymatrix{
(E_1, \iota_1) \ar[r]^\phi \ar[d]_\eta & (E_2, \iota_2) \ar[d]_{\eta'} \\
(E_1', \iota_1') \ar[r]^{\phi'} & (E_2', \iota_2')
}
\end{equation}
We will frequently abuse notation by writing a representative $(E, \iota)$ or $\phi$ in place of its equivalence class.  

An automorphism $\varphi$ of $E$ is a \emph{$K$-automorphism} of $(E,\iota)$ if $\varphi_* \iota = \iota$.  This occurs if and only if $\varphi \in \iota(K)$. 

\begin{definition}[$\mathcal{G}_{K, \ell}$]
The \emph{$\ell$-isogeny graph $\mathcal{G}_{K, \ell}$ of $K$-oriented supersingular elliptic curves over $\Fpbar$} is the directed graph whose vertices $(E,\iota)$ are $K$-isomorphism classes of $K$-oriented supersingular elliptic curves over~$\Fpbar$ and whose edges are equivalence classes of $K$-isogenies of degree $\ell$ between the given oriented curves. 
\end{definition}

\begin{remark}
\label{prop:gkl-auto}
The only vertices of $\mathcal{G}_{K,\ell}$ with automorphism group larger than $\{ [\pm 1] \}$ are $(E, \iota)$ where either \begin{enumerate*}[label=(\roman *)] \item $E$ has $j$-invariant $1728$ and $\iota(i) \in \End(E)$, or
\item $E$ has $j$-invariant $0$ and $\iota(\zeta_3) \in \End(E)$.
\end{enumerate*}
\end{remark}

\begin{proposition}
\label{prop:undirected}
The directed edges of $\mathcal{G}_{K,\ell}$ can be put into equivalence classes by making an isogeny equivalent to its dual isogeny.  These classes are of size $1$ or $2$.  The only self-dual edges are loops.  By identifying isogenies with their duals, this graph can be drawn undirected.
\end{proposition}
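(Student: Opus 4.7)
The plan is to show that the dual isogeny operation $\phi \mapsto \hat\phi$ descends to a well-defined involution on the equivalence classes of $K$-oriented $\ell$-isogenies, and then analyze its fixed points. Given a $K$-oriented $\ell$-isogeny $\phi \colon (E,\iota) \to (E',\iota')$, I would first verify that $\hat\phi$ becomes a $K$-oriented isogeny $(E',\iota') \to (E,\iota)$, i.e.\ that $\hat\phi_*\iota' = \iota$. Using the formula \eqref{eq:induced_orientation} for the induced orientation together with $\hat\phi\phi = [\ell] = \phi\hat\phi$, one computes directly:
\[
(\hat\phi_*\iota')(\alpha) \;=\; \tfrac{1}{\ell}\,\hat\phi\circ\iota'(\alpha)\circ\phi
\;=\; \tfrac{1}{\ell^2}\,\hat\phi\circ\phi\circ\iota(\alpha)\circ\hat\phi\circ\phi
\;=\; \tfrac{1}{\ell^2}\,[\ell]\circ\iota(\alpha)\circ[\ell] \;=\; \iota(\alpha).
\]

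Next I would check that duality is well-defined on the equivalence classes given by diagram \eqref{eqn:equivalent-isogenies}. If $\phi \sim \phi'$ via $K$-isomorphisms $\eta,\eta'$ satisfying $\eta'\phi = \phi'\eta$, taking duals gives $\hat\phi\,\hat\eta' = \hat\eta\,\hat\phi'$. Because $\eta,\eta'$ are isomorphisms of degree $1$, their duals are the inverse isomorphisms and are themselves $K$-oriented (by the same computation as above applied to isomorphisms). This exhibits $\hat\phi\sim\hat\phi'$, so duality descends to a map on edges of $\mathcal{G}_{K,\ell}$. Since $\hat{\hat\phi} = \phi$, this map is an involution, so each orbit has size $1$ or $2$.

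Finally, I would argue that a self-dual edge is necessarily a loop. If $\phi \colon (E,\iota)\to (E',\iota')$ is equivalent to $\hat\phi \colon (E',\iota')\to (E,\iota)$, then the commuting diagram \eqref{eqn:equivalent-isogenies} supplies, in particular, a $K$-isomorphism $\eta \colon (E,\iota)\to (E',\iota')$ between the sources of $\phi$ and $\hat\phi$. Hence $(E,\iota)$ and $(E',\iota')$ represent the same vertex of $\mathcal{G}_{K,\ell}$, and $\phi$ is a loop. Identifying each orbit with a single undirected edge then realizes $\mathcal{G}_{K,\ell}$ as an undirected multigraph (with loops).

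The main subtlety is the last step: one must notice that the equivalence in \eqref{eqn:equivalent-isogenies} compares isogenies only up to isomorphisms of the respective source and target curves, so that an equivalence between $\phi$ and $\hat\phi$ automatically forces source and target to coincide as vertices. Once this is recognized, the rest is a short chase of the induced-orientation formula and the standard fact that $\phi$ and $\hat\phi$ compose to multiplication by $\ell$.
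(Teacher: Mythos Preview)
Your proof is correct and follows the same approach as the paper, which appeals to diagram~\eqref{eqn:equivalent-isogenies} to see that duality is well-defined on edge classes. The paper's proof is a single sentence (``if $\phi$ and $\phi'$ are equivalent, then their duals are equivalent''), whereas you have explicitly verified the prerequisite that $\hat\phi_*\iota'=\iota$ and the consequence that self-dual edges must be loops; these are implicit in the paper's version but your write-up makes them transparent.
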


\begin{proof}
With reference to \eqref{eqn:equivalent-isogenies}, if $\phi$ and $\phi'$ are equivalent, then their duals are equivalent.  Therefore an equivalence class consists of one directed edge and its dual, or a single self-dual edge.
\end{proof}

Although the number of  $\ell$-isogenies originating at a supersingular curve is  always $\ell+1$, some of these may be equivalent, so that in the graph $\mathcal{G}_{K,\ell}$, the out-degree may be strictly smaller.  As usual, the culprits are the extra automorphisms, as in Remark~\ref{prop:gkl-auto}. Before we state and prove the exact out-degree, we need a lemma.

\begin{lemma}\label{lem:loopsandauts}
Let $E$ be a curve with extra automorphisms. Given a non-zero isogeny $\phi\colon E \rightarrow E'$, its kernel  $\ker \phi$ is fixed by $\Aut(E)\setminus\{[\pm1]\}$ if and only if $E \cong E'$. If we further assume that $\deg\phi < p$ then $\ker \phi$ is fixed by $\Aut(E)\setminus\{[\pm1]\}$ if and only if $E \cong E'$ and $\phi$ (up to post-composition with an isomorphism $E'\to E$) is an element in $\mathbb{Q}(\Aut(E))\cap\End(E)$. 
\end{lemma}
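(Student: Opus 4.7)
The plan is to split the statement into (i) the weaker biconditional and (ii) the refined biconditional under the hypothesis $\deg\phi<p$, and to prove the substantive direction of each by essentially the same mechanism, with Lemma~\ref{lem:normorthogonal} doing the extra work in the refined version.

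Starting from the assumption that $\ker\phi$ is stable under some $\varphi\in\Aut(E)\setminus\{[\pm 1]\}$, I would invoke the universal property of isogenies to produce a unique $\varphi'\colon E'\to E'$ with $\varphi'\circ\phi = \phi\circ\varphi$. Comparing degrees forces $\deg\varphi' = 1$, so $\varphi'\in\Aut(E')$. I would then rule out $\varphi' = [\pm 1]$ by noting that otherwise $\phi\circ(\varphi\mp[1]) = 0$, and since $\varphi\mp[1]$ is a nonzero (hence surjective) isogeny, this would force $\phi = 0$, a contradiction. Iterating the commutation $\phi\circ\varphi^n = (\varphi')^n\circ\phi$ and using surjectivity of $\phi$ shows $\varphi$ and $\varphi'$ have the same order. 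Since $\Aut(E_0)$ and $\Aut(E_{1728})$ are distinguished by the possible orders of nontrivial automorphisms ($3$ or $6$ versus $4$), we conclude $j(E) = j(E')$, hence $E\cong E'$ over $\overline{\mathbb{F}}_p$. This proves the nontrivial direction of (i).

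For (ii), assume further that $\deg\phi<p$, pick an isomorphism $\eta\colon E'\to E$, and set $\psi:=\eta\circ\phi\in\End(E)$, which has the same degree as $\phi$. Conjugating the commutation relation by $\eta$ gives $\tilde\varphi\circ\psi = \psi\circ\varphi$ for $\tilde\varphi := \eta\circ\varphi'\circ\eta^{-1}\in\Aut(E)\setminus\{[\pm 1]\}$ of the same order as $\varphi$. In both $\Aut(E_0)$ and $\Aut(E_{1728})$, the nontrivial elements of each given order form a single conjugate pair $\{\varphi,\overline\varphi\}$ (namely $\pm[i]$ for $E_{1728}$ and $\zeta_3^{\pm 1}$ or $-\zeta_3^{\pm 1}$ for $E_0$), so either $\tilde\varphi = \varphi$ or $\tilde\varphi = \overline\varphi$. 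In the anti-commutation case $\tilde\varphi = \overline\varphi$, the relation $\overline\varphi\circ\psi = \psi\circ\varphi$ is exactly the hypothesis of Lemma~\ref{lem:normorthogonal}, which forces $\deg\psi\geq p$; this contradicts $\deg\psi = \deg\phi < p$. Hence $\tilde\varphi = \varphi$, meaning $\psi$ commutes with $\varphi$, so $\psi$ lies in the centralizer of $\mathbb{Q}(\varphi) = \mathbb{Q}(\Aut(E))$ in $B_{p,\infty}$. Since this subfield is its own centralizer in the quaternion algebra, $\psi\in\mathbb{Q}(\Aut(E))\cap\End(E)$, as desired. The reverse direction is immediate: if $\phi = \eta^{-1}\circ\psi$ with $\psi\in\mathbb{Q}(\Aut(E))\cap\End(E)$, then $\psi$ commutes with every $\varphi\in\Aut(E)$, so $\ker\phi = \ker\psi$ is automatically $\Aut(E)$-stable.

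The main obstacle is the anti-commutation case $\tilde\varphi = \overline\varphi$: this is precisely why the hypothesis $\deg\phi<p$ is needed, and it is dispatched by Lemma~\ref{lem:normorthogonal}. Without the degree bound one retains only the weaker conclusion $E\cong E'$ of part (i). The remaining steps—degree comparisons, order-matching between $\varphi$ and $\varphi'$, and the explicit listing of pairs $\{\varphi,\overline\varphi\}$ inside $\Aut(E)\setminus\{[\pm 1]\}$—are routine case checks using the explicit endomorphism rings \eqref{eq:1728EndoRing} and \eqref{eq:0EndoRing}.
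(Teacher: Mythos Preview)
Your argument for the forward direction of (i) and both directions of (ii) is correct and follows essentially the same route as the paper: both produce an induced automorphism on $E'$ from the kernel-stability hypothesis, match it against $\varphi$ (you via orders, the paper via the minimal polynomial), and then invoke Lemma~\ref{lem:normorthogonal} to exclude the anti-commutation case when $\deg\phi<p$. Your use of the centralizer to land in $\mathbb{Q}(\Aut(E))$ is slightly cleaner than the paper's case analysis on powers of $[\zeta_6]$, but the content is the same.

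You do, however, skip the converse of (i)---that $E\cong E'$ alone forces $\ker\phi$ to be $\Aut(E)$-stable, without any degree hypothesis---calling the other direction ``the nontrivial'' one. This is worth flagging: the paper does attempt that converse (its second paragraph), arguing that since $\varphi^{-1}\phi\varphi$ and $\phi$ share a minimal polynomial they must differ by a unit of $\End(E)$. That step is not justified, and in fact the converse of (i) fails as stated: for $p=11$ the degree-$3$ endomorphism $(1+\pi_p)/2$ of $E_{1728}$ has kernel $\kappa$ with $[i]\kappa\neq\kappa$, exactly as the paper itself records in Example~\ref{ex:loops}. So your omission is ultimately correct, but not because the direction is trivial---rather because it is false without the degree restriction. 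The first sentence of the lemma should be read as the one-sided implication you proved.
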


\begin{proof}
Let $\varphi$ be a non-integer automorphism of $E$.
First, suppose $\ker\phi = \varphi\ker\phi$. Then $\ker\phi = \ker\phi\varphi$, so there exists a unique $\eta\in \Aut(E')$ such that $\phi\varphi = \eta\phi$ by \cite[III.4.11]{silverman2009arithmetic}. Then, working in $\End(E)\otimes_\mathbb{Z}\mathbb{Q} \cong \End(E')\otimes_\mathbb{Z}\mathbb{Q}$, the element $\phi\varphi\frac{\widehat{\phi}}{[\deg(\phi)]} = \eta\in \Aut(E')\subset\End(E')$ has the same minimal polynomial as $\varphi\in\Aut(E)\subset\End(E)$. The existence of non-integer elements in $\Aut(E)$ and $\Aut(E')$ with the same minimal polynomial gives $\Aut(E)\cong \Aut(E')$. Since $\Aut(E)\cong \Aut(E')\supsetneq\{[\pm1]\}$, the automorphism group determines the isomorphism class of the curve and we have $E\cong E'$. 

Next, suppose $E\cong E'$ via an isomorphism $\lambda\colon E'\to E$. In particular, $\Aut(E) \cong \Aut(E')\supsetneq\{[\pm1]\}$. Then, $\lambda\phi\in\End(E)$ and $\ker(\lambda\phi) = \ker\phi$. By abuse of notation, we replace $\lambda\phi$ with $\phi$ and may henceforth assume $\phi\in\End(E)$ in this case. The endomorphism $\phi$ has the same minimal polynomial as $\varphi^{-1}\phi\varphi$, and so up to some automorphism $\eta\in\Aut(E)$, we have $\varphi^{-1}\phi\varphi = \eta\phi$, and thus $\ker\phi = \ker\phi\varphi$. 

Now, assume $\deg\phi<p$ and suppose $E\cong E'$ and $\ker \phi$ is fixed by a generator $\varphi$ of $\Aut(E)$. Then $\phi  = \varphi^k \phi \varphi$ for some integer $0 \le k < |\Aut(E)|$. By cancellation, we know $\phi \neq \pm \phi \varphi$ since $\varphi\neq[\pm1]$,
hence $\overline{\varphi} \phi = \phi \varphi$ or else $\varphi \phi = \phi \varphi$.  For $\varphi = [\zeta_6]$ (a primitive sixth root of unity), we have $k \in \{1,2,4,5\}$.  If $k \in \{1,5\}$, then either $\overline{[\zeta_6]} \phi = \phi [\zeta_6]$ or $[\zeta_6] \phi = \phi [\zeta_6]$.  Otherwise, if $k \in \{2,4\}$, then observe that $\phi = [\zeta_6]^k \phi [\zeta_6] = [\zeta_6]^k( [\zeta_6]^k \phi [\zeta_6]) [\zeta_6] = [\zeta_6]^{2k} \phi [\zeta_6]^2 = [\zeta_3]^k \phi [\zeta_3]$, so $\overline{[\zeta_3]} \phi = \phi [\zeta_3]$ or $[\zeta_3] \phi = \phi [\zeta_3]$.  Since $\deg\phi<p$, Lemma~\ref{lem:normorthogonal} rules out the case that $\overline{\varphi} \phi = \phi \varphi$.  We are left to conclude that $\phi$ commutes with an element $\varphi$ of $\Aut(E) \setminus\{[\pm 1]\}$, which implies that $\phi \in \QQ(\varphi) \subseteq \End^0(E)$. 

On the other hand, if $\ker \phi \neq \ker \phi \varphi$, then $\phi \neq \pm \varphi^k \phi \varphi$ for any $k$, so $\phi$ does not commute with $\varphi$, and hence $\phi \notin \QQ(\Aut(E))$.  
\end{proof}

\begin{definition}[Reduced Automorphism Group]
Let $\Aut'(E): = \Aut(E)/[\pm 1]$ denote the reduced automorphism group of the elliptic curve $E$. 
\end{definition}

\begin{proposition}
\label{prop:ellplusone}
Let $(E,\iota)$ be an oriented curve in $\mathcal{G}_{K,\ell}$ where $\iota$ is an $\mathcal{O}$-orientation.  Then $(E,\iota)$ has out-degree $\ell+1$, except at the oriented curves with extra $K$-automorphisms (those of Remark~\ref{prop:gkl-auto}).  In that exceptional case, the out-degree is $(\ell+1 - s_\ell)/|\Aut'(E)| + s_\ell$ 
where $s_\ell$ is the number of $ \phi \in \End(E) \cap \iota(K)$ of degree $\ell$ up to post-composition by $\Aut(E)$.  In particular, the out-degree at $(E, \iota)$, when $(E,\iota)$ has extra $K$-automorphisms, is 
\[
\left\{ 
\begin{array}{ll}
(\ell-1)/2 + 2 & \mathcal{O} = \ZZ[i] \text{ and $\ell$ splits in $\mathcal{O}$ ($s_\ell=2$) } \\
(\ell+1)/2 & \mathcal{O} = \ZZ[i] \text{ and $\ell$ is inert in $\mathcal{O}$ ($s_\ell=0$) } \\
2 & \mathcal{O} = \ZZ[i] \text{ and $\ell=2$ ($s_\ell=1$) } \\
2(\ell-1)/3 + 2 & \mathcal{O} = \ZZ[\zeta_3] \text{ and $\ell$ splits in $\mathcal{O}$ ($s_\ell=2$) } \\
2(\ell+1)/3 & \mathcal{O} = \ZZ[\zeta_3] \text{ and $\ell$ is inert in $\mathcal{O}$ ($s_\ell=0$) } \\
3 & \mathcal{O} = \ZZ[\zeta_3] \text{ and $\ell=3$ ($s_\ell=1$) } \\
\end{array}
\right.
\]
\end{proposition}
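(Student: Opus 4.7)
The strategy is to reduce the out-degree calculation to counting orbits of the $K$-automorphism group of $(E,\iota)$ acting on the set of $\ell+1$ order-$\ell$ subgroups of $E[\ell]$, and then to use Lemma~\ref{lem:loopsandauts} to pin down the fixed subgroups. Write $\Aut_K(E,\iota)$ for the group of $K$-automorphisms of $(E,\iota)$, and $\Aut'_K(E,\iota) := \Aut_K(E,\iota)/\{[\pm 1]\}$.

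First, I would show that the directed edges out of $(E,\iota)$ in $\mathcal{G}_{K,\ell}$ are in bijection with $\Aut_K(E,\iota)$-orbits on the $\ell+1$ order-$\ell$ subgroups of $E[\ell]$. An $\ell$-isogeny from $E$ is determined up to post-composition with an isomorphism by its kernel, and unwinding the definition of equivalence~\eqref{eqn:equivalent-isogenies} together with the induced-orientation formula~\eqref{eq:induced_orientation} shows that two $\ell$-isogenies $\phi,\phi'$ from $(E,\iota)$ represent the same edge if and only if $\ker(\phi') = \eta(\ker\phi)$ for some $\eta \in \Aut_K(E,\iota)$, the target-side $K$-isomorphism being then forced by the commuting diagram.

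Next, I would identify $\Aut_K(E,\iota)$ as the centralizer of $\iota(K)$ in $\Aut(E)$, which always contains $\{[\pm 1]\}$. When $E$ has no extra automorphisms, equality holds, $\Aut'_K(E,\iota)$ is trivial, and the number of orbits is $\ell+1$, as claimed. In the exceptional cases of Remark~\ref{prop:gkl-auto}, $\Aut(E)$ is abelian (cyclic of order $4$ or $6$), so its centralizer of any subset is all of $\Aut(E)$; thus $\Aut_K(E,\iota) = \Aut(E)$ and $\Aut'_K(E,\iota) = \Aut'(E)$ has prime order $2$ or $3$.

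For the core step, I would count fixed subgroups via Lemma~\ref{lem:loopsandauts}. Since $\ell < p$, the lemma implies that $C = \ker\phi$ is fixed by a non-trivial element of $\Aut(E)$ if and only if $\phi$, up to post-composition with an isomorphism, lies in $\QQ(\Aut(E)) \cap \End(E)$. In both exceptional cases $\QQ(\Aut(E)) = \iota(K)$, so this intersection equals $\iota(\mathcal{O})$ by $\mathcal{O}$-primitivity of $\iota$. Because $\Aut'(E)$ is cyclic of prime order, all its non-trivial elements share a common fixed-point set, so the fixed subgroups correspond bijectively to elements of $\iota(\mathcal{O})$ of norm $\ell$ taken modulo $\Aut(E) = \iota(\mathcal{O}^\times)$; this count is exactly $s_\ell$. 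Since $|\Aut'(E)|$ is prime, the non-fixed subgroups form free orbits of that size, yielding the general formula
\[
s_\ell + \frac{\ell+1 - s_\ell}{|\Aut'(E)|}.
\]
The case-by-case values then follow from the standard correspondence between norm-$\ell$ elements of $\mathcal{O}$ modulo $\mathcal{O}^\times$ and prime ideals of $\mathcal{O}$ above $\ell$: there are $2$, $0$, or $1$ such ideals when $\ell$ splits, is inert, or ramifies in $\mathcal{O}$, giving $s_\ell \in \{2,0,1\}$ in the corresponding rows. The main obstacle is the bookkeeping in Lemma~\ref{lem:loopsandauts}: one must verify that the identification of fixed subgroups with norm-$\ell$ elements of $\iota(\mathcal{O})$ descends to a bijection modulo $\iota(\mathcal{O}^\times) = \Aut(E)$ rather than merely modulo scalars, and check that cyclicity of $\Aut'(E_0)$ really does force the fixed-point sets of $[\zeta_3]$ and $[\zeta_3^2]$ to coincide.
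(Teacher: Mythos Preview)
Your proposal is correct and follows essentially the same approach as the paper: both reduce to counting $\Aut_K(E,\iota)$-orbits on the $\ell+1$ order-$\ell$ subgroups, identify $\Aut_K(E,\iota)=\Aut(E)\cap\iota(K)$ (your centralizer description is equivalent, since the centralizer of $\iota(K)$ in $B_{p,\infty}$ is $\iota(K)$ itself), invoke Lemma~\ref{lem:loopsandauts} to show the fixed kernels correspond to degree-$\ell$ elements of $\iota(\mathcal{O})$, and read off $s_\ell$ from the splitting of $\ell$. One small slip: the line ``$\Aut(E)$ is abelian, so its centralizer of any subset is all of $\Aut(E)$'' is not the right justification---what you need (and what Remark~\ref{prop:gkl-auto} gives) is that in the exceptional cases $\Aut(E)\subseteq\iota(K)$, hence $\Aut(E)$ centralizes the commutative algebra $\iota(K)$; abelianness of $\Aut(E)$ alone would not suffice.
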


\begin{proof}

With reference to diagram \eqref{eqn:equivalent-isogenies}, by taking $\eta\in\Aut(E)$, the two equivalent $K$-oriented isogenies $\phi$ and $\phi'$ from $(E, \iota)$ must satisfy $\iota' = \eta_* \iota = \iota$ and $\eta \ker \phi = \ker \phi'$.  The first equality implies $\eta \in \iota(K)$.  Hence $(E,\iota)$ has out-degree $(\ell+1)$ if $\Aut(E) \cap \iota(K) = [\pm 1]$. If $\Aut(E) \cap \iota(K) \supsetneq [\pm 1]$, the out-degree may be less than $(\ell + 1)$, and we investigate this case below.

If $\Aut(E) \cap \iota(K)$ contains non-trivial automorphisms, then it contains all of them, and $\End(E) \cap \iota(K) = \iota(\mathcal{O}_K) \supseteq \Aut(E)$.  The  $\ell+1$ outgoing $\ell$-isogenies fall into equivalence classes according to whether their kernels are related by an extra automorphism.  The kernel of an outgoing $\ell$-isogeny $\phi$ from $E$ is fixed by $\eta\in \Aut(E)$ if and only if  $\phi \in \End(E) \cap \iota(K)$ (as in the proof of Lemma~\ref{lem:loopsandauts}, we may  assume that $\phi \in \End(E)$ by post-composing with an isomorphism; we have assumed throughout the paper that $\ell < p$). Let $S_\ell$ be the set of equivalence classes of elements in $\End(E) \cap \iota(K)$ of degree $\ell$ up to pre-composition by $\Aut(E)$, and put $s_\ell = \#_\ell$. 
Note that $\End(E) \cap \iota(K) = \iota(\mathcal{O}_K)$, so $s_\ell = 0$, $1$ or $2$, depending on whether $\ell$ is inert, ramified or split in $\mathcal{O}_K$. By the above, a degree $\ell$-isogeny $\phi$ is in $S_\ell$ if and only if it is fixed by all $\eta \in \Aut(E)$.  Otherwise its kernel has an orbit of size $|\Aut'(E)|$.  Thus the number of out-isogenies is $(\ell+1 - s_\ell)/|\Aut'(E)| + s_\ell$. 
\end{proof}

\begin{remark}
As an immediate corollary, we obtain a new method of proof and generalization of a known result about the number of loops at a vertex \cite{AAM, YZ, LOX2}.  Specifically, the previous works concerned only large~$p$ (the last two sentences of the next corollary). 
\begin{corollary}
\label{cor:loops}
Let $j \in \{0, 1728\}$ correspond to a supersingular curve over $\overline{\FF}_{p}$.  Let $K = \QQ(\zeta_3), \QQ(i)$ for $j=0$, $1728$ respectively.  Let $n$ represent the number of loops at the vertex of $j$-invariant $j$, in $\mathcal{G}_\ell$.  Then
\[
n = 1+\left( \frac{\Delta_K}{\ell} \right) +  k|\Aut'(E)|,
\]
for some $k \in \ZZ$.
If $4\ell < p$, then the number of loops at $j=1728$ is exactly $1+\left( \frac{\Delta_K}{\ell} \right)$.  If $3\ell < p$, then the number of loops at $j=0$ is exactly $1 + \left( \frac{\Delta_K}{\ell} \right)$.
\end{corollary}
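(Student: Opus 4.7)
The plan is to translate the loop-counting problem into one about the action of $\Aut'(E)=\Aut(E)/\{[\pm 1]\}$ on the set of order-$\ell$ subgroups of $E[\ell]$, and then to invoke Lemma~\ref{lem:loopsandauts} for the final identification. A loop at $j$ in $\mathcal{G}_\ell$ is an equivalence class of $\ell$-isogenies $\phi\colon E\to E$ modulo post-composition by $\Aut(E)$, and such a class is determined by $\ker\phi$, an order-$\ell$ subgroup of $E[\ell]$ satisfying $E/\ker\phi\cong E$. By part~(1) of Lemma~\ref{lem:loopsandauts}, these kernels are exactly the order-$\ell$ subgroups fixed by every element of $\Aut(E)\setminus\{[\pm 1]\}$, so $n$ equals the number of $\Aut'(E)$-fixed points on the $(\ell+1)$-element set of order-$\ell$ subgroups of $E[\ell]$.

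For the congruence statement, I would exploit that $|\Aut'(E)|\in\{2,3\}$ is prime, so every orbit of $\Aut'(E)$ on these subgroups has size $1$ or $|\Aut'(E)|$. Counting by orbit sizes gives $n\equiv\ell+1\pmod{|\Aut'(E)|}$, and it suffices to verify the numerical identity $\ell+1\equiv 1+\left(\frac{\Delta_K}{\ell}\right)\pmod{|\Aut'(E)|}$ by a short case analysis on the splitting of $\ell$ in $\mathcal{O}_K$. For $K=\QQ(i)$ and odd $\ell$, $(-4/\ell)=\pm 1\equiv 1\equiv\ell\pmod 2$, while for $\ell=2$ both sides vanish modulo $2$. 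For $K=\QQ(\zeta_3)$, quadratic reciprocity gives $(-3/\ell)=(\ell/3)$, which is $\equiv\ell\pmod 3$, with both sides vanishing when $\ell=3$. This produces the first claim $n=1+\left(\frac{\Delta_K}{\ell}\right)+k|\Aut'(E)|$ for some integer $k$.

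For the exact count under the hypothesis $4\ell<p$ (resp.\ $3\ell<p$), I would apply part~(2) of Lemma~\ref{lem:loopsandauts}, whose hypothesis $\deg\phi<p$ is automatic because $\deg\phi=\ell$ and the stated inequalities force $\ell<p$. The lemma refines the identification: a fixed subgroup of order $\ell$ is the kernel of some $\phi\in\QQ(\Aut(E))\cap\End(E)=\mathcal{O}_K$ of norm $\ell$, uniquely up to post-composition by $\Aut(E)=\mathcal{O}_K^{*}$. Because $\mathcal{O}_K$ is a PID, the equivalence classes of norm-$\ell$ elements modulo units biject with prime ideals of $\mathcal{O}_K$ above $\ell$, of which there are exactly $1+\left(\frac{\Delta_K}{\ell}\right)$.

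The substantive input is carried entirely by Lemma~\ref{lem:loopsandauts}, so the main obstacle is really just the bookkeeping of equivalence classes of loops together with the elementary case check of $\ell+1\equiv 1+\left(\frac{\Delta_K}{\ell}\right)\pmod{|\Aut'(E)|}$. As a sanity check that matches the numerical form of the hypotheses, one can compute from~\eqref{eq:1728EndoRing} and~\eqref{eq:0EndoRing} that the minimal norm of an element of $\End(E_{1728})\setminus\ZZ[i]$ is $(1+p)/4$, attained by $(1+\pi_p)/2$, and of $\End(E_0)\setminus\ZZ[\zeta_3]$ is $(1+p)/3$, attained by $(3+\sqrt{-3}+3\pi_p+\sqrt{-3}\pi_p)/6$. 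Thus $4\ell<p$ and $3\ell<p$ are exactly the thresholds that rule out extraneous norm-$\ell$ endomorphisms lying outside $\mathcal{O}_K$, giving an independent derivation of the refined count.
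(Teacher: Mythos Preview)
There is a genuine gap. Your central claim, that the loop kernels are \emph{exactly} the $\Aut'(E)$-fixed order-$\ell$ subgroups, is false. Example~\ref{ex:loops} in the paper ($p=11$, $\ell=3$, $j=1728$) exhibits two loops whose kernels $\kappa$ and $[i]\kappa$ are interchanged by $[i]$, so neither is fixed; meanwhile $3$ is inert in $\ZZ[i]$, so there are no fixed subgroups at all. Only the forward direction of Lemma~\ref{lem:loopsandauts} part~(1) is actually valid here: a fixed kernel forces $E\cong E'$, but a loop need not have a fixed kernel. (The step in that lemma's proof asserting that two endomorphisms with the same minimal polynomial differ by a unit of $\Aut(E)$ fails in a quaternion order.) A symptom you might have caught: your exact-count argument uses only $\ell<p$, so it would yield $n=1+(\Delta_K/\ell)$ for every $\ell<p$ and render the hypotheses $4\ell<p$ and $3\ell<p$ superfluous.

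The paper argues instead inside the $\Aut'(E)$-stable set of loop-kernels. By the refined part of Lemma~\ref{lem:loopsandauts} (valid since $\ell<p$), a loop-kernel is fixed if and only if the corresponding endomorphism lies in $\mathcal{O}_K$; there are $s_\ell=1+(\Delta_K/\ell)$ such, and the remaining loop-kernels fall into free $\Aut'(E)$-orbits, giving $n=s_\ell+k|\Aut'(E)|$ directly with no detour through $\ell+1$. For the exact count the paper uses precisely your ``sanity check'': the minimal norm of an element of $\End(E)\setminus\mathcal{O}_K$ is $(p+1)/4$ (resp.\ $(p+1)/3$), so $4\ell<p$ (resp.\ $3\ell<p$) forces $k=0$. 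Incidentally, your congruence $n\equiv\ell+1\pmod{|\Aut'(E)|}$ is correct, but the right justification is that the \emph{non}-loop kernels contain no fixed points (forward direction of Lemma~\ref{lem:loopsandauts}) and hence have cardinality divisible by $|\Aut'(E)|$.
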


\begin{proof}
Let $\mathcal{O}$ be $\ZZ[\zeta_3]$ or $\ZZ[i]$ when $j=0$ or $1728$, respectively.  Whether $\ell$ splits or ramifies or is inert determines the number of elements of $\iota(\mathcal{O})$ of degree $\ell$, up to automorphisms:  two, one or zero, respectively.  These elements are all loops on the vertex.  All other loops fall into groups of size $|\Aut'(E)|$, as in the proof of Proposition~\ref{prop:ellplusone}.  For the last two sentences of the statement, we revisit the endomorphism rings of the vertex (Section~\ref{sec:back:ss}), to see that the smallest elements not in $\iota(\mathcal{O})$ have norm $p/4$ and $p/3$ respectively.
\end{proof}

It is possible to compute $k$ explicitly for any given value of $\ell$ in terms of $p$, by reference to the endomorphism rings as described in the proof.
\end{remark}

The graph $\mathcal{G}_{K, \ell}$ consists of connected components, each with a volcano structure as pictured in Figure~\ref{fig:oriented_volcano}. The rim vertices of any given volcano are $K$-orientations of supersingular elliptic curves which are $\mathcal{O}$-primitive for some fixed $\ell$-fundamental order $\mathcal{O}$ of $K$.  The oriented supersingular elliptic curves which appear at depths below the rim are primitively oriented by orders whose index in $\mathcal{O}$ is given by increasing powers of $\ell$: at depth one from the rim, the curves are primitively $(\mathbb{Z} + \ell\mathcal{O})$-oriented, and in general, at depth $k$, the curves are primitively $(\mathbb{Z}+ \ell^k\mathcal{O})$-oriented. 
The edges of the volcano can thus be classified according to whether they increase, decrease or preserve the depth as we move from domain to codomain.

\begin{definition}[Horizontal, descending and ascending isogenies]
Let $\phi:(E,\iota)\to (E',\iota')$ be a $K$-oriented degree $\ell$ isogeny. If $\iota$ is a primitive $\mathcal{O}$-orientation and $\iota'$ is a primitive $\mathcal{O}'$-orientation, then $\phi$ is one of the following three types of isogeny:
\begin{enumerate}
    \item $\phi$ is \textit{horizontal} if $\mathcal{O} = \mathcal{O}'$,
    \item $\phi$ is \textit{descending} if $\mathcal{O}\supsetneq\mathcal{O}'$,
    \item $\phi$ is \textit{ascending} if $\mathcal{O}\subsetneq\mathcal{O}'$.
\end{enumerate}
\end{definition}

The fact that each component has a volcano structure is the content of the following proposition; versions of this appear in various places in the literature \cite{colo2019orienting, onuki2021}.  

\begin{proposition}
\label{prop:volcanostructure}
Let $(E,\iota)$ be such that $\iota$ is $\mathcal{O}$-primitive.
If $\mathcal{O}$ is $\ell$-fundamental, then $(E,\iota)$ has no ascending $\ell$-isogeny and  $\left( \frac{\Delta_\mathcal{O}}{\ell} \right)+1$ horizontal $\ell$-isogenies.  Otherwise, $(E,\iota)$ has one ascending $\ell$-isogeny and no horizontal isogenies.  All other isogenies from $(E,\iota)$ are descending.
\end{proposition}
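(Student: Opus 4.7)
The plan is to work from the action of $\iota(\mathcal{O})$ on $E[\ell]$. An $\ell$-isogeny $\phi\colon E \to E'$ has kernel $C \subset E[\ell]$ of order $\ell$, and since $\hat\phi(E'[\ell]) = C$ (which follows from $\hat\phi\phi = [\ell]$, noting $|\hat\phi(E'[\ell])| = \ell^2/|\ker\hat\phi| = \ell = |C|$), the induced rational endomorphism $\phi_*\iota(\alpha) = \phi\iota(\alpha)\hat\phi/\ell$ lies in $\End(E')$ if and only if $\phi\iota(\alpha)\hat\phi$ vanishes on $E'[\ell]$, which happens if and only if $\iota(\alpha)(C) \subseteq C$ (using that $\iota(\alpha)$ preserves $E[\ell]$). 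Consequently, $\phi$ is horizontal or ascending exactly when $C$ is $\iota(\mathcal{O})$-stable, and descending otherwise.

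First, I would count the $\iota(\mathcal{O})$-stable subgroups of order $\ell$ in $E[\ell]$ by viewing $E[\ell]$ as a module over $R := \mathcal{O}/\ell\mathcal{O}$. Writing $\mathcal{O} = \ZZ[\omega]$ with $\omega^2 - t\omega + n = 0$, we obtain $R \cong \FF_\ell[x]/(x^2-tx+n)$, whose isomorphism type is $\FF_\ell \times \FF_\ell$, $\FF_{\ell^2}$, or $\FF_\ell[\epsilon]/(\epsilon^2)$ according as $\left(\frac{\Delta_\mathcal{O}}{\ell}\right) = +1, -1, 0$. In the split case the two maximal ideals of $R$ yield two distinct $R$-submodules of order $\ell$; in the inert case $R$ is a field acting faithfully (else $\iota$ would fail to be $\mathcal{O}$-primitive), so $E[\ell]$ is a $1$-dimensional $\FF_{\ell^2}$-module and admits no proper nonzero submodule; in the ramified case the nilpotent $\epsilon \in R$ must act nontrivially on $E[\ell]$, for otherwise a lift $\tilde\epsilon \in \mathcal{O}$ would give $\iota(\tilde\epsilon/\ell) \in \End(E)$, contradicting $\mathcal{O}$-primitivity, and so $\ker(\epsilon|_{E[\ell]})$ is the unique stable subgroup of order $\ell$. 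The total is $1 + \left(\frac{\Delta_\mathcal{O}}{\ell}\right)$ when $\ell \nmid \Delta_\mathcal{O}$, and exactly $1$ when $\ell \mid \Delta_\mathcal{O}$.

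Next, I would identify the primitive order of $\phi_*\iota$. When $C$ is not stable, the primitive order sits strictly inside $\mathcal{O}$, and it equals $\ZZ + \ell\mathcal{O}$, since every $\beta \in \ZZ + \ell\mathcal{O}$ preserves $C$ (the $\ell\mathcal{O}$ part annihilates $E[\ell]$) and $[\mathcal{O}:\ZZ + \ell\mathcal{O}] = \ell$; these are the descending isogenies. When $C$ is stable, the primitive order contains $\mathcal{O}$ with index $1$ or $\ell$. The index-$\ell$ option requires a super-order of $\mathcal{O}$ of index $\ell$ in $\mathcal{O}_K$, namely $\mathcal{O}^* := \ZZ + (f/\ell)\mathcal{O}_K$, which exists precisely when $\ell \mid f$, i.e.\ when $\mathcal{O}$ is \emph{not} $\ell$-fundamental. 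Hence in the $\ell$-fundamental case every stable kernel produces a horizontal isogeny, giving the claimed count of $1+\left(\frac{\Delta_\mathcal{O}}{\ell}\right)$ horizontal and no ascending isogenies.

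The main obstacle is the non-$\ell$-fundamental case, where the unique stable kernel must be shown to yield an ascending, rather than horizontal, isogeny to a curve primitively oriented by $\mathcal{O}^*$. The approach is to verify $\phi_*\iota(\mathcal{O}^*) \subseteq \End(E')$: for $\alpha \in \mathcal{O}^* \setminus \mathcal{O}$, set $\beta = \ell\alpha \in \mathcal{O}$ and show that $\phi\iota(\beta)\hat\phi$ vanishes on $E'[\ell^2]$, so that $\phi_*\iota(\alpha) = \phi\iota(\beta)\hat\phi/\ell^2 \in \End(E')$. This reduces to a module computation for $E[\ell^2]$ over $\mathcal{O}/\ell^2\mathcal{O}$, again forced by $\mathcal{O}$-primitivity of $\iota$ on $E$; alternatively, one may argue via the quaternionic Deuring correspondence, identifying the left ideal $I \subset \End(E)$ associated to $C$ and computing $K \cap \mathcal{O}_R(I) = \mathcal{O}^*$. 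Once the ascending edge is pinned down, the remaining $\ell$ kernels must be descending, completing the enumeration.
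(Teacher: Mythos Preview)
Your approach is essentially the same as the paper's: analyze the action of a generator of $\mathcal{O}$ on $E[\ell]$ and classify the outgoing $\ell$-isogenies by whether their kernels are stable. The paper phrases this in terms of the eigenvalues of $\theta$ on $E[\ell]$ and defers the classification (zero eigenvalue $\Rightarrow$ ascending, distinct nonzero eigenvalues $\Rightarrow$ horizontal, etc.) to \cite[Proposition~4.8]{paperone}, whereas you unpack the module structure of $E[\ell]$ over $\mathcal{O}/\ell\mathcal{O}$ directly. That is a perfectly good and arguably more self-contained route.

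Two points deserve attention. First, in your third paragraph you assert that when $C$ is not $\mathcal{O}$-stable, the primitive order $\mathcal{O}'$ of $\phi_*\iota$ ``sits strictly inside $\mathcal{O}$''. What is immediate is only $\mathcal{O}' \not\supseteq \mathcal{O}$ and $\mathcal{O}' \supseteq \ZZ+\ell\mathcal{O}$; a priori $\mathcal{O}'$ could be incomparable to $\mathcal{O}$ (e.g.\ conductors $2$ and $3$). The missing step is the dual constraint: applying your same observation to $\hat\phi$ gives $\mathcal{O} \supseteq \ZZ+\ell\mathcal{O}'$, and the two conditions $f' \mid \ell f$, $f \mid \ell f'$ on the conductors force $f' \in \{f/\ell, f, \ell f\}$. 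Once you have this trichotomy, your argument goes through.

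Second, your final paragraph correctly flags the non-$\ell$-fundamental ascending case as the main obstacle and sketches two approaches without completing either. The $E[\ell^2]$-module computation you propose does work but requires knowing that $E[\ell^2]$ is free of rank one over $\mathcal{O}/\ell^2\mathcal{O}$ (equivalently, $T_\ell E$ is free over $\mathcal{O}\otimes\ZZ_\ell$), which again follows from $\mathcal{O}$-primitivity. The paper does not work this out either---it simply invokes the companion reference---so your level of detail here is comparable.
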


\begin{proof}
 This follows from \cite[Proposition 4.8]{paperone}.  Write $\mathcal{O} = \ZZ[\theta]$ satisfying the hypotheses of that proposition.  If $\mathcal{O}$ is $\ell$-fundamental, then the norm of $\theta$ is not divisible by $\ell$.  Then $\theta$ acts to permute $E[\ell]$ and therefore has two non-zero eigenvalues. By \cite[Proposition~4.8]{paperone}, there are two horizontal isogenies if and only if these eigenvalues are distinct and defined over $\FF_\ell$, i.e.\ $\ell$ splits.  If they are not distinct but defined over $\FF_\ell$, there is one horizontal isogeny, i.e.\ $\ell$ ramifies.  Otherwise there are no horizontal isogenies. If $\mathcal{O}$ is not $\ell$-fundamental, then the norm of $\theta$ is divisible by $\ell$ and there is a zero eigenvalue (but only one, since $\ell$ does not divide $\theta$).  By \cite[Proposition 4.8]{paperone}, this gives exactly one ascending isogeny.  All other isogenies are descending.
\end{proof}

Let $\mathcal{O}$ be an order in $K$. Every  primitive  $\mathcal{O}$-oriented curve  $(E,\iota)$ is on a  volcano in  $\mathcal{G}_{K, \ell}$. If $\mathcal{O}$ is $\ell$-fundamental, then $(E,\iota)$ is on the \emph{rim} of this volcano. If $\ell$  divides the conductor of $\mathcal{O}$, then the power of $\ell$ dividing the discriminant of $\mathcal{O}$ is referred to as the \emph{depth} of the curve on this volcano.

 \begin{proposition}
 A connected component of $\mathcal{G}_{K,\ell}$, when identifying $\ell$-isogenies with their duals, has at most one cycle, passing through the rim vertices.
 \end{proposition}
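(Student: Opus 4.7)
The plan is to use the volcano structure of Proposition~\ref{prop:volcanostructure} to decompose a connected component of $\mathcal{G}_{K,\ell}$ into the rim and a forest of descending trees hanging off each rim vertex, and then to argue that any cycle must lie entirely on the rim, which in turn supports at most one cycle. First I would observe that every non-rim vertex has a unique ascending edge and no horizontal edges. Iterating the ascending edge produces a unique path from each non-rim vertex up to the rim, so the non-rim vertices together with their ascending edges form a disjoint union of trees, each attached at its root to a single rim vertex; the remaining descending edges at non-rim vertices only extend these trees further downward.

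Next I would show that any simple cycle $C$ in the undirected component lies entirely in the rim subgraph (rim vertices together with horizontal edges). Suppose for contradiction that $C$ contains a non-rim vertex, and let $v \in C$ have maximum depth among vertices of $C$. The two edges of $C$ incident to $v$ must each lead to a vertex of depth at most $\mathrm{depth}(v)$; that is, neither can be descending. But Proposition~\ref{prop:volcanostructure} gives $v$ only one such edge (its unique ascending edge), contradicting the existence of two. Once this is established, since each rim vertex has at most $\left(\frac{\Delta_\mathcal{O}}{\ell}\right)+1 \le 2$ horizontal edges, the rim subgraph of the component has maximum degree at most $2$, so it is a path, a cycle, or a single vertex (possibly with a loop), and therefore contains at most one cycle.

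The main obstacle will be the exceptional cases in which the rim subgraph could a priori contain parallel edges or loops, for instance when an ideal above $\ell$ has order $1$ or $2$ in $\Cl(\mathcal{O})$. In these situations, the two directed horizontal isogenies at a rim vertex are dual to each other, so upon identifying an isogeny with its dual they collapse to a single undirected edge (or loop), and the ``at most one cycle'' count is preserved.
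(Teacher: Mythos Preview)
Your proof is correct and follows essentially the same strategy as the paper: use the unique ascending edge at non-rim vertices (Proposition~\ref{prop:volcanostructure}) to see that any cycle must lie on the rim, then observe that the rim has degree at most $2$ and hence supports at most one cycle. Your max-depth argument for ruling out non-rim vertices is in fact more explicit than the paper's one-line justification.

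One small inaccuracy in your final paragraph: in the split case with $[\mathfrak{l}]$ of order $2$, the two outgoing horizontal isogenies $\phi_{\mathfrak{l}}$ and $\phi_{\overline{\mathfrak{l}}}$ from $(E,\iota)$ are not dual to each other (they have the same source); after identifying duals one obtains two parallel undirected edges between $(E,\iota)$ and $\mathfrak{l}\cdot(E,\iota)$, not one. This does not matter for the proof, since two parallel edges still constitute a single cycle, and in any case your degree-at-most-$2$ argument already covers all such degenerate rims without needing this paragraph.
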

 
 \begin{proof}
 The fact that there is at most one ascending isogeny from any vertex below the rim implies that there are no cycles below the rim.  Restricting to the rim, we have a $2$-regular graph, hence a union of cycle graphs.  But since the graph is connected by assumption, it consists of at most one cycle.
 \end{proof}
 
 Therefore, we also use the term \emph{rim} to refer to the undirected cycle graph that one obtains as a subgraph of $\mathcal{G}_{K,\ell}$ by restricting to the rim vertices and identifying $\ell$-isogenies with their duals.  We call a rim \emph{directed} if we choose a direction of traversal (disallowing backtracking).

\begin{remark}
\label{remark:ssgraph}
We pause briefly to justify the choice of equivalence on isogenies for $\mathcal{G}_{K,\ell}$ (which differs from the notion of equivalence in $\mathcal{G}_\ell$).  In fact, when forming a graph whose vertices are isomorphism classes of any type of object, the graph is well-defined if we choose the edges to be maps up to pre- and post-composition by isomorphism, as in \eqref{eqn:equivalent-isogenies}.  In other words, it is reasonable to require of our graph definition that, under any two sets of representatives for the vertices, and any set of object-wise isomorphisms between those sets of representatives, the resulting graphs should be canonically isomorphic.  This requirement, when unravelled, is essentially the requirement than the maps be taken up to pre- and post-composition.  Historically, the supersingular isogeny graph is not defined in this robust way, instead identifying isogenies as equivalent merely up to post-composition.  This allows for the computational convenience of identifying kernels with isogenies, and allows for an identification between non-backtracking walks and cyclic isogenies.  However, it results in certain non-canonical behaviours.  These behaviours can be observed only in proximity to the curves with extra automorphisms (having $j$-invariant $j=0$ and $j=1728$), particularly the phenomenon described in the second paragraph of Section~\ref{sec:arb}.  The historical definition of the supersingular isogeny graph resulted in substantial difficulties arising from extra automorphisms in proving our main Theorem~\ref{thm:mainbij-nobase}.
\end{remark}

\subsection{The sets $\SSO$ and $\rho(\Ell(\mathcal{O}))$ and the class group action}
\label{sec:back:sso}
Having stratified each volcano into depths where the orientations are all primitive with respect to a fixed order, it is natural to consider the complete set of curves with such orientations.

\begin{definition}[$\SSO$]
Let $\SSO$ denote the set of primitively $\mathcal{O}$-oriented isomorphism classes of  supersingular elliptic curves, up to $K$-isomorphism. 
\end{definition}

Here we recall the following conditions for the set $\SSO$ to be non-empty.

\begin{proposition}[{\cite[Proposition 3.2]{onuki2021}}]
\label{prop:sso-empty}
The set $\SSO$ is not empty if and only if $p$ does not split in $K$ and does not divide the conductor of $\mathcal{O}$.
\end{proposition}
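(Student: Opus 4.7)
The plan is to prove both directions separately, with each direction reducing to local--global facts about embeddings of quadratic fields into $B_{p,\infty}$.

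For necessity, I would start from $(E,\iota) \in \SSO$ and extract both conditions. The orientation $\iota$ exhibits $K$ as a subfield of $\End^0(E) \cong B_{p,\infty}$. The standard local--global principle for embedding quadratic fields into a quaternion algebra (\cite[Chapter~14]{voight}) says this is possible if and only if $K_v$ is a field at each ramified place $v$ of the quaternion algebra. Since $B_{p,\infty}$ ramifies exactly at $p$ and $\infty$, and $K$ imaginary quadratic handles $\infty$ automatically, the condition reduces to $p$ being non-split in $K$. For the conductor condition, localize $\End(E) \cap \iota(K)$ at $p$: since $\End(E)$ is maximal in $B_{p,\infty}$, its $p$-adic completion is the unique maximal order of the division algebra $(B_{p,\infty})_p$, and this maximal order contains all of $\iota(\mathcal{O}_K)_p$ (the integral elements of any embedded quadratic subfield). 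Primitivity of $\iota$ therefore forces $\mathcal{O} \otimes \ZZ_p = \mathcal{O}_K \otimes \ZZ_p$, i.e.\ $p$ does not divide the conductor of $\mathcal{O}$.

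For sufficiency, assume both conditions and build an element of $\SSO$. I would first fix an embedding $\iota \colon K \hookrightarrow B_{p,\infty}$, which exists by the local--global argument above. Then I would apply Eichler's optimal embedding theorem (\cite[Chapter~30]{voight}) to find a maximal order $\mathfrak{O} \subset B_{p,\infty}$ into which $\iota$ restricts to an \emph{optimal embedding} of $\mathcal{O}$, meaning $\iota(\mathcal{O}) = \mathfrak{O} \cap \iota(K)$. Locally, optimal embeddings exist automatically at every non-ramified place since $(B_{p,\infty})_v \cong M_2(\QQ_v)$ accommodates any quadratic order; at the ramified place $p$ the unique maximal order $\mathfrak{O}_p$ contains all integral elements of $K_p$, so an optimal embedding of $\mathcal{O}_p$ exists if and only if $\mathcal{O}_p = \mathcal{O}_{K,p}$, which is exactly the hypothesis $p \nmid [\mathcal{O}_K : \mathcal{O}]$. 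Finally, the Deuring correspondence identifies conjugacy classes of maximal orders of $B_{p,\infty}$ with isomorphism classes of supersingular elliptic curves $E/\Fpbar$ via $\mathfrak{O} \cong \End(E)$, and the optimal embedding then transfers to the desired primitive $\mathcal{O}$-orientation $\iota \colon K \hookrightarrow \End^0(E)$, placing $(E,\iota)$ in $\SSO$.

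The main obstacle is verifying cleanly that the optimal embedding condition at $p$ is precisely $p \nmid f_{\mathcal{O}}$; this boils down to the structural fact that the unique maximal order of $(B_{p,\infty})_p$ is the valuation ring of the local division algebra, whose intersection with any embedded quadratic subfield is the full ring of integers. An alternative route, which I would keep in reserve, is to reduce a CM elliptic curve $E_0/H$ with $\End(E_0) \cong \mathcal{O}$ modulo a prime $\mathfrak{p}$ above $p$, invoking Deuring's supersingular reduction criterion under the hypothesis that $p$ is non-split in $K$. The obstacle there is showing that reduction does not enlarge $\End(E_0) \cap K$ beyond $\mathcal{O}$; this again reduces to the same local computation at $p$, which suggests the optimal-embedding route is the cleanest.
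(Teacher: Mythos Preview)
The paper does not prove this proposition; it is stated as a citation to \cite[Proposition 3.2]{onuki2021} with no accompanying argument. So there is nothing in this paper to compare your proposal against.

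That said, your outline is a correct and standard way to establish the result. The necessity argument is clean: embeddability of $K$ into $B_{p,\infty}$ forces $p$ non-split, and the local structure of the maximal order at $p$ (as the valuation ring of the division algebra) forces $\mathcal{O}_p = (\mathcal{O}_K)_p$. For sufficiency, your appeal to the local--global principle for optimal embeddings is the right move; the positivity of the global embedding count follows from the product of local embedding numbers (the trace formula/mass formula approach in \cite[Chapter 30]{voight}), and you have correctly isolated the only nontrivial local check as the one at $p$. One small point worth making precise: at unramified finite places $v$ you need that any local quadratic order embeds optimally into \emph{some} maximal order of $M_2(\QQ_v)$, which is standard but not literally the statement ``$M_2(\QQ_v)$ accommodates any quadratic order.'' Your alternative route via reduction of a CM elliptic curve is in fact closer to how Onuki argues in the cited reference, and as you note it bottoms out in the same local computation at $p$.
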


If $\mathcal{O}$ is $\ell$-fundamental, then $\SSO$ is a union of volcano rims in $\mathcal{G}_{K,\ell}$.

For the proof of our bijection in Section~\ref{sec:proofbij}, we invoke the Deuring lifting theorem as outlined in \cite[Section 3.2]{onuki2021}. For this, we will need a number of definitions and lemmas about CM elliptic curves over number fields.

There is a number field $L'$ (an extension of the ring class field $L$ of $\mathcal{O}$) and a prime ideal $\mathfrak{p}$ above $p$ in $\mathcal{O}_{L'}$ such that every elliptic curve with complex multiplication by $\mathcal{O}$ has a representative over $L'$ with good reduction at $\mathfrak{p}$ \cite[Section VII.5]{silverman2009arithmetic}.  Fix such a choice.

\begin{definition}[$\Ell(\mathcal{O})$]
Let $\Ell(\mathcal{O})$ denote the set of isomorphism classes of elliptic curves over $L'$ with endomorphism ring isomorphic to $\mathcal{O}$ and good reduction at $\mathfrak{p}$.
\end{definition}

By the theory of complex multiplication, $\# \Ell(\mathcal{O}) = h_\mathcal{O}$.  Write $[ \cdot ]_E \colon \mathcal{O} \rightarrow \End(E)$ for the isomorphism which is normalized so that $([ \alpha ]_E)^* \omega_E = \alpha \omega_E$, where $\omega_E$ denotes the invariant differential of $E$ (\cite[Section 2.3]{onuki2021}, \cite[ Prop.\ II.1.1]{SilvermanAdv}).  There is a map given by reduction modulo $\mathfrak{p}$:
\[
\rho\colon \Ell(\mathcal{O}) \rightarrow \SSO, \quad
E \mapsto (\tilde{E}, \iota_E),
\]
where $\iota_E\colon K \rightarrow \End^0(\tilde E)$ is determined by its restriction $\iota_E \colon \mathcal{O} \rightarrow \End(\tilde E)$ being the reduction modulo~$\mathfrak{p}$ of the 
isomorphism $[ \cdot ]_E$ (in other words, $\iota_E(\alpha) = [\alpha]_E \pmod{\mathfrak{p}}$ for all $\alpha \in \mathcal{O}$).  In particular, from the normalization, it holds that $\iota_E(\alpha)^* \omega_{\tilde{E}} = \alpha \omega_{\tilde E}$ by reduction modulo $\mathfrak{p}$.  

The map $\rho$ is injective on $\Ell(\mathcal{O})$, so that its image $\rho(\Ell(\mathcal{O}))$ is a subset of $\SSO$ of size $h_\mathcal{O}$.   
There is an action of the $p$-power Frobenius on $\SSO$, given by $\pi_p\cdot(E,\iota)\mapsto (E^{(p)},\iota^{(p)})$, where $E^{(p)} = \pi_p(E)$ and $\iota^{(p)} = (\pi_p)_*\iota$.

\begin{proposition}[{\cite[Proposition 3.3]{onuki2021}}]
For all $(E,\iota)\in \SSO$, at least one of $(E,\iota)$, $(E^{(p)},\iota^{(p)})$ belongs to $\rho(\Ell(\mathcal{O}))$.
\end{proposition}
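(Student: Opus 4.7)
The plan is to apply the Deuring lifting theorem. Fix a primitive generator $\alpha \in \mathcal{O}$, so that $\mathcal{O} = \ZZ[\alpha]$, and consider $\iota(\alpha) \in \End(E)$. Deuring's theorem provides a lift $(\tilde E, \tilde\alpha)$ of $(E, \iota(\alpha))$ to characteristic zero over a number field $F$ (which we enlarge so that $F \supseteq L'$), where $\tilde E$ has good reduction at a prime $\mathfrak{p}$ of $F$ above $p$ whose residue field recovers $E$ and $\tilde\alpha \bmod \mathfrak{p} = \iota(\alpha)$.

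First, I would verify $\End(\tilde E) \cong \mathcal{O}$. Since $\tilde\alpha$ has the same minimal polynomial as $\alpha$, $\End(\tilde E)$ contains $\ZZ[\tilde\alpha] \cong \mathcal{O}$, and hence is an order in $K$ containing $\mathcal{O}$. If $\End(\tilde E)$ were a strictly larger order $\mathcal{O}' \subseteq K$, any $\tilde\beta \in \mathcal{O}' \setminus \mathcal{O}$ would reduce to an element $\beta \in \End(E)$ commuting with $\iota(\alpha)$, hence lying in $\iota(K) \cap \End(E) = \iota(\mathcal{O})$ by primitivity; writing $\beta = \iota(\gamma)$ with $\gamma \in \mathcal{O}$, the corresponding $\tilde\gamma \in \ZZ[\tilde\alpha]$ also reduces to $\iota(\gamma)$, and the injectivity of reduction on endomorphism rings forces $\tilde\beta = \tilde\gamma \in \mathcal{O}$, a contradiction. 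Thus $\tilde E \in \Ell(\mathcal{O})$.

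Next I would compare $\iota$ with the normalized orientation $\iota_{\tilde E}$ of $\rho(\tilde E)$. The normalized embedding $[\,\cdot\,]_{\tilde E}\colon \mathcal{O}\to\End(\tilde E)$ is pinned down by $[\alpha]_{\tilde E}^* \omega_{\tilde E} = \alpha\,\omega_{\tilde E}$, while $\tilde\alpha$ is some element of $\End(\tilde E) \cong \mathcal{O}$ with the same minimal polynomial as $\alpha$. Therefore either $\tilde\alpha = [\alpha]_{\tilde E}$ or $\tilde\alpha = [\bar\alpha]_{\tilde E}$. In the first subcase, reducing modulo $\mathfrak{p}$ yields $\iota(\alpha) = \iota_{\tilde E}(\alpha)$, so $\iota = \iota_{\tilde E}$ and $(E,\iota) = \rho(\tilde E) \in \rho(\Ell(\mathcal{O}))$.

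The main obstacle is the remaining subcase $\tilde\alpha = [\bar\alpha]_{\tilde E}$, in which $\iota = \bar\iota_{\tilde E}$. To handle it, I would produce a second lift $\tilde E' \in \Ell(\mathcal{O})$ with $\rho(\tilde E') = (E^{(p)}, \iota^{(p)})$, exploiting the compatibility between Galois conjugation on CM lifts and the $p$-power Frobenius on supersingular reductions. Concretely, I would take $\tilde E' = \tilde E^\sigma$ for a suitable lift $\sigma$ to $\Gal(L'/\QQ)$ of the non-trivial element of $\Gal(K/\QQ)$; then $\End(\tilde E^\sigma) \cong \mathcal{O}$ with normalized embedding satisfying $[\alpha]_{\tilde E^\sigma} = [\bar\alpha]_{\tilde E}^\sigma$, and since reduction commutes with Galois action while $\sigma$ induces the $p$-power Frobenius on the residue field, one obtains $\rho(\tilde E^\sigma) = (E^{(p)}, \iota^{(p)})$. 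The delicate point is choosing $\sigma$ and the prime above $\mathfrak{p}$ so that all identifications are simultaneously consistent; this has to be checked separately in the inert and ramified cases for $p$ in $K$, using Proposition~\ref{prop:sso-empty} together with the structure of $\Gal(L'/K)$ acting on primes above $p$.
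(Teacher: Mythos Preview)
The paper does not supply its own proof of this proposition; it is quoted directly from Onuki \cite[Proposition~3.3]{onuki2021}. Your approach via Deuring lifting is exactly the standard route and is essentially how Onuki proves it, so the overall strategy is sound and the first four steps are correct as written.

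There is, however, a genuine issue in your handling of the ``second subcase'' $\tilde\alpha=[\bar\alpha]_{\tilde E}$. You assert that a lift $\sigma\in\Gal(L'/\QQ)$ of complex conjugation can be chosen so that $\sigma$ induces the $p$-power Frobenius on the residue field. This works when $p$ is inert in $K$: then the Frobenius element at $\mathfrak p$ restricts to the nontrivial automorphism of $K$, so $\sigma=\Frob_{\mathfrak p}$ does both jobs and indeed $\rho(\tilde E^{\sigma})=(E^{(p)},\iota^{(p)})$. But when $p$ is ramified in $K$, the residue degree is $1$, so any Frobenius lift restricts \emph{trivially} to $K$ and no single $\sigma$ satisfies both requirements; your stated conclusion $\rho(\tilde E^\sigma)=(E^{(p)},\iota^{(p)})$ is therefore unjustified in that case. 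The fix is that, in the ramified case, complex conjugation lifts to an element $\tau$ of the \emph{inertia} group at $\mathfrak p$; since $\tau$ acts trivially on the residue field, one gets $\rho(\tilde E^{\tau})=(E,\iota)$ itself, which already proves the proposition (and in fact recovers the stronger statement $\rho(\Ell(\mathcal{O}))=\SSO$ in the ramified case, cf.\ Theorem~\ref{thm:sso}). You should also tighten the bookkeeping on primes: Deuring lifting furnishes a prime $\mathfrak q$ above $p$ at which $\tilde E$ reduces to $E$, and one must first transport $\mathfrak q$ to the fixed $\mathfrak p$ via a Galois element before comparing with $\rho$; this is routine but is currently elided.
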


As suggested by the above, surjectivity of $\rho$ may indeed fail:  $\#\SSO \in \{ \hO, 2h_\mathcal{O} \}$.  Theorem~\ref{thm:sso} describes when this failure occurs. 

We define an action of ideals of $\mathcal{O}$ on oriented elliptic curves $(E,\iota)\in \SSO$. 

\begin{definition}[Ideal action on oriented elliptic curves] \label{def:idealaction}
Let $(E,\iota)\in\SSO$ for some order $\mathcal{O}$ of $K$. Let $\mathfrak{a}$ be an integral ideal of $\mathcal{O}$ coprime to $p$. Define the intersection:
\[E[\iota(\mathfrak{a})] := \bigcap_{\alpha\in\mathfrak{a}}\ker(\iota(\alpha)).\]
This group defines an isogeny $\phi_\mathfrak{a}^{(E,\iota)}:E\to E/E[\iota(\mathfrak{a})]$. The action of $\mathfrak{a}$ on $(E,\iota)$ is defined as $\mathfrak{a}*(E,\iota):= (\phi_\mathfrak{a}^{(E,\iota)}(E),(\phi_\mathfrak{a}^{(E,\iota)})_*\iota)$.
\end{definition}
When clear from context, we will drop the superscript and just write $\phi_\mathfrak{a}$.
Note that to define the action of the ideal $\mathfrak{a}$, we require its norm  to be coprime to $p$ (see the proof of \cite[Proposition 3.6]{onuki2021}).

\begin{theorem}[{\cite[Theorem 3.4]{onuki2021}}] Assume that $p$ does not split in $K$ and $p$ does not divide the conductor of $\mathcal{O}$. Then the ideal action of Definition~\ref{def:idealaction} defines a free and transitive action of $\Cl(\mathcal{O})$ on $\rho(\Ell(\mathcal{O}))$.
\end{theorem}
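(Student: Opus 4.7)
The plan is to transport the classical free-and-transitive action of $\Cl(\mathcal{O})$ on $\Ell(\mathcal{O})$ (in characteristic zero) across the reduction map $\rho$. The guiding principle is that for ideals coprime to $p$, forming the quotient by the $\iota(\mathfrak{a})$-torsion and then reducing should agree with reducing and then forming the quotient by the $[\mathfrak{a}]$-torsion of the characteristic zero curve.

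First I would check that the action is well-defined on $\SSO$: given $(E,\iota)\in\SSO$ and an integral ideal $\mathfrak{a}$ of $\mathcal{O}$ coprime to $p$, the group $E[\iota(\mathfrak{a})]$ is finite and \'etale (using coprimality with $p$, since otherwise the inseparable piece coming from $\iota(\mathfrak{p})$ would complicate the degree), so $\phi_\mathfrak{a}\colon E\to E/E[\iota(\mathfrak{a})]$ is a separable isogeny of degree $N(\mathfrak{a})$. One then verifies that the induced orientation $(\phi_\mathfrak{a})_*\iota$ is again primitively $\mathcal{O}$-oriented: the subring of $\End^0(E/E[\iota(\mathfrak{a})])$ generated by $(\phi_\mathfrak{a})_*\iota(\mathcal{O})$ is again $\mathcal{O}$, because conjugation by $\phi_\mathfrak{a}/\sqrt{\deg\phi_\mathfrak{a}}$ is a ring isomorphism of $\End^0$, and primitivity is preserved since $\mathfrak{a}$ is invertible in $\mathcal{O}$. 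This shows $\mathfrak{a}*(E,\iota)\in\SSO$.

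Next I would show that the action respects $\rho$, so it restricts to an action on $\rho(\Ell(\mathcal{O}))$. Pick $E_0\in\Ell(\mathcal{O})$ over $L'$ with $\rho(E_0)=(E,\iota)$; since $\mathfrak{a}$ is coprime to $p$, the subgroup $E_0[[\mathfrak{a}]_{E_0}]$ is \'etale over $\mathcal{O}_{L',\mathfrak{p}}$ and reduces isomorphically, and by functoriality of quotients by \'etale subgroup schemes, the quotient isogeny $E_0\to E_0/E_0[[\mathfrak{a}]_{E_0}]$ reduces modulo $\mathfrak{p}$ to $\phi_\mathfrak{a}\colon E\to E/E[\iota(\mathfrak{a})]$. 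Normalization of $[\cdot]_{E_0}$ with respect to the invariant differential is preserved under reduction, so $\rho(\mathfrak{a}*E_0)=\mathfrak{a}*\rho(E_0)$. The classical theory of complex multiplication (for example via lifting to $\CC$ using Deuring) tells us that $\Cl(\mathcal{O})$ acts freely and transitively on $\Ell(\mathcal{O})$, and in particular principal ideals act trivially; both properties descend to $\rho(\Ell(\mathcal{O}))$ because $\rho$ is injective and $\Cl(\mathcal{O})$-equivariant.

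Finally one verifies that ideals coprime to $p$ represent every class of $\Cl(\mathcal{O})$ (a standard weak approximation argument for orders), so the action of $\Cl(\mathcal{O})$ is genuinely defined, not just of some sub-semigroup. Putting these ingredients together, $\Cl(\mathcal{O})$ acts on $\rho(\Ell(\mathcal{O}))$, and the action inherits freeness and transitivity from the characteristic zero side. The main obstacle I anticipate is the bookkeeping in the reduction step: one must argue carefully that $E_0[[\mathfrak{a}]_{E_0}]$ and $E[\iota(\mathfrak{a})]$ match under reduction as group schemes (not merely as point sets), which requires $\gcd(N(\mathfrak{a}),p)=1$ so that the relevant subgroup schemes are \'etale, and requires the normalization $[\alpha]_{E_0}^*\omega_{E_0}=\alpha\omega_{E_0}$ to pass cleanly through $\mathfrak{p}$; the hypotheses that $p$ does not split in $K$ and does not divide the conductor ensure $\SSO$ and $\rho(\Ell(\mathcal{O}))$ are nonempty (Proposition~\ref{prop:sso-empty}) so that the action is not vacuous.
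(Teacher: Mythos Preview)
The paper does not prove this theorem; it is quoted directly from \cite[Theorem~3.4]{onuki2021}, and the sentence following the statement in the paper notes the key equivariance $\rho(\sigma\cdot(E,\iota))=\sigma\cdot\rho((E,\iota))$ with a pointer to \cite[Proof of Proposition~3.6]{onuki2021}. Your sketch is essentially the argument Onuki gives: transport the classical free-and-transitive $\Cl(\mathcal{O})$-action on $\Ell(\mathcal{O})$ across the injective, equivariant reduction map $\rho$, using that ideals coprime to $p$ give \'etale torsion subgroups which reduce cleanly. So your approach matches the cited source and is correct.
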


This action is the `same' as that of the CM theory, in the sense that $\rho( \sigma \cdot (E,\iota)) = \sigma \cdot (\rho( (E,\iota)))$ for $\sigma \in \Cl(\mathcal{O})$ \cite[Proof of Proposition 3.6]{onuki2021}. 

Finally, we relate $\SSO$ to the volcanoes of $\mathcal{G}_{K,\ell}$.

\begin{definition}[$\mathcal{O}$-cordillera]
A collection of volcanoes in $\mathcal{G}_{K,\ell}$.  The collection of volcanoes whose rims are primitively oriented by $\mathcal{O}$ is called the \emph{$\mathcal{O}$-cordillera}.
\end{definition}
The set of vertices at the rim of the $\mathcal{O}$-cordillera is exactly the set $\SSO$.

Next, we slightly strengthen results of Col\`o-Kohel \cite{colo2019orienting} and Onuki \cite{onuki2021} to give an action on oriented isogenies by a direct product of the class group with Frobenius.

\begin{definition}[Action of Frobenius on oriented elliptic curves]\label{def:frob_action}
The two-element group $\langle \pi_p \rangle = \{1,\pi_p\}$ generated by the Frobenius automorphism $\pi_p$ of $\mathbb{F}_{p^2}$ acts on $\mathcal{G}_{K,\ell}$ by 
\[
\pi_p \cdot (E, \iota) = (E^{(p)}, \iota^{(p)} ), \quad
\pi_p \cdot \varphi = \varphi^{(p)},
\]
where $\iota^{(p)} := (\pi_p)_*(\iota)$.
\end{definition}
For any isogeny $\varphi$, we have $\pi_p \circ \varphi (x,y) = \varphi^{(p)}(x^p,y^p) = \varphi^{(p)} \circ \pi_p (x,y)$.  Hence, one has \[
\iota^{(p)}(\alpha) = (\pi_p)_*(\iota)(\alpha) = \frac{1}{p}\pi_p \circ \iota(\alpha) \circ \widehat{\pi_p} = \frac{1}{p} \iota(\alpha)^{(p)} \circ \pi_p \circ \widehat{\pi_p} = \iota(\alpha)^{(p)}.
\]
Since $\varphi \mapsto \varphi^{(p)}$ gives an isomorphism $\End(E) \cong \End(E^{(p)})$, this yields an action on $\SSO$ by $\langle \pi_p \rangle$.  In fact, it is an action on the graph $\mathcal{G}_{K,\ell}$, i.e.\ it preserves adjacency.

\begin{proposition}
\label{prop:cl-frob}
The actions described in 
Definitions~\ref{def:idealaction} and \ref{def:frob_action} above commute and hence give an action of $\Cl(\mathcal{O}) \times \langle \pi_p \rangle$ on $\SSO$. This action is transitive and its point stabilizers are either all trivial or all $\langle \pi_p \rangle$.  In particular,
$\#\SSO \in \{ \hO, 2\hO \}$.
\end{proposition}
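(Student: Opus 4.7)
The plan is a four-step argument. First, I would verify that the $\Cl(\mathcal{O})$ and $\langle \pi_p \rangle$ actions commute, giving an action of $G := \Cl(\mathcal{O})\times\langle\pi_p\rangle$ on $\SSO$. Second, I would deduce transitivity from the cited results of Onuki. Third, I would bound the common point stabilizer $H$ to have order at most two. Fourth, in the $|H|=2$ case, I would identify $H$ with $\langle\pi_p\rangle$; this last step is where I expect the main difficulty.

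For commutativity, I would check that $\pi_p\cdot(\mathfrak{a}\ast(E,\iota)) = \mathfrak{a}\ast(\pi_p\cdot(E,\iota))$ for any ideal $\mathfrak{a}$ of $\mathcal{O}$ coprime to $p$ and any $(E,\iota)\in\SSO$. The key identity is $\ker(\varphi^{(p)}) = \pi_p(\ker\varphi)$ for every isogeny $\varphi$, which applied to each generator of $\iota(\mathfrak{a})$ yields
\[
E^{(p)}\bigl[\iota^{(p)}(\mathfrak{a})\bigr] \;=\; \pi_p\bigl(E[\iota(\mathfrak{a})]\bigr),
\]
so the quotient curves $E^{(p)}/E^{(p)}[\iota^{(p)}(\mathfrak{a})]$ and $(E/E[\iota(\mathfrak{a})])^{(p)}$ are canonically identified. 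Naturality of the pushforward formula \eqref{eq:induced_orientation} under the Frobenius twist matches the induced orientations as well. For transitivity, the cited Theorem of Onuki gives transitivity of $\Cl(\mathcal{O})$ on $\rho(\Ell(\mathcal{O}))$, and the preceding Proposition 3.3 of Onuki gives $\SSO = \rho(\Ell(\mathcal{O}))\cup\pi_p\cdot\rho(\Ell(\mathcal{O}))$, and the two together yield transitivity of $G$ on $\SSO$.

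Since $G$ is abelian and acts transitively, every point stabilizer coincides with the kernel $H$ of the action. Because the $\Cl(\mathcal{O})$-action on the nonempty set $\rho(\Ell(\mathcal{O}))$ is free, no nonidentity element of $\Cl(\mathcal{O})\times\{1\}$ lies in $H$, so the second projection embeds $H$ into $\langle\pi_p\rangle$, forcing $|H|\in\{1,2\}$. This already yields $\#\SSO = |G|/|H| \in \{h_\mathcal{O},\,2h_\mathcal{O}\}$.

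The hard part is to show that in the case $|H|=2$, the nontrivial element is $(1,\pi_p)$ rather than $(\mathfrak{c},\pi_p)$ for some nontrivial $\mathfrak{c}$. Squaring $(\mathfrak{c},\pi_p)$ and using that $\pi_p^2$ acts trivially on $\SSO$ (supersingular $j$-invariants and their endomorphisms lie in $\mathbb{F}_{p^2}$) yields only $\mathfrak{c}^2=1$. The relation $(\mathfrak{c},\pi_p)\in H$ means $\pi_p$ preserves $\rho(\Ell(\mathcal{O}))$, so by Onuki's Proposition 3.3 one must have $\SSO=\rho(\Ell(\mathcal{O}))$. I would then transport the question through the $\Cl(\mathcal{O})$-equivariant bijection $\rho\colon\Ell(\mathcal{O})\to\SSO$ and invoke Deuring lifting: the Frobenius on reductions corresponds to a decomposition-group element above $\mathfrak{p}$ inside $\Gal(L/\QQ)$, where $L$ is the ring class field of $\mathcal{O}$. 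The generalized dihedral structure of $\Gal(L/\QQ)$ (in which complex conjugation conjugates $\Gal(L/K)\cong\Cl(\mathcal{O})$ by inversion) pins down the translation amount explicitly, and an involutive translation on a torsor must be the identity, giving $\mathfrak{c}=1$. Equivalently, it suffices to exhibit a single $\pi_p$-fixed point in $\SSO$, since by transitivity and commutativity such a point would immediately force $\pi_p$ to fix every point, hence $(1,\pi_p)\in H$ and $H=\langle\pi_p\rangle$. This CM-theoretic input is the main obstacle.
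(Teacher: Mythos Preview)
Your steps 1--3 match the paper's proof: the commutativity computation is essentially the same kernel calculation, and transitivity together with the bound $|H|\le 2$ on the common stabilizer follow from Onuki's results exactly as you describe. The paper then writes only ``from which the rest of the statement follows,'' giving no further detail on why the order-$2$ stabilizer must be $\{1\}\times\langle\pi_p\rangle$ rather than $\{(1,1),(\mathfrak{c},\pi_p)\}$ for some $\mathfrak{c}$ of order $2$.

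Your step 4 contains a real error: the claim ``an involutive translation on a torsor must be the identity'' is false---any $2$-torsion class in $\Cl(\mathcal{O})$ yields a nontrivial involutive translation. If you push your own Deuring-lifting idea through, you find that in the ramified case $\pi_p$ acts on $\SSO=\rho(\Ell(\mathcal{O}))$ as the class $[\mathfrak{p}_\mathcal{O}]$ of the prime of $\mathcal{O}$ above $p$ (since $\rho(E^{\Frob_\mathfrak{p}})=\pi_p\cdot\rho(E)$ and the Artin map identifies $\Frob_\mathfrak{p}\in\Gal(L/K)$ with $[\mathfrak{p}_\mathcal{O}]$). This class squares to the identity but is nontrivial exactly when $\mathfrak{p}_\mathcal{O}$ is non-principal, i.e.\ when no element of $\mathcal{O}$ has norm $p$; for instance, for $p\equiv 1\pmod 4$ take $K=\QQ(\sqrt{-p})$ and $\mathcal{O}=\ZZ[f\sqrt{-p}]$ with $f>1$ coprime to $p$. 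So the literal assertion that the stabilizer equals $\langle\pi_p\rangle$ appears to need qualification. Fortunately only the conclusion $\#\SSO\in\{h_\mathcal{O},2h_\mathcal{O}\}$ is used later (Theorems~\ref{thm:sso} and~\ref{thm:mainbij-fibre}), and that already follows from your steps 1--3, which agree with the paper.
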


\begin{proof}

	We have $\pi_p \cdot \varphi_\mathfrak{a} \cdot (E, \iota) = (\varphi_\mathfrak{a})^{(p)} \cdot \pi_p \cdot (E, \iota)$.
To avoid confusion we momentarily use the more specific notation $\varphi_\mathfrak{a}^{(E,\iota)}$ to denote the isogeny $\varphi_\mathfrak{a}$ with domain $(E,\iota)$ and kernel $E[\iota(\mathfrak{a})]$. Then
\begin{align*}
	\ker( (\varphi_\mathfrak{a}^{(E,\iota)})^{(p)} ) &= \ker( \varphi_\mathfrak{a}^{(E,\iota)} )^{(p)} = E[\iota(\mathfrak{a})]^{(p)} = \bigcap_{\theta \in \iota(\mathfrak{a})} \ker(\theta)^{(p)} \notag \\ &= \bigcap_{\theta \in \iota(\mathfrak{a})} \ker(\theta^{(p)}) = \bigcap_{\theta \in \iota^{(p)}(\mathfrak{a})} \ker(\theta) = E^{(p)}[\iota^{(p)}(\mathfrak{a})] = \ker( (\varphi_\mathfrak{a}^{(E^{(p)},\iota^{(p)})}).
\end{align*}

The calculation above implies that $(\varphi_\mathfrak{a}^{(E,\iota)})^{(p)} = \varphi_{\mathfrak{a}}^{(E^{(p)},\iota^{(p)})}$.
Thus,
\begin{equation*}
	\pi_p \cdot \mathfrak{a} \cdot (E, \iota)  = \mathfrak{a} \cdot \pi_p \cdot (E,\iota).
\end{equation*}
So the action of $\ClO \times \langle \pi_p\rangle$ on $\SSO$ is well-defined.

The restriction of this action to $\ClO$ acts freely and transitively on a subset of $\SSO$ which contains at least one of $(E,\iota)$ or $(E^{(p)}, \iota^{(p)})$ \cite[Theorem 3.4]{onuki2021}, from which the rest of the statement follows.  Transitivity implies that the stabilizers are all of the same size.
\end{proof}

\begin{remark}
One might expect to see the dihedral group, not a direct product.  We will see the dihedral group coming from class field theory in Section~\ref{sec:classfieldtheory}, where we consider the action of conjugation on $\SSO$.  But in our definitions here, Frobenius acts on both curve and orientation.  In other words, $\SSO$ has an action by a direct product and by a dihedral group, both extensions of the same action of $\ClO$ by $\ZZ/2\ZZ$, but these are not necessarily the same.  For an example of the action of Frobenius and the action of conjugation being different, see Figure~\ref{fig:two cycles frob conj}.
\end{remark}

\begin{figure}
    \centering
    \includegraphics[width=0.8\textwidth]{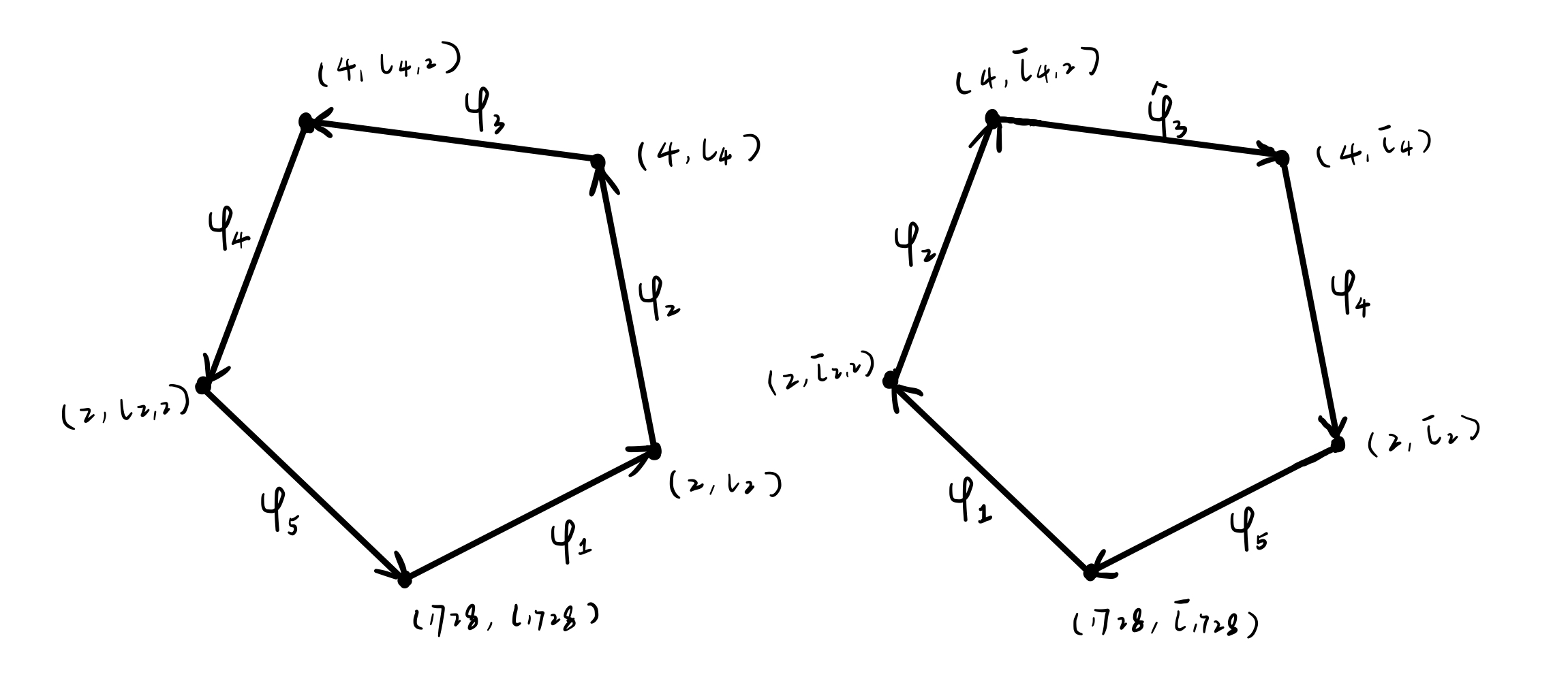}
    \caption{Here $p=31, \ell=2$ and $K= \QQ(\sqrt{-47})$. This figure shows the set $\SS_{\mathcal{O}_K}$ which consists of two cycles. We denote by $(j, \iota_j)$ the vertex with $j$-invariant $j$ and orientation $\iota_j$; if the $j$-invariant appears twice, the second orientation is denoted by $\iota_{j,2}$.  The notation $\overline{\iota}_j$ denotes the conjugate orientation.  Conjugation (Section~\ref{sec:classfieldtheory}) maps $(j, \iota_j)$ to $(j, \overline{\iota}_j)$ and  $(j, \iota_{j,2})$ to $(j, \overline{\iota}_{j,2})$. On the other hand,  $\pi_p$ maps $(1728, \iota_{1728})$ to $(1728, \overline{\iota}_{1728})$, $(2, \iota_{2})$ to $(2, \overline{\iota}_{2,2})$, and $(4, \iota_{4})$ to $(4, \overline{\iota}_{4,2})$.}
    \label{fig:two cycles frob conj}
\end{figure}

\subsection{A lemma of algebraic number theory}

We make explicit a small lemma from algebraic number theory that will be used in Section~\ref{sec:counting}.
\begin{lemma}
\label{lemma:tower-split}
Let $K$ be an imaginary quadratic field and let $\mathcal{O}$ be an order in $K$ with conductor $f$. Assume that $\ell$ is a prime that does not divide $f$. Then $\ell$ splits (ramifies, is inert, respectively) in $\mathcal{O}$ if and only if $\ell$ splits (ramifies, is inert, respectively) in the maximal order $\mathcal{O}_K$.
\end{lemma}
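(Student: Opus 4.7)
The plan is to reduce the lemma to the well-known fact that the splitting type of $\ell$ in a quadratic order $R$ is encoded in the isomorphism type of the finite $\mathbb{F}_\ell$-algebra $R/\ell R$: the prime $\ell$ splits, ramifies, or is inert in $R$ according as $R/\ell R$ is isomorphic to $\mathbb{F}_\ell \times \mathbb{F}_\ell$, $\mathbb{F}_\ell[\epsilon]/(\epsilon^2)$, or $\mathbb{F}_{\ell^2}$. Thus it suffices to produce a ring isomorphism
\[
\mathcal{O}/\ell\mathcal{O} \;\cong\; \mathcal{O}_K/\ell\mathcal{O}_K,
\]
induced by the inclusion $\mathcal{O} \hookrightarrow \mathcal{O}_K$.

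First I would establish the key relation $f\mathcal{O}_K \subseteq \mathcal{O}$, which is essentially the definition of the conductor. Combined with the hypothesis $\gcd(\ell,f)=1$, Bézout yields integers $a,b$ with $a\ell + bf = 1$. For any $\alpha \in \mathcal{O}_K$ this gives the decomposition $\alpha = a\ell\alpha + bf\alpha$, in which $bf\alpha \in f\mathcal{O}_K \subseteq \mathcal{O}$ and $a\ell\alpha \in \ell\mathcal{O}_K$. Hence $\mathcal{O}_K = \mathcal{O} + \ell\mathcal{O}_K$, proving surjectivity of the natural map $\mathcal{O}/\ell\mathcal{O} \to \mathcal{O}_K/\ell\mathcal{O}_K$.

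Injectivity amounts to showing $\mathcal{O} \cap \ell\mathcal{O}_K = \ell\mathcal{O}$. Given $\alpha \in \mathcal{O} \cap \ell\mathcal{O}_K$, write $\alpha = \ell\beta$ with $\beta \in \mathcal{O}_K$. Applying the Bézout decomposition to $\beta$ itself gives $\beta = a\ell\beta + bf\beta = a\alpha + b(f\beta)$, with $a\alpha \in \mathcal{O}$ and $f\beta \in \mathcal{O}$. Hence $\beta \in \mathcal{O}$, and $\alpha \in \ell\mathcal{O}$ as required. This finishes the proof of the isomorphism, and with it the lemma.

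There is no real obstacle here: the content is the elementary commutative-algebra identity $\mathcal{O}/\ell\mathcal{O} \cong \mathcal{O}_K/\ell\mathcal{O}_K$ when $\ell$ is coprime to the conductor, plus the classification of quadratic étale (or local) $\mathbb{F}_\ell$-algebras. The only point to double-check is that under the isomorphism, the notion of ``splitting'' in the possibly non-maximal order $\mathcal{O}$ (in terms of proper ideals) matches the algebraic characterization via $\mathcal{O}/\ell\mathcal{O}$; this is automatic because the ideals of $\mathcal{O}$ containing $\ell\mathcal{O}$ correspond bijectively to ideals of $\mathcal{O}/\ell\mathcal{O}$, and each such ideal of $\mathcal{O}$ is automatically proper since $\gcd(\ell,f)=1$ forces its conductor with $\mathcal{O}_K$ to be $\mathcal{O}$ itself.
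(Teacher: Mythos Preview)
Your proof is correct. The approach differs from the paper's, which simply invokes \cite[Proposition~7.20]{Cox_primesoftheform}: that proposition gives a norm-preserving bijection $\mathfrak{a} \mapsto \mathfrak{a} \cap \mathcal{O}$, $\mathfrak{a}' \mapsto \mathfrak{a}'\mathcal{O}_K$ between $\mathcal{O}_K$-ideals coprime to $f$ and invertible $\mathcal{O}$-ideals coprime to $f$, and the lemma follows immediately by looking at the ideals above $\ell$. Your argument is more self-contained: you establish the ring isomorphism $\mathcal{O}/\ell\mathcal{O} \cong \mathcal{O}_K/\ell\mathcal{O}_K$ directly via the B\'ezout relation $a\ell + bf = 1$ and then read off the splitting type from the structure of this $\FF_\ell$-algebra. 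The two arguments are morally the same localisation principle (``away from the conductor, $\mathcal{O}$ and $\mathcal{O}_K$ agree''), but yours avoids the external reference at the cost of a short computation, while the paper's is a one-line citation. Your closing remark about properness of the ideals above $\ell$ is the one place that could be tightened: the cleanest justification is simply that your ring isomorphism already matches the ideals of $\mathcal{O}$ containing $\ell\mathcal{O}$ bijectively with those of $\mathcal{O}_K$ containing $\ell\mathcal{O}_K$, preserving indices, so no separate invertibility check is needed.
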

\begin{proof}
By \cite[Proposition 7.20]{Cox_primesoftheform}, if $\mathfrak{a}$ is an $\mathcal{O}_K$-ideal above $\ell$ prime to $f$, then $\mathfrak{a}' = \mathfrak{a} \cap \mathcal{O}$ is an $\mathcal{O}$-ideal above $\ell$ with the same norm, and if $\mathfrak{a}' $ is an $\mathcal{O}$-ideal above $\ell$ prime to $f$, then $\mathfrak{a}= \mathfrak{a}' \mathcal{O}_K$ is an $\mathcal{O}_K$-ideal above $\ell$ with the same norm. Furthermore, these maps are inverse to one another, giving a bijection between ideals of $\mathcal{O}_K$ coprime to $f$, and invertible ideals of $\mathcal{O}$.  Therefore the splitting, ramification and inertia properties of $\ell$ in $\mathcal{O}_K$ and in $\mathcal{O}$ are the same.
\end{proof}


\section{Bijection between oriented volcano rims and isogeny cycles in the supersingular $\ell$-isogeny graph}
\label{sec:bij}

\subsection{Statements of results}
\label{sec:bij-statement}
In this section, we give a bijection 
between `isogeny cycles' (closed walks satisfying a non-backtracking and non-repeating condition; see below) on the supersingular $\ell$-isogeny graph and rims (up to conjugation) of the volcanoes in all oriented supersingular $\ell$-isogeny graphs. We first give our definition of isogeny cycles.

\begin{definition}[Isogeny cycle]
\label{defn:isogeny-cycle}
An \emph{isogeny cycle} is a closed walk, forgetting basepoint, in $\mathcal{G}_\ell$ containing no backtracking (no consecutive edges compose to multiplication-by-$\ell$, Definition~\ref{defn:backtracking}) which is not a power of another closed walk (i.e.\ not equal to another closed walk repeated more than once).
\end{definition}

Isogeny cycles travelling the same vertices in opposite directions necessarily employ different directed edges and hence are considered different.

We need a few more items of terminology.  The term \emph{directed} refers to the notion of direction of traversal.  Each rim corresponds to two directed rims, since there are two possible directions of traversal.  The \emph{conjugate} of a rim $R$ is obtained by taking each vertex $(E,\iota)$ to $(E,\overline{\iota})$ and each edge $\phi\colon (E,\iota) \rightarrow (E',\iota')$ to $\phi \colon (E,\overline{\iota}) \rightarrow (E', \overline{\iota'})$.  The conjugation of a rim is again a rim.  Note that if $\phi_\mathfrak{a}^{(E,\iota)}\colon (E,\iota) \rightarrow (E', \iota')$, then $\phi_{\overline{\mathfrak{a}}}^{(E,\overline{\iota})} \colon (E,\overline{\iota}) \rightarrow (E', \overline{\iota'})$ since $E[\overline{\iota}(\overline{\mathfrak{a}})] = E[\iota(\mathfrak{a})]$.  So in particular, the direction of a rim ($E_0$, $E_1$, etc.) is preserved by conjugation, but the ideal whose action traverses the rim in that direction is conjugated.

We now state our main theorem.  
For each integer $r > 2$, define two sets:
    \[
    \mathcal{C}_r = \{ \mbox{ isogeny cycles in $\mathcal{G}_\ell$ of length $r$ } \}.
    \]
\[
    \mathcal{R}_{r} =
    \left\{ \mbox{ directed rims $R$ of size $r$ in $\mathcal{G}_{K,\ell}$: }
        \substack{\mbox{$K$ is an imaginary quadratic field }}\right \}.
\]

\begin{theorem}
\label{thm:mainbij-nobase}
Let $r > 2$.
There is a bijection between $\mathcal{C}_r$ and $\mathcal{R}_r/ \sim$, where $\sim$ denotes identifying any rim with its conjugate.  
\end{theorem}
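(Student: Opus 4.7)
The plan is to construct mutually inverse maps $F \colon \mathcal{R}_r/\sim\, \to \mathcal{C}_r$ and $G \colon \mathcal{C}_r \to \mathcal{R}_r/\sim$. For the forward map $F$, take a directed rim $R$ in some $\mathcal{G}_{K,\ell}$, forget the orientations edge-by-edge to get a sequence of $\ell$-isogenies in $\mathcal{G}_\ell$, and then forget the basepoint to obtain a closed walk. This descends to $\mathcal{R}_r/\sim$ because, as noted in the paragraph defining conjugation, the conjugate rim has the same underlying isogenies. To confirm $F(R) \in \mathcal{C}_r$, observe via Proposition~\ref{prop:volcanostructure} that the rim is traced by the action of a single prime $\mathfrak{l}$ above $\ell$ in an $\ell$-fundamental order $\mathcal{O}$; since $r > 2$ the rim has at least three vertices, which forces $\ell$ to split in $\mathcal{O}$, so $\mathfrak{l} \ne \bar{\mathfrak{l}}$. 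Consecutive $\mathfrak{l}$-actions then compose to the $\mathfrak{l}^2$-action (cyclic kernel of order $\ell^2$) rather than to $[\ell] = \mathfrak{l}\bar{\mathfrak{l}}$, so the walk is non-backtracking; non-power-ness follows since the rim length equals the order of $[\mathfrak{l}]$ in $\Cl(\mathcal{O})$, which is exactly $r$.

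For the inverse $G$, given an isogeny cycle $\phi_1, \ldots, \phi_r$, choose a basepoint $E_0$ and form $\theta = \phi_r \circ \cdots \circ \phi_1 \in \End(E_0)$ of degree $\ell^r$. The non-backtracking hypothesis makes $\ker \theta$ cyclic of order $\ell^r$, so $\theta \notin \ZZ$ (since $[\ell^{r/2}]$ has non-cyclic kernel), and hence $K := \QQ(\theta) \subseteq \End^0(E_0)$ is imaginary quadratic. Fixing an isomorphism $K \cong \QQ(\theta)$ (there are two choices, differing by complex conjugation) gives a $K$-orientation $\iota$ of $E_0$, well-defined up to conjugation, with $\mathcal{O} := \iota(K) \cap \End(E_0)$. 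The cycle lifts uniquely via the induced-orientation rule \eqref{eq:induced_orientation} to a closed walk in $\mathcal{G}_{K,\ell}$ through $(E_0, \iota)$ (closure follows from $\theta \in \iota(K)$ being central). Because the portion of each volcano below its rim is a tree, any non-backtracking closed walk in $\mathcal{G}_{K,\ell}$ must lie entirely on a rim, so $\mathcal{O}$ is $\ell$-fundamental and the lift is a traversal of some rim. Non-backtracking forces the traversal to apply a single prime $\mathfrak{l}$ at every step (any switch to $\bar{\mathfrak{l}}$ would compose to $[\ell]$), and the non-power condition forces the rim length to equal $r$ exactly. Changing basepoint from $E_i$ to $E_{i+1}$ replaces $\theta_i$ by $(\phi_{i+1})_* \theta_i$, moving to another vertex of the same rim; changing the isomorphism $K \cong \QQ(\theta)$ replaces $\iota$ by $\bar\iota$, so the output of $G$ is well-defined in $\mathcal{R}_r/\sim$.

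The identities $F \circ G = \mathrm{id}$ and $G \circ F = \mathrm{id}$ then reduce to tracing through both constructions: composing around a lifted rim recovers exactly $\theta$, and forgetting orientations from the rim produced by $G$ recovers the original cycle edge-by-edge.

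The main obstacle will be the behaviour at vertices with $j \in \{0, 1728\}$, where extra automorphisms change the equivalence relation on edges. In $\mathcal{G}_\ell$, isogenies are identified up to post-composition by $\Aut(E)$, whereas in $\mathcal{G}_{K,\ell}$ only post-composition by $\Aut(E) \cap \iota(K)$ is allowed; consequently an $\ell$-isogeny in $\mathcal{G}_\ell$ incident to such a vertex may correspond to several oriented lifts, or may be stabilised by an extra automorphism which fixes its kernel. The key tools controlling this are Lemma~\ref{lem:loopsandauts}, which identifies kernels fixed by extra automorphisms with isogenies in $\QQ(\Aut(E)) \cap \End(E)$, and Proposition~\ref{prop:ellplusone}, which computes the effect on the out-degree. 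The ramified case $\mathfrak{l} = \bar{\mathfrak{l}}$ is ruled out on the rim by $r > 2$ but must still be excluded when it could arise from exceptional behaviour at an extra-automorphism vertex, and these subtleties are also precisely what forces the quotient by conjugation and makes the bijection non-canonical.
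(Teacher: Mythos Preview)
Your high-level strategy matches the paper's exactly: the forward map forgets orientations, the backward map composes around the cycle to obtain an endomorphism $\theta$ and uses $\QQ(\theta)$ to orient. Your structural arguments (split $\ell$ from $r>2$, non-backtracking from $\mathfrak{l}^2 \ne (\ell)$, non-power from the order of $[\mathfrak{l}]$, the lift landing on a rim because the sub-rim part is a tree) are all correct and appear in the paper in essentially the same form.

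The genuine gap is the one you flag but do not close: neither $F$ nor $G$ is well-defined as you have written them. For $G$, an edge of $\mathcal{G}_\ell$ is only an isogeny up to post-composition by $\Aut(E_{i+1})$, so when some intermediate $E_i$ has $j(E_i)\in\{0,1728\}$, replacing $\phi_{i-1}$ by $\varphi\phi_{i-1}$ with $\varphi\in\Aut(E_i)\setminus\{[\pm 1]\}$ changes $\theta$ to an endomorphism that need not even lie in the same quadratic subfield of $\End^0(E_0)$ (the paper's Figure~\ref{fig:0121} gives an explicit instance). So ``form $\theta=\phi_r\circ\cdots\circ\phi_1$'' does not yield a well-defined $K$, let alone a well-defined rim. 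For $F$, an edge of $\mathcal{G}_{K,\ell}$ is taken up to pre- and post-composition by $K$-automorphisms; if $(E_i,\iota_i)$ has extra $K$-automorphisms (Remark~\ref{prop:gkl-auto}), then $\phi$ and $\phi\varphi$ represent the same oriented edge but generally different edges of $\mathcal{G}_\ell$, so ``forget the orientations edge-by-edge'' is ambiguous.

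The paper's resolution is not merely to invoke Lemma~\ref{lem:loopsandauts} and Proposition~\ref{prop:ellplusone}, but to introduce new machinery: a fixed (and \emph{safe}) choice of representative isogeny for every directed edge of $\mathcal{G}_\ell$ (Definitions~\ref{defn:arb} and~\ref{defn:arb2}). This makes composition around a cycle well-defined up to sign (Lemma~\ref{lemma:unique-endo}), gives a unique non-backtracking factorisation of any $\ell$-power endomorphism (Lemma~\ref{lemma:unique-cycle}), and---the delicate part---Proposition~\ref{prop:rim-to-cycle} shows that every rim admits a choice of vertex and edge representatives compatible with the safe assignment such that the composed endomorphism actually lies in $\iota_0(K)$. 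Example~\ref{ex:loops} shows that without the ``safe'' condition there are rims for which no such compatible choice exists, so this is not a technicality one can wave away. Your proposal needs this entire layer (or an equivalent device) before $F$ and $G$ become maps at all; once it is in place, the verification that they are inverse proceeds along the lines you sketch.
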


\textbf{Sketch of proof.}  Theorem~\ref{thm:mainbij-nobase} will be proved in Section~\ref{sec:proofbij}.  At the highest level, this bijection is easy to describe:  given a rim $R \in \mathcal{R}_r$, forgetting orientations gives a cycle $C \in \mathcal{C}_r$.  Conversely, given a cycle $C \in \mathcal{C}_r$ passing through a curve $E$, composing its component isogenies in order, one obtains an endomorphism $\theta \in \End(E)$, which induces an orientation $\iota_\theta$.  Lifting $E$ to $(E,\iota_\theta)$, the isogenies of the cycle will give a rim in~$\mathcal{G}_{K, \ell}$, where $K$ is the quadratic field generated by the minimal polynomial of $\theta$.  These two maps are inverse.

However, the devil is in the details, especially the devil whose name is \emph{extra automorphisms}. In particular, if $\mathcal{G}_\ell$ has no curves with extra automorphisms, the proof is much briefer.  In what follows, we have endeavoured to clearly mark areas where we address extra automorphisms, so that at a first read, they can be skipped.  In particular, if $\mathcal{G}_\ell$ has no extra automorphisms, the reader can entirely skip Section~\ref{sec:graphcover}, and the word `safe' (Definition~\ref{defn:arb2}) can be ignored.  Section~\ref{sec:arb} should still be read, however, because its definitions and statements will be used in the proof of Theorem~\ref{thm:mainbij-nobase}.

\subsection{Extra automorphisms and `arbitrary assignment'}
\label{sec:arb}

Before we proceed, we must discuss a subtlety concerning extra automorphisms (see the related Remark~\ref{remark:ssgraph}).  In particular, the composition of the isogenies along a walk in $\mathcal{G}_\ell$ is not well-defined, because the isogenies themselves are only defined up to post-composition by an automorphism.  In the case of the automorphisms $[\pm 1]$, which commute with all the isogenies in the chain, this amounts to a single sign ambiguity.  But extra automorphisms may wreak havoc:
the endomorphism obtained by composing around a closed walk 
may not even have the same discriminant (and quadratic field) after a post-composition by an automorphism.
  
This would appear to lampoon any hope of associating endomorphisms nicely with closed walks.  However, the remedy to this issue is the following observation.
Consider two walks which differ by replacing a segment $j_1 \stackrel{\phi}{\longrightarrow} 0 \stackrel{\psi}{\longrightarrow} j_2$ with $j_1 \stackrel{[\zeta_3^k] \phi}{\longrightarrow} 0 \stackrel{\psi [\zeta_3^{-k}]}{\longrightarrow} \ j_2$, or $j_3 \stackrel{\phi}{\longrightarrow} 1728 \stackrel{\psi}{\longrightarrow} j_4$ with $j_3 \stackrel{[i] \phi} {\longrightarrow} 1728 \stackrel{\psi [-i]}{\longrightarrow} j_4$, where~$[\zeta_3]$ and~$[i]$ are extra automorphisms associated to primitive third and fourth roots of unity, respectively.  In each case, the first arrow is the same directed edge of $\mathcal{G}_\ell$, but the second arrow differs as an edge of $\mathcal{G}_\ell$.  The composition of the walk, however, is unchanged.  In fact, there are typically three distinct walks $j_1 \rightarrow 0 \rightarrow j_2$ in $\mathcal{G}_\ell$, and three distinct resulting isogenies $j_1 \rightarrow j_2$, but there is no \emph{canonical} identification between the elements of these two sets of size three.  One way to set a non-canonical identification is to consider the incoming isogeny $j_1 \rightarrow 0$ to be a particular fixed (but arbitrary) representative of its equivalence class (recall that the equivalence class consists of all the isogenies with the same kernel, i.e.\ up to post-composition by an automorphism).  
  
  \begin{definition}[Arbitrary assignment]
  \label{defn:arb}
  An \emph{arbitrary assignment} is a choice of isogeny $\phi$, up to sign, from every equivalence class of isogenies represented by a directed edge of $\mathcal{G}_\ell$.
  \end{definition}

There is no choice except when the codomain has extra automorphisms.
\textbf{In what follows, we shall assume that such a fixed arbitrary assignment has been made for every arrow entering $j=0$ or $j=1728$.}
This choice will affect the exact bijection we obtain in our main theorem, but not the fact that it is a bijection.  Figure~\ref{fig:0121} is instructive as to the subtleties that may arise.

\begin{figure}
    \centering
    \begin{tikzpicture}
\node[rectangle,draw] (E121) at (0,3) {$E_{0}$};
\node[rectangle,draw] (E0) at (0,0) {$E_{121}$};

\draw[->,violet,dashed] (E0) .. controls (-.5,1.5) .. (E121) node[pos=.5,left] {$\varphi$};
\draw[->,blue,dotted,line width=.3mm] (E0) .. controls (-1.5,1.5) .. (E121) node[pos=.5,left] {$[\zeta_3]\varphi$};
\draw[->] (E0.west) .. controls (-3.5,1.5) .. (E121.west) node[pos=.5,left] {$[\zeta_3^2]\varphi$};

\draw[->,violet,dashed] (E121) .. controls (.5,1.5) .. (E0) node[pos=.5,right] {$\widehat{\varphi}$};
\draw[->,blue,dotted,line width=.3mm] (E121) .. controls (1.5,1.5) .. (E0) node[pos=.5,right] {$\widehat{\varphi}[\zeta_3^2]$};
\draw[->] (E121.east) .. controls (3.5,1.5) .. (E0.east) node[pos=.5,right] {$\widehat{\varphi}[\zeta_3]$};

\end{tikzpicture}
\begin{center}
 \caption{This figure shows all the $2$-isogenies between $j=0$ and $j=121$ for $p=179$.  Dual pairs are indicated by arrows matching in style.  The isogenies entering $j=0$ (upward in the picture) differ by post-composition by an automorphism: either $[\pm 1]$, $[\pm\zeta_3]$ or $[\pm\zeta_3^2]$.  Therefore they are represented by a single directed edge in $\mathcal{G}_\ell$.  The downward arrows from $j=0$ to $j=121$ are represented by distinct directed edges in $\mathcal{G}_\ell$.  The two walks $\widehat{\varphi} [\zeta_3] \circ \varphi$ (up on dashed, down on solid) and $\widehat{\varphi} \circ [\zeta_3] \varphi$ (up on dotted, down on dashed) give the \emph{same} endomorphism of $j=121$, namely $\widehat{\varphi} \circ [\zeta_3] \circ \varphi$, despite being different walks in $\mathcal{G}_\ell$.  In fact, the nine distinct closed walks of length $2$ from $121$ to $0$ and back compose to only three distinct endomorphisms of $j=121$, namely $\pm [2]$, $\pm \widehat{\varphi} \circ [\zeta_3] \circ \varphi$, or $\pm \widehat{\varphi} \circ [\zeta_3^2] \circ \varphi$.  Finally, notice that \emph{any} walk of length two from $j=0$ to $j=121$ and back is considered backtracking according to Definition~\ref{defn:backtracking}.  The same is not true for walks of length two from j=121 to $j=0$ and back.}\label{fig:0121} 
\end{center}
\end{figure}
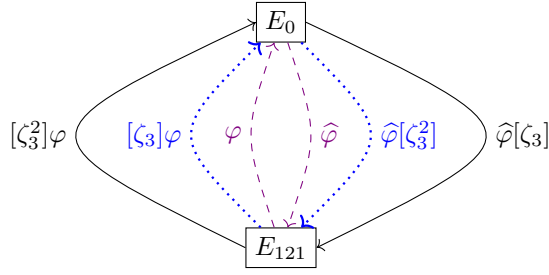

The following lemma is now immediate.

\begin{lemma}
\label{lemma:unique-endo}
Assume we have made an arbitrary assignment for $\mathcal{G}_\ell$.  Then any closed walk in $\mathcal{G}_\ell$
results in a unique endomorphism (up to sign) by composition of its component isogenies in order. 
\end{lemma}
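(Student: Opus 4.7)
The plan is to read the lemma directly off the definition of arbitrary assignment, using only that $[\pm 1]$ is central. A closed walk in $\mathcal{G}_\ell$ is a sequence of directed edges $e_1, e_2, \ldots, e_n$ with $e_i$ joining vertices $E_{i-1}$ and $E_i$, and $E_n = E_0$. By hypothesis, a fixed arbitrary assignment has been made, so for each $e_i$ we have a distinguished isogeny $\phi_i \colon E_{i-1} \to E_i$, determined uniquely up to multiplication by $[\pm 1]$. I would set $\theta := \phi_n \circ \phi_{n-1} \circ \cdots \circ \phi_1 \in \End(E_0)$ and show that this is the desired endomorphism.

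Next I would verify that $\theta$ depends on the remaining sign choices only up to an overall sign. Replacing each $\phi_i$ by $\epsilon_i \phi_i$ with $\epsilon_i \in \{\pm 1\}$ exhausts the residual freedom in the arbitrary assignment. Since $[\pm 1]$ lies in the center of every endomorphism ring $\End(E_i)$, each $\epsilon_i$ commutes past the adjacent factors in the composition and can be pulled out to the front. Collecting them produces $\bigl(\prod_i \epsilon_i\bigr)\theta = \pm\theta$, which establishes the claim.

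The one substantive point in the lemma — that the extra automorphisms at $j=0$ and $j=1728$ do not introduce further ambiguity in the composition — has already been dealt with in the discussion preceding the lemma (compare Figure~\ref{fig:0121}): the arbitrary assignment is precisely the mechanism that pins down a representative for every incoming edge at such a vertex, eliminating the $\Aut'(E_i)$-worth of choice that would otherwise appear when composing through $E_i$. Once those choices are committed to, the only remaining freedom is the central sign coming from $[-1] \in \Aut(E_i)$. Thus there is no real obstacle to this argument; the lemma is essentially a formal consequence of Definition~\ref{defn:arb} together with the centrality of $\{\pm 1\}$.
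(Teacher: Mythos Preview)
Your argument is correct and is exactly what the paper intends: it declares the lemma ``now immediate'' after introducing the arbitrary assignment, and your write-up simply unpacks that immediacy by noting that each edge's representative is fixed up to a central sign, so the composition is fixed up to a global sign. There is nothing to add.
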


The endomorphism obtained by composing the isogenies of a cycle in order will simply be called the \emph{composition} of the cycle.
The fact that composition is well defined allows us to define a notion of backtracking without ambiguity around curves with extra automorphisms.

\begin{definition}[Backtracking closed walk]
\label{defn:backtracking}
Given an arbitrary assignment on $\mathcal{G}_\ell$, two consecutive edges $\bullet \substack{\overset{\phi_0}{\longrightarrow}} \bullet \substack{\overset{\phi_1}{\longrightarrow}} \bullet$ are said to be \emph{backtracking} if $\phi_1 \circ \phi_0 = [\ell]$, up to possible post-composition by an automorphism.  A closed walk is considered to contain backtracking if any consecutive edges, including the last and first, are backtracking. 
\end{definition}

This definition of backtracking is the same definition as \cite[Definition 4.3]{bank2019cycles}, except those authors leave out the composition of the last and first steps, which is more appropriate in their context.  Note that this definition is not captured by the sequence of vertices alone.  To wit, see Figure~\ref{fig:0121} for an example of two length-two walks, one of which is backtracking and one of which is not backtracking, but which are taken to each other by an isomorphism of the graph fixing vertices. 

We also have a converse to Lemma~\ref{lemma:unique-endo}.

\begin{lemma}
\label{lemma:unique-cycle}
Assume we have made an arbitrary assignment for $\mathcal{G}_\ell$.
        Consider an endomorphism $\theta \in \End(E)$ of $\ell$-power degree.  If $\theta$ is not divisible by $[\ell]$, then there is a unique closed walk 
        in $\mathcal{G}_\ell$ with composition $\varphi \theta$,
        for some $\varphi \in \Aut(E)$.  Furthermore, the walk has no backtracking, and $\varphi$ is unique up to sign.
\end{lemma}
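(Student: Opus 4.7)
The plan is to construct the walk directly from the cyclic structure of $\ker\theta$, and to read off both uniqueness and the non-backtracking property from that structure.

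First, I will show that $\ker\theta$ is cyclic. Write $\deg\theta = \ell^r$, so $|\ker\theta| = \ell^r$ (the isogeny $\theta$ is separable because $\ell\ne p$). An $\ell$-primary subgroup of $E$ is cyclic if and only if it contains no copy of $E[\ell]$. If $E[\ell]\subseteq\ker\theta$, then $\theta$ annihilates $E[\ell]$, which by the standard factorization through multiplication-by-$\ell$ means $\theta = \theta'\circ [\ell]$ for some $\theta'\in\End(E)$, contradicting $[\ell]\nmid\theta$. Hence $\ker\theta$ is cyclic of order $\ell^r$ and therefore has a \emph{unique} filtration $0=K_0\subsetneq K_1\subsetneq\cdots\subsetneq K_r=\ker\theta$ with $|K_i|=\ell^i$.

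Next, I will produce the walk and identify its composition. Set $E_0=E$, $E_i=E/K_i$ for $1\le i<r$, and fix the unique isomorphism $\iota\colon E/K_r\xrightarrow{\sim} E$ satisfying $\iota\circ\bar\theta=\theta$, where $\bar\theta\colon E\to E/K_r$ is the canonical quotient. Let $\phi_i\colon E_{i-1}\to E_i$ be the quotient by $K_i/K_{i-1}$ for $i<r$, and let $\phi_r\colon E_{r-1}\to E$ be $\iota$ composed with the quotient by $K_r/K_{r-1}$. Each $\phi_i$ has degree $\ell$ and its kernel specifies a unique directed edge of $\mathcal{G}_\ell$, so $\phi_1,\ldots,\phi_r$ is a closed walk from $E$ to $E$. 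By construction, $\phi_r\circ\cdots\circ\phi_1=\iota\circ\bar\theta=\theta$. The arbitrary assignment may replace certain $\phi_i$ by $\alpha_i\phi_i$ for $\alpha_i\in\Aut(E_i)$ (nontrivially only at $E_i$ with $j\in\{0,1728\}$); telescoping these automorphisms past the remaining isogenies shows that the composition under the assignment has the form $\pm\varphi\theta$ for some $\varphi\in\Aut(E)$, pinned down up to sign by Lemma~\ref{lemma:unique-endo}.

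For uniqueness, any closed walk in $\mathcal{G}_\ell$ whose composition equals $\pm\varphi'\theta$ must have length $r$ (by comparing degrees) and cumulative kernels $0=L_0\subsetneq L_1\subsetneq\cdots\subsetneq L_r=\ker(\varphi'\theta)=\ker\theta$ with $|L_i|=\ell^i$. Cyclicity of $\ker\theta$ forces $L_i=K_i$, so the sequence of directed edges is determined. Hence the walk is unique and, since the composition determines $\varphi$ up to sign once $\theta$ is fixed, so is $\varphi$.

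Finally, the walk has no backtracking. Suppose first that $\phi_{i+1}\circ\phi_i=\alpha\circ[\ell]$ for some $\alpha\in\Aut(E_{i+1})$ and some $1\le i\le r-1$. Then $\ker(\phi_{i+1}\circ\phi_i)=E_{i-1}[\ell]$, which by the factorization means $K_{i+1}/K_{i-1}$ would equal the full $\ell$-torsion of $E_{i-1}$ and hence be non-cyclic — impossible, since $K_{i+1}$ is a subgroup of the cyclic group $\ker\theta$. The wrap-around case $\phi_1\circ\phi_r=\alpha\circ[\ell]$ is the main obstacle and is handled by a similar but more delicate argument: such an identity gives $\phi_r=\hat\phi_1\circ\alpha$ up to automorphism, whence the composition factors as $\varphi\theta=\hat\phi_1\circ\eta$ with $\deg\eta=\ell^{r-1}$. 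Post-composing with $\phi_1$ yields $\phi_1\circ\varphi\theta=\eta\circ[\ell]$, so $\ker(\phi_1\circ\varphi\theta)\supseteq E[\ell]$; tracing this back through the chain $K_1\subset\ker\theta$ forces $\ker\theta$ to contain $E[\ell]$, again contradicting $[\ell]\nmid\theta$. The hard part is this last traceback, which has to be run carefully through the extra-automorphism bookkeeping of the arbitrary assignment so that the centrality of $[\ell]$ in $\End(E)$ really does produce the contradiction.
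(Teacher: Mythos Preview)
Your approach and the paper's are the same at heart --- factor $\theta$ into $\ell$-isogenies using the cyclic structure of $\ker\theta$ --- but you organize it as ``filtration first, then translate to the arbitrary assignment'', and that translation step has a real gap. The paper instead factors $\theta$ \emph{inductively using the assignment from the start}: take $\phi_1$ from the assignment with kernel $\ker\theta\cap E[\ell]$, write $\theta=\phi'\circ\phi_1$, and recurse on $\phi'$. At the end one has $\theta=\varphi^{-1}\phi_r\cdots\phi_1$ for some $\varphi\in\Aut(E)$, so the composition is manifestly $\varphi\theta$. Your version builds $\phi_i$ between your own models $E_i=E/K_i$, then claims the assignment ``replaces $\phi_i$ by $\alpha_i\phi_i$'' and that one can ``telescope these automorphisms past the remaining isogenies''. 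That telescoping fails at curves with extra automorphisms: if $\alpha_i\notin\{[\pm1]\}$ then $\phi_{i+1}\alpha_i$ has kernel $\alpha_i^{-1}\ker\phi_{i+1}\neq\ker\phi_{i+1}$, so it is not of the form $\beta\,\phi_{i+1}$ for any $\beta\in\Aut(E_{i+1})$, and the composition is genuinely a different endomorphism. (One \emph{can} rescue this by defining isomorphisms $\gamma_i\colon E/K_i\to E_i^{\text{fixed}}$ inductively from the assignment so that the $\gamma_i$ cancel in the product --- but unwinding that is exactly the paper's inductive factorization.)

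On non-backtracking: your treatment of the internal case is fine and matches the paper's one-line argument (backtracking inserts an $\alpha[\ell]$ into the product, and $[\ell]$ is central). For the wrap-around case, you correctly flag it as the delicate point, but your traceback does not close. From $E[\ell]\subseteq\ker(\phi_1\circ\varphi\theta)$ you only get $\varphi\theta(E[\ell])\subseteq K_1$; since $\ker(\varphi\theta)\cap E[\ell]=K_1$ has order $\ell$, the image $\varphi\theta(E[\ell])$ has order exactly $\ell$, so this merely says $\varphi\theta(E[\ell])=K_1$ --- it does \emph{not} force $E[\ell]\subseteq\ker\theta$. The paper does not single out the wrap-around case, so your instinct that it deserves separate attention is reasonable, but the argument you give for it is not yet a proof.
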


\begin{proof}
   Since $[\ell] \nmid \theta$, the kernel $\ker(\theta) \cap E[\ell]$ has size $\ell$.  Define the isogeny $\phi_1:E\to E'$ corresponding to this subgroup (using the arbitrary assignment if applicable).  Then any isogeny cycle which composes to $\theta$ must factor through this isogeny, hence this is the first step in the desired isogeny cycle: $\theta = \phi' \circ \phi_1$.  Now take $\ker \phi' \cap E'[\ell]$ to find the next kernel, and the map $\phi_2$ so that $\theta = \phi'' \circ \phi_2 \circ \phi_1$.   Continuing, we obtain a closed walk 
   in the $\ell$-isogeny graph composing to an isogeny with the same kernel as $\theta$.  Which is to say, $\pm \theta$, up to post-composition by an automorphism of $E$.  But by construction, this walk is unique.  It has no backtracking, since backtracking would produce a composition divisible by $[\ell]$.
   The same closed walk 
   is obtained if we begin with $\theta$ or a post-composition of $\theta$ by an automorphism on $E$. Thus, by Lemma~\ref{lemma:unique-endo}, the automorphism $\varphi$ of the statement is unique up to sign.
\end{proof}

Finally, there is one more small lemma we will require, that shows that any closed walk composing to a power of an endomorphism is a power of a closed walk.

\begin{lemma}
\label{lem:primitive-cycle}
Suppose we have made an arbitrary assignment for $\mathcal{G}_\ell$ and let $\iota \colon K \rightarrow \End^0(E)$ be an $\mathcal{O}$-primitive orientation.  Let $\alpha = u\beta^s$ for some $s > 1$, $\beta \in \mathcal{O}$, and some unit $u \in \mathcal{O}$.  Assume that $\ell$ does not divide $\alpha$, and that a closed walk 
$C$ of $\mathcal{G}_\ell$ composes to $\pm \iota(\alpha)$.  Then $C = C_\beta^s$ where $C_\beta$ is a closed walk of $\mathcal{G}_{\ell}$ 
that composes to $\pm \iota(\beta)$.
\end{lemma}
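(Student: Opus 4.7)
My plan is to exploit the cyclic structure of $\ker\iota(\alpha)$ together with the uniqueness of no-backtracking walks provided by Lemma~\ref{lemma:unique-cycle}.  First, I would check that $[\ell] \nmid \iota(\alpha)$ and $[\ell] \nmid \iota(\beta)$: by $\mathcal{O}$-primitivity, $[\ell] \mid \iota(\alpha)$ would force $\alpha/\ell$ to lie in $\iota^{-1}(\iota(K) \cap \End(E)) = \mathcal{O}$, contradicting $\ell \nmid \alpha$; likewise for $\beta$.  Writing $n$ for the length of $C$ and $r$ for the length of the putative $C_\beta$, we have $n = rs$.  Since $\ker\iota(\alpha) = \ker(\iota(u)\iota(\beta)^s) = \ker\iota(\beta)^s$ is cyclic of order $\ell^n$, its unique cyclic subgroup of order $\ell^{jr}$ coincides with $\ker\iota(\beta)^j$ for each $j = 1, \ldots, s$.

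Next, I would observe that $C$ is precisely the walk produced by the proof of Lemma~\ref{lemma:unique-cycle} applied to $\iota(\alpha)$, whose $i$-th step is determined by the unique subgroup of order $\ell^i$ inside $\ker\iota(\alpha)$.  After $jr$ steps, the relevant subgroup is $\ker\iota(\beta)^j$, which is also the kernel of the endomorphism $\iota(\beta)^j \in \End(E)$, so the walk returns to the vertex $E$ in $\mathcal{G}_\ell$ every $r$ steps.  This lets me factor $C = C^{(1)} C^{(2)} \cdots C^{(s)}$, where each $C^{(j)}$ is a no-backtracking closed walk of length $r$ based at $E$.  Identifying the intermediate copies of $E$ with $E$ via the endomorphism $\iota(\beta)^{j-1}$ (which has the appropriate kernel), the composition of $C^{(j)}$ has kernel $\ker \iota(\beta)$ and therefore equals $\varphi_j \iota(\beta)$ for some $\varphi_j \in \Aut(E)$.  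The uniqueness part of Lemma~\ref{lemma:unique-cycle} then forces all $C^{(j)}$ to coincide as walks in $\mathcal{G}_\ell$; let $C_\beta$ denote this common walk, giving $C = C_\beta^s$.

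It remains to arrange that $C_\beta$ composes to $\pm \iota(\beta)$ rather than to $\varphi \iota(\beta)$ for some potentially non-trivial $\varphi \in \Aut(E)$.  If $E \notin \{E_0, E_{1728}\}$ then $\Aut(E) = \{[\pm 1]\}$ and the conclusion is immediate.  Otherwise, I would exploit the identity $(\varphi \iota(\beta))^s = \pm \iota(u)\iota(\beta)^s$ obtained from the two computations of the composition of $C$, together with the commutation relations between the extra automorphisms and $\iota(\beta)$ implied by the endomorphism ring descriptions in \eqref{eq:1728EndoRing} and \eqref{eq:0EndoRing}, to produce $v \in \mathcal{O}^*$ with $\iota(v) = \varphi$; then setting $\beta' := v\beta$ and $u' := u v^{-s}$ preserves $\alpha = u'(\beta')^s$ and causes $C_\beta$ to compose to $\iota(\beta')$.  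The main obstacle is precisely this last extra-automorphism bookkeeping at $j = 0$ and $j = 1728$: one must carefully distinguish the subcases according to whether $\iota(K)$ equals $\QQ(\zeta_3)$ or $\QQ(i)$, and use Lemma~\ref{lem:loopsandauts} together with the primitivity hypothesis to rule out automorphisms $\varphi$ that cannot be absorbed into $\beta$.
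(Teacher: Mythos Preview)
Your approach—factoring $C$ into $r$-step segments $C^{(1)}, \ldots, C^{(s)}$ based at $E$ and then invoking the uniqueness in Lemma~\ref{lemma:unique-cycle} to conclude they coincide—is correct and natural when $E$ has no extra automorphisms, and in that regime it is essentially equivalent to the paper's argument.

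The gap is in the extra-automorphism case.  Let $\gamma_j$ be the composition of $C^{(j)}$ and set $\psi_j := \gamma_j\cdots\gamma_1$.  Then $\ker\psi_j = \ker\iota(\beta)^j$ only gives $\psi_j = \eta_j\,\iota(\beta)^j$ for some $\eta_j \in \Aut(E)$, and hence
\[
\gamma_j \;=\; \eta_j\,\iota(\beta)\,\eta_{j-1}^{-1}.
\]
This involves a \emph{pre}-composition by $\eta_{j-1}^{-1}$, not merely a post-composition, so $\ker\gamma_j = \eta_{j-1}\bigl(\ker\iota(\beta)\bigr)$, which need not equal $\ker\iota(\beta)$ when $\eta_{j-1}\neq[\pm 1]$.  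Consequently Lemma~\ref{lemma:unique-cycle} does \emph{not} force the $C^{(j)}$ to coincide, and your sentence ``the composition of $C^{(j)}$ has kernel $\ker\iota(\beta)$'' is unjustified.  Lemma~\ref{lem:loopsandauts} only repairs this when $\iota(K)=\QQ(\Aut(E))$; when $E$ has extra automorphisms but $\iota(K)\neq\QQ(\Aut(E))$ the argument breaks down.  Your final paragraph then presupposes $C=C_\beta^s$ in order to obtain $(\varphi\iota(\beta))^s=\pm\iota(u)\iota(\beta)^s$, which is circular at that stage.

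The paper proceeds in the opposite direction: it first takes $C_\beta$ to be the walk of Lemma~\ref{lemma:unique-cycle} for $\iota(\beta)$ (so it composes to $\varphi\,\iota(\beta)$), and then modifies the \emph{initial} edge by pre-composing with $\varphi^{-1}$ to obtain $C_\beta'$ composing to $\varphi\,\iota(\beta)\,\varphi^{-1}$.  Now $(C_\beta')^s$ composes to $\varphi\,\iota(\beta^s)\,\varphi^{-1}=\varphi\,\iota(u^{-1}\alpha)\,\varphi^{-1}$, and one further first-edge modification yields a walk composing to $\varphi\,\iota(u^{-1})\,\iota(\alpha)$.  The uniqueness clause of Lemma~\ref{lemma:unique-cycle} identifies this walk with $C$ and forces $\varphi=\pm\iota(u)\in\iota(\mathcal{O})$, whence $C=(C_\beta')^s$ with $C_\beta'$ composing to $\pm\iota(\beta)$.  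The device you are missing is precisely this first-edge trick, which converts the awkward post-composition $\varphi\,\iota(\beta)$ into the conjugation $\varphi\,\iota(\beta)\,\varphi^{-1}$ so that $s$-th powers remain inside a single conjugate of $\iota(K)$.
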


\begin{proof}
By Lemma~\ref{lemma:unique-cycle},  since $\ell$ does not divide $\beta$, there exists a unique closed walk 
$C_\beta$ of $\ell$-isogenies composing to $\varphi \iota( \beta)$ for some automorphism $\varphi$ of $E$.  If $E$ has no extra automorphisms, then the proof is done.

If $E$ has extra automorphisms, however, then we slightly modify $C_\beta$. Change the initial edge $\phi$ of $C_\beta$ by replacing it with $\phi\varphi^{-1}$ (having the same domain and codomain). Call this modified closed walk 
$C_\beta'$.   By the conventions of the arbitrary assignment (Definition~\ref{defn:arb}), the composition around $C_\beta'$ is $\varphi \iota( \beta) \varphi^{-1}$.  The composition of $(C_\beta')^{s}$ is $\varphi \iota( \beta)^{s} \varphi^{-1} = \varphi \iota(u^{-1}) \iota(\alpha) \varphi^{-1}$.  But then we have closed walks 
$C$ and $(C_\beta')^s$, composing to $\pm \iota(\alpha)$ and $\pm \varphi \iota(u^{-1}) \iota(\alpha) \varphi^{-1}$, respectively.   By the arbitrary assignment and Lemma~\ref{lemma:unique-cycle}, we can again modify the first edge of $(C_\beta')^s$ to obtain $C'$ which composes to $\pm \varphi \iota(u^{-1})\iota(\alpha)$.  But then $C = C'$ by Lemma~\ref{lemma:unique-cycle}, and so $\varphi = \pm  \iota(u) \in \iota(\mathcal{O})$.  Therefore $\varphi \in \iota(\mathcal{O})$, in which case $(C_\beta')^s$ composes to $\pm \iota(u^{-1}) \iota(\alpha)$, and by Lemma~\ref{lemma:unique-cycle}, $C = (C_\beta')^s$. 
\end{proof}

\subsection{Proof of Theorem~\ref{thm:mainbij-nobase}}
\label{sec:proofbij}
In order to prove Theorem~\ref{thm:mainbij-nobase}, it is easier to first discuss rims and isogeny cycles with a  basepoint (a marked point). 
Let $E$ be a fixed vertex of $\mathcal{G}_{\ell}$. 
We define the following sets.
\[
    \mathcal{R}_{E,r} =
    \left\{ \substack{\mbox{ directed rims of size $r$ in $\mathcal{G}_{K,\ell}$}\\[1pt] \mbox{passing through $(E,\,\iota)$} } \ :\
    \substack{\mbox{$K$ is an imaginary quadratic field,} \\[1pt]\mbox{$\iota$ is a $K$-orientation on $E$}
    }\right \},
\]
\[
\mathcal{I}_{E,r} =\left\{ (\iota, \mathfrak{l}) : \substack{ 
\mbox{$\iota \colon K \rightarrow \End^0(E)$ a primitive $\mathcal{O}$-orientation,} \\
\mbox{$K$ is an imaginary quadratic field,} \\[1pt] \mbox{$\mathcal{O}$ is an $\ell$-fundamental order of $K$,} \\
\mbox{$(\ell) = \mathfrak{l}\bar{\mathfrak{l}}$ splits in $\mathcal{O}$,} \\ \mbox{$[\mathfrak{l}]$ has order $r$ in $\Cl(\mathcal{O})$}, \\
}
\right\}, \text{ and}
\]
     \[
    \mathcal{C}_{E,r} = \{ \mbox{isogeny cycles of length $r$ in $\mathcal{G}_{\ell}$ starting and ending at $E$ } \}.
    \]
    
Let $\sim$ denote conjugation on elements of $\mathcal{I}_{E,r}$, i.e.\ taking a pair $(\iota, \mathfrak{l})$ to $(\overline{\iota}, \overline{\mathfrak{l}})$.  As described in Section~\ref{sec:bij-statement}, conjugation preserves the direction of the rim but conjugates the ideal whose action traverses the rim. 
We will continue to use the symbol $\sim$ to denote conjugation of a rim.

We assume a safe arbitrary assignment (Definition~\ref{defn:arb2}) has been made for $\mathcal{G}_\ell$. Depending upon this arbitrary assignment, we define the following maps:
\begin{equation}
\xymatrix{
   \mathcal{I}_{E,r}/{\sim}
   \ar@<.3em>[rr]^{\Psi} 
   & &
   \mathcal{R}_{E,r}/{\sim} \ar@<.3em>[ll]^{\Psi'} \ar@<.3em>[dl]^{\Phi}  \\
   & \mathcal{C}_{E,r}
   \ar@<.3em>[ul]^{\Theta}
   &
}    \label{eqn:triangle}
\end{equation}
The goal is to show that the diagram commutes, where every map is a bijection.  Then Theorem~\ref{thm:mainbij-nobase} will follow by `forgetting' the basepoint.

Disregarding conjugation for the moment, we first define maps 
\[
\xymatrix{
   \mathcal{I}_{E,r}
   \ar@<.3em>[r]^{\Psi} &
 \mathcal{R}_{E,r} \ar@<.3em>[l]^{\Psi'}
   }.
\]
In Lemma~\ref{lem:psi}, we show that these are inverse bijections and that they provide inverse bijections even after a quotient by the conjugation relation.  In Lemma~\ref{lem:theta} we will define $\Theta$ and show it is a bijection.  Lemma~\ref{lem:phi} will define $\Phi$ and show it is a bijection.  Finally, in Theorem~\ref{thm:mainbij}, we show that the triangle in \eqref{eqn:triangle} commutes.

\textbf{Definition of $\Psi'$.}
Consider a directed rim $R \in \mathcal{R}_{E,r}$ passing through $(E,\iota)$.  Denote by $\mathcal{O}$ the order such that $\iota$ is $\mathcal{O}$-primitive.  Then $\mathcal{O}$ is $\ell$-fundamental since $(E,\iota)$ is at the rim.   Moreover, the $\mathcal{O}$-ideal $(\ell)$ splits and the $\mathcal{O}$-ideals above $\ell$ have order $r$ in $\Cl(\mathcal{O})$ since $(E,\iota)$ belongs to a rim of size $r$.  The direction along the rim corresponds to the action of one of the ideals above $\ell$, which we denote $\mathfrak{l}$.  Then $\Psi'$ is defined as
\[
\Psi'\colon \mathcal{R}_{E,r} \rightarrow \mathcal{I}_{E,r},
\quad
R \mapsto (\iota, \mathfrak{l}).
\]

\textbf{Definition of $\Psi$.}
On the other hand, consider $(\iota,\mathfrak{l}) \in \mathcal{I}_{E,r}$.  The orientation $\iota$ on $E$ gives rise to a directed rim of size $r$ passing through $(E,\iota)$ by the consecutive action of $\mathfrak{l}$.  Let $\Psi$ send $(\iota, \mathfrak{l})$ to this directed rim in $\mathcal{R}_{E,r}$, i.e.
\[
\Psi \colon \mathcal{I}_{E,r} \rightarrow \mathcal{R}_{E,r}, 
 \quad
(\iota, \mathfrak{l}) \mapsto R = ( (E,\iota) \rightarrow_{\phi_{\mathfrak{l}}} \cdots ).
\]

\begin{lemma}
\label{lem:psi}
The maps $\Psi$ and $\Psi'$ defined above are inverse bijections.  They also provide bijections
\[
\xymatrix{
   \mathcal{I}_{E,r}/{\sim}
   \ar@<.3em>[r]^{\Psi} &
   \mathcal{R}_{E,r}/{\sim} \ar@<.3em>[l]^{\Psi'}
   }
\]
where $\sim$ denotes equivalence between conjugates.
\end{lemma}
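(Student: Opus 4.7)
\textbf{Proof plan for Lemma~\ref{lem:psi}.}

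The plan is to verify well-definedness of both maps, then check the two inverse identities, and finally confirm that conjugation is respected on each side so the bijection descends to quotients.

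First, for $\Psi$: starting from $(\iota,\mathfrak{l}) \in \mathcal{I}_{E,r}$, I would show that the sequence obtained by iteratively applying the action of $\mathfrak{l}$ to $(E,\iota)$ is a directed rim of length exactly $r$ through $(E,\iota)$. The orientation $\iota$ is $\mathcal{O}$-primitive with $\mathcal{O}$ an $\ell$-fundamental order, so by Proposition~\ref{prop:volcanostructure}, $(E,\iota)$ is a rim vertex, and each step of the $\mathfrak{l}$-action is a horizontal $\ell$-isogeny (since $\mathfrak{a} \mapsto \mathfrak{a}*(-,-)$ preserves primitivity with respect to the same order $\mathcal{O}$, cf.\ Section~\ref{sec:back:sso}). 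Because the $\Cl(\mathcal{O})$-action on $\rho(\Ell(\mathcal{O}))$ is free and transitive (and extends to $\SSO$ via Proposition~\ref{prop:cl-frob}), the cycle closes after exactly $r = \mathrm{ord}([\mathfrak{l}])$ steps and not earlier. For $\Psi'$: starting from $R$, the vertex $(E,\iota)$ on $R$ determines $\iota$, and the first edge of $R$ is a horizontal $\ell$-isogeny from $(E,\iota)$. By Proposition~\ref{prop:volcanostructure}, horizontal $\ell$-isogenies from $(E,\iota)$ correspond bijectively with the prime ideals of $\mathcal{O}$ above $\ell$ (split case, giving exactly two), so there is a unique $\mathfrak{l}$ such that the first edge is the action of $\mathfrak{l}$. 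The ideal class $[\mathfrak{l}]$ has order $r$ because the rim has size $r$.

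For the inverse identities: the composition $\Psi' \circ \Psi(\iota,\mathfrak{l})$ returns $(\iota,\mathfrak{l})$ immediately from the construction, since $\Psi$ places $(E,\iota)$ as basepoint and makes the first edge the action of $\mathfrak{l}$. For $\Psi \circ \Psi'$: given $R$, extract $(\iota,\mathfrak{l})$ and form the rim $R' = \Psi(\iota,\mathfrak{l})$. Both $R$ and $R'$ are directed rims through $(E,\iota)$ whose first edge is the $\mathfrak{l}$-action, so they agree on the first edge. Since the oriented $\ell$-isogeny graph $\mathcal{G}_{K,\ell}$ has the volcano structure of Proposition~\ref{prop:volcanostructure}, at each rim vertex there is a unique horizontal $\ell$-isogeny in each of the two rim directions, so any directed rim is determined by its basepoint and first edge. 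Thus $R = R'$.

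Finally, for compatibility with the conjugation relation $\sim$: I would show that $\Psi$ intertwines conjugation, i.e.\ $\Psi(\bar\iota,\bar{\mathfrak{l}})$ is the conjugate of $\Psi(\iota,\mathfrak{l})$. This follows from the identity $E[\bar\iota(\bar{\mathfrak{a}})] = E[\iota(\mathfrak{a})]$ noted in Section~\ref{sec:bij-statement}, which implies $\phi_{\bar{\mathfrak{l}}}^{(E,\bar\iota)}$ has the same kernel as $\phi_{\mathfrak{l}}^{(E,\iota)}$ and lands at the conjugate oriented curve; iterating gives the conjugate rim (with the same direction of traversal). Symmetrically, $\Psi'$ intertwines conjugation since the conjugate rim at $(E,\bar\iota)$ has first-edge action given by $\bar{\mathfrak{l}}$. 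Hence the inverse bijection descends to the quotients by $\sim$.

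The only step I expect to require real care is verifying that the first edge of a directed rim uniquely determines one of the two ideals above $\ell$ (and that this identification is compatible with conjugation of ideals in the split prime pair $\{\mathfrak{l},\bar{\mathfrak{l}}\}$); this is the content invoked from Proposition~\ref{prop:volcanostructure} but should be stated explicitly to rule out any ambiguity at vertices with extra $K$-automorphisms, which is where the arbitrary assignment conventions of Section~\ref{sec:arb} enter.
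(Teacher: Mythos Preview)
Your approach is correct and matches the paper's, which simply asserts that $\Psi$ and $\Psi'$ are inverse from the definitions and that conjugate rims correspond to conjugate orientations; you have merely (and usefully) unpacked the details the paper leaves implicit.

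One small correction: your final caveat about the arbitrary assignment conventions of Section~\ref{sec:arb} is misplaced here. Those conventions govern the \emph{unoriented} graph $\mathcal{G}_\ell$ and enter only in the maps $\Phi$ and $\Theta$; the maps $\Psi,\Psi'$ live entirely in $\mathcal{G}_{K,\ell}$, where edges are already taken up to pre- and post-composition by $K$-automorphisms (cf.\ \eqref{eqn:equivalent-isogenies}), so no arbitrary assignment is needed. The identification of the first rim edge with a unique $\mathfrak{l} \in \{\mathfrak{l},\bar{\mathfrak{l}}\}$ holds even at vertices with extra $K$-automorphisms: in that case $\mathcal{O} \in \{\ZZ[i],\ZZ[\zeta_3]\}$, and Proposition~\ref{prop:ellplusone} (the split case, $s_\ell = 2$) confirms there are still exactly two horizontal out-edges, one for each prime above $\ell$.
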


\begin{proof}
The fact that $\Psi$ and $\Psi'$ are inverse to each other follows from their definitions. 
It is also clear that conjugate rims  correspond to conjugate orientations (while fixing $E$ and $K$).
\end{proof}

\textbf{Definition of $\Theta$.}
We now define a map $\Theta$ from $\mathcal{C}_{E,r}$ to $\mathcal{I}_{E,r}$. Let $C \in \mathcal{C}_{E,r}$.  Starting at the vertex $E$, this cycle composes into an endomorphism $\pm \theta \in \End(E)$. This endomorphism $\pm \theta$ induces an orientation $\iota_\theta \colon K \rightarrow \End^0(E)$ on $E$ (defined up to conjugation). 
 Fixing a choice for $\iota_\theta$, let $\mathcal{O}$ be the order for which $\iota_\theta$ is primitive.  Then $\ell$ must split or ramify in $\mathcal{O}$ because $\theta \in \mathcal{O}$ has norm $\ell^r$ and is not divisible by~$\ell$ (since $C \in \mathcal{C}_{E,r}$ has no backtracking).  If $\phi_\mathfrak{l}^{(E,\iota_\theta)}$  is equal to the first isogeny of the cycle $C$ in its indicated direction, for a prime $\mathfrak{l}$ above $\ell$, then define $\Theta(C) := (\iota_\theta, \mathfrak{l})$.

\begin{lemma}
\label{lem:theta}
The process described above gives a well-defined map $\Theta \colon \mathcal{C}_{E,r} \rightarrow \mathcal{I}_{E,r}/{\sim}$ (in particular, the ideal $\mathfrak{l}$ indicated in the definition above exists).
\end{lemma}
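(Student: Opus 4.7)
The plan is to trace through the construction of $\Theta(C)$ and justify each choice, with the central work being a volcano-geometric analysis of non-backtracking closed walks. Let $C \in \mathcal{C}_{E,r}$. By Lemma~\ref{lemma:unique-endo} applied under the safe arbitrary assignment, composing the $r$ edges of $C$ in order yields an endomorphism $\theta \in \End(E)$ of degree $\ell^r$, well defined up to sign; consecutive non-backtracking forces $\theta$ to have cyclic kernel, so $[\ell] \nmid \theta$ in $\End(E)$, and in particular $\theta \notin \ZZ$. The minimal polynomial of $\theta$ then cuts out an imaginary quadratic field $K = \QQ(\theta) \subset \End^0(E)$, and the two embeddings $K \hookrightarrow \End^0(E)$ sending a chosen root to $\theta$ are a conjugate pair of orientations $\iota_\theta, \overline{\iota_\theta}$; this is the source of the $\sim$-ambiguity in the target. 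Fix $\iota_\theta$, set $\mathcal{O} := \iota_\theta^{-1}(\End(E))$, and let $\alpha \in \mathcal{O}$ be the preimage of $\theta$.

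The key step is to show $\mathcal{O}$ is $\ell$-fundamental with $\ell$ split in $\mathcal{O}$. I would lift $C$ to a closed walk in $\mathcal{G}_{K,\ell}$ by pushing $\iota_\theta$ forward along successive edges; the lift closes up because $\iota_\theta$ commutes with $\theta$ in $\End^0(E)$, and the lift inherits both consecutive and cyclic non-backtracking from $C$ since the forgetful map $\mathcal{G}_{K,\ell} \to \mathcal{G}_\ell$ respects dual isogenies. Suppose for contradiction that $(E, \iota_\theta)$ sits at depth $k \geq 1$ on its oriented volcano. By Proposition~\ref{prop:volcanostructure}, each below-rim vertex has a unique ascending edge and all other edges descending, so the subgraph below the rim is a tree in which any non-trivial closed walk must backtrack. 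To return to $(E, \iota_\theta)$, the walk must therefore ascend all $k$ steps to its unique rim ancestor $w$; the descending return must then traverse the unique tree-path from $w$ back to $(E, \iota_\theta)$, which is precisely the reverse of the ascending portion. But this makes the last descending edge the dual of the first ascending edge, contradicting cyclic non-backtracking. Hence $\mathcal{O}$ is $\ell$-fundamental, and the same tree argument confines the lifted walk to the rim.

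By Proposition~\ref{prop:volcanostructure}, each rim vertex has $\left(\frac{\Delta_\mathcal{O}}{\ell}\right)+1$ horizontal edges. If $\ell$ were inert in $\mathcal{O}$ there would be no horizontal edges and thus no closed walk at all; if $\ell$ ramified there would be a single horizontal edge, giving only length-$2$ back-and-forth walks, which are cyclically backtracking. Since $r > 2$, $\ell$ must split, say $(\ell) = \mathfrak{l}\bar{\mathfrak{l}}$, and the kernel of the first edge of $C$ coincides with exactly one of $E[\iota_\theta(\mathfrak{l})]$ or $E[\iota_\theta(\bar{\mathfrak{l}})]$, picking out a distinguished $\mathfrak{l}$. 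Induction along the rim identifies $C$ with the repeated action of $\mathfrak{l}$, so $\mathfrak{l}^r = (\alpha)$ in $\mathcal{O}$ and $[\mathfrak{l}]^r = 1$ in $\Cl(\mathcal{O})$.

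If $[\mathfrak{l}]$ had order $s < r$ with $s \mid r$, then writing $\mathfrak{l}^s = (\beta)$ gives $\alpha = u\beta^{r/s}$ for some $u \in \mathcal{O}^\times$, and Lemma~\ref{lem:primitive-cycle} would express $C$ as a proper power of a length-$s$ closed walk, contradicting $C \in \mathcal{C}_{E,r}$. Hence $[\mathfrak{l}]$ has exact order $r$, so $(\iota_\theta, \mathfrak{l}) \in \mathcal{I}_{E,r}$. Finally, replacing $\iota_\theta$ with $\overline{\iota_\theta}$ swaps $\mathfrak{l}$ for $\bar{\mathfrak{l}}$ via $E[\overline{\iota_\theta}(\mathfrak{l})] = E[\iota_\theta(\bar{\mathfrak{l}})]$, so $\Theta(C)$ is well defined in $\mathcal{I}_{E,r}/{\sim}$. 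The main obstacle is the volcano-geometric step that forces $(E, \iota_\theta)$ onto the rim; the remaining verifications are routine ideal arithmetic.
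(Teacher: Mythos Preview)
Your proof is correct but takes a genuinely different route from the paper's. The paper argues purely algebraically: with $\alpha = \iota_\theta^{-1}(\theta)$, if $\ell$ divided the conductor of $\ZZ[\alpha]$ then $\ell$ would divide both the discriminant and the norm of $\alpha$, hence the trace, hence $\alpha$ itself, contradicting non-backtracking; this shows $\ZZ[\alpha]$ (and so $\mathcal{O} \supseteq \ZZ[\alpha]$) is $\ell$-fundamental in one line, and then $(\alpha) = \mathfrak{l}^r$ falls out from the factorization of $(\ell)$ and $\ell \nmid \alpha$. The ramified case is ruled out implicitly, since there $[\mathfrak{l}]^2 = 1$ and the Lemma~\ref{lem:primitive-cycle} argument already forces order exactly $r > 2$.

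Your approach instead lifts $C$ to $\mathcal{G}_{K,\ell}$ and uses the volcano geometry of Proposition~\ref{prop:volcanostructure} to pin $(E,\iota_\theta)$ to the rim, then reads off split/inert/ramified from the horizontal edge count. This is longer but gives the pleasant geometric picture that the lifted cycle literally sits on a rim, which is exactly what $\Psi$ produces, so the commutativity of the triangle becomes almost visible. One suggestion for tightening your tree argument: rather than asserting the walk ``must ascend all $k$ steps,'' it is cleaner to observe directly that arriving at a below-rim vertex via a descending edge forces the \emph{next} edge to descend (the unique ascending edge is the one just traversed), so any descending step launches an infinite descent; hence the first edge ascends, and by time-reversal the last edge descends from the parent, and uniqueness of the ascending edge makes these dual. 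This sidesteps tracking the full ascent to $w$.
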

\begin{proof}
Let $C_{E,r} \in \mathcal{C}_{E,r}$ and let $\theta \in \End(E)$ be the composition of the edges in $C_{E,r}$ (unique up to sign). Let~$\alpha$ denote the preimage of $\theta$ in $K$ via $\iota_\theta$. The orientation $\iota_\theta$ is a primitive $\mathcal{O}$-orientation for exactly one order $\ZZ[\alpha] \subseteq \mathcal{O}\subseteq \QQ(\alpha)$.  If $\ZZ[\alpha]$ has conductor divisible by $\ell$, then $\alpha$ has discriminant and norm divisible by $\ell$, hence trace divisible by $\ell$, so it is itself divisible by $\ell$.  
But this is impossible because the isogeny cycle $C_{E,r}$ has no backtracking. 
Hence $\ZZ[\alpha]$, and therefore $\mathcal{O}$, are $\ell$-fundamental orders. 

Note that $\alpha$ has norm $\ell^r$.
We consider the factorization of $(\ell)$ in $\mathcal{O}$.  As mentioned in the definition of~$\Theta$, $\ell$ splits or ramifies, so $(\ell) = \mathfrak{l}\overline{\mathfrak{l}}$. Then we can factor $(\alpha)$, using its norm:  $(\alpha) = (\ell)^t \mathfrak{l}^s$ for some $t,s$ (up to possibly replacing $\mathfrak{l}$ with its conjugate).  However, $t=0$ since $\ell$ does not divide $\alpha$.  Hence $s=r$, by comparing norms.  That is, $(\alpha) = \mathfrak{l}^r$. 

This shows that $\ell$ is represented by a class $[\mathfrak{l}]$ of order dividing $r$ in $\Cl(\mathcal{O})$.  We will show it is of order exactly $r$.   If not, then $\mathfrak{l}^s = (\beta) \in \mathcal{O}$, so $\alpha =  u\beta^s$ for some unit $u \in \mathcal{O}$.  Then by Lemma~\ref{lem:primitive-cycle}, the cycle $C_{E,r}$ is a power of a smaller cycle, a contradiction to the definition of isogeny cycle.

Finally, since $(\alpha) = \mathfrak{l}^r$, the kernel $E[\mathfrak{l}]$ is contained in $\ker \theta$, and so $\phi_\mathfrak{l}$ defined on $(E, \iota_\theta)$ is equal to the first isogeny of $C$ in its direction of travel.

Note that our only arbitrary choice is that of $\iota_\theta$ in place of its conjugate, but as we map into $\mathcal{I}_{E,r}/{\sim}$, the other choice results in the same image element, and the map is well-defined.
\end{proof}

\textbf{Definition of $\Phi$.}
Next we consider the map $\Phi$.  Given a directed rim $R \in \mathcal{R}_{E,r}/{\sim}$, we get a walk in  $\mathcal{G}_\ell$ by forgetting orientations.  If there are no extra automorphisms on any curves of the rim, this is well-defined up to a sign on the isogenies.  If there are extra automorphisms, we must use the process in Section~\ref{sec:graphcover} to safely forget orientations.  The resulting cycle will be denoted $\Phi(R)$.  Note that if $R$ is conjugated, we obtain the same walk.  This is clear when there are no extra automorphisms.  When we use the process in Section~\ref{sec:graphcover}, this follows from the observation that any valid set of representatives is again valid for the conjugate rim, in the proof of Proposition~\ref{prop:rim-to-cycle}.

\begin{lemma}
\label{lem:phi}
Let $R \in \mathcal{R}_{E,r}$.  Then $\Phi(R) \in \mathcal{C}_{E,r}$.
\end{lemma}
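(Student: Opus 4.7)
The plan is to verify in turn each of the three conditions Definition~\ref{defn:isogeny-cycle} imposes on a member of $\mathcal{C}_{E,r}$: being a closed walk of length $r$ based at $E$, containing no backtracking, and not being a power of a shorter closed walk.

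First, the base-point and length conditions are immediate: since $R\in\mathcal{R}_{E,r}$ is a directed rim of size $r$ passing through $(E,\iota)$, forgetting the orientations via $\Phi$ yields a walk of length $r$ in $\mathcal{G}_\ell$ starting and ending at $E$.

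Next, I would establish no backtracking. By Lemma~\ref{lem:psi}, $R$ corresponds to a pair $(\iota,\mathfrak{l})\in\mathcal{I}_{E,r}$, so $\iota$ is $\mathcal{O}$-primitive for an $\ell$-fundamental order $\mathcal{O}$ in which $\ell$ splits as $\mathfrak{l}\overline{\mathfrak{l}}$. Two consecutive edges along $R$ are of the form $\phi_\mathfrak{l}^{(E_i,\iota_i)}$ followed by $\phi_\mathfrak{l}^{(E_{i+1},\iota_{i+1})}$, whose composition as an oriented isogeny has kernel $E_i[\iota_i(\mathfrak{l}^2)]$. Because $\mathfrak{l}^2\neq\mathfrak{l}\overline{\mathfrak{l}}=(\ell)$ in the split case, this kernel is cyclic of order $\ell^2$, whereas the kernel of $[\ell]$ is $E_i[\ell]\cong(\ZZ/\ell)^2$. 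Hence the composition cannot equal $\pm[\ell]$ up to any post-composed automorphism, so $\Phi(R)$ has no backtracking in the sense of Definition~\ref{defn:backtracking}.

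The heart of the argument is the non-power condition. Suppose for contradiction that $\Phi(R)=(C')^d$ for some $d>1$ and some closed walk $C'$ of length $r/d$ at $E$. Let $\theta'\in\End(E)$ be the composition of $C'$ (well defined up to sign by Lemma~\ref{lemma:unique-endo} under the safe arbitrary assignment), so that up to sign and an automorphism absorbed by the assignment, $(\theta')^d$ equals the composition of $\Phi(R)$. Pick any $\alpha\in\mathcal{O}$ with $(\alpha)=\mathfrak{l}^r$; then the composition of $\Phi(R)$ is $\pm\iota(\alpha)$. Since $\ell$ splits and $\mathcal{O}$ is $\ell$-fundamental, $\mathfrak{l}\neq\overline{\mathfrak{l}}$, and unique factorization of invertible $\mathcal{O}$-ideals coprime to the conductor shows that $\alpha\notin\QQ$ (otherwise $(\alpha)=(\overline{\alpha})$ would force $\mathfrak{l}^r=\overline{\mathfrak{l}}^r$, hence $\mathfrak{l}=\overline{\mathfrak{l}}$). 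Therefore $\iota(\alpha)$ generates the quadratic subfield $\iota(K)\subset\End^0(E)$, and $\theta'$, which commutes with $(\theta')^d=\pm\iota(\alpha)$, must lie in $\iota(K)\cap\End(E)=\iota(\mathcal{O})$ by primitivity. Writing $\theta'=\iota(\gamma)$, we get $(\gamma)^d=(\alpha)=\mathfrak{l}^r$; factoring $(\gamma)=\mathfrak{l}^a\overline{\mathfrak{l}}^b$ and raising to the $d$-th power gives $\mathfrak{l}^{ad}\overline{\mathfrak{l}}^{bd}=\mathfrak{l}^r$, forcing $b=0$ and $a=r/d$. Thus $\mathfrak{l}^{r/d}$ is principal in $\mathcal{O}$, contradicting the fact that $[\mathfrak{l}]$ has order exactly $r$ in $\Cl(\mathcal{O})$.

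The main obstacle I anticipate is the bookkeeping around extra automorphisms: both the composition of $\Phi(R)$ and of $(C')^d$ carry ambiguity up to automorphisms of $E$ stemming from the safe arbitrary assignment, and ensuring these twists do not break the centralizer deduction $\theta'\in\iota(K)$ is delicate when $E=E_0$ or $E_{1728}$. One way to sidestep this, if needed, is to invoke Lemma~\ref{lem:primitive-cycle} directly: it packages the automorphism and unit bookkeeping inside a factor $u\in\mathcal{O}^\times$ and reduces the non-power condition to the purely ideal-theoretic statement that $\alpha$ is not of the form $u\beta^s$ with $s>1$, which is exactly what the order of $[\mathfrak{l}]$ in $\Cl(\mathcal{O})$ rules out by the factorization argument above.
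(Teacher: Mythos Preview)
Your proof is correct and follows essentially the same approach as the paper's: both use that consecutive rim edges compose to an isogeny with kernel $E[\iota(\mathfrak{l}^2)]\neq E[\ell]$ to rule out backtracking, and both use a centralizer argument (your $(\theta')^d=\pm\iota(\alpha)$ forces $\theta'\in\iota(K)$, the paper's $\phi^k=\iota(\beta)$ forces $\phi\in\iota(K)$) together with the order of $[\mathfrak{l}]$ in $\Cl(\mathcal{O})$ to rule out the walk being a proper power. Your backtracking step is marginally cleaner in that it uses splitness directly ($\mathfrak{l}^2\neq\mathfrak{l}\overline{\mathfrak{l}}$) rather than the hypothesis $r>2$, and you make explicit the check that $\alpha\notin\QQ$, which the paper leaves implicit; your concern about extra automorphisms is well-placed but, as you note, is handled by the safe arbitrary assignment framework (Proposition~\ref{prop:rim-to-cycle} guarantees the composition of $\Phi(R)$ is exactly $\pm\iota(\alpha)$ with no stray automorphism twist).
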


\begin{proof}
Let $W$ be the walk obtained from $R$ by forgetting the orientations.  We need to show that $W$ doesn't have backtracking and is not repeating smaller cycles.  Let $(\iota, \mathfrak{l}) = \Psi'(R) \in \mathcal{I}_{E,r}/{\sim}$. 
First, we show that the action of $\mathfrak{l}$, which dictates the direction of $R$, cannot produce a backtracking path.  Suppose the action of $\mathfrak{l}$ produces a backtracking path $\phi_1, \phi_2$.  Then there is an endomorphism $\varphi$ of $E$ such that $\varphi [\ell] = \phi_2 \phi_1$ has the same kernel as $\phi_{\mathfrak{l}^2}$, namely $E[\ell]$.  This implies $\mathfrak{l}^2 = (\ell)$, so $r=2$, a contradiction.

Now suppose that the walk $W$ in $\mathcal{G}_\ell$ repeats a smaller closed walk $k > 1$ times.  Let $\phi \in \End(E)$ be the composition of the smaller closed walk.  Then $\phi = \phi_\mathfrak{a}$ for the ideal $\mathfrak{a} = \mathfrak{l}^{r/k}$, and $\phi^k = \phi_\mathfrak{b}$ where $\mathfrak{b} = \mathfrak{l}^r = (\beta)$ for some $\beta \in \mathcal{O}$.  Hence as an element of $\End(E)$, $\phi^k = \iota(\beta)$, which implies that $\phi \in \iota(K)$.  But then letting~$\alpha$ be such that $\iota(\alpha) = \phi$, we have $\alpha^k = \beta$ and $\mathfrak{l}^{r/k} = (\alpha)$.  This contradicts the rim being of size $r$.  Thus, the walk $W$ in $\mathcal{G}_{\ell}$  cannot repeat a smaller walk $k > 1$ times.
\end{proof}

\begin{theorem}\label{thm:mainbij}
Given a safe arbitrary assignment for $\mathcal{G}_\ell$, we define maps $\Phi$ and $\Phi' := \Psi \circ \Theta$ as above.  Then the sets $\mathcal{C}_{E,r}$ and $\mathcal{R}_{E,r}/{\sim}$ are in bijection under the inverse pair of maps $\Phi$ and $\Phi'$.
\end{theorem}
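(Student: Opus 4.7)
The plan is to verify the two compositions $\Phi \circ \Phi'$ and $\Phi' \circ \Phi$ separately, using that $\Psi$ and $\Psi'$ are already inverse bijections by Lemma~\ref{lem:psi}, and that the maps $\Theta$ and $\Phi$ are well defined by Lemmas~\ref{lem:theta} and \ref{lem:phi}. The strategy rests on the principle that an endomorphism $\theta \in \End(E)$ of $\ell$-power degree coprime to $\ell$ determines a unique non-backtracking closed walk (Lemma~\ref{lemma:unique-cycle}), which connects the composition-of-edges side of the story with the ideal-action side of the story.

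First I would show $\Phi \circ \Phi' = \operatorname{id}_{\mathcal{C}_{E,r}}$. Let $C \in \mathcal{C}_{E,r}$ compose to $\pm\theta \in \End(E)$ and set $\Theta(C) = (\iota_\theta, \mathfrak{l})$. The proof of Lemma~\ref{lem:theta} establishes that, writing $\iota_\theta(\alpha) = \theta$, one has $(\alpha) = \mathfrak{l}^r$ in the primitive order $\mathcal{O}$. Hence $\Psi(\iota_\theta, \mathfrak{l})$ is the directed rim $R$ whose composition of edges, starting at $(E,\iota_\theta)$, equals $\phi_{\mathfrak{l}^r}^{(E,\iota_\theta)} = \iota_\theta(\alpha) = \theta$ up to an automorphism of $E$. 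Forgetting orientations via $\Phi$, the resulting closed walk in $\mathcal{G}_\ell$ is a non-backtracking walk composing to $\pm \theta$. Since the first edge of $\Phi(R)$ and of $C$ both correspond to the kernel of $\phi_\mathfrak{l}^{(E,\iota_\theta)}$ by the definition of $\Theta$, Lemma~\ref{lemma:unique-cycle} forces $\Phi(R) = C$ as closed walks (the automorphism ambiguity in the lemma is absorbed by the safe arbitrary assignment).

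Next I would show $\Phi' \circ \Phi = \operatorname{id}_{\mathcal{R}_{E,r}/\sim}$. Let $R \in \mathcal{R}_{E,r}$ with $\Psi'(R) = (\iota, \mathfrak{l})$, so $\iota$ is a primitive $\mathcal{O}$-orientation on $E$ with $[\mathfrak{l}]$ of order $r$ in $\Cl(\mathcal{O})$, and $\mathfrak{l}^r = (\beta)$ for some $\beta \in \mathcal{O}$. The composition of the edges of $R$ equals $\iota(\beta) \in \End(E)$, and by Lemma~\ref{lem:phi} the image $\Phi(R)$ is an honest element of $\mathcal{C}_{E,r}$. Applying $\Theta$, the orientation induced by $\iota(\beta)$ extends $\ZZ[\iota(\beta)]$ to its unique primitive order, and since $\beta \in \mathcal{O}$ already, this primitive order is $\mathcal{O}$ and the induced orientation is $\iota$ (or its conjugate, which is allowed under the equivalence $\sim$). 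Matching the first edge of $\Phi(R)$ with $\phi_\mathfrak{l}$ recovers the ideal $\mathfrak{l}$ (up to conjugation, the ambiguity absorbed by $\sim$). Therefore $\Theta(\Phi(R)) = (\iota, \mathfrak{l})$ in $\mathcal{I}_{E,r}/\sim$, and applying $\Psi$ returns $R$ in $\mathcal{R}_{E,r}/\sim$.

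The main obstacle will be cleanly tracking the two layers of ambiguity that have been quotiented out: the conjugation equivalence $\sim$ on orientations and ideals, and the sign/automorphism ambiguity inherent in composing walks in $\mathcal{G}_\ell$ around vertices with $j \in \{0, 1728\}$. The safe arbitrary assignment introduced in Section~\ref{sec:arb} is designed precisely so that Lemmas~\ref{lemma:unique-endo}, \ref{lemma:unique-cycle}, and \ref{lem:primitive-cycle} apply uniformly, and I would invoke them at each step rather than re-deriving the extra-automorphism case by hand. Once these bookkeeping issues are controlled, the identifications above are formal and the theorem follows.
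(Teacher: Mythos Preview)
Your approach is essentially the same as the paper's: verify both compositions using the fact that the endomorphism obtained by composing around the rim coincides with the one obtained by composing around the isogeny cycle, then appeal to Lemma~\ref{lemma:unique-cycle} for uniqueness.

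One point to tighten: in both directions you assert that the composition of the edges of $R$ (after forgetting orientations via $\Phi$) equals $\pm\iota(\alpha)$, respectively $\pm\iota(\beta)$. In the presence of extra automorphisms this is not automatic from Lemmas~\ref{lemma:unique-endo}, \ref{lemma:unique-cycle}, \ref{lem:primitive-cycle}, which all live on the $\mathcal{G}_\ell$ side; it is precisely the content of Proposition~\ref{prop:rim-to-cycle} (in Section~\ref{sec:graphcover}), which guarantees that a valid set of representatives for the rim exists and that the resulting composition lands in $\iota(K)$. The paper's proof invokes Proposition~\ref{prop:rim-to-cycle} explicitly at both of these steps, and you should do the same rather than relying only on the Section~\ref{sec:arb} lemmas.
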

\begin{proof}
We make a safe arbitrary assignment (Definition~\ref{defn:arb2}), to define the maps above.
We will show that the two maps are inverse to one another.

Suppose $C \in \mathcal{C}_{E,r}$ is taken to $[R] \in \mathcal{R}_{E,r}/{\sim}$ by $\Phi'$.  If $\theta$ is the composition of $C$, then $R$ is the rim passing through $(E, \iota_\theta)$ ($\overline{R}$ is the rim passing through $(E, \overline{\iota_\theta})$, respectively).  In particular, $\mathfrak{l}^r = (\alpha)$ where $\iota_\theta(\alpha) = \theta$.  Therefore, composing the isogenies around $R$ results in the endomorphism $\pm \theta$.  By Lemma~\ref{lemma:unique-cycle} (and using  Proposition~\ref{prop:rim-to-cycle} in the case of extra automorphisms), the rim $R$ maps to $C$ when orientations are forgotten (i.e.\ under $\Phi$).

Next, suppose $[R] \in \mathcal{R}_{E,r}/{\sim}$ is taken to $(\iota_\theta, \mathfrak{l}) \in \mathcal{I}_{E,r}$ by $\Psi'$.  Let $C = \Phi(R)$.  Then $C$ composes to $\theta$ (using Proposition~\ref{prop:rim-to-cycle} in the case of extra automorphisms).  Thus $C$ maps to $(\iota_\theta, \mathfrak{l}) \in \mathcal{I}_{E,r}$ under $\Theta$.
\end{proof}

The discussions so far relies on a choice of a basepoint $E$.  We now prove the main theorem (Theorem~\ref{thm:mainbij-nobase}) as a corollary to the basepointed version (Theorem~\ref{thm:mainbij}).

\begin{proof}[Proof of Theorem~\ref{thm:mainbij-nobase}]
Make a safe arbitrary assignment for $\mathcal{G}_\ell$ and use it to define the maps of diagram \eqref{eqn:triangle} as described above.

There is a map $\mathcal{C}_{E,r} \rightarrow \mathcal{C}_r$ given by forgetting the basepoint of the isogeny cycle.  There is also a map $\mathcal{R}_{E,r}/{\sim} \ \rightarrow \mathcal{R}_r/{\sim}$ given by forgetting the basepoint of the rim.

Given $[R] \in \mathcal{R}_r/{\sim}$, a basepoint can be chosen in $r$ ways.  For each such basepoint $(E,\iota)$, we obtain $R_{E,\iota} \in \mathcal{R}_{E,r}$, which we map to $C_{E,\iota} := \Phi(R_{E,\iota}) \in \mathcal{C}_{E,r}$ via Theorem~\ref{thm:mainbij}.  We must show that the resulting~$C_{E,\iota}$ are all equal when basepoints are forgotten.  But the map $\Phi$ to isogeny cycles is agnostic to the basepoint:  this is clear for rims without extra automorphisms, as there are no choices to make, and is a consequence of Proposition~\ref{prop:rim-to-cycle} when there are extra automorphisms.

Similarly, a given $C \in \mathcal{C}_r$ corresponds to $C_{i} \in \mathcal{C}_{E_i,r}$ for $E_i$ the $i$-th curve in the cycle.  Each of these basepointed cycles gives a corresponding basepointed rim $R_{i} := \Phi'(C_{i}) \in \mathcal{R}_{E_i,r}$.  We must show that the resulting $R_{i}$ are all equal, when basepoints are forgotten.  Define $(\iota_i, \mathfrak{l}_i) = \Theta(C_i) \in \mathcal{I}_{E,r}$.  Define the composed endomorphism $\theta_i \in \End(E_i)$ for each $E_i$.  Then by construction, $\iota_i = \iota_{\theta_{i}}$ and $\phi_i = \phi^{(E_i, \iota_i)}_{\mathfrak{l}_i}$.  It suffices to show that all $(\iota_i, \mathfrak{l}_i)$ give rise to the same rim.   The $\theta_i$ are related by $\theta_{i+1} = \phi_i \theta_i \phi_i^{-1}$.  Thus, we obtain $\iota_{{i+1}} = (\phi_i)_* \iota_{{i}}$.  In particular, the orientations $\iota_i$ are all associated to the same field $K$, and they are all primitive for the same order $\mathcal{O}$ of $K$, since $\phi_i$ is horizontal.  Therefore $(E_i, \iota_i)$ and $(E_{i+1}, \iota_{i+1})$ are elements of $\SSO$ connected by a horizontal isogeny $\phi_i$ of degree $\ell$, i.e.\ they are on the same rim.  As for direction, by construction $\phi_i = \phi^{E_i}_{\mathfrak{l}_i}$ for all $i$.  Since $\phi_i$ all indicate the same direction, the $\mathfrak{l}_i$ all indicate the same direction.
\end{proof}

\subsection{Mapping from $\mathcal{G}_{K,\ell}$ to $\mathcal{G}_\ell$}
\label{sec:graphcover}
We now deal with the one remaining item pushed aside during the proof: intricacies introduced by extra automorphisms when defining a map from rims to closed walks by forgetting orientations.
One can hope to map any walk in $\mathcal{G}_{K,\ell}$ to a walk in $\mathcal{G}_\ell$ by forgetting orientations.  However, isogenies in the oriented isogeny graph are taken up to pre- and post-composition by automorphisms (see~\eqref{eqn:equivalent-isogenies}), so when mapping down to $\mathcal{G}_\ell$, there is a choice to be taken if the domain has extra automorphisms.  Even more to the point, if we obtain a closed walk from a rim, by making the choice arbitrarily, we may end up with a closed walk whose composition does not lie in $\iota(K)$.  (Recall that in order to obtain a bijection, we hope to recover the rim from the isogeny cycle using this composition.)  

In this section, we sidestep this issue, with the help of a special type of arbitrary assignment, by showing that it is nevertheless possible to accomplish something more limited: to define a map from rims to isogeny cycles such that the resulting isogeny cycle has a composition in $\iota(K)$.  We emphasize that this is a non-issue (and this entire section can be skipped) if $\mathcal{G}_\ell$ has no curves with extra automorphisms.
To be somewhat more precise, consider a rim $R$ in $\mathcal{G}_{K,\ell}$.  If we choose representatives of the vertices and edges (which are, a priori, only equivalence classes), then we can forget orientations to obtain a closed walk in $\mathcal{G}_\ell$.  However, the choice of representatives must be made with care.  First, the edges must belong to the arbitrary assignment:  if not, then when composing around the rim before and after forgetting orientations, we may differ by a post-composition, and hence may produce an endomorphism from the wrong quadratic field (which we will see momentarily in Example~\ref{ex:loops} below).  Second, the choice of representatives must be consistent:  when we traverse the entire rim, beginning at representative $(E,\iota)$, we must return to representative $(E,\iota)$, and not merely an isomorphic oriented curve.  In the proof that follows, we spend the vast majority of our energy in showing that such a choice of representative exists and is unique.

\begin{example}
\label{ex:loops}
An example demonstrates some of the problems the proof will need to overcome.  Consider the example $p=11$ and $\ell=3$ where $\mathcal{G}_3$ has two loops at $j=1728$.  Recall the endomorphism ring of the curve with $j$-invariant $j = 1728$ from Section~\ref{sec:back:ss}.
There are eight endomorphisms of degree $\ell=3$, namely
\[
\frac{\pm 1\pm \pi_p}{2} \quad \text{and}\quad \frac{\pm [i]\pm [i]\pi_p}{2} .
\]
The first group have trace $\pm 1$ and the second group have trace $0$.  The endomorphisms
\[
\pm \frac{1+\pi_p}{2}, \quad
\pm \frac{[i]+[i]\pi_p}{2}
\]
have the same kernel, which we denote $\kappa$.  This corresponds to one of the loops, and these four endomorphisms differ by post-composition by an automorphism (namely $\pm 1, \pm [i]$).  The endomorphisms
\[
\pm \frac{1-\pi_p}{2}, \quad
\pm \frac{[i]-[i]\pi_p}{2}
\]
all have the same kernel, namely the image of $\kappa$ under $[i]$.  This corresponds to the second loop, and these four endomorphisms also differ from one another only by post-composition by an automorphism.

If, in the arbitrary assignment, we choose $\frac{1+\pi_p}{2}$ for the first loop and $\frac{1 - \pi_p}{2}$ for the second, then these endomorphisms generate orientations by $\ZZ\left[ \frac{1+\sqrt{-11}}{2}\right]$.  The endomorphisms $\pm([i] + [i]\pi_p)/2$ of trace zero generate orientations by $\ZZ[ \sqrt{-3} ]$.  The graph $\mathcal{G}_{\QQ(\sqrt{-3}),\ell}$ has one rim in $\SS_{\ZZ[\sqrt{-3}]}$, namely the loop on the oriented curve $(E,\iota)$ where $E$ has $j$-invariant $1728$ and $\iota(\sqrt{-3}) = [\pm \sqrt{-3}]$ (case (i) of Remark~\ref{prop:gkl-auto}).  Under this arbitrary assignment, there is no place to map this rim in such a way that the composition of the resulting loop will match the order $\ZZ[\sqrt{-3}]$ as desired. 
\end{example}

This motivates modifying Definition~\ref{defn:arb} so that the two loops must compose to endomorphisms from different fields $K$.  
In particular, we must be careful to choose the arbitrary assignment `in tandem' for multiple edges between curves with extra automorphisms.  This issue arises in larger cycles also.  For an explicit example, consider $p=11$, and $\QQ(\sqrt{-23})$, which has class group of size $3$ generated by $\mathfrak{l}$ lying above $\ell=2$;  the rim for the maximal order is of size $3$ with $j$-invariants $1728,1728,0$.
We therefore make a special requirement on the arbitrary assignment which applies to this case.

  \begin{definition}[Safe arbitrary assignment]
  \label{defn:arb2}
    An arbitrary assignment is said to be \emph{safe} if it obeys the following rule: whenever there is a collection of edges $E_1 \rightarrow E_2$ related by precomposition by an automorphism of $E_1$ besides $[\pm 1]$, we choose their representatives to be of the form $\{  \pm \phi \varphi : \varphi \in \Aut(E_1) \}$ for a fixed choice~$\phi$.
  \end{definition}
  
  The rule in Definition~\ref{defn:arb2} is automatically satisfied at $E_1$ and $E_2$ by any arbitrary assignment unless both $E_1$ and $E_2$ have extra automorphisms, and there are multiple edges between them.  It is also clear that a safe arbitrary assignment is always possible.
  
  With regards to Example~\ref{ex:loops}, note that $\frac{1+\pi_p}{2} = -[i]\frac{1 - \pi_p}{2} [i]$, which violates Definition~\ref{defn:arb2}.  This is the reason the example `fails' the bijection.

\begin{proposition}
\label{prop:rim-to-cycle}
Suppose a safe arbitrary assignment has been made for $\mathcal{G}_\ell$. There is a map from rims $R$ of $\mathcal{G}_{K,\ell}$ to closed walks 
$C(R)$ in $\mathcal{G}_\ell$ with the following properties:
\begin{enumerate}
    \item \label{item:map-directed} The curves and isogenies  of $C(R)$ are representatives of the isomorphism classes of curves and isogenies of $R$, in the same order; in particular, there is a map $R \rightarrow C(R)$ of directed graphs.
    \item \label{item:re-compose} The composition $\pm \theta$ of $C(R)$ at vertex $E$ in $C(R)$ corresponding to $(E, \iota)$ in $R$, gives rise to the orientation $\pm \iota$ in the sense that $\theta = \iota(\alpha)$ for some $\alpha \in K$.
\end{enumerate}
\end{proposition}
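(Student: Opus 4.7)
The plan is to build $C(R)$ by traversing $R$ step by step, using the safe arbitrary assignment to pin down specific isogeny representatives, and then verifying the two properties. I fix a concrete representative $(E_0,\iota_0)$ of a vertex of $R$, and inductively, given $(E_i,\iota_i)$, the outgoing rim edge is the $K$-oriented quotient isogeny $\phi_{\mathfrak{l}}^{(E_i,\iota_i)}$ with kernel $E_i[\iota_i(\mathfrak{l})]$; this subgroup specifies a directed edge of $\mathcal{G}_\ell$, for which I take $\phi_i\colon E_i \to E_{i+1}$ to be the representative given by the safe arbitrary assignment (up to sign). Setting $\iota_{i+1} = (\phi_i)_*\iota_i$, the sequence $E_0 \to E_1 \to \cdots \to E_{r-1} \to E_r$ realizes the vertices and edges of $R$ in order, which establishes property~(1). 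Since $R$ is a rim, $(E_r,\iota_r)$ is $K$-isomorphic to $(E_0,\iota_0)$, and by absorbing a closing isomorphism into the final edge $\phi_{r-1}$ (which only changes it up to post-composition by an automorphism of its codomain and so preserves its status as a representative of an edge of $\mathcal{G}_\ell$), the walk $C(R)$ becomes a genuine closed walk.

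For property~(2), set $\theta = \phi_{r-1} \circ \cdots \circ \phi_0 \in \End(E_0)$. A telescoping of the kernels along the walk shows $\ker\theta = E_0[\iota_0(\alpha)]$, where $(\alpha) = \mathfrak{l}^r \subset \mathcal{O}$, so $\theta$ and $\iota_0(\alpha)$ share a kernel as endomorphisms of $E_0$ and hence $\theta = \pm\eta\,\iota_0(\alpha)$ for some $\eta \in \Aut(E_0)$. Property~(2) reduces to showing $\eta \in \iota_0(K)$, since in that case $\eta = \iota_0(\eta')$ for a unit $\eta' \in \mathcal{O}^\times$ and $\theta = \iota_0(\pm\eta'\alpha)$, with $\pm\eta'\alpha \in K$. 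If $E_0$ has no extra automorphisms then $\eta \in \{\pm 1\} \subset \iota_0(K)$ and one is done immediately.

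The main obstacle is the case in which some vertex $E_i$ carries extra automorphisms: there, multiple subgroups of $E_i[\ell]$ related by pre-composition with automorphisms in $\Aut(E_i)$ may represent a single $\mathcal{G}_{K,\ell}$-edge but distinct $\mathcal{G}_\ell$-edges, and the arbitrary-assignment choice of $\phi_i$ can inject a spurious automorphism into the composed endomorphism $\theta$. The safe arbitrary-assignment condition (Definition~\ref{defn:arb2}) is calibrated precisely so that the representatives of any such cluster of pre-composition-related edges differ only by pre-composition with automorphisms of $E_i$; when $E_i$ has extra $K$-automorphisms as in Remark~\ref{prop:gkl-auto}, those automorphisms lie in $\iota_i(K)$. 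A careful bookkeeping argument, tracking how these contributions propagate along the rim via the push-forward maps $(\phi_i)_*$, then shows that the accumulated discrepancy $\eta$ lies in $\iota_0(K) \cap \Aut(E_0)$, establishing property~(2). Example~\ref{ex:loops} illustrates how $\eta$ can fall outside $\iota_0(K)$ when the arbitrary assignment is not safe, and so motivates Definition~\ref{defn:arb2}.
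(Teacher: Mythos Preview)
Your setup is sound and matches the paper's: build the walk edge by edge using the kernels $E_i[\iota_i(\mathfrak{l})]$ and the assigned representatives, push forward the orientation, and close up. The reduction of property~(2) to showing $\eta\in\iota_0(K)$ is also correct. The gap is in the paragraph where you dispatch this by ``careful bookkeeping.''

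Your argument there rests on the premise that when a rim vertex $E_i$ has extra automorphisms, those automorphisms lie in $\iota_i(K)$ (so any discrepancy they introduce is harmless). But that is exactly what can fail: a rim may pass through $j=0$ or $j=1728$ with an orientation $\iota_i$ \emph{not} of the type in Remark~\ref{prop:gkl-auto}, so that $\Aut(E_i)\not\subset\iota_i(K)$. In that situation your naive traversal can close up with $\phi_*\iota_0=\varphi_*\iota_0$ for a nontrivial $\varphi\in\Aut(E_0)\setminus\iota_0(K)$, and then $\eta\notin\iota_0(K)$. The safe assignment does not, by itself, force $\eta$ into $\iota_0(K)$; rather, its role is to guarantee that one can \emph{modify} the walk to repair this. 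Concretely, the paper replaces the initial representative $(E_0,\iota_0)$ by $(E_0,\varphi_*\iota_0)$ and the first edge $\phi_0$ by $\phi_0\varphi^{-1}$, which has a \emph{different} kernel and hence is a different directed edge of $\mathcal{G}_\ell$; the safeness condition ensures this replacement is still an assigned representative. So the correct $C(R)$ need not be the walk produced by your naive construction, and your ``absorb the closing isomorphism into $\phi_{r-1}$'' step does not address this (it changes a post-composition, not the kernel sequence, and in any case knocks $\phi_{r-1}$ out of the assignment).

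There is also a residual \emph{exceptional} case the paper isolates: when every rim vertex has the same $j$-invariant with extra automorphisms and each kernel $\ker\phi_i$ is fixed by $\Aut(E_i)$, the modification above is unavailable (the replacement edge has the same kernel). Here the paper invokes Lemma~\ref{lem:normorthogonal} to show each $\phi_i$ actually commutes with the extra automorphism, forcing $\phi\in\QQ(\Aut(E_0))=\iota_0(K)$. Your sketch does not account for this case either. In short, the strategy is right, but the heart of the argument---why $\eta\in\iota_0(K)$---requires changing the walk in the generic extra-automorphism case and a separate commutativity argument in the exceptional one, neither of which follows from bookkeeping alone.
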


\begin{proof}
\textbf{Defining the map.}
It is possible to define such a map as follows.  Consider a rim $R$ of length $r$, associated to an ideal $\mathfrak{l}$ above $\ell$ in the associated $\ell$-fundamental order $\mathcal{O}$, with basepoint $(E_0,\iota_0)$, written as follows:
\[
\xymatrix{
(E_0,\iota_0) \ar[r]^{\phi_0} &
(E_1,\iota_1) \ar[r]^{\phi_1} &
(E_2,\iota_2) \ \cdots \!\!\!\!\!\!\!\!\!\!\!\!\!\! & 
(E_{r-1}, \iota_{r-1}) \ar[r]^{\quad \phi_{r-1}} &
(E_0, \iota_0)
}.
\]
Write $\phi := \phi_{r-1} \phi_{r-2} \cdots \phi_1 \phi_0 \in \End(E)$.  The result $\phi$ depends on representative choices of the isomorphism classes of oriented curves $(E_i, \iota_i)$ and of the isogenies $\phi_i$ shown above.  We seek to choose such representatives so that
\begin{enumerate}[label=(\roman*)]
    \item \label{item:iota} $\phi_*\iota_0 = \pm \iota_0$,
    \item \label{item:arbitrary} each $\phi_i$ is chosen from the safe arbitrary assignment.
\end{enumerate}
Such a choice will be called a \emph{valid set of representatives} during the course of this proof.  With such a valid set of representatives, to define a map from rims to isogeny cycles, we then forget orientations to obtain an isogeny cycle in $\mathcal{G}_\ell$.  This is now well-defined because of \ref{item:arbitrary} and will define the map referred to in the statement of the proposition.  For any rim $R$, we will write $C(R)$ for the isogeny cycle which is its image.

\textbf{Facts about valid sets of representatives.}
We will need a few facts about valid sets of representatives.  First, we will show that if we have a valid set of representatives with respect to a basepoint $(E_0, \iota_0)$, then it is also a valid set of representatives for any other $(E_i,\iota_i)$ in the cycle.  In other words, for any $k$, we have $(\phi_{r-k} \cdots \phi_{r-k+1})_* \iota_{r-k+1} = \pm \iota_{r-k+1}$ (where indices are taken modulo $r$).  To see this for $(E_1, \iota_1)$, use $\iota_1 = (\phi_0)_* \iota_0$ to compute the left hand side, using the fact that $\phi_* \iota_0 = \pm \iota_0$; repeat for $E_2$, $E_3$ etc. 

Let $\alpha \in \mathcal{O}$ be such that $\mathfrak{l}^r = (\alpha)$.   Without causing confusion, we will write $\phi_{\mathfrak{l}} \colon  (E_i,\iota_i) \rightarrow (E_{i+1}, \iota_{i+1})$ for each $i$ (this isogeny is defined up to isomorphism, by the convention that its kernel is $E_i[\iota_i(\mathfrak{l})]$).
We will now prove that any valid set of representatives also satisfies
\begin{enumerate}[label=(\roman*)]
\setcounter{enumi}{2}
    \item \label{item:classgp} $\ker \phi_i = \ker \phi_{\mathfrak{l}} = E_i[\iota_i(\mathfrak{l})]$,
    \item \label{item:classgp2} $\phi = \pm \iota_0(\alpha)$.
\end{enumerate}
  To prove this, observe that $\ker \phi_{\mathfrak{l}^r} = E_i[\iota_0(\alpha)] = \ker \iota_0(\alpha)$.  In particular, $\iota_0(\alpha)$ is $K$-isomorphic to $\phi_{\mathfrak{l}^r}$.  Furthermore, $(\iota_0(\alpha))_* \iota_0 = \pm \iota_0$ because elements of $\iota_0(K)$ commute. 
Next, since $\phi_* \iota_0 = \pm \iota_0 = (\iota_0(\alpha))_* \iota_0$ (by \ref{item:iota}) it must be that $\phi = \pm \iota_0(\alpha)$, since this is the only pre-/post-composition $\phi$ of $\iota_0(\alpha)$ that continues to satisfy $\phi_* \iota_0 = \pm \iota_0$.  This implies that $\ker \phi = \ker \phi_{\mathfrak{l}^r}$ and hence $\ker \phi_0 = \ker \phi_{\mathfrak{l}}$.  Since the valid set of representatives is valid at each basepoint, we have $\ker \phi_i = \ker \phi_{\mathfrak{l}}$ by the same reasoning.

\textbf{Claim: The map defined on a valid set of representatives satisfies requirements \eqref{item:map-directed} and \eqref{item:re-compose} of Proposition \ref{prop:rim-to-cycle}.}
 By \ref{item:classgp2}, $\phi = \pm \iota_0(\alpha)$.
The composition of the isogeny cycle $C(R)$ is $\pm \phi$, which demonstrates \eqref{item:re-compose} at the vertex $(E_0, \iota_0)$. 
Then \eqref{item:re-compose} holds at the other vertices in $R$ by the fact that a valid set of representatives is valid at each of the vertices of $R$.  The fact that the map satisfies \eqref{item:map-directed} is by definition.

\textbf{Proof outline.}
It remains to show that there is a set of representatives satisfying \ref{item:iota}--\ref{item:arbitrary} (i.e.\ it is a valid set of representatives), and that any such set leads to the same isogeny cycle (i.e.\ the map $R\to C(R)$ given above is well-defined).  To do this, we will divide the possible rims into two cases:  exceptional and non-exceptional.  The exceptional case is the following:  first, we assume every vertex of the rim has the same $j$-invariant with extra automorphisms, of which $\varphi$ is a non-trivial ($\neq [\pm 1]$) element; then, defining $\phi$ as above for any basepoint, we assume that no matter which vertex we start with as basepoint, we have $\ker \phi = \ker \phi \varphi$.

\textbf{The map $R\mapsto C(R)$ is well-defined.}
Let us show that the map $R \mapsto C(R)$ is well-defined in the non-exceptional cases.  A priori, it may be that there are two choices of representatives satisfying \ref{item:iota}--\ref{item:arbitrary}.  Suppose so.  Use primes ($\iota', \phi_i'$ etc.) to denote the second valid set of representatives.  

First note that if $(E_i, \iota_i') = (E_i, \pm \iota_i)$ at any vertex, then the two choices of representatives must be the same.  To see this, note that the vertex representative $(E_i, \pm \iota_i)$ determines the kernel of $\phi_i$ (by \ref{item:classgp}), which in turn determines $\phi_i$ up to sign (by \ref{item:arbitrary}).  Therefore $\phi_i' = \pm \phi_i$.  But this implies that $(E_{i+1},\iota_{i+1}') = (E_{i+1},\pm \iota_{i+1})$.  Proceeding in this way, the two valid sets of representatives are the same.  Hence the proof is complete if any curve in the rim has no extra automorphisms (in which case there is no choice for $\iota_i$).

Suppose that we have $\iota_0' = \varphi_* \iota_0$ for some non-trivial (not $\pm 1$) automorphism $\varphi \in \End(E_0)$.  In particular, $E_0$ has extra automorphisms.  At the same time, $\iota_1' = \eta_* \iota_1$ for some $\eta \in \Aut(E_1)$ (possibly trivial this time).   These relationships entail that $\phi_0' = \eta \phi_0 \varphi^{-1}$.  By \ref{item:arbitrary}, $\phi_0$ and $\phi_0'$ both belong to the safe arbitrary assignment, so $\eta = \pm 1$ and hence $\iota_1' = \pm \iota_1$; thus, we have reduced to the case in the previous paragraph.

\textbf{Existence of a valid set of representatives in non-exceptional cases.}
We use the safeness of the safe arbitrary assignment (Definition~\ref{defn:arb2}).  Let $\phi_0$ be the map with kernel $E_0[\iota_0(\mathfrak{l})]$ and post-composition determined by the safe arbitrary assignment.  Then let $\iota_1  = (\phi_0)_* \iota_0$.  Continue in this manner around the entire rim.  When we close the rim (reaching $(E_0, \iota_0)$ again), we find that $\phi_* \iota_0 = \varphi \iota_0 \varphi^{-1}$ for some automorphism~$\varphi$ (since the rim closes, we must reach an oriented curve isomorphic to the initial oriented curve).  If $E_0$ has no extra automorphisms, then the proof of this case is complete. Hence we are done if the cycle contains even one curve without extra automorphisms.

Let us therefore restrict to the case that $E_0$ has extra automorphisms and so do all other curves in the rim.  If item \ref{item:iota} fails, then there is a nontrivial automorphism $\varphi \in \Aut(E_0)$ so that $\phi_* \iota_0 = \varphi_* \iota_0$.  Then
\[
(\phi \varphi^{-1})_* (\varphi_* \iota_0) 
= \phi_* \iota_0 = \varphi_* \iota_0.
\]
Thus, replacing isomorphism class representatives $(E_0, \iota_0)$ with $(E_0, \varphi_*\iota_0)$ and $\phi_0$ with $\phi_0 \varphi^{-1}$, we recover~\ref{item:iota}.  Provided that $\phi_0$ and $\phi_0\varphi^{-1}$ have different kernels, which they do when $E_1 \not\cong E_0$ (Lemma~\ref{lem:loopsandauts}), then using the conventions of the safe arbitrary assignment (Definition~\ref{defn:arb2}), we have the freedom to replace $\phi_0$ with~$\phi_0\varphi^{-1}$ without violating item \ref{item:arbitrary}.

Thus we are reduced to the case that all $E_i$ are isomorphic (assumed equal without loss of generality), and $\phi_0$ and $\phi_0\varphi$ have the same kernel (for every choice of basepointing the rim).  This is the exceptional case.

\textbf{Existence of a valid set of representatives in the exceptional case.}
Suppose we are in the exceptional situation described in the proof outline.  That is, all vertices are isomorphic curves with extra automorphisms, and $\ker \phi = \ker \phi \varphi$.
Since $\phi \neq \pm \phi \varphi$ (as in the proof of Lemma~\ref{lem:loopsandauts}), we have $\varphi^k \phi = \pm \phi \varphi$ for  some integer $k$ with $\varphi^k \neq \pm 1$.  This means either $\phi\varphi = \varphi \phi$ or $\phi \varphi = \varphi \overline{\phi}$.  Note that $\phi$ commutes with $\varphi$ if and only if $\varphi \phi$ does, so commuting is independent of the safe arbitrary assignment.

The above is true for $\phi_0$ as well as $\phi$.  But $\ell = N(\phi_0) < p$, which by Lemma~\ref{lem:normorthogonal} implies that~$\phi$ and~ $\varphi$ commute.  The same argument applies to each $\phi_i$ for which $\ker \phi_i = \ker \phi_i \varphi$.  But if there is a $\phi_i$ for which this is not true, we are out of the exceptional case.  Hence every $\phi_i$ commutes with $\varphi$.

Therefore we may suppose $\phi$ commutes with $\varphi$.  Then $\phi \in \QQ(\varphi)$.  Regardless of the safe arbitrary assignment, the commutativity implies $\phi_* \iota = \iota$ for the orientation $\iota(\zeta_3) = \varphi$ or $\iota(i) = \varphi$ for $j=0$ and $j=1728$, respectively.  So suppose $\phi$ is the arbitrary assignment.
Then if $R$ is a rim with $\phi_\mathfrak{l} = \phi$ (up to post-composition by an automorphism), then $R$ is associated to the field $K = \QQ(\zeta_3)$ or $\QQ(i)$ respectively,  and $\phi$ provides a set of representatives satisfying \ref{item:iota}--\ref{item:arbitrary}.  Hence there is a valid set of representatives.
\end{proof}

We observe the following immediate corollary to Theorem~\ref{thm:mainbij-nobase}.

\begin{corollary}
Let $(j_1, \ldots, j_n)$ be a finite sequence of $j$-invariants of supersingular elliptic curves in $\mathcal{G}_\ell$.  Then the number of isogeny cycles in $\mathcal{G}_\ell$ on this ordered sequence of $j$-invariants is equal to the number of rims of $\bigcup_{K} \mathcal{G}_{K,\ell}$ on this ordered sequence of $j$-invariants.
\end{corollary}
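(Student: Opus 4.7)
The plan is to observe that the bijection of Theorem~\ref{thm:mainbij-nobase} preserves the underlying ordered sequence of curves, and hence of $j$-invariants. Thus it restricts to a bijection between those isogeny cycles and those rims that trace any prescribed ordered sequence $(j_1,\ldots,j_n)$ (interpreted, in either case, up to cyclic rotation since basepoints are forgotten). The argument consists of unwinding how the maps $\Phi$ and $\Phi' = \Psi \circ \Theta$ of diagram~\eqref{eqn:triangle} act on the underlying sequence of curves, together with the observation that the conjugation equivalence $\sim$ on rims fixes underlying curves.

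For the rim-to-cycle direction, the map $\Phi$ is defined via Proposition~\ref{prop:rim-to-cycle}. Item~\eqref{item:map-directed} of that proposition states explicitly that the curves and isogenies of the isogeny cycle $C(R)$ are representatives of the isomorphism classes of curves and isogenies of $R$, listed in the same order. In particular, the ordered sequence of $j$-invariants encountered by $C(R)$ coincides with that encountered by $R$.

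For the cycle-to-rim direction, given an isogeny cycle $C \in \mathcal{C}_{E,n}$ traversing curves $E = E_0, E_1, \ldots, E_{n-1}$, the map $\Theta$ produces an orientation $\iota_\theta$ on $E_0$ together with an ideal $\mathfrak{l}$ above $\ell$ satisfying $\ker \phi^{(E_0, \iota_\theta)}_\mathfrak{l} = \ker \phi_0$; then $\Psi$ reconstructs the rim via the repeated class-group action by $\mathfrak{l}$. By the same argument used in the proof of Lemma~\ref{lem:theta} at each successive basepoint, the $i$-th edge of the resulting rim has the same kernel as the $i$-th edge of $C$, so the $i$-th vertex of the rim has underlying curve $E_i$. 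Thus the rim traces the same ordered sequence of underlying $j$-invariants as $C$. Finally, conjugation sends $(E_i, \iota_i)$ to $(E_i, \overline{\iota_i})$, fixing the underlying curve at each vertex; so the equivalence class of a rim under $\sim$ has a well-defined underlying ordered sequence of $j$-invariants.

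There is no serious obstacle here: the refinement is essentially a bookkeeping observation riding on the bijection already established in Theorem~\ref{thm:mainbij-nobase}. The only mild point of care is to note that the subtleties handled by the safe arbitrary assignment in Proposition~\ref{prop:rim-to-cycle} concern only which \emph{isogenies} (i.e.\ which directed edges with specified kernels) appear in $C(R)$; they do not disturb the sequence of underlying curves. Restricting the bijection of Theorem~\ref{thm:mainbij-nobase} to the preimage of any fixed ordered sequence $(j_1,\ldots,j_n)$ therefore yields the claimed equality of counts.
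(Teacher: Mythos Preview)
Your argument is correct and follows the same approach as the paper, which simply declares this an ``immediate corollary'' to Theorem~\ref{thm:mainbij-nobase} without supplying further details. Your elaboration---checking via Proposition~\ref{prop:rim-to-cycle}\eqref{item:map-directed} that $\Phi$ preserves the ordered sequence of underlying curves, and via the basepoint-shifting argument of the proof of Theorem~\ref{thm:mainbij-nobase} that $\Phi'$ does as well, together with the observation that conjugation fixes underlying $j$-invariants---is exactly what ``immediate'' means here and is a faithful unpacking of the paper's intent.
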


In fact, an alternate approach to proving Theorem~\ref{thm:mainbij-nobase} would be to observe that it is indeed equivalent to this corollary.  One could then break into cases based on the existence of extra automorphisms.  In most cases, there are no double edges, and this is a bijection of singleton sets.

\subsection{An undirected version of the main bijection}
\label{sec:mainbij-undir}

The arbitrary assignment also allows us to set up a bijection between walks \and their `duals' (walking the same $j$-invariants backwards), despite the ambiguity at $j=0$ and $j=1728$.  This means we can discuss the bijection between undirected rims and undirected isogeny cycles.

\begin{lemma}
\label{lemma:dual-reverse}
Assume we have made a safe arbitrary assignment for $\mathcal{G}_\ell$. Let $(j_0, j_1, \ldots, j_n)$ be a sequence of $j$-invariants for a closed walk in $\mathcal{G}_\ell$ (where $j_i$ is adjacent to $j_{i+1}$, and $j_n$ to $j_0$).  Then the closed walks through this ordered list of $j$-invariants can be paired with the closed walks through the list $(j_n, j_{n-1}, \ldots, j_1, j_0)$ (i.e.\ going backwards) in such a way that the pair of endomorphisms obtained from each pair of walks (by composition) is a pair of dual endomorphisms.
\end{lemma}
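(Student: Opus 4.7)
The strategy is to pair each walk $W$ with the walk $W^*$ produced by applying Lemma~\ref{lemma:unique-cycle} to the dual of the composition of $W$. Suppose $W$ traverses $(j_0, \ldots, j_n)$ with safe arbitrary-assignment representatives $(\phi_0, \ldots, \phi_{n-1})$, so that $\theta = \phi_{n-1} \circ \cdots \circ \phi_0 \in \End(E_0)$. Assume for the moment that $W$ has no backtracking, so that $[\ell] \nmid \theta$ and hence $[\ell] \nmid \hat{\theta}$. I would then apply Lemma~\ref{lemma:unique-cycle} to $\hat{\theta} \in \End(E_0)$ to produce a unique closed walk $W^*$ in $\mathcal{G}_\ell$ whose composition is $\pm \hat{\theta}$ (the iterative construction in the proof of that lemma chooses arbitrary-assignment representatives at each step, so the resulting composition is exactly the input endomorphism up to sign), and define the pairing by $W \mapsto W^*$.

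To verify that $W^*$ traverses the reversed sequence $(j_n, j_{n-1}, \ldots, j_0)$, I would track the kernels that arise in the iterative construction. Writing $\hat{\theta} = \hat{\phi}_0 \circ \hat{\phi}_1 \circ \cdots \circ \hat{\phi}_{n-1}$, the innermost factor $\hat{\phi}_{n-1}$ has kernel of order $\ell$ contained in $E_0[\ell]$, and since $[\ell] \nmid \hat{\theta}$ the intersection $\ker \hat{\theta} \cap E_0[\ell]$ must equal $\ker \hat{\phi}_{n-1}$ (as both have order $\ell$). Hence the first step of $W^*$ goes from $E_n = E_0$ to a curve isomorphic to $E_{n-1}$; iterating on the cofactor $\hat{\phi}_0 \circ \cdots \circ \hat{\phi}_{n-2}$, and so on, shows that $W^*$ visits $(E_n, E_{n-1}, \ldots, E_0)$ as claimed. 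Bijectivity then follows from the involutivity of the construction: since $\hat{\hat{\theta}} = \theta$, applying the same procedure to the composition of $W^*$ recovers $W$, so $(W^*)^* = W$.

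The remaining case is that $W$ contains backtracking, so $[\ell] \mid \theta$ and Lemma~\ref{lemma:unique-cycle} does not apply directly. I would handle this by factoring $W$ at its backtracking steps into an alternating concatenation of non-backtracking sub-walks and backtracking pairs, dualizing each non-backtracking piece by the procedure above and pairing each backtracking pair $(\phi_i, \phi_{i+1})$ with $\phi_{i+1} \circ \phi_i = \pm [\ell]$ with a reversal in the opposite direction that also composes to $\pm [\ell]$. The main obstacle I expect is the subcase when a backtracking pair occurs at a vertex with $j \in \{0, 1728\}$: the safe arbitrary assignment of Definition~\ref{defn:arb2} is essential to fix a consistent choice of reverse pair so that the local automorphism factors introduced at the transitions between backtracking and non-backtracking segments combine correctly, and the overall composition of $W^*$ agrees with $\pm \hat{\theta}$.
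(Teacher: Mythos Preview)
Your approach differs from the paper's: the paper constructs the reverse walk explicitly by dualizing each edge and then, at every vertex with extra automorphisms, absorbing the stray automorphism into the next outgoing edge (using the safe assignment, which guarantees that $\widehat{\phi_{i-1}}\varphi^{-1}$ is again an assigned representative). You instead try to conjure $W^*$ indirectly by feeding $\hat\theta$ into Lemma~\ref{lemma:unique-cycle}. Your kernel-tracking verification that $W^*$ visits the reversed $j$-sequence is fine, and the involutivity argument for bijectivity is nice.

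The gap is your parenthetical claim that the composition of $W^*$ is $\pm\hat\theta$. Lemma~\ref{lemma:unique-cycle} only promises composition $\varphi\hat\theta$ for some $\varphi\in\Aut(E_0)$, and when $j_0\in\{0,1728\}$ there is no reason the arbitrary assignment at the final step picks $\varphi=\pm 1$. With $\varphi\neq\pm 1$, the pair $(\theta,\varphi\hat\theta)$ is not a pair of dual endomorphisms, which is exactly what the lemma asserts. The paper's explicit construction is precisely what forces the composition to be $\pm\hat\theta$: each time an automorphism is inserted to land in the assignment, it is immediately cancelled against the next edge, so nothing is left over at the end. Your indirect route does not supply this, and your involutivity argument does not rescue it (applying the construction to $\varphi\hat\theta$ still only gives a walk composing to $\psi\theta\bar\varphi$ for some $\psi$, and while this must equal $\pm\theta$, that does not force $\varphi=\pm 1$).

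On the backtracking case: your decomposition into non-backtracking segments and backtracking pairs is not obviously well-defined (consider three consecutive edges with two overlapping backtracks), and Lemma~\ref{lemma:unique-cycle} is unavailable anyway. The paper's edge-by-edge dualization works uniformly regardless of backtracking, so no separate case analysis is needed. In practice the lemma is only invoked for isogeny cycles (which are non-backtracking), so this point is less critical, but your sketch here would need substantial tightening to stand on its own.
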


\begin{proof}
Each walk $\phi_0, \phi_1, \ldots, \phi_n$ with composition $\theta$ has a `dual walk' obtained by replacing the individual isogenies with their duals and reversing the order:  the walk $\widehat{\phi_n}, \widehat{\phi_{n-1}}, \ldots, \widehat{\phi_0}$.  This results in a walk on the same vertices backwards.  If the walk does not pass through $j=0$ or $j=1728$, this dual walk gives the dual endomorphism under composition (up to sign).  The difficulty arises when choosing a dual for an outgoing edge $\phi_i \colon E_i \rightarrow E_{i+1}$ where $E_i$ has $j$-invariant $0$ or $1728$, in which case the dual $\widehat{\phi_i} \colon  E_{i+1} \rightarrow E_i$ may not be in the safe arbitrary assignment, and we are forced to use instead $\varphi\widehat{\phi_i} \colon  E_{i+1} \rightarrow E_i$ for some $\varphi \in \Aut(E_i)$.  In this case the endomorphism obtained by composition is no longer $\pm \widehat{\theta}$, because of the intervening $\varphi$ between $\widehat{\phi_i}$ and $\widehat{\phi_{i-1}}$.
However, this can be adjusted by choosing an appropriate replacement for $\widehat{\phi_{i-1}} \colon  E_{i} \rightarrow E_{i-1}$ from among the available isogenies in the safe arbitrary assignment from $E_i$ to $E_{i-1}$, which differ by precomposition by an automorphism.  In other words, can obtain the composition $\widehat{\phi_{i-1}} \widehat{\phi_i}$ as $\widehat{\phi_{i-1}}\varphi^{-1} \varphi \widehat{\phi_i}$.  In this way, we can obtain a dual path whose composition according to the safe arbitrary assignment is $\pm \widehat{\theta}$.  Since this process can be performed in both directions (from clockwise paths to counterclockwise, and vice versa), inversely (by Lemma~\ref{lemma:unique-cycle}), the statement is proved.
\end{proof}

The preceding lemma allows us to define, for any closed walk, its \emph{backward walk}.

\begin{lemma}
If $\overline{R}$ is the conjugate of $R$, then $\Phi(\overline{R})$ and $\Phi(R)$ are backward isogeny cycles of each other.
\end{lemma}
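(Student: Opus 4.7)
The plan is straightforward: apply Proposition~\ref{prop:rim-to-cycle} to compute the compositions of $\Phi(R)$ and $\Phi(\overline R)$, observe that they are dual endomorphisms, and then read off the conclusion from Lemma~\ref{lemma:dual-reverse}. Specifically, if $R$ is the directed rim
\[
(E_0,\iota_0) \to (E_1,\iota_1) \to \cdots \to (E_{r-1},\iota_{r-1}) \to (E_0,\iota_0),
\]
traversed by the action of a prime ideal $\mathfrak{l}$ above $\ell$ in the $\ell$-fundamental order $\mathcal{O}$ with $\mathfrak{l}^r=(\alpha)$, then Proposition~\ref{prop:rim-to-cycle}(\ref{item:re-compose}) gives that $\Phi(R)$ composes at $E_0$ to $\pm\iota_0(\alpha)$.

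Next I would apply the same proposition to $\overline R$, which—as an (undirected) rim in the sense of Section~\ref{sec:mainbij-undir}—admits a traversal in the direction opposite to that of $R$. Under this traversal, $\Phi(\overline R)$ composes at $E_0$ to $\pm\iota_0(\overline\alpha)$, and the identity $\iota(\overline\beta) = \widehat{\iota(\beta)}$ (which follows from $\iota(\beta)\iota(\overline\beta) = [N(\beta)] = \iota(\beta)\widehat{\iota(\beta)}$ after cancelling $\iota(\beta)$) shows that this composition is precisely the dual of $\iota_0(\alpha)$.

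With the two compositions thus exhibited as dual endomorphisms, the conclusion is read off from Lemma~\ref{lemma:dual-reverse}: that lemma's pairing produces a bijection between closed walks through $(j_0,\ldots,j_n)$ and those through the reverse sequence $(j_n,\ldots,j_0)$ so that the associated compositions are dual. Uniqueness of a closed walk from its composition up to the safe arbitrary assignment (Lemma~\ref{lemma:unique-cycle}) then matches $\Phi(\overline R)$ with the backward walk of $\Phi(R)$.

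The hardest step is interpreting $\overline R$ correctly: the direction convention in Section~\ref{sec:bij-statement} preserves the vertex order of the rim (so a naive direct application of Proposition~\ref{prop:rim-to-cycle} would yield $\overline{\iota_0}(\overline\alpha)=\iota_0(\alpha)$, giving equal rather than dual compositions). The resolution, which must be made precise, is to read $\overline R$ via its reverse traversal as permitted in the undirected setting of Section~\ref{sec:mainbij-undir}; coupling this with the bookkeeping of the safe arbitrary assignment (Definition~\ref{defn:arb2}) at curves of $j$-invariant $0$ or $1728$—much as in the proof of Proposition~\ref{prop:rim-to-cycle}—shows that the backward walk of $\Phi(R)$ produced by Lemma~\ref{lemma:dual-reverse} really is $\Phi(\overline R)$ on the nose, not merely up to $\mathcal{G}_\ell$-equivalence.
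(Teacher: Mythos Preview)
Your approach and the paper's both rest on Lemma~\ref{lemma:dual-reverse}, but reach it by different routes. The paper's two-sentence proof simply asserts that $\overline{R}$ consists of the dual isogenies of $R$ traversed in reverse, and then invokes the backward-walk construction of Lemma~\ref{lemma:dual-reverse} directly. You instead compute the composed endomorphisms of $\Phi(R)$ and $\Phi(\overline{R})$ via Proposition~\ref{prop:rim-to-cycle}, exhibit them as duals through the identity $\iota(\overline{\beta})=\widehat{\iota(\beta)}$, and then appeal to the uniqueness in Lemma~\ref{lemma:unique-cycle} to match $\Phi(\overline{R})$ with the backward walk. Your route is more explicit, and you rightly flag the direction subtlety: under the conjugation convention of Section~\ref{sec:bij-statement}, traversing $\overline{R}$ in its own stated direction gives composition $\overline{\iota_0}(\overline{\alpha})=\iota_0(\alpha)$, not its dual---so one must pass to the reverse traversal (effectively $\overline{R}^{\mathrm{op}}$), which is precisely what the paper's first sentence asserts without elaboration. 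What your argument buys is transparency about where this step is needed; what the paper's buys is brevity. Both leave the safe-arbitrary-assignment bookkeeping at $j=0,1728$ implicit, and your acknowledgement that this ``must be made precise'' is fair: the residual imprecision is shared.
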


\begin{proof}
The rim $\overline{R}$ consists of the set of edges and vertices traversed backward, where an isogeny is replaced with its dual.  From the proof of Lemma~\ref{lemma:dual-reverse}, this results in the backward isogeny cycle of $\Phi(R)$.
\end{proof}

\begin{corollary}\label{cor:mainbij-undir}
There is a bijection between undirected rims of length $r$ of $\mathcal{G}_{K,\ell}$ for all $K$, and undirected isogeny cycles of length $r$ in $\mathcal{G}_\ell$.
\end{corollary}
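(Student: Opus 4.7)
The plan is to deduce this corollary from Theorem~\ref{thm:mainbij-nobase} by introducing a further quotient by direction reversal on both sides. Theorem~\ref{thm:mainbij-nobase} already provides a bijection $\Phi \colon \mathcal{R}_r/{\sim} \to \mathcal{C}_r$ between directed rims (modulo conjugation) and directed isogeny cycles. To pass to the undirected setting I would quotient $\mathcal{R}_r/{\sim}$ further by the involution that reverses the direction of traversal, and quotient $\mathcal{C}_r$ by the involution that sends an isogeny cycle to its backward isogeny cycle, which is well-defined by Lemma~\ref{lemma:dual-reverse}. The corollary will then follow from the fact that $\Phi$ intertwines these two involutions.

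First I would check that direction reversal is well-defined on $\mathcal{R}_r/{\sim}$. Under the parametrization $\Psi' \colon R \mapsto (\iota, \mathfrak{l})$ of a basepointed rim, reversing the direction of traversal corresponds to $(\iota, \mathfrak{l}) \mapsto (\iota, \overline{\mathfrak{l}})$, while conjugation corresponds to $(\iota, \mathfrak{l}) \mapsto (\overline{\iota}, \overline{\mathfrak{l}})$. These two operations commute, so reversal descends to a well-defined involution on $\mathcal{R}_r/{\sim}$, and hence on $\mathcal{R}_r/{\sim}$ after forgetting basepoints. On the cycle side, Lemma~\ref{lemma:dual-reverse} already equips $\mathcal{C}_r$ with a well-defined backward-cycle involution via the safe arbitrary assignment.

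The main content is to show that $\Phi$ intertwines these two involutions, i.e.\ that the direction-reversed rim of $R$ maps under $\Phi$ to the backward isogeny cycle of $\Phi(R)$. Given a valid set of representatives for $R$ as constructed in Proposition~\ref{prop:rim-to-cycle}, with isogenies $\phi_0, \phi_1, \ldots, \phi_{r-1}$ in the safe arbitrary assignment, forgetting orientations from the reversed rim produces the sequence $\widehat{\phi_{r-1}}, \widehat{\phi_{r-2}}, \ldots, \widehat{\phi_0}$, which traverses the same vertices in the opposite order. By Lemma~\ref{lemma:dual-reverse} these duals can be replaced by representatives lying in the safe arbitrary assignment without changing the resulting closed walk in $\mathcal{G}_\ell$, and that closed walk is by definition the backward isogeny cycle of $\Phi(R)$.

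The main obstacle will be handling curves with extra automorphisms (at $j=0$ or $j=1728$): the duals $\widehat{\phi_i}$ may fail to lie in the safe arbitrary assignment, so the adjustment of Lemma~\ref{lemma:dual-reverse} introduces an automorphism between consecutive edges. As in that lemma, the offending automorphism can be absorbed into the adjacent edge, preserving both the underlying closed walk in $\mathcal{G}_\ell$ and the valid-representative conditions of Proposition~\ref{prop:rim-to-cycle}; this works uniformly around the cycle because the safe arbitrary assignment was specifically designed to permit such local adjustments. Once this intertwining is in place, $\Phi$ descends to a bijection on the joint quotients, yielding the undirected bijection claimed in Corollary~\ref{cor:mainbij-undir}.
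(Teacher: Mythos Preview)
Your argument is correct and follows essentially the same approach as the paper: descend the bijection of Theorem~\ref{thm:mainbij-nobase} to the undirected setting by verifying that $\Phi$ intertwines direction reversal on rims with the backward-cycle involution furnished by Lemma~\ref{lemma:dual-reverse}. Your explicit check that reversal and conjugation commute (so reversal is well-defined on $\mathcal{R}_r/{\sim}$), together with your handling of the extra-automorphism adjustments via the safe arbitrary assignment, matches the paper's treatment.
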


\begin{definition}
\label{defn:barbell}
A closed walk in $\mathcal{G}_\ell$ is called a \emph{barbell} if traversing it in the opposite direction as in Lemma~\ref{lemma:dual-reverse} results in the same walk, up to a choice of basepoint.
\end{definition}

An example is shown in Figure~\ref{fig:barbell}, justifying the name of `barbell.'

\begin{figure}
    \centering
    \includegraphics[width=0.8\textwidth]{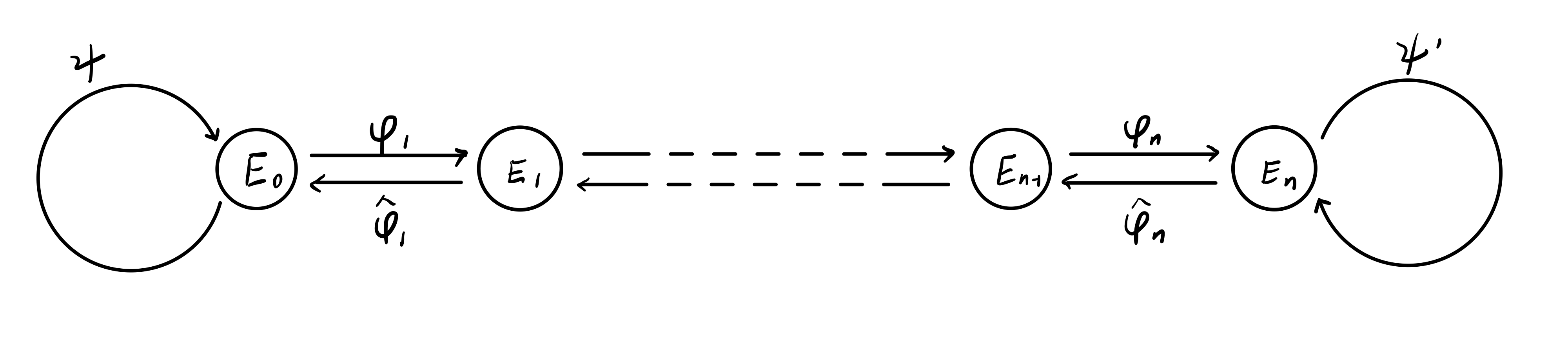}
    \caption{An example of an isogeny cycle that is a barbell (Definition~\ref{defn:barbell}).  If we walk in the direction of the arrows from $E_0$, we obtain $\psi \widehat{\varphi_1} \cdots \widehat{\varphi_n} \psi' \varphi_n \cdots \varphi_1$.  If we walk the cycle backwards (as defined in Lemma~\ref{lemma:dual-reverse}), we get $\widehat{\varphi_1} \cdots \widehat{\varphi_n} \widehat{\psi'} \varphi_n \cdots \varphi_1\widehat{\psi} $.  If we ignore the basepoint and have $\psi = \widehat{\psi}$ and $\psi' = \widehat{\psi'}$, then these are the same cyclic sequence of arrows, in which case this is a barbell.}
    \label{fig:barbell}
\end{figure}

\begin{lemma}
\label{lemma:barbellcount}
Barbells necessarily involve at least two loops and have an even number $r$ of vertices and edges.  The total number of barbells having $r$ edges is at most $\Np(\ell+1)\ell^{(r-2)/2}$.
\end{lemma}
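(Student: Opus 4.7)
The plan is to tackle the two assertions of the lemma in sequence. For the parity and loop count, I would encode the barbell condition on the cyclic edge sequence $(e_1,\ldots,e_r)$ as the existence of a shift $k$ with $e_i = \widehat{e_{k+1-i \bmod r}}$ for every $i$. Any edge fixed by the involution $\sigma(i) = k+1-i \bmod r$ is then self-dual, hence a loop by Proposition~\ref{prop:undirected}. The non-backtracking requirement $e_{i+1} \neq \widehat{e_i}$ becomes $i+1 \neq \sigma(i)$, i.e., $2i \not\equiv k \pmod{r}$. I would then run a parity analysis on $r$ and $k$: if $r$ is odd then $2$ is invertible modulo $r$ and $2i \equiv k \pmod{r}$ has a solution, producing a backtracking spot (contradiction); if $r$ is even and $k$ is even then $\gcd(2,r)=2$ divides $k$, so the same equation again has solutions, again a contradiction. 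Hence $r$ must be even and $k$ must be odd; in that remaining case the fixed-edge equation $2i \equiv k+1 \pmod{r}$ has two solutions (since $k+1$ is even), yielding at least two loops, located at positions $r/2$ apart in the walk.

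For the count, I would exploit the structure just established: a barbell consists of a loop $\psi$ at some vertex $E_0$, an outgoing non-backtracking path $\varphi_1,\ldots,\varphi_n$ of length $n=(r-2)/2$ to the other loop vertex $E_n$, a loop $\psi'$ at $E_n$, and the reverse path $\widehat{\varphi_n},\ldots,\widehat{\varphi_1}$ back. I would parametrize each barbell by its basepoint $E_0$ together with the outgoing half-walk $(\psi,\varphi_1,\ldots,\varphi_n)$: at most $\Np$ choices of basepoint, at most $\ell+1$ choices for the initial edge (the loop $\psi$, which is bounded by the total out-degree at $E_0$), and at most $\ell$ non-backtracking choices for each of the $n$ subsequent path edges. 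This yields the claimed upper bound $\Np(\ell+1)\ell^{(r-2)/2}$.

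The main obstacle I foresee is the parity case analysis in the first part: carefully working out how the non-backtracking condition rules out the odd-$r$ and even-$k$ cases via the counting of solutions to $2i \equiv k \pmod{r}$ and $2i \equiv k+1 \pmod{r}$, and tracking that the two fixed edges are genuinely distinct. Once the palindromic structure is in hand, the counting reduces to straightforward bookkeeping on a non-backtracking walk of length $r/2$.
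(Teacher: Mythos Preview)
Your approach is essentially the same as the paper's, though you supply considerably more detail for the structural claim. Where the paper simply asserts the palindromic vertex sequence $j_0,j_1,\ldots,j_n,j_n,\ldots,j_1,j_0$ and observes that the turnaround points must be loops, you derive this carefully by encoding the barbell condition as $e_i=\widehat{e_{k+1-i}}$ and running the parity analysis on $r$ and $k$; this is correct and a nice way to make the structure explicit. (One small quibble: Proposition~\ref{prop:undirected} concerns $\mathcal{G}_{K,\ell}$, not $\mathcal{G}_\ell$, but the fact you need---that a self-dual directed edge must be a loop---is immediate in any directed graph.)

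For the count, your parametrization by $(E_0,\psi,\varphi_1,\ldots,\varphi_n)$ is the same idea as the paper's ``walks of length $(r-2)/2$ between two loops.'' There is one point to watch: this half-walk does not record the second loop $\psi'$, so if the terminal vertex $E_n$ carries more than one loop, distinct barbells may share the same half-walk and the map from barbells to half-walks is not injective as written. The paper's one-line proof is equally silent on this. The fix is easy---record $\psi'$ as well and use that each barbell has two loop-basepoints giving two such extended half-walks, yielding an injective $2$-to-$1$ map from (barbell, loop-basepoint) pairs to walks of length $r/2+1$---at the cost of an innocuous constant factor. Since the lemma is only invoked to show barbells are asymptotically negligible in the proof of Theorem~\ref{thm:numcycles-randwalk}, this has no downstream effect.
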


\begin{proof}
Any barbell traverses a palindromic sequence of vertices of the following form:
\[
j_0, j_1, \ldots, j_{n-1}, j_n, j_n, j_{n-1}, \ldots, j_1, j_0.
\]
In particular, the `turnaround' points must be loops, because otherwise they would involve backtracking. This implies that the number of edges and vertices is even.  The count follows by counting the number of walks of length $(r-2)/2$ between two loops, and bounding the number of loops by the number of edges of the graph.
\end{proof}

The above bound on the number of barbells is far from tight.  There are congruence conditions modulo $p$ depending on $\ell$ which can rule out loops in $\mathcal{G}_\ell$ and hence rule out the existence of barbells.  (These conditions can be worked out by considering the specialization of the modular polynomial $\Phi_\ell(x,x) \bmod p$ and choosing~$p$ such that the roots are not supersingular.)
 
\begin{theorem}
\label{thm:barbells}
Under the bijection of Theorem~\ref{thm:mainbij-nobase}, the self-conjugate rims in $\mathcal{G}_{K,\ell}$ are in bijection with the barbells in $\mathcal{G}_\ell$.
\end{theorem}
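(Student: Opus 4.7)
The plan is to derive Theorem~\ref{thm:barbells} from the compatibility of the bijection of Theorem~\ref{thm:mainbij-nobase} with two natural involutions: conjugation on the rim side and backward-walk on the isogeny cycle side. Once this intertwining is in hand, the theorem is a clean matching of fixed points, since the self-conjugate rims are by definition the fixed points of conjugation on $\mathcal{R}_r$ and, by Definition~\ref{defn:barbell}, the barbells are precisely the fixed points of the backward-walk involution on $\mathcal{C}_r$.

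The essential input is the unnumbered lemma just before Corollary~\ref{cor:mainbij-undir}, which states that $\Phi(\overline{R})$ is the backward isogeny cycle of $\Phi(R)$. This is exactly the required intertwining: $\Phi$ sends conjugation on rims to the backward-walk operation on cycles.

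For the forward direction, suppose $R$ is self-conjugate, so $R = \overline{R}$. Then $\Phi(R) = \Phi(\overline{R})$, which by the lemma equals the backward isogeny cycle of $\Phi(R)$. Hence $\Phi(R)$ equals its own backward walk, so by Definition~\ref{defn:barbell} it is a barbell. For the converse, given a barbell $C \in \mathcal{C}_r$, let $[R]$ be its preimage under the bijection. Then $\Phi(R) = C = \mathrm{backward}(C) = \Phi(\overline{R})$, and tracking this equality through the basepointed formulation of Theorem~\ref{thm:mainbij} (together with injectivity of the bijection on $\mathcal{R}_r/\sim$) forces $R = \overline{R}$ in $\mathcal{R}_r$, so $R$ is self-conjugate.

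The main delicacy is not in the fixed-point argument itself but in ensuring that the notions of ``conjugate of a rim'' and ``backward of an isogeny cycle'' correspond under $\Phi$ exactly, without hidden automorphism corrections at $j = 0$ or $j = 1728$. This is precisely what the safe arbitrary assignment (Definition~\ref{defn:arb2}) and Lemma~\ref{lemma:dual-reverse} were set up to guarantee, and these are exactly the tools used to prove the intertwining lemma invoked above. With this machinery already developed in Sections~\ref{sec:arb}--\ref{sec:graphcover}, the proof of Theorem~\ref{thm:barbells} reduces to the clean fixed-point matching sketched here.
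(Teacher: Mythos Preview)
Your converse direction has a genuine gap. You derive $\Phi(R) = C = \mathrm{backward}(C) = \Phi(\overline{R})$ and then invoke injectivity of the bijection on $\mathcal{R}_r/{\sim}$ to conclude $R = \overline{R}$ in $\mathcal{R}_r$. But $\Phi$ is only a bijection on the \emph{quotient}, so injectivity gives merely $[R] = [\overline{R}]$, which holds tautologically by the very definition of $\sim$. Indeed, the paper's definition of $\Phi$ explicitly records that conjugating $R$ yields the same walk, so $\Phi(R) = \Phi(\overline{R})$ is always true; your argument therefore extracts no information. (This also exposes a tension in the unnumbered lemma you cite: taken literally together with $\Phi(R) = \Phi(\overline{R})$, it would force every isogeny cycle to be a barbell. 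The proof of that lemma in fact describes the rim with ``edges and vertices traversed backward, where an isogeny is replaced with its dual'' --- that is the \emph{reversed} rim, not the conjugate $\overline{R}$.)

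The paper's proof avoids this trap by comparing $R$ not with $\overline{R}$ but with the rim $R'$ directed in the opposite direction. The correct intertwining is that reversing direction on rims corresponds to taking the backward walk on cycles, i.e.\ $\Phi(R')$ is the backward of $\Phi(R)$. If $\Phi(R)$ is a barbell, then $\Phi(R') = \Phi(R)$, and now injectivity on $\mathcal{R}_r/{\sim}$ gives the \emph{non-trivial} conclusion $[R'] = [R]$, so $R' \in \{R,\overline{R}\}$; since $R' \neq R$ for $r > 2$, one obtains $R' = \overline{R}$, which is precisely self-conjugacy. The forward direction then reads: if $R$ is self-conjugate (so $\overline{R} = R'$), then $\Phi(R) = \Phi(\overline{R}) = \Phi(R')$ is the backward of $\Phi(R)$, hence a barbell. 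Your fixed-point strategy is the right idea, but the two involutions you must intertwine are \emph{direction reversal} on rims and backward-walk on cycles; conjugation acts trivially on the image and cannot play that role.
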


\begin{proof}
If $R$ is a self-conjugate rim, then conjugation returns the same rim traversed in the opposite direction.  Hence $\Phi(R)$ must be a barbell.  Conversely, if $R$ is a rim such that $\Phi(R)$ is a barbell, then the rim $R'$ directed in the opposite direction maps to $\Phi(R)$, the cycle $\Phi(R)$ traversed in the opposite direction.  Hence $\Phi(R) = \Phi(R')$ and therefore $R$ is self-conjugate.
\end{proof}


\section{Supersingular curves oriented by $\mathcal{O}$}
\label{sec:classfieldtheory}

This section answers the question of when $\SSO = \rho(\Ell(\mathcal{O}))$.  Its purpose is to prepare for the next section, where we discuss the fibre sizes of the map taking a rim to its corresponding quadratic order.  In particular, we wish to understand the action of conjugation on $\SSO$.

Let $p$ be inert or ramified in $K$, and let $\mathcal{O}$ be an order of $K$. 
In this section, we study the set $\SSO$ in greater detail, using class field theory.   In particular, Theorem~\ref{thm:sso} characterizes when $\rho(\Ell(\mathcal{O})) = \SSO$, which holds precisely when $p$ ramifies in $K$.  This also characterizes the scenario when conjugation takes $\rho(\Ell(\mathcal{O}))$ back to itself.

Let $K = \QQ(\beta)$ be an imaginary quadratic field of discriminant $\Delta_K$ with a distinguished root $\beta$ of a fixed minimal polynomial $f_K(x)$ generating $K/\QQ$.  Let $\mathcal{O}$ be an order of $K$.
Let $L/K$ be the ring class field of $\mathcal{O}$.  By the general theory of class fields, the field $L$ has Galois group $\Gal(L/\QQ) \cong \Cl(\mathcal{O}) \rtimes \langle \tau \rangle$ where $\Cl(\mathcal{O}) \cong \Gal(L/K)$, and $\tau$ is the non-trivial automorphism of $K/\QQ$, and where $\sigma \tau = \tau \sigma^{-1}$ for any $\sigma \in \Cl(\mathcal{O})$ \cite[Lemma 9.3]{Cox_primesoftheform}.
Let $H_{\mathcal{O}}(x)$ be the Hilbert class polynomial (sometimes called the ring class polynomial) 
of the order $\mathcal{O}$.
Define
\[
\mathcal{L}_K := \{ j \in L : H_\mathcal{O}(j) = 0 \}, \quad
\mathcal{L}_\QQ := \{ (j, \alpha) \in L \times K : H_\mathcal{O}(j) = 0 = f_K(\alpha) \}.
\]

The Galois groups $\Gal(L/K)$ and $\Gal(L/\QQ)$ act freely and transitively on $\mathcal{L}_K$ and $\mathcal{L}_\QQ$, respectively, via $j \mapsto j^\sigma$ and $(j, \alpha) \mapsto (j,\alpha)^\sigma = (j^\sigma, \alpha^\sigma)$, respectively.  We have $\# \mathcal{L}_K = \hO = \#\Gal(L/K)$ and $\#\mathcal{L}_\QQ = 2h_\mathcal{O} = \#\Gal(L/\QQ)$.

It is well known that $\mathcal{L}_K$ is in bijection with $\Ell(\mathcal{O})$, the isomorphism classes of elliptic curves over $\CC$ with complex multiplication by $\mathcal{O}$, by the map $j \mapsto E(j)$ taking a $j$-invariant to its isomorphism class of curves.  (We will abuse notation by denoting an element of $\Ell(\mathcal{O})$ as a curve $E$ without equivalence class brackets $[E]$.)  This endows $\Ell(\mathcal{O})$ with an action of $\Gal(L/K)$.

We now give an analogous result for $\mathcal{L}_\QQ$.
 Given $E \in\Ell(\mathcal{O})$, there are two isomorphisms $\iota \colon \mathcal{O} \rightarrow \End_\CC(E)$ that differ by conjugation.  Denote these by $\iota_{E}$ and $\overline{\iota}_{E}$, where $\iota_{E}$ is normalized so that $\iota_E(\alpha)^* \omega_E = \alpha \omega_E$  for the invariant differential $\omega_E$ of $E$ and any $\alpha \in \mathcal{O}$.  Let $\Ell^*(\mathcal{O})$ denote the set of cardinality $2\hO$ of pairs consisting of a curve from $\Ell(\mathcal{O})$ together with one of the two orientations:
 \[
 \Ell^*(\mathcal{O}) := \{ (E, \iota)\colon E \in \Ell(\mathcal{O}), \, \iota\colon \mathcal{O} \xrightarrow{\sim} \End(E) \}.
 \]

 \begin{proposition}
 There is a bijection of $\mathcal{L}_\QQ$ with $\Ell^*(\mathcal{O})$ via
\[
(j,\delta) \mapsto 
\left(
E=E(j), \left\{
\begin{array}{ll}
\iota_E & \text{ if }\delta = \beta \\
\overline{\iota}_E & \text{ if }\delta = \overline{\beta}
\end{array} \right.
\right).
\]
This endows $\Ell^*(\mathcal{O})$ with an action of $\Gal(L/\QQ)$.  
\end{proposition}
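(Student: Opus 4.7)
The plan is to verify in sequence that the map is well-defined, bijective, and compatible with a Galois action; none of these steps should be genuinely difficult, and the whole argument is essentially a bookkeeping exercise built on the equality $\#\mathcal{L}_\QQ = 2h_\mathcal{O} = \#\Ell^*(\mathcal{O})$.

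First I would check that the map is well-defined. Given $(j,\delta)\in\mathcal{L}_\QQ$, the condition $H_\mathcal{O}(j)=0$ guarantees that $E(j)\in\Ell(\mathcal{O})$, so the two normalized isomorphisms $\iota_E,\overline{\iota}_E\colon\mathcal{O}\xrightarrow{\sim}\End(E)$ exist. Since $f_K$ has only the two roots $\beta,\overline{\beta}$, the rule matching $\delta=\beta$ with $\iota_E$ and $\delta=\overline{\beta}$ with $\overline{\iota}_E$ is an unambiguous assignment, provided $E$ has $j$-invariant not equal to $0$ or $1728$; in the exceptional CM cases $\mathcal{O}=\ZZ[i]$ or $\ZZ[\zeta_3]$, the prescription $\iota_E(\alpha)^*\omega_E=\alpha\omega_E$ still picks out a unique $\iota_E$ as explained in Section~\ref{sec:back:sso}, so no ambiguity arises. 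In fact $E(j)$ depends only on the isomorphism class of $j$, and is canonically identified by the choice made in the set $\Ell(\mathcal{O})$.

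Next I would prove bijectivity by a counting argument combined with injectivity. We have $\#\mathcal{L}_\QQ=2h_\mathcal{O}$ as already noted, and likewise $\#\Ell^*(\mathcal{O})=2h_\mathcal{O}$ since each of the $h_\mathcal{O}$ elements of $\Ell(\mathcal{O})$ contributes exactly two orientations $\iota_E$ and $\overline{\iota}_E$ (these are genuinely distinct because $\mathcal{O}\supsetneq\ZZ$). So it suffices to check injectivity. Suppose $(j_1,\delta_1)$ and $(j_2,\delta_2)$ have the same image. Then $E(j_1)\cong E(j_2)$ in $\Ell(\mathcal{O})$, which forces $j_1=j_2$ by the bijection between $\mathcal{L}_K$ and $\Ell(\mathcal{O})$. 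Equality of the second coordinate of the image then says the same choice among $\{\iota_E,\overline{\iota}_E\}$ is made, which forces $\delta_1=\delta_2$ by the definition of the map. Hence the map is a bijection.

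Finally I would transport the action. The Galois group $\Gal(L/\QQ)$ acts freely and transitively on $\mathcal{L}_\QQ$ by $(j,\alpha)^\sigma=(j^\sigma,\alpha^\sigma)$, so defining $(E,\iota)^\sigma$ to be the image under the bijection of $(j,\delta)^\sigma$, where $(j,\delta)$ is the preimage of $(E,\iota)$, gives a well-defined action of $\Gal(L/\QQ)$ on $\Ell^*(\mathcal{O})$. The only potential obstacle I can see — and really the only step that required any thought above — is confirming that $\iota_E$ and $\overline{\iota}_E$ are always distinct so the correspondence between the two `orientations' and the two Galois conjugates $\beta,\overline{\beta}$ is unambiguous; this is immediate because $\iota_E\neq\overline{\iota}_E$ would force $\alpha=\overline{\alpha}$ for all $\alpha\in\mathcal{O}$, contradicting $\mathcal{O}\subset K$ being an order in an imaginary quadratic field.
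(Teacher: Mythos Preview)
Your proof is correct. The paper itself omits any proof of this proposition, simply following it with the remark ``The bijection is evidently non-canonical; we could make the opposite choice for $\iota_E$.'' Your careful verification of well-definedness, bijectivity via the count $\#\mathcal{L}_\QQ = 2h_\mathcal{O} = \#\Ell^*(\mathcal{O})$ together with injectivity, and transport of the Galois action is exactly the routine bookkeeping the paper implicitly leaves to the reader.
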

The bijection is evidently non-canonical; we could make the opposite choice for $\iota_E$.

Choose a finite extension $L'/L$ such that there is a prime $\mathfrak{p}$ of $L'$ above $p$ for which all curves in $\Ell(\mathcal{O})$ have good reduction at $\mathfrak{p}$.
Let $\rho_\mathfrak{p}$ 
denote reduction modulo $\mathfrak{p}$, and put $\tilde{E} = \rho_{\mathfrak{p}}(E)$ for brevity.  Reduction provides a map
\[
\rho \colon \Ell^*(\mathcal{O}) \rightarrow \SSO
\text{ given by }
(E, \iota_E) \mapsto (\tilde{E}, \iota_{E} \bmod{ \mathfrak{p}} )\text{, and }
(E, \overline{\iota}_E) \mapsto (\tilde{E}, \overline{\iota}_{E} \bmod{\mathfrak{p}}  ).
\]
There is a natural embedding of $\Ell(\mathcal{O})$ in $\Ell^*(\mathcal{O})$ given by $E \mapsto (E, \iota_E)$ (i.e.\ choosing the normalized isomorphism), and to ease notation, we will consider $\Ell(\mathcal{O})$ a subset of $\Ell^*(\mathcal{O})$.  Consider the following diagram.
\[
\xymatrix@R=1em{
\Cl(\mathcal{O}) \cong \Gal(L/K) \ \ar@{^{(}->}[r] & \Gal(L/\QQ) \cong \Cl(\mathcal{O}) \rtimes \langle \tau \rangle \\
\mathcal{L}_K\ar@(ul,ur)[] \ar^{\text{\Large $\wr$}}[d] &  \mathcal{L}_\QQ\ar@(ul,ur)[] \ar^{\text{\Large $\wr$}}[d]
\\
 \Ell(\mathcal{O}) \ \ar@{^{(}->}[r] \ar[d]^{\rho_\mathfrak{p}} & \Ell^*(\mathcal{O})  \ \ar[d]^{\rho_\mathfrak{p}}  \\
\rho_\mathfrak{p}(\Ell(\mathcal{O})) \ \ar@{^{(}->}[r] & \SSO
}
\]
The actions of the Galois groups shown are free and transitive.  The inclusion map $\Ell(\mathcal{O}) \hookrightarrow \Ell^*(\mathcal{O})$ is equivariant with respect to the action of $\Gal(L/K) \subseteq \Gal(L/\QQ)$.

In the vertical arrows of the lower half of the diagram, we see reduction modulo $\mathfrak{p}$.  The square in this portion commutes.  The descending arrow on the left is a bijection.  

First, we consider the easier unramified case, and show that $\rho_\mathfrak{p}(\Ell(\mathcal{O})) \neq \SSO$.  The map $(E,\iota) \mapsto (E,\overline{\iota})$ on $\SSO$ will be called \emph{conjugation}.

\begin{proposition}
\label{prop:sso-unram}
Let $p$ be inert in $K$.  Then the two isomorphism classes of $\mathcal{O}$-oriented curves represented by $(E,\iota)$ and $(E,\overline{\iota})$ cannot both belong to $\rho_\mathfrak{p}(\Ell(\mathcal{O}))$, so $\rho_\mathfrak{p}(\Ell(\mathcal{O})) \neq \SSO$, in which case $2\# \rho_\mathfrak{p}(\Ell(\mathcal{O})) =\# \SSO$. 
\end{proposition}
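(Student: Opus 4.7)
The plan is to deduce the statement from the stronger fact that, when $p$ is inert in $K$, the involution $\pi_p$ of Definition~\ref{def:frob_action} acts without fixed points on $\SSO$. Combined with Proposition~\ref{prop:cl-frob}, this forces $\#\SSO = 2\hO$ and partitions $\SSO$ into two disjoint free $\ClO$-orbits, namely $\rho(\Ell(\mathcal{O}))$ and $\pi_p\cdot\rho(\Ell(\mathcal{O}))$. The final step identifies the complementary orbit $\pi_p\cdot\rho(\Ell(\mathcal{O}))$ with the conjugate $\{(E,\overline\iota):(E,\iota)\in\rho(\Ell(\mathcal{O}))\}$, which then yields the proposition.

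To prove that $\pi_p$ acts freely, suppose $\pi_p\cdot(E,\iota)=(E,\iota)$ in $\SSO$, so there exists a $K$-isomorphism $\lambda\colon(E^{(p)},\iota^{(p)})\to(E,\iota)$. Set $\phi:=\lambda\circ\pi_p\in\End(E)$, which has degree $p$. Three observations are in play: the standard commutation $\pi_p\circ\iota(\alpha)=\iota(\alpha)^{(p)}\circ\pi_p$; the identity $\iota^{(p)}(\alpha)=\iota(\alpha)^{(p)}$, which follows from $\pi_p\circ\widehat{\pi_p}=[p]$ applied to the definition of $(\pi_p)_*\iota$; and the $K$-equivariance $\lambda\circ\iota^{(p)}(\alpha)=\iota(\alpha)\circ\lambda$. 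Concatenating, one obtains $\phi\iota(\alpha)=\iota(\alpha)\phi$ for every $\alpha\in K$, so $\phi$ lies in the centralizer of $\iota(\mathcal{O})$ in $\End^0(E)\cong B_{p,\infty}$, which is just $\iota(K)$. Combining $\phi\in\End(E)$ with primitivity of the orientation gives $\phi\in\iota(K)\cap\End(E)=\iota(\mathcal{O})$; writing $\phi=\iota(\gamma)$ yields $N(\gamma)=\deg\phi=p$. However, when $p$ is inert in $K$, no element of $\mathcal{O}_K$ (and hence none of $\mathcal{O}\subseteq\mathcal{O}_K$) has norm $p$, a contradiction.

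To identify the complementary orbit, fix a prime $\mathfrak{p}$ of $L'$ above $p$ and let $\Frob_\mathfrak{p}$ denote its Frobenius, restricted to $L$. Since $p$ is inert in $K$, $\Frob_\mathfrak{p}|_K=\tau$ is complex conjugation, so under the bijection $\mathcal{L}_\QQ\leftrightarrow\Ell^*(\mathcal{O})$ the action of $\Frob_\mathfrak{p}$ sends $(j,\beta)\leftrightarrow(E(j),\iota_{E(j)})$ to $(j^{\Frob_\mathfrak{p}},\overline\beta)\leftrightarrow(E(j^{\Frob_\mathfrak{p}}),\overline{\iota}_{E(j^{\Frob_\mathfrak{p}})})$. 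Thus $\Frob_\mathfrak{p}$ swaps the `normalized' subset $\Ell(\mathcal{O})\subset\Ell^*(\mathcal{O})$ with its `conjugated' counterpart $\overline{\Ell(\mathcal{O})}:=\{(E,\overline{\iota}_E)\}$. Equivariance of the reduction map at $\mathfrak{p}$ yields $\rho\circ\Frob_\mathfrak{p}=\pi_p\circ\rho$, and conjugation of orientations commutes with reduction; applying $\rho$ to $\Frob_\mathfrak{p}(\Ell(\mathcal{O}))=\overline{\Ell(\mathcal{O})}$ therefore gives $\pi_p\cdot\rho(\Ell(\mathcal{O}))=\overline{\rho(\Ell(\mathcal{O}))}$, exactly as needed.

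The main obstacle will be justifying carefully the equivariance $\rho\circ\Frob_\mathfrak{p}=\pi_p\circ\rho$ at the level of (curve, orientation) pairs, since the normalization of $\iota_E$ is defined via the invariant differential in characteristic zero and must be tracked through reduction. This is precisely the mechanism that separates the inert and ramified cases and explains the asymmetry between $\#\SSO=2\hO$ and $\#\SSO=\hO$ in Theorem~\ref{thm:sso}.
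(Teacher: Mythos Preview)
Your proof is correct but takes a genuinely different route from the paper. The paper argues directly via the invariant differential: if both $(E,\iota)$ and $(E,\overline\iota)$ lay in $\rho_\mathfrak{p}(\Ell(\mathcal{O}))$, then both orientations would inherit the normalization from characteristic zero, giving $\iota(\alpha)^*\omega_E = \alpha\omega_E$ and simultaneously $\overline\iota(\alpha)^*\omega_E = \alpha\omega_E$, whence $\iota(\alpha-\overline\alpha)^*\omega_E=0$. Choosing $\alpha\in\mathcal{O}$ with $\alpha-\overline\alpha$ coprime to $p$ (possible precisely because $p$ is unramified) makes $\iota(\alpha-\overline\alpha)$ separable, a contradiction. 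That is the entire argument.

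Your approach instead first shows that $\pi_p$ acts freely on $\SSO$ via a quaternion-centralizer and norm argument; this step does not appear in the paper's proof and is a genuinely independent way to obtain $\#\SSO=2\hO$ in the inert case. You then identify the complementary $\ClO$-orbit with the conjugate set via the compatibility $\rho\circ\Frob_\mathfrak{p}=\pi_p\circ\rho$. The trade-off is this: your first step is clean and conceptual, but your third step---the equivariance you rightly flag as the main obstacle---when unwound, requires the identity $([\alpha]_E)^{\Frob_\mathfrak{p}}=[\overline\alpha]_{E^{\Frob_\mathfrak{p}}}$ (using $\Frob_\mathfrak{p}|_K=\tau$), and that identity is proved by applying $\Frob_\mathfrak{p}$ to the normalization condition $[\alpha]_E^*\omega_E=\alpha\omega_E$. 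So you ultimately invoke the same differential mechanism as the paper, only packaged inside a longer structural argument. What your route buys is extra information: it identifies the complement of $\rho(\Ell(\mathcal{O}))$ explicitly as both the $\pi_p$-translate and the conjugate set, which anticipates the role of conjugation in Theorem~\ref{thm:sso}.
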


\begin{proof}
Since $p$ is unramified, there exists $\alpha_0 \in \mathcal{O}$ such that $\iota(\alpha_0 - \overline{\alpha_0})$ is separable.  
Suppose $E_1, E_2 \in \Ell(\mathcal{O})$.  Then there are isomorphisms $[ \cdot ]_{E_i} \colon \mathcal{O} \rightarrow \End(E_i)$, where we use the normalization of Section~\ref{sec:back:sso}, namely we choose the isomorphisms so that $[\alpha]_{E_i}^* \omega_{E_i} = \alpha \omega_{E_i}$ for all $\alpha \in \mathcal{O}$.  
With notation as in Section~\ref{sec:back:sso}, suppose that the reduction $(\tilde{E}_1,\iota_{E_1})$ of $(E_1, [\cdot]_{E_1})$ modulo $\mathfrak{p}$ is in the isomorphism class of $(E, \iota)$ and the reduction $(\tilde{E}_2, \iota_{E_2})$ of $(E_2, [\cdot]_{E_2})$ is in the isomorphism class of $(E, \overline{\iota})$.  

Let us first assume for simplicity that $(\tilde{E_1},\iota_{E_1}) = (E,\iota)$ and $(\tilde{E_2}, \iota_{E_2}) = (E, \overline{\iota})$, in which case the proof is more transparent.  Then, as discussed in Section~\ref{sec:back:sso}, the normalization is preserved by reduction, which is to say,
\[
\iota(\alpha)^* \omega_E 
= \alpha \omega_E 
=  \overline{\iota}(\alpha)^* \omega_{E } 
\]
for all $\alpha \in \mathcal{O}$.  But then
\[
\iota(\alpha)^* \omega_E 
=  \overline{\iota}(\alpha)^* \omega_{E } 
= \iota(\overline{\alpha})^* \omega_E,
\]
for all $\alpha \in \mathcal{O}$, which implies $\iota(\alpha -\overline{\alpha})^* \omega_E = 0$ for all $\alpha$, a contradiction for $\alpha = \alpha_0$ since $\iota(\alpha_0 - \overline{\alpha_0})$ is separable.

In general, however, the map $\rho$ is only defined on isomorphism classes, prompting a certain amount of bookkeeping.  In particular, assume that $(\tilde{E_1}, \iota_{E_1}) = (\phi_1 E, \phi_1 \iota \phi_1^{-1})$, and $(\tilde{E_2}, \iota_{E_2}) = (\phi_2 E, \phi_2 \overline{\iota} \phi_2^{-1})$ where $\phi_i \colon E \rightarrow \phi_i E$, $i=1,2$, are isomorphisms.  
Then for all $\alpha \in \mathcal{O}$,
\[
\alpha \omega_E 
= \alpha \phi_2^* \omega_{\phi_2 E}
=  \phi_2^* \alpha \omega_{\phi_2 E}
= \phi_2^* (\phi_2 \overline{\iota}(\alpha) \phi_2^{-1})^* \omega_{\phi_2 E } 
= \overline{\iota}(\alpha)^* \phi_2^* \omega_{\phi_2 E}
= \overline{\iota}(\alpha)^* \omega_E
= \iota(\overline{\alpha})^* \omega_E
\]
and by a similar computation, $\iota(\alpha)^* \omega_E = \alpha \omega_E$.  The rest of the proof is as before.
\end{proof}

Next we consider the ramified case, where we show that $\rho_\mathfrak{p}(\Ell(\mathcal{O})) = \SSO$.

\begin{proposition}
\label{prop:tau-on-sso}
Suppose $p$ is ramified in $K$.
Then the action of $\Gal(L/\QQ)$ on $\mathcal{L}_\QQ$ induces an action on~$\SSO$.  Under this action, $\tau$ acts by conjugation, i.e.\ $\tau \cdot (E, \iota) = (E, \overline{\iota})$.
\end{proposition}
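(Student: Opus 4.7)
The plan is to show $\tau$ acts on $\SSO$ as conjugation $(E,\iota)\mapsto(E,\overline{\iota})$, from which the descent of the whole $\Gal(L/\QQ)$-action to $\SSO$ follows by combining with the (already established) class group action.

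First, I would unwind how $\tau$ acts on $\Ell^*(\mathcal{O})$ under the bijection with $\mathcal{L}_\QQ$. Choose any lift $\tilde\tau \in \Gal(L'/\QQ)$ of $\tau$, so that $\alpha^{\tilde\tau}=\overline\alpha$ for $\alpha\in K$. Applying $\tilde\tau$ to the normalization identity $\iota_E(\alpha)^*\omega_E=\alpha\,\omega_E$ and using $\omega_E^{\tilde\tau}=\omega_{E^{\tilde\tau}}$ yields
\[
\bigl(\iota_E(\alpha)^{\tilde\tau}\bigr)^*\omega_{E^{\tilde\tau}}=\overline{\alpha}\,\omega_{E^{\tilde\tau}}.
\]
By uniqueness of the normalized embedding on $E^{\tilde\tau}$, this identifies the transported map $\alpha\mapsto \iota_E(\alpha)^{\tilde\tau}$ with $\overline{\iota}_{E^{\tilde\tau}}$, i.e.\ $\tau\cdot(E,\iota_E)=(E^{\tilde\tau},\overline\iota_{E^{\tilde\tau}})$ in $\Ell^*(\mathcal O)$, in agreement with the rule $(j,\beta)\mapsto(j^\tau,\overline\beta)$ on $\mathcal{L}_\QQ$.

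The key step is to choose the lift $\tilde\tau$ so that it fixes $\mathfrak p$ and acts trivially on the residue field $k_\mathfrak p$. This is where the ramification hypothesis enters decisively: the inertia group of $p$ in $\Gal(K/\QQ)$ is all of $\langle\tau\rangle$, and surjectivity of inertia groups along the tower $\QQ \subset K \subset L'$ provides a lift $\tilde\tau\in I(\mathfrak p/p)\subset \Gal(L'/\QQ)$ with these properties. For such $\tilde\tau$, reduction modulo $\mathfrak p$ commutes with the action of $\tilde\tau$, and that action is trivial on $k_\mathfrak p$. Writing $\iota:=\widetilde{\iota_E}$, we therefore obtain
\[
\rho\bigl(\tau\cdot(E,\iota_E)\bigr)=\rho\bigl(E^{\tilde\tau},\overline{\iota}_{E^{\tilde\tau}}\bigr)=(\widetilde E,\overline\iota),
\]
which is precisely the conjugate of $\rho(E,\iota_E)=(\widetilde E,\iota)$, establishing the stated formula for $\tau$.

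It then remains to verify that the action of $\Gal(L/\QQ)=\Cl(\mathcal O)\rtimes\langle\tau\rangle$ descends consistently to $\SSO$. The $\Cl(\mathcal O)$-subaction descends by the results of Onuki recalled above; conjugation of orientation is manifestly well-defined on $\SSO$; and the semidirect product relation $\sigma\tau=\tau\sigma^{-1}$ is verified on $\SSO$ by observing that $[\mathfrak a]\cdot(E,\overline\iota)$ and $\overline{[\overline{\mathfrak a}]\cdot(E,\iota)}$ are both realized by the isogeny with kernel $E[\overline\iota(\mathfrak a)]=E[\iota(\overline{\mathfrak a})]$, together with $[\overline{\mathfrak a}]=[\mathfrak a]^{-1}$ in $\Cl(\mathcal O)$. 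The single principal obstacle in the argument is the choice of inertial lift $\tilde\tau$; in the inert case no such lift exists, which is why the conclusion here fails there (foreshadowing Proposition~\ref{prop:sso-unram}).
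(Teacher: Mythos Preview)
Your proof is correct and takes essentially the same approach as the paper: both hinge on the observation that, since $p$ ramifies in $K$, complex conjugation can be realized inside the inertia group above $p$, so reduction modulo $\mathfrak{p}$ fixes $j$-invariants while conjugating the orientation. The paper's version is terser---it defines the action on $\SSO$ directly and checks $\rho$-equivariance via the single line ``$\tau$ lies in the inertia group of $\mathfrak{P}=\mathfrak{p}\cap L$, hence $j^\tau\equiv j\pmod{\mathfrak{p}}$''---whereas you additionally unpack the compatibility with the normalization of $\iota_E$ and verify the dihedral relation on $\SSO$ explicitly.
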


\begin{proof}
Define an action on $\SSO$ by $\Cl(\mathcal{O})$ as in Definition~\ref{def:idealaction}, and by $\langle \tau \rangle$ as $\tau \cdot (E, \iota) = (E, \overline{\iota})$.  Let $\mathfrak{P} = \mathfrak{p} \cap L$ be the restriction of $\mathfrak{p}$ to $L$. 
Since $p$ is ramified in $K$, $\tau$ is in the inertia group of $\mathfrak{P}$, which implies that $j^\tau \equiv j \pmod{\mathfrak{P}}$ and hence also modulo $\mathfrak{p}$, where $j = j(E)$.  Thus 
\begin{align*}
\rho_\mathfrak{p}( \tau \cdot(E_j,\iota) )
& = \rho_\mathfrak{p}( (E_{j^\tau}, \overline{\iota} ) )
= (E_{j^\tau \bmod{\mathfrak{p}}}, \overline{\iota} \bmod{\mathfrak{p}} )
= (E_{j \bmod{\mathfrak{p}}}, \overline{\iota} \bmod{\mathfrak{p}}) \\
&= \tau \cdot (E_{j \bmod{\mathfrak{p}}}, \iota \bmod{\mathfrak{p}})
= \tau \cdot \rho_\mathfrak{p}((E_j, \iota)).
\end{align*}
The map $\rho$ is also known to satisfy $\rho_\mathfrak{p}( \sigma \cdot (E,\iota)) = \sigma \cdot (\rho_\mathfrak{p}( (E,\iota)))$ for $\sigma \in \Cl(\mathcal{O})$ \cite[Proof of Proposition 3.6]{onuki2021}.   Therefore, the actions described on $\SSO$ give a well-defined action of $\Gal(L/\QQ)$ with respect to which~$\rho$ is equivariant.
\end{proof}

\begin{theorem}
\label{thm:sso}
The prime $p$ ramifies in $K$ if and only if $\rho_\mathfrak{p}(\Ell(\mathcal{O})) = \SSO$ if and only if conjugation takes $\rho_\mathfrak{p}(\Ell(\mathcal{O}))$ to itself. 
\end{theorem}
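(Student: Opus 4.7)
The plan is to establish the cyclic implications (ii) $\Rightarrow$ (iii) $\Rightarrow$ (i) $\Rightarrow$ (ii). The first is immediate, since conjugation $(E,\iota)\mapsto(E,\overline{\iota})$ is a self-map of $\SSO$. For (iii) $\Rightarrow$ (i), I would argue by contrapositive: Proposition~\ref{prop:sso-empty} forces $p$ to be non-split (so $p$ is inert or ramified), and the inert case is ruled out by Proposition~\ref{prop:sso-unram}, which shows that $(E,\iota)$ and $(E,\overline{\iota})$ cannot both lie in $\rho_\mathfrak{p}(\Ell(\mathcal{O}))$; hence $p$ ramifies.

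The substantive step is (i) $\Rightarrow$ (ii). Assume $p$ ramifies in $K$. By Proposition~\ref{prop:tau-on-sso}, the reduction $\rho_\mathfrak{p}\colon \Ell^*(\mathcal{O})\to\SSO$ is $\Gal(L/\QQ)$-equivariant with $\tau$ acting on $\SSO$ as conjugation. I would first deduce that conjugation preserves $\rho_\mathfrak{p}(\Ell(\mathcal{O}))$: for $(E,\iota)=\rho_\mathfrak{p}(E',\iota_{E'})\in\rho_\mathfrak{p}(\Ell(\mathcal{O}))$, equivariance gives $(E,\overline{\iota})=\tau\cdot\rho_\mathfrak{p}(E',\iota_{E'}) = \rho_\mathfrak{p}(E'^{\tau},\overline{\iota}_{E'^{\tau}})$, and since $\tau$ lies in the inertia subgroup at $\mathfrak{P}$ we have $j(E')^\tau \equiv j(E') \pmod{\mathfrak{p}}$, so the reduced curves agree; a cardinality argument on the $\Gal(L/\QQ)$-orbit $\rho_\mathfrak{p}(\Ell^*(\mathcal{O}))$ then yields $\overline{\rho_\mathfrak{p}(\Ell(\mathcal{O}))} \subseteq \rho_\mathfrak{p}(\Ell(\mathcal{O}))$. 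To upgrade conjugation-invariance to (ii), I would invoke \cite[Proposition~3.3]{onuki2021} to write $\SSO = \rho_\mathfrak{p}(\Ell(\mathcal{O})) \cup \pi_p\cdot\rho_\mathfrak{p}(\Ell(\mathcal{O}))$ and then show that $\pi_p$ also preserves $\rho_\mathfrak{p}(\Ell(\mathcal{O}))$ as a set: for $\pi\in\mathcal{O}_K$ generating the prime above $p$, the element $\iota(\pi)\in B_{p,\infty}$ has reduced norm $p$, and any degree-$p$ endomorphism of the supersingular $E$ factors through the purely inseparable Frobenius (since $E[p](\overline{\FF}_p)=0$), so a suitable scaling of $\iota(\pi)$ equals $\lambda\pi_p$ for an isomorphism $\lambda\colon E^{(p)}\to E$; a short calculation using the commutativity of $\iota(K)$ inside $B_{p,\infty}$ then shows $\lambda_{*}\iota^{(p)}\in\{\iota,\overline{\iota}\}$, so $\pi_p\cdot(E,\iota)\in\{(E,\iota),(E,\overline{\iota})\}\subseteq\rho_\mathfrak{p}(\Ell(\mathcal{O}))$ by the just-established conjugation-invariance.

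The main obstacle is the Frobenius step. Two subtleties must be handled carefully. First, when the prime of $\mathcal{O}$ above $p$ is non-principal, $\iota(\pi)$ lies in $\End^0(E)$ but not in $\End(E)$, so one must clear denominators coming from the conductor of $\mathcal{O}$ before invoking the factorization through $\pi_p$. Second, when $\pi_p\notin\iota(K)$ (which occurs unless $K$ contains $\QQ(\sqrt{-p})$), conjugation by $\pi_p$ induces the non-trivial Galois involution on $\iota(K)$, producing $\overline{\iota}$ rather than $\iota$ in the calculation — and it is precisely to absorb this case that conjugation-invariance is needed as a preliminary step.
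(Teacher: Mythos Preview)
Your overall architecture is sound and matches the paper's: (ii)\,$\Rightarrow$\,(iii) is trivial, (iii)\,$\Rightarrow$\,(i) is the contrapositive of Proposition~\ref{prop:sso-unram}, and the real content is (i)\,$\Rightarrow$\,(ii) via factoring an element of $\iota(\mathcal{O})$ through Frobenius. However, there are two genuine gaps in your (i)\,$\Rightarrow$\,(ii).

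\textbf{The conjugation-invariance step is circular.} Your ``cardinality argument'' does not give (i)\,$\Rightarrow$\,(iii). Equivariance of $\rho_\mathfrak{p}$ under $\Gal(L/\QQ)$ shows that $\tau$ carries $\rho_\mathfrak{p}(\Ell(\mathcal{O}))$ to $\rho_\mathfrak{p}(\tau\cdot\Ell(\mathcal{O}))$; but upstairs $\tau$ swaps the normalized and non-normalized halves of $\Ell^*(\mathcal{O})$, so this only yields the dichotomy: either $\rho_\mathfrak{p}$ is two-to-one (whence $\rho_\mathfrak{p}(\Ell(\mathcal{O}))=\SSO$) or $\rho_\mathfrak{p}$ is a bijection (whence conjugation exchanges $\rho_\mathfrak{p}(\Ell(\mathcal{O}))$ with its complement). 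That is (ii)\,$\Leftrightarrow$\,(iii), not (i)\,$\Rightarrow$\,(iii), so you cannot feed (iii) into your Frobenius step.

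\textbf{The Frobenius step is incomplete when the ramified prime is non-principal.} If $\mathfrak{p}\subset\mathcal{O}$ is principal, say $\mathfrak{p}=(\pi)$, then indeed $\iota(\pi)=\lambda\pi_p$ with $\lambda$ an isomorphism, and commutativity of $\iota(K)$ gives $\lambda_*\iota^{(p)}=\iota$ directly (so $\pi_p$ fixes $(E,\iota)$ and you are done --- your $\overline{\iota}$ case never arises here, so the second ``subtlety'' is a red herring). But the ramified prime need not be principal, even in $\mathcal{O}_K$: for $\Delta_K=-pq$ with $p,q>3$ distinct primes, the prime above $p$ has non-trivial class. Clearing denominators then produces $\iota(m\pi)=\psi\pi_p$ with $\psi:(E^{(p)},\iota^{(p)})\to(E,\iota)$ a \emph{horizontal $K$-oriented isogeny of degree $m^2>1$}, not an isomorphism. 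To conclude that $(E^{(p)},\iota^{(p)})$ lies in the $\Cl(\mathcal{O})$-orbit of $(E,\iota)$, you still need to show that a horizontal isogeny of degree coprime to $p$ realizes a class-group action up to multiplication by an integer. This is exactly the step the paper supplies: it takes $\theta\in\iota(\mathcal{O})$ with $\theta^2=-np$ (always available, e.g.\ $\theta=\iota(\sqrt{\Delta_\mathcal{O}})$), factors $\theta=\phi\pi_p$, and then argues via the volcano structure that the horizontal $\phi$ decomposes into ascending/descending pairs contributing $[q]$ plus a purely horizontal remainder, which is a class-group element. Your proposal does not provide this analysis, and ``clearing denominators'' alone does not substitute for it.
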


\begin{proof}
If $\rho_\mathfrak{p}(\Ell(\mathcal{O})) = \SSO$, then $p$ ramifies by Proposition~\ref{prop:sso-unram}.
Conversely, if $p$ ramifies in $K$, then there is some $\theta \in \iota(\mathcal{O}) \subseteq \End(E)$ such that $\theta^2 = -np$ for some integer $n$ coprime to $p$.  The endomorphism $\theta$ is a horizontal isogeny with respect to $\mathcal{O}$ (as an element of $\iota(\mathcal{O})$).  Furthermore, $\theta$ must factor as
\[
E \xrightarrow{\pi_p} E^{(p)} \xrightarrow{\phi} E,
\]
where $\phi$ is of degree $n$ and separable.  But since $\theta \in \iota(\mathcal{O})$, $\theta_* \iota = \iota$ and we have the following chain of $K$-oriented isogenies:
\[
(E,\iota) \xrightarrow{\pi_p} (E^{(p)}, \iota^{(p)}) \xrightarrow{\phi} (E, \iota).
\]
The maps shown above are horizontal isogenies ($\pi_p$ is horizontal and so is $\theta$, hence $\phi$ is also).  We wish to show that $\phi$ represents the action of an element of $\Cl(\mathcal{O})$, up to multiplication by an integer.  We can factor~$n$ into prime powers and prove it for each one, so without loss of generality, we assume that $n$ is a prime power~$q^k$, for $q \ne p$.  

 The map $\phi$ is given by a path in the oriented $q$-isogeny graph determined by successive kernels, and as it is horizontal, this path must have as many descending as ascending steps.  Using the volcano structure of the graph, all the ascending and descending edges can be paired up into consecutive pairs $\phi_2 \circ \phi_1$ where $\phi_1$ descends and $\phi_2$ ascends.  Furthermore, $\phi_2 = \varphi_1 \widehat{\phi_1} \varphi_2$ for some automorphisms $\varphi_1, \varphi_2$.  Then $\varphi_1^{-1} \phi_2 \phi_1 = \widehat{\phi_1} \varphi_2 \phi_1$ has trace and norm divisible by $q$, hence it is divisible by $[q]$.  So $\phi_2\phi_1 = \varphi [q]$ for some automorphism~$\varphi$.  Since we only care about $\phi$ up to multiplication by an integer, we can replace $\phi_2 \phi_1$ with $\varphi$, which is horizontal.  Repeating this process leaves a purely horizontal path, which must therefore represent an action of an element of $\Cl(\mathcal{O})$
(since $q \ne p$, $\Cl(\mathcal{O})$ acts on $\SSO$, so any horizontal $q$-isogeny is the action of an element of the class group).
It follows that $(E^{(p)}, \iota^{(p)}) \in \Cl(\mathcal{O}) \cdot (E,\iota)$, which in turn implies that $\rho_\mathfrak{p}(\Ell(\mathcal{O})) = \SSO$ by \cite[Propositions  3.3 and 3.4]{onuki2021}.

Finally, we show that $\rho_\mathfrak{p}(\Ell(\mathcal{O})) = \SSO$ if and only if conjugation takes $\rho_\mathfrak{p}(\Ell(\mathcal{O}))$ to itself.  Since $p$ is ramified, by Proposition~\ref{prop:tau-on-sso}, the action of $\Gal(L/\QQ)$ is defined on $\SSO$ and $\tau$ acts by conjugation, fixing the $j$-invariant $j(E)$ modulo $\mathfrak{p}$.
First, note that if $(E,\iota)$ and $(E,\overline{\iota})$ are related by an element of the class group, then the entire orbit $\Cl(\mathcal{O}) \cdot (E,\iota)$ consists of elements related to their conjugates by the class group, and therefore, conjugation takes $\rho_\mathfrak{p}(\Ell(\mathcal{O}))$ into itself.  Therefore conjugation takes $\rho_\mathfrak{p}(\Ell(\mathcal{O}))$ to itself, or else maps it to $\SSO \setminus \rho_\mathfrak{p}(\Ell(\mathcal{O}))$.
We know that $\# \rho_\mathfrak{p}(\Ell(\mathcal{O})) = h_\mathcal{O}$, and $\# \SSO \in \{ \hO, 2\hO \}$ (Section~\ref{sec:back:sso}).  But $\# \SSO = 2\hO$ if and only if $\rho_\mathfrak{p} \colon \Ell^*(\mathcal{O}) \rightarrow \SSO$ is an isomorphism if and only if $\Gal(L/\QQ)$ acts freely and transitively on $\SSO$ (calling upon Proposition~\ref{prop:tau-on-sso} again).  But $\rho_\mathfrak{p}(\Ell(\mathcal{O}))$ forms one $\Cl(\mathcal{O})$-orbit, so the only way the action can be free and transitive is if conjugation takes $\rho_\mathfrak{p}(\Ell(\mathcal{O}))$ outside itself.
\end{proof}

\section{Isogeny cycles and their associated orders}
\label{sec:cycle-order}

The following theorem describes a way to relate isogeny cycles in $\mathcal{C}_r$ to imaginary quadratic orders.  Define
\[
\mathcal{I}_{r} =\left\{ \text{imaginary quadratic orders $\mathcal{O}$} : \substack{ \mbox{$p$ does not split in the field containing $\mathcal{O}$} \\ 
\mbox{$p$ does not divide the conductor of $\mathcal{O}$} \\ \mbox{$\mathcal{O}$ is an $\ell$-fundamental order,} \\
\mbox{$(\ell) = \mathfrak{l}\bar{\mathfrak{l}}$ splits in $\mathcal{O}$,} \\ \mbox{and $[\mathfrak{l}]$ has order $r$ in $\Cl(\mathcal{O})$}. }
\right\}.
\]
These are exactly the orders for which $\SSO$ is non-empty and the permutation on the vertices arising from the action of a prime ideal in $\mathcal{O}$ above $\ell$ has a decomposition consisting of cycles of length $r$ only.
The following theorem describes how often each order in $\mathcal{I}_r$ is obtained from an isogeny cycle in $\mathcal{G}_\ell$ via the bijection of Theorem~\ref{thm:mainbij-nobase}.  If $p$ is inert, one expects to obtain $2\hO/r$ directed isogeny cycles of size $r$ which give rise to an order~$\mathcal{O}$, since $\#\SSO = 2\hO$ and the rims are identified in pairs by conjugation, and given a direction.  However, in the case that $p$ ramifies in the field $K$ containing $\mathcal{O}$, the exact count may differ from this.

Let $\mathcal{O}$ be a quadratic order with fraction field $K$.  Let $L/K$ be the ring class field of $\mathcal{O}$.  Let $g_\mathcal{O}$ be the number of genera in $\Cl(\mathcal{O})$.  Let $\sigma_\ell := \Frob_\ell \in \Gal(L/K)$ have order $r$.  Define $C_\ell := \langle \sigma_\ell \rangle$ (so $\# C_\ell = r$).

\begin{theorem}
\label{thm:mainbij-fibre}
Let $r > 2$.
The map $\mathcal{R}_r \rightarrow \mathcal{I}_r$ which takes a rim to the order $\mathcal{O}$ with respect to which all its vertices are primitively $\mathcal{O}$-oriented as in the bijection of Theorem~\ref{thm:mainbij-nobase} has fibres of size $2h_\mathcal{O}/r$ or $4\hO/r$, depending on whether or not $p$ ramifies in the field $K$ containing~$\mathcal{O}$, respectively.  The map factors through $\mathcal{R}_r/{\sim}$, so the induced map $\mathcal{C}_r \rightarrow \mathcal{I}_r$ has a fibre of size $\epsilon_{\mathcal{O},\ell} \hO/r$ over $\mathcal{O}$ for some $1 \le \epsilon_{\mathcal{O},\ell} \le 2$. 
Furthermore,
   
   \begin{enumerate}
       \item Suppose $p$ is inert $K$. Then $\epsilon_{\mathcal{O},\ell} = 2$.
       \item Suppose $p$ ramifies in $K$.  Then there are two possibilities:
       \begin{enumerate}
           \item There are no self-conjugate rims in $\rho(\Ell(\mathcal{O}))$. Then $\epsilon_{\mathcal{O},\ell} = 1$. 
           \item
       There are self-conjugate rims  in $\rho(\Ell(\mathcal{O}))$, in which case $r$ is even, and 
\[
\epsilon_{\mathcal{O},\ell} := 1 +
 \frac{ r g_\mathcal{O} } { 2 h_\mathcal{O} }.
\]
In this case, $1 < \epsilon_{\mathcal{O},\ell} \le 2$. 
   \end{enumerate}
   \end{enumerate}
\end{theorem}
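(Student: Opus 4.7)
The plan is to count in two stages: first the fibre sizes in $\mathcal{R}_r$, then the effect of the conjugation equivalence $\sim$ in passing to $\mathcal{R}_r/{\sim}$. For the $\mathcal{R}_r$-fibre over $\mathcal{O}$, every rim is a $\langle\mathfrak{l}\rangle$-orbit on $\SSO$ of size exactly $r$ (by the defining property of $\mathcal{I}_r$), and each undirected rim yields two directed rims, so the total is $2\#\SSO/r$. Together with $\#\SSO=h_\mathcal{O}$ (ramified) or $2h_\mathcal{O}$ (inert) from Theorem~\ref{thm:sso}, this gives the stated values. For the inert case, Theorem~\ref{thm:sso} implies that conjugation swaps $\rho(\Ell(\mathcal{O}))$ with the complementary $\Cl(\mathcal{O})$-orbit in $\SSO$, so no rim can be self-conjugate; $\sim$ is a free involution on $\mathcal{R}_r$ and halving the fibre yields $\epsilon_{\mathcal{O},\ell}=2$ immediately.

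For the ramified case, $\SSO=\rho(\Ell(\mathcal{O}))$ is a single $\Cl(\mathcal{O})$-orbit, and by Proposition~\ref{prop:tau-on-sso} the full group $\Gal(L/\QQ)=\Cl(\mathcal{O})\rtimes\langle\tau\rangle$ acts on it with $\tau$ giving conjugation. Picking a basepoint $x_0$ and identifying $\SSO\leftrightarrow\Cl(\mathcal{O})$ via $\sigma\mapsto\sigma x_0$, I would write $\tau x_0=\sigma_0 x_0$; the semidirect relation $\tau\sigma\tau^{-1}=\sigma^{-1}$ then implies that $\tau$ acts on $\Cl(\mathcal{O})$ by $\sigma\mapsto\sigma^{-1}\sigma_0$. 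The $C_\ell$-orbit $C_\ell\sigma$ is therefore $\tau$-stable (self-conjugate as an undirected rim) if and only if $\sigma_0\in\sigma^2 C_\ell$, equivalently iff the image of $\sigma_0$ in $G:=\Cl(\mathcal{O})/C_\ell$ is a square. Case 2(a), where no such $\sigma$ exists, is then exactly the case where $\sim$ acts freely on $\mathcal{R}_r$, giving $\epsilon_{\mathcal{O},\ell}=1$. In case 2(b), the image of $\sigma_0$ is a square in $G$, and the number of $\tau$-stable undirected rims equals $|G[2]|$.

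In case 2(b) I would then (i) verify that $r$ is even, because a free order-two action on a cycle of odd length must have a fixed point and $\tau$ has none within a stable orbit; (ii) compute $|G[2]|$ by applying the snake lemma to the squaring map on $1\to C_\ell\to\Cl(\mathcal{O})\to G\to 1$, so that the resulting six-term exact sequence combined with the cancellation of $|C_\ell[2]|=|C_\ell/2C_\ell|=2$ (valid when $r$ is even) and the identity $|A[2]|=|A/2A|$ for finite abelian $A$ yields $|G[2]|=|\Cl(\mathcal{O})[2]|=g_\mathcal{O}$; and (iii) show that each $\tau$-stable undirected rim contributes a single singleton $\sim$-class of directed rims rather than a two-element pair, so that the $\mathcal{R}_r/{\sim}$-fibre becomes $h_\mathcal{O}/r+g_\mathcal{O}/2$, giving $\epsilon_{\mathcal{O},\ell}=1+rg_\mathcal{O}/(2h_\mathcal{O})$. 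The upper bound $\epsilon_{\mathcal{O},\ell}\le 2$ then follows from $g_\mathcal{O}=|G[2]|\le|G|=h_\mathcal{O}/r$.

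The principal obstacle will be Step (iii) above: determining the precise effect of conjugation on the two directed versions of a self-conjugate undirected rim. This requires carefully separating the direction of the $E$-sequence (preserved by conjugation, since $\tau$ fixes $j$-invariants) from the direction of the ideal action (reversed, since $\mathfrak{l}$ maps to $\overline{\mathfrak{l}}$), and the analysis is further complicated by possible extra automorphisms at $j=0$ or $j=1728$ that can alter the freeness properties of $\tau$ on $\SSO$. The identification of self-conjugate rims with barbells via Theorem~\ref{thm:barbells} is likely the cleanest route to establishing the exact contribution of each self-conjugate rim to the $\sim$-class count.
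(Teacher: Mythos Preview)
Your overall architecture matches the paper's: compute $\#\SSO$, dispose of the inert case via Theorem~\ref{thm:sso}, and in the ramified case analyse the $\Gal(L/\QQ)$-action with $\tau$ acting as conjugation. The difference is in how you carry out the ramified count, and your step (ii) has a genuine gap.

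The snake lemma computation is vacuous. Writing out the six-term sequence for the squaring map on $0\to C_\ell\to\Cl(\mathcal{O})\to G\to 0$ and taking the alternating product of orders, the identities $|C_\ell[2]|=|C_\ell/C_\ell^2|$ and $|A[2]|=|A/A^2|$ (applied to both $\Cl(\mathcal{O})$ and $G$) collapse everything to $1=1$; no information about $|G[2]|$ is extracted. In fact $|G[2]|$ is \emph{not} determined by $r$, $h_\mathcal{O}$, $g_\mathcal{O}$ alone: it equals $g_\mathcal{O}/2$ or $g_\mathcal{O}$ according as $\sigma_\ell\notin\Cl(\mathcal{O})^2$ or $\sigma_\ell\in\Cl(\mathcal{O})^2$ (this is precisely the question of whether the connecting map $G[2]\to C_\ell/C_\ell^2$ is trivial). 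To pin this down you need an additional input you have not used: that $\tau$ has no fixed points on $\SSO$, which forces $\sigma_0\notin\Cl(\mathcal{O})^2$, and in case 2(b) (where $\sigma_0\in\Cl(\mathcal{O})^2\cdot C_\ell$) this in turn forces $\sigma_\ell\notin\Cl(\mathcal{O})^2$, giving $|G[2]|=g_\mathcal{O}/2$, not $g_\mathcal{O}$.

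Your step (iii) then needs to compensate: each $\tau$-stable undirected rim in fact contributes \emph{two} singleton $\sim$-classes of directed rims, not one. So your two errors cancel to give the correct final formula, but neither intermediate claim is right as stated. You correctly flag (iii) as the principal obstacle; resolving it cleanly requires exactly the kind of direction-versus-ideal analysis you describe, and the barbell characterisation (Theorem~\ref{thm:barbells}) is indeed the right tool.

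The paper sidesteps both of these delicate points. Rather than counting $\tau$-stable orbits and then worrying about directions, it counts \emph{vertices} whose stabiliser in $\Cl(\mathcal{O})\rtimes\langle\tau\rangle$ meets $C_\ell\tau$: the stabilisers form the coset $\Cl(\mathcal{O})^2\sigma\cdot\tau$ (a single genus), its intersection with $C_\ell$ has exactly $r/2$ elements (the odd powers of $\sigma_\ell$, using $\sigma\notin\Cl(\mathcal{O})^2$), and each stabiliser corresponds to $g_\mathcal{O}$ vertices. This gives the proportion $c=rg_\mathcal{O}/(2h_\mathcal{O})$ of vertices lying on self-conjugate rims directly, and the final formula $\epsilon_{\mathcal{O},\ell}=1+c$ follows without ever needing to separate the two directions of a self-conjugate rim.
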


The following commutative diagram  illustrates the theorem:
\begin{equation}
    \label{eqn:mainbij-fibre}
\xymatrix@C=5em{
    \mathcal{C}_r \ar[r]^\cong_{Thm.~\ref{thm:mainbij-nobase}}  & \mathcal{R}_r/{\sim} \ar[dr] & \mathcal{R}_r  \ar[l]_{{\substack{\text{fibres} \\ \text{of size} \\ \text{$1$ or $2$}}}} \ar[d]^{\substack{\text{fibres of size}\\ \text{$2h_\mathcal{O}/r$} \\ \text{or} \\ \text{$4\hO/r$} }} \\
                           &  & \mathcal{I}_r }
\end{equation}

\begin{proof}

Define a map $\mathcal{R}_r \rightarrow \mathcal{I}_r$ taking a rim to the associated order as described in the statement of the theorem. The size of the fibre above $\mathcal{O}$ is $2\#\SSO/r$ (the factor of $2$ arises because rims are directed), which is $2\hO/r$ or $4\hO/r$ in the ramified and inert cases, respectively, by Theorem~\ref{thm:sso} (the conditions on $p$ in the definition of $\mathcal{I}_r$ imply $\SSO$ is nonempty by Proposition~\ref{prop:sso-empty}).  Since a conjugate rim is associated to the same order as the original, the map factors through $\mathcal{R}_r/{\sim}$.  The fibres of $\mathcal{R}_r \rightarrow \mathcal{R}_r/{\sim}$ are of size $1$ or $2$ (depending upon whether the rim in question was self-conjugate).

In the inert case, $\#\SSO = 2\hO$ by Theorem~\ref{thm:sso}.  The fibres of $\mathcal{R}_r \rightarrow \mathcal{R}_r /{\sim}$ are uniformly of size $2$ for rims in $\SSO$ (since conjugation swaps $\rho(\Ell(\mathcal{O}))$ and its complement in $\SSO$ by Theorem~\ref{thm:sso}).  The map $\mathcal{R}_r \rightarrow \mathcal{I}_r$ has a fibre of size $4\hO/r$ over $\mathcal{O}$.  Hence the map $\mathcal{C}_r \rightarrow \mathcal{I}_r$ has a fibre of size $2\hO/r$ above $\mathcal{O}$.

Now we consider the ramified case.  Call upon the conventions and set-up of Section~\ref{sec:classfieldtheory}.   Let $G = \Gal(L/\QQ)$. We consider the stabilizers $\Stab_G( (E, \iota) )$ of elements of $\SSO$.  
All stabilizers are of size 2 since $\# G = 2\hO = 2\#\SSO$ by Theorem~\ref{thm:sso}, and no stabilizers are contained in $\Cl(\mathcal{O})$ (whose action is free).  Therefore, for some $(E,\iota)$, we have $\Stab_G( (E, \iota) ) = \{ id, \sigma \tau \}$, for some $\sigma \in \Cl(\mathcal{O}) \cong \Gal(L/K)$. The set of stabilizers is the orbit of $\Stab_G( (E, \iota) )$ under conjugation by elements of $G \cong \Cl(\mathcal{O}) \rtimes \langle \tau \rangle$.  For $\eta \in \Cl(\mathcal{O})$, we have
\[
\eta \Stab_G( (E, \iota) ) \eta^{-1} = \{ id, \eta \sigma \tau \eta^{-1} \} = \{ id, \eta^2 \sigma \tau \}.
\]

Also $\tau \Stab_G( (E, \iota) ) \tau^{-1} = \{ id, \tau \sigma \} $   $= \{ id, \sigma^{-2} \sigma \tau \}$. Thus, the nontrivial elements of the stabilizers, taken together, form a set $\Cl(\mathcal{O})^2 \sigma \tau$.  Set the notation $G_\sigma := \Cl(\mathcal{O})^2 \sigma$. Then $\sigma \in \Cl(\mathcal{O}) \setminus \Cl(\mathcal{O})^2$, as $\tau$ does not stabilize any pairs $(E, \iota)$. Hence $G_{\sigma}$ is a non-principal genus of $\Cl(\mathcal{O})$. 

An element $(E,\iota)$ is in the same $\ell$-isogeny rim as its conjugate $(E,\overline{\iota})$ if and only if $\sigma_\ell^k \tau \in \Stab_G( (E, \iota) )$ for some integer $k$ if and only if $C_\ell \tau \cap \Stab_G( (E,\iota) ) = C_\ell \tau \cap G_\sigma\tau \neq \emptyset$.  If $C_\ell \cap G_\sigma = \emptyset$, then there are no stabilizers intersecting $C_\ell \tau$, so there are no self-conjugate rims.  Since $\rho(\Ell(\mathcal{O}))$ is taken to itself by conjugation, in this case all rims pair up in conjugate pairs.  Therefore the fibres are of size $\hO/r$.

Suppose there are self-conjugate rims, in which case $\sigma_\ell^k \in G_\sigma$ for some integer $k$. Then $k$ must be odd, as otherwise $\sigma_\ell^k \in G_\sigma \cap \Cl(\mathcal{O})^2 = \emptyset$. It follows that $\sigma_\ell \in G_\sigma$. A similar argument forces $r$ to be even, as otherwise $\sigma_\ell = \sigma_\ell^{r+1} \in G_\sigma \cap \Cl(\mathcal{O})^2 = \emptyset$.

There are $\# G_\sigma = \# \Cl(\mathcal{O})^2 = \hO/g_\mathcal{O}$ distinct stabilizers, of which $\# G_\sigma \cap C_\ell$ belong to elements in the same rim as their conjugates.  By the same logic as before, using the fact that $r$ is even, the intersection of $G_\sigma$ with $C_\ell$ consists of the odd powers of $\sigma_\ell$, and is therefore of size $\# C_\ell/2 = r/2$. Since all rims are the same size, this means the proportion of rims which are self-conjugate is 
\[
c := \frac{ r/2 } { \# \Cl(\mathcal{O})^2 } = \frac{ rg_\mathcal{O} }{2 h_\mathcal{O} }.
\]
This represents the proportion of rims (equivalently, of vertices) of $\SSO$ that are self-conjugate.  This proportion is the same when we restrict to $\rho(\Ell(\mathcal{O}))$.

The proportion $1-c$ of non-self-conjugate rims is double-counted when we wish to count $\ell$-isogeny cycles in $\mathcal{G}_\ell$, so we have
\[
\epsilon_{\mathcal{O},\ell} = 2c + (1-c) = 1 + c.
\]
For the final inequality, note that in the ramified case, the fibres of $\mathcal{R}_r \rightarrow \mathcal{I}_r$ are of size $2\hO/r$, and reference diagram \eqref{eqn:mainbij-fibre}.
\end{proof}

\begin{example}
\label{ex:241}
The following example illustrates the most complicated type of fibre.  Consider $\mathcal{G}_{\ell}$ with $p=241$, $\ell=11$, and the quadratic order $\mathcal{O}$ of discriminant $-4p$, namely $\mathcal{O} = \ZZ[\sqrt{-p}]$.   Since $p$ ramifies, $\SSO = \rho(\Ell(\mathcal{O}))$ by Theorem~\ref{thm:sso}.  Let $r=4$.  The class group of $\mathcal{O}$ is cyclic of order $12$.  Therefore, there are $12/4 = 3$ rims of size $4$ in $\SSO$ under the action of a prime ideal $\mathfrak{l}$ in $\mathcal{O}$ above $\ell$.  That means $6$ directed rims.  Using an isomorphism $(\Cl(\mathcal{O}), \cdot) \cong (\ZZ/12\ZZ,+)$ which takes $\sigma_\ell$ of order $4$ to the element $3$, we have $\Cl(\mathcal{O})^2 = \{0,2,4,6,8,10\}$ and $C_\ell = \{0,3,6,9\}$.  So $1/3$ of the elements of $\Cl(\mathcal{O})^2$ are in $C_\ell$.  This implies that $1/3$ of the total directed rims are self-conjugate:  two of the six (one of the three undirected rims).  Using the formula of Theorem~\ref{thm:mainbij-fibre}, with $g_\mathcal{O} = 2$, we obtain
\[
\epsilon_{\mathcal{O},\ell} = 1+ \frac{4 \cdot 2}{2 \cdot 12} = \frac{4}{3},
\]
which gives a fibre size of $\frac{4}{3} \cdot \frac{12}{4} = 4$ for the map $\mathcal{C}_4 \rightarrow \mathcal{I}_4$ above $\mathcal{O}$. In other words, there should be four directed isogeny cycles of size $4$ (equivalently, two undirected isogeny cycles) in $\mathcal{G}_{11}$ which give rise to $\mathcal{O}$.

The set $\rho(\Ell(\mathcal{O}))$ has three undirected length $4$ isogeny cycles: $(64, 93, 216, 240)$ (twice, conjugate to one another) and $(8,8,28,28)$ (self-conjugate).  The subgraph of $\mathcal{G}_{11}$ of $\FF_p$-points is shown in Figure~\ref{fig:241}, where we observe that there is only one $4$-cycle through $(964,93,216,240)$, and that $(8,8,28,28)$ collapses to a `barbell shaped' isogeny cycle.  The commutative diagram \eqref{eqn:mainbij-fibre}, if labelled with the (undirected) elements above $\ZZ[\sqrt{-p}]$, looks something like the following (please compare to Figure~\ref{fig:241}):
\[
\xymatrix@C=5em{
   { {\Huge{\substack{ \; \\ \diamond \vspace{0.11em}\\ \vspace{-0.23em} \circ \\  \vspace{-0.13em} \shortmid \\ \circ }}}}
     \ar@<.5em>[r] \ar@<-.5em>[r]  & 
     { \Huge{\substack{ \square \\ \square }}}
     \ar[dr] & 
      { \Huge{\substack{ \square \; \square \\ \square }}}
     \ar@<.5em>[l] \ar@<-.5em>[l] \ar[d] \\
                           &  & \ZZ[\sqrt{-p}] }
\]
\end{example}

\begin{figure}
    \centering
    \includegraphics[width=0.4\textwidth]{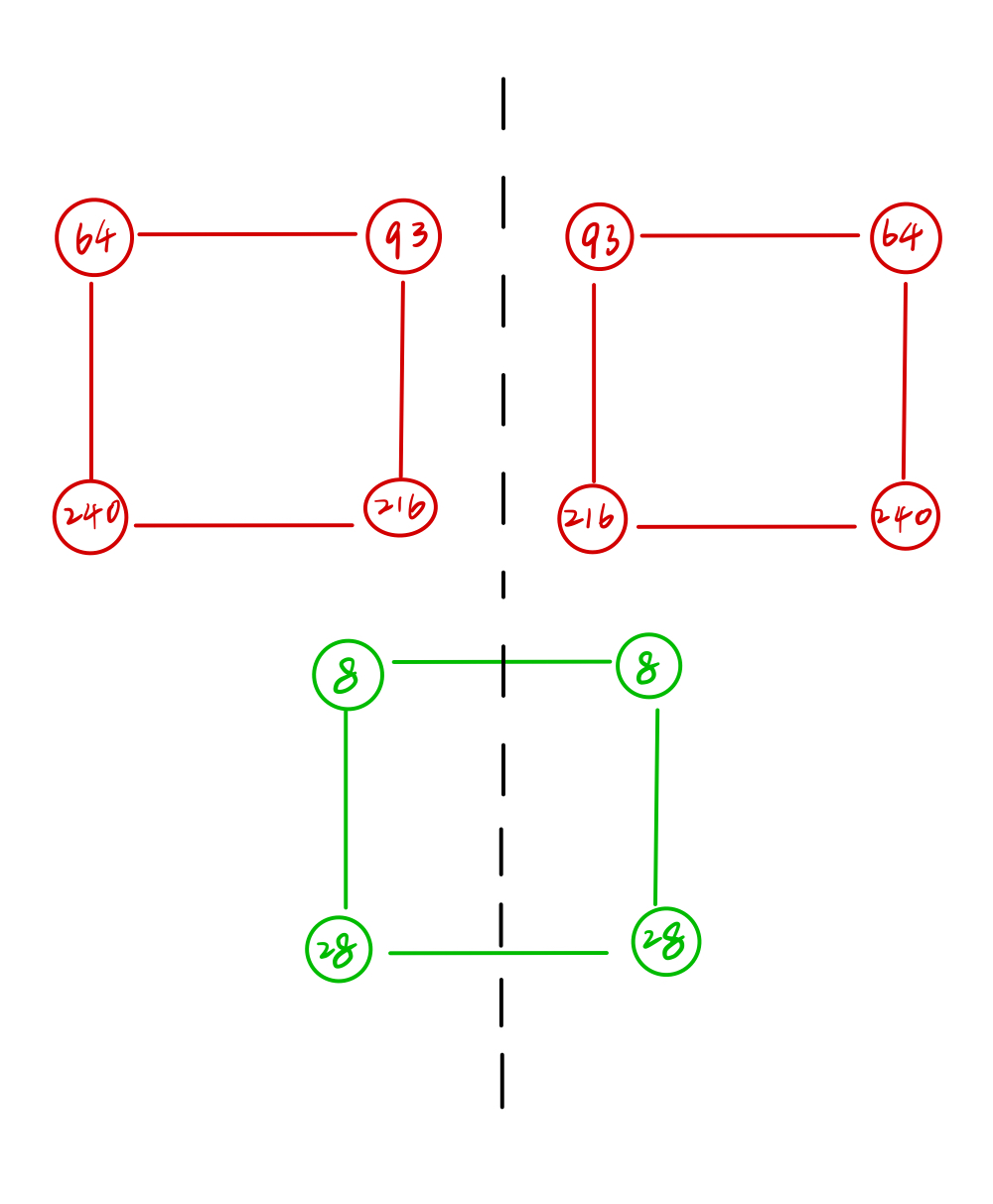}
    \includegraphics[width=0.4\textwidth]{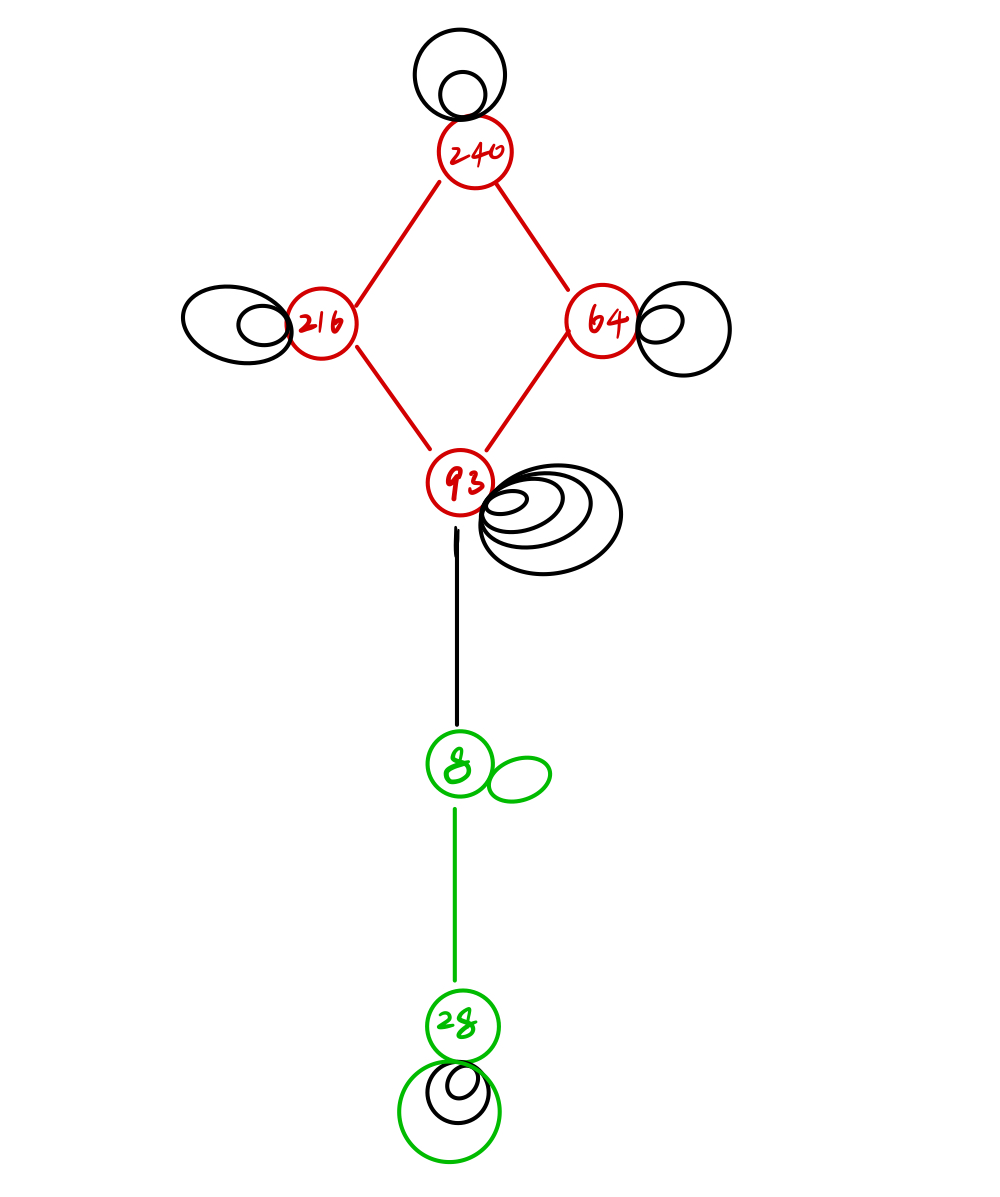}
    \caption{An illustration of Example~\ref{ex:241} with $p=241$, $\ell=11$ and $\mathcal{O} = \ZZ[\sqrt{-p}]$, for which $h_\mathcal{O}=12$.  On the left, the rims of $\mathcal{G}_{\QQ(\sqrt{-p}),11}$ drawn in such a way that conjugation is a mirror symmetry through the dotted line.  On the right, the subgraph of $\FF_p$-vertices of $\mathcal{G}_{11}$.}
    \label{fig:241}
\end{figure}

\section{An extended example}
\label{sec:examples}

In this section, we give an example that demonstrates our bijection (Theorem~\ref{thm:mainbij-nobase}). We choose $p = 179,\,\ell = 2$, and we consider $r = 3,\,4,\,5$ and $6$.  Computing the set $\mathcal{R}_r$ involves first finding all $\ell$-fundamental imaginary quadratic orders $\mathcal{O}$ such that $\mathcal{O}$ embeds into $B_{p,\infty}$, the ideal $(\ell) = \mathfrak{l}\bar{\mathfrak{l}}$ splits in $\mathcal{O}$ and $[\mathfrak{l}]$ has order~$r$ in $\Cl(\mathcal{O})$. A complete list of such orders $\mathcal{O}$ and prime ideals $\mathfrak{l},\,\bar{\mathfrak{l}}$ can be calculated as follows:
\begin{enumerate}
    \item list all representations of $\ell^r$ by norm forms of sufficiently small discriminant;
    \item each representation gives rise to an imaginary quadratic element $\alpha$ such that $N(\alpha) = \ell^r$ (up to sign and conjugation);
    \item check the splitting behaviour of $p$ in $\QQ(\alpha)$, discarding $\alpha$ if $p$ splits in $\QQ(\alpha)$;
    \item consider all $\ell$-fundamental orders in $\QQ(\alpha)$ that contain $\alpha$ and check whether the ideal class $[\mathfrak{l}]$ has order $r$ in $\Cl(\mathcal{O})$, obtaining a list $L_r$ of desirable orders $\mathcal{O}$.
\end{enumerate}

As an example, consider representing $2^3 = 8$.  We need only check discriminants $\Delta$ with $|\Delta| \le 4 \cdot 8 = 32$.  We find two representations:  $1^2 + 31 \cdot 1^2 = 4 \cdot 8$ and $3^2 + 23 \cdot 1^2 = 4 \cdot 8$.  These gives rise to two possible $\alpha$ up to conjugation and sign:
\[
\frac{\pm 1 \pm \sqrt{-31}}{2}, \frac{\pm 3 \pm\sqrt{-23}}{2}.
\]
However, $179$ splits in $\QQ(\sqrt{-23})$ and not in $\QQ(\sqrt{-31})$.  Hence we discard the second $\alpha$.  We find that $\mathfrak{l} = (2, \frac{1+\sqrt{-31}}{2})$ is not principal and $\mathfrak{l}^3 = (\alpha)$.  Hence $L_3 = \left\{ \ZZ\left[\frac{1+\sqrt{-31}}{2}  \right] \right\}$.

Each element $\alpha$ corresponds to an orientation $\iota_\alpha$ (up to conjugation), which gives a rim (up to conjugation and direction), as in the map $\Theta$ of Lemma~\ref{lem:theta}. 
Such a list therefore gives the number of undirected rims: $\#L_r = \frac{1}{2} \#(\mathcal{R}_r/{\sim})$. 

Theorem~\ref{thm:mainbij-nobase} concerns directed isogeny cycles and rims, but in this example we will ignore the issue of direction, which is allowed by Corollary~\ref{cor:mainbij-undir}.  Therefore, we use $L_r$ as a proxy for $\mathcal{R}_r/{\sim}$ for computational purposes.
We compare $L_r$ with $\mathcal{C}_r$, verifying the bijection theorem.

For each $r \in \{ 3,4,5,6\}$, we compute the set $\mathcal{C}_r$ on the supersingular 2-isogeny graph by reference to Figure~\ref{fig:241} showing $\mathcal{G}_2$.  We will actually consider $\mathcal{C}_r'$, the set of isogeny cycles of $\mathcal{C}_r$ but forgetting direction, allowed by Corollary~\ref{cor:mainbij-undir}.  Then we determine the associated endomorphisms by composing around the cycles as in the map $\Theta$; we do this by computing the trace of the composed endomorphism $\theta$ using the formulas $\theta + \widehat{\theta}$ or $1 + \deg(\theta) - \deg(1-\theta)$ in SageMath \cite{sagemath}.

For example, there is only one isogeny cycle of length $3$ (in the sense of Theorem~\ref{thm:mainbij-nobase}, but ignoring direction) in $\mathcal{C}_3'$ for $\mathcal{G}_2$,
namely $(j_3, \overline{j_3}, 171)$.  Composing around the triangle, one finds an endomorphism of trace $\pm 1$.  Since the endomorphism has norm $8$, it has minimal polynomial $x^2 \pm  x + 8$, and hence is of the form $\frac{ \pm 1 \pm \sqrt{-31}}{2}$.  This generates an order with class number $3$ (the class numbers were computed with SageMath).  

In Table~\ref{table:cycles}, we give the lists $\mathcal{C}_r'$, and $L_r$, and put them in correspondence. 
This represents the first row of the table which puts $\mathcal{C}_3' = \{ (j_3, \overline{j_3}, 171)\}$ and $L_3 = \left\{\ZZ\left[ \frac{1+\sqrt{-31}}{2} \right]\right\}$ (as a proxy for $\mathcal{R}_3/{\sim}$ modulo direction) in bijection.

The table also verifies Corollary~\ref{cor:h}.  In our case, we have observed no fields ramified at $p$, so $\epsilon_{\mathcal{O},\ell} = 2$.  We compute:
\begin{align*}
2 = 2 \# \mathcal{C}_3' = \# \mathcal{C}_3 &= \frac{2}{3} h\left( \ZZ\left[ \frac{1 + \sqrt{-31}}{2} \right] \right) = \frac{2}{3} \cdot 3 = 2. \\
2 = 2 \# \mathcal{C}_4' = \# \mathcal{C}_4 &= \frac{2}{4} h\left( \ZZ\left[ \frac{1 + \sqrt{-39}}{2} \right] \right) = \frac{2}{4} \cdot 4 = 2.\\
2 = 2 \# \mathcal{C}_5' = \# \mathcal{C}_5 &= \frac{2}{5} h\left( \ZZ\left[ \frac{1 + \sqrt{-47}}{2} \right] \right) = \frac{2}{5} \cdot 5 = 2.\\
14 = 2 \# \mathcal{C}_6' = \# \mathcal{C}_6 &= \frac{2}{6} \left(
h\left( \ZZ\left[ \frac{1 + \sqrt{-87}}{2} \right] \right)+
h\left( \ZZ\left[ \frac{1 + \sqrt{-231}}{2} \right] \right)+ 
\right. \\
&\quad\quad\quad\left.
h\left( \ZZ\left[ \frac{1 + \sqrt{-247}}{2} \right] \right)+
h\left( \ZZ\left[ \frac{1 + \sqrt{-255}}{2} \right] \right)+
h\left( \ZZ\left[ 3 \frac{1 + \sqrt{-15}}{2} \right] \right)
\right) \\
&= \frac{2}{6} \cdot \left( 6 + 12 + 6 + 12 + 6 \right) = 14.
\end{align*}
This verifies Corollary~\ref{cor:h} in this example.

\begin{table}
\begin{center}
\setlength{\tabcolsep}{0.5em}
\renewcommand{\arraystretch}{1.8}
\begin{tabular}{ |c|c|c|c|c|c|c| } 
\hline
 isogeny cycle & figure & length & endomorphism & order & class number \\
 \hline
 \hline
 $(j_3, \overline{j_3}, 171)$ & Fig.\ A & 3 &  $\frac{\pm 1 \pm \sqrt{-31}}{2}$ & $\ZZ\left[ \frac{1 + \sqrt{-31}}{2} \right]$ & 3  \\
 \hline
 \hline
 $(61, j_1, 140, \overline{j_1})$ & Fig.\ A & 4  & $\frac{\pm 5 \pm \sqrt{-39}}{2}$ & $\ZZ\left[ \frac{1 + \sqrt{-39}}{2} \right]$ & 4 \\
\hline
\hline
  $(22,\overline{j_2}, \overline{j_3}, j_3, j_2)$ & Fig.\ A & 5 & $\frac{\pm 9 \pm \sqrt{-47}}{2}$ & $\ZZ\left[ \frac{1 + \sqrt{-47}}{2} \right]$ & 5 \\
  \hline
  \hline
    $(22,\overline{j_2}, \overline{j_1}, 140, j_1, j_2)$ &Fig.\ B & 6 & $\frac{\pm 13 \pm \sqrt{-87}}{2}$ & $\ZZ\left[ \frac{1 + \sqrt{-87}}{2} \right]$ & 6 \\
    \hline
      $(140,j_1,j_2,j_3,171,120)$ &Fig.\ C  & \multirow{2}{*}{6} & \multirow{2}{*}{$\frac{\pm 5 \pm \sqrt{-231}}{2}$} & \multirow{2}{*}{$\ZZ\left[ \frac{1 + \sqrt{-231}}{2} \right]$} & \multirow{2}{*}{12} \\
      $(140,\overline{j_1},\overline{j_2},\overline{j_3},171,120)$ & Fig.\ D  & & &&\\
      \hline
        $(0,121,112,35,112,121)^*$ &Fig.\ E &6 & $\frac{\pm 3 \pm \sqrt{-247}}{2}$ & $\ZZ\left[ \frac{1 + \sqrt{-247}}{2} \right]$ & 6 \\
        \hline
                $(22, j_2, j_3, 171, \bar{j}_3,\bar{j}_2)$ & Fig.\ F   &\multirow{2}{*}{6}  & \multirow{2}{*}{$\frac{\pm 1 \pm \sqrt{-255}}{2}$} & \multirow{2}{*}{$\ZZ\left[ \frac{1 + \sqrt{-255}}{2} \right]$} & \multirow{2}{*}{12} \\
        $(0,121,112,35,112,121)^*$ & Fig.\ E &  & &&\\
                \hline
                $(61,j_1,j_2,22,\overline{j_2}, \overline{j_1})$ &Fig.\ G & 6 & $\frac{\pm 11 \pm 3\sqrt{-15}}{2}$ & $\ZZ\left[ 3\left( \frac{1 + \sqrt{-15}}{2}\right) \right]$ & 6 \\
 \hline
\end{tabular}

\caption{Cycles of lengths three through six, with the associated endomorphisms to which the cycles compose.  The two cycles labelled with an asterisk $*$ are not fully specified by giving their $j$-invariants.  However, as discussed in Section~\ref{sec:arb}, the bijection between these two cycles and the two associated $\alpha$ is not canonical, but if we choose a safe arbitrary assignment, we can make a non-canonical bijection.  So there is no reason to specify which cycle is which in the table here.}\label{table:cycles} 
\end{center}
\end{table}

\begin{figure}[h!]

\begin{center}
 \subfloat[]{\includegraphics[width = 4.5in]{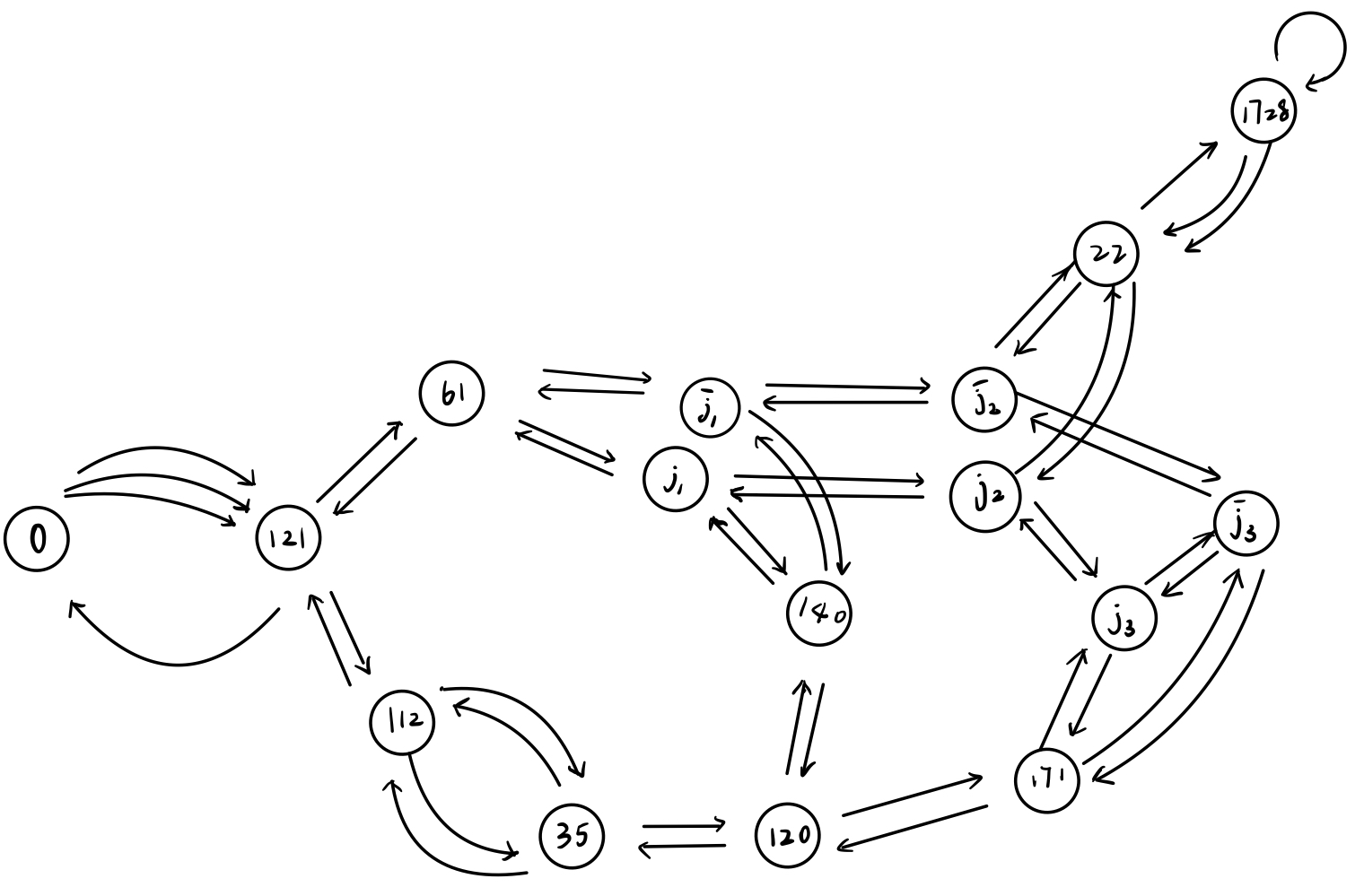}} \\
 \subfloat[]{\includegraphics[width = 2in]{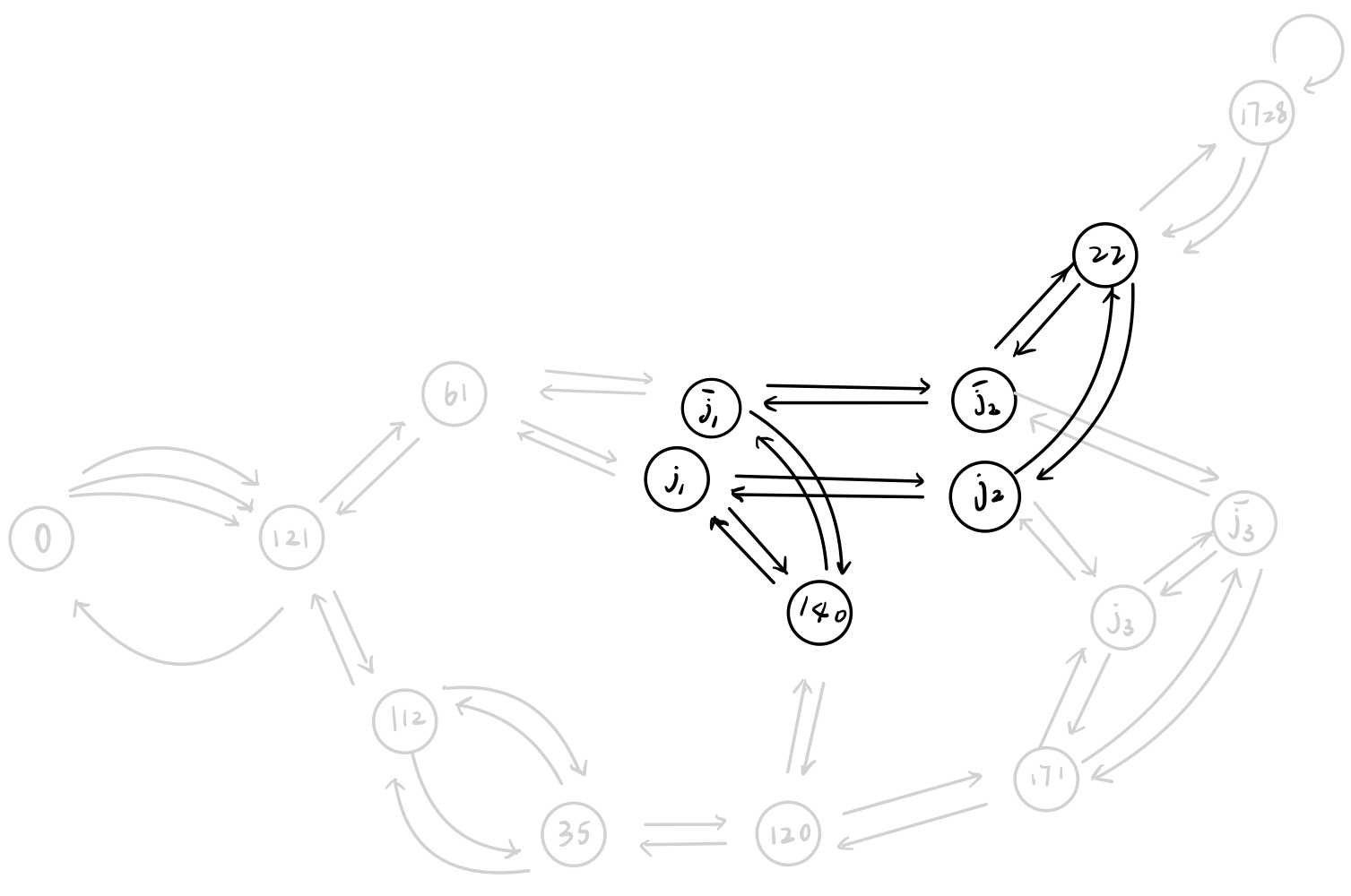}} 
\subfloat[]{\includegraphics[width = 2in]{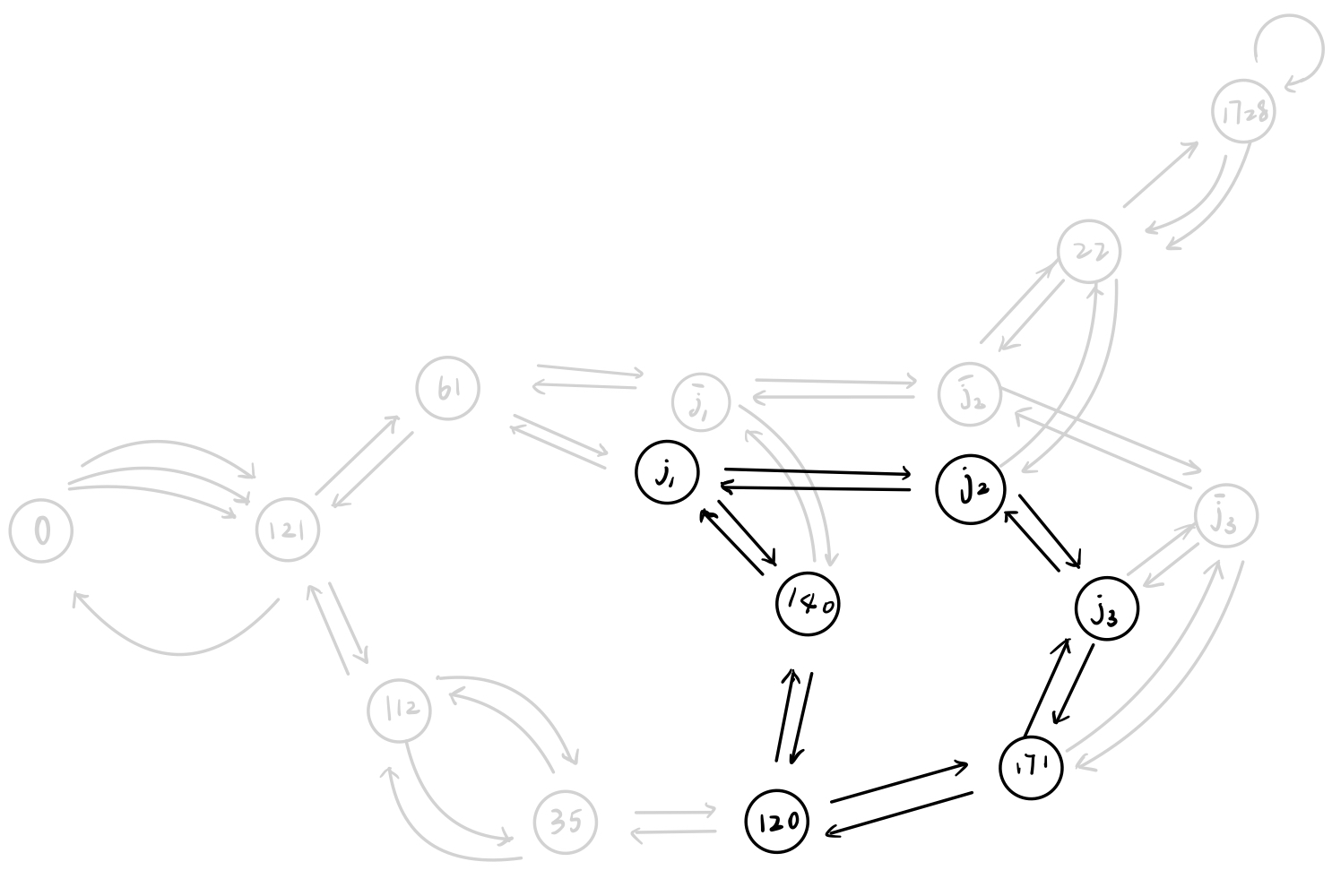}}
\subfloat[]{\includegraphics[width = 2in]{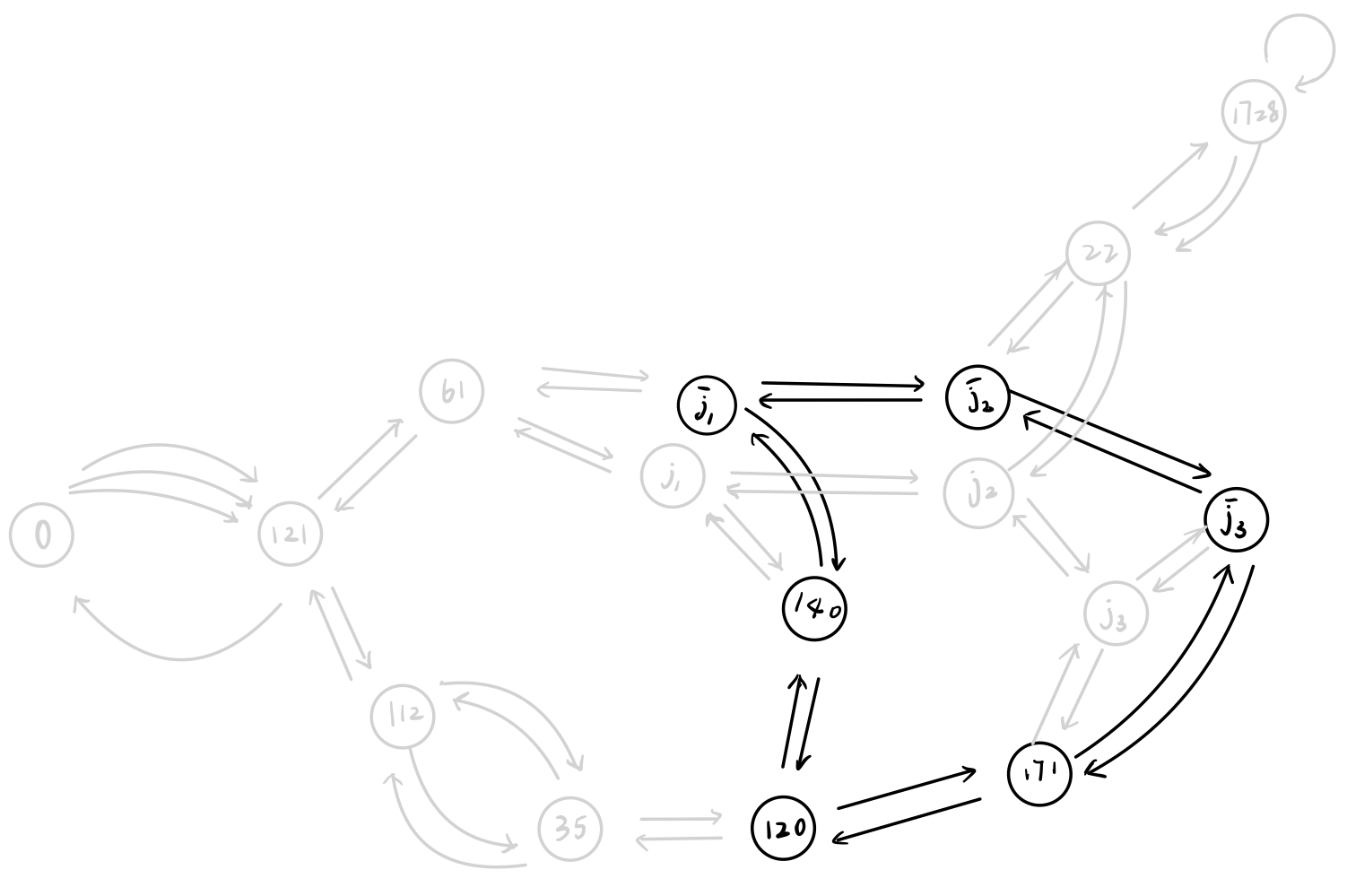}} \\
\subfloat[]{\includegraphics[width = 2in]{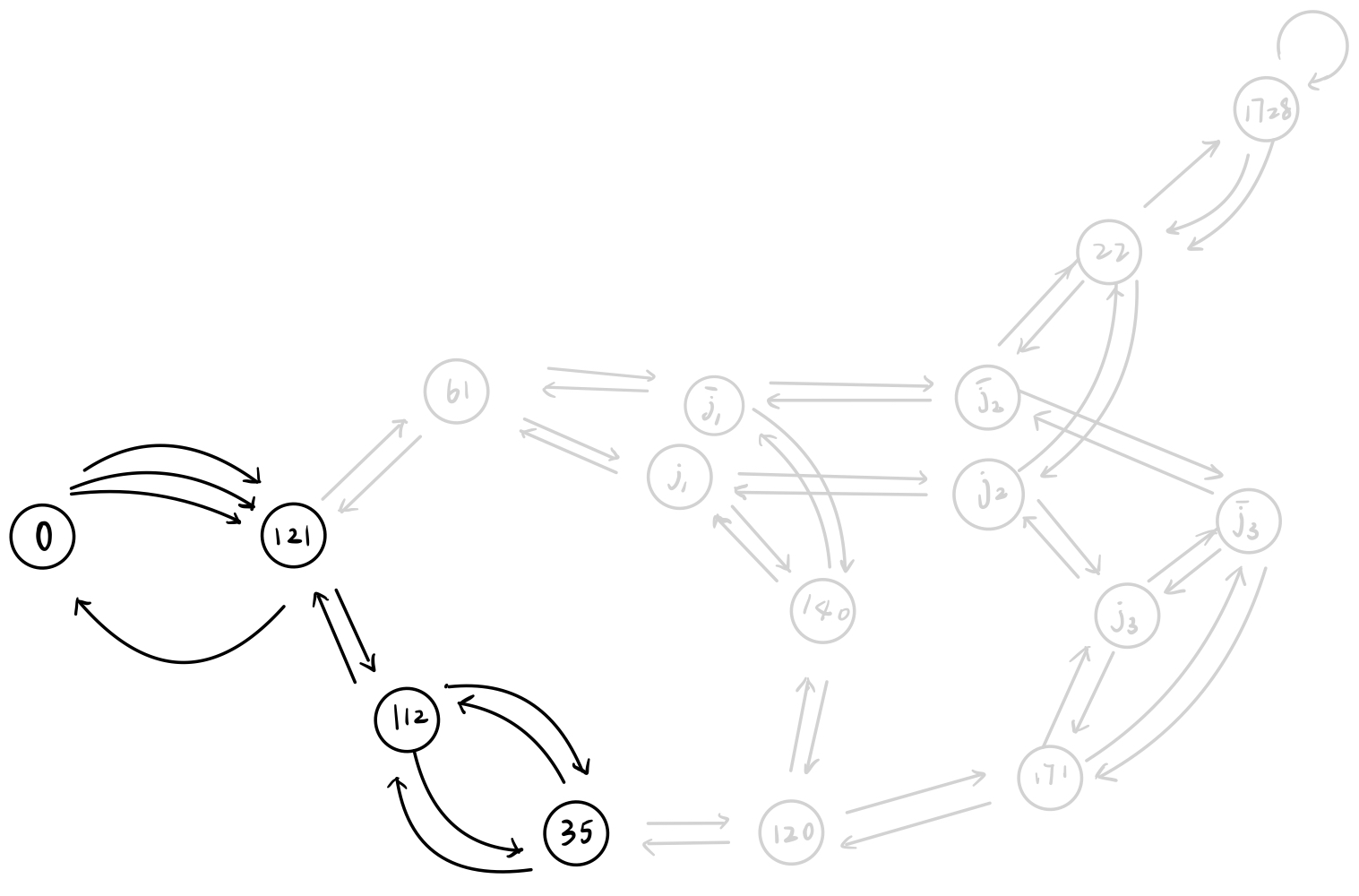}}
\subfloat[]{\includegraphics[width = 2in]{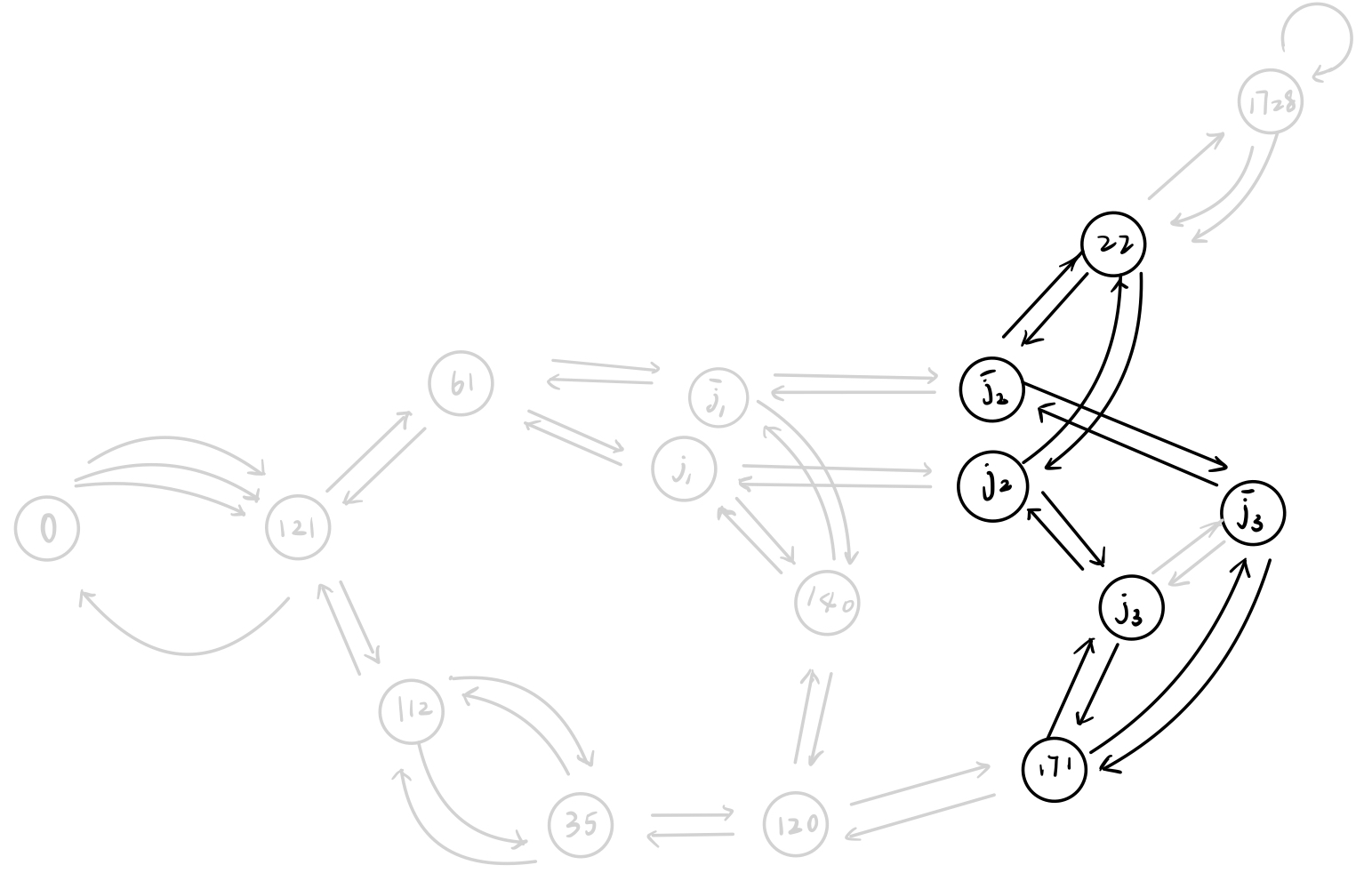}} 
\subfloat[]{\includegraphics[width = 2in]{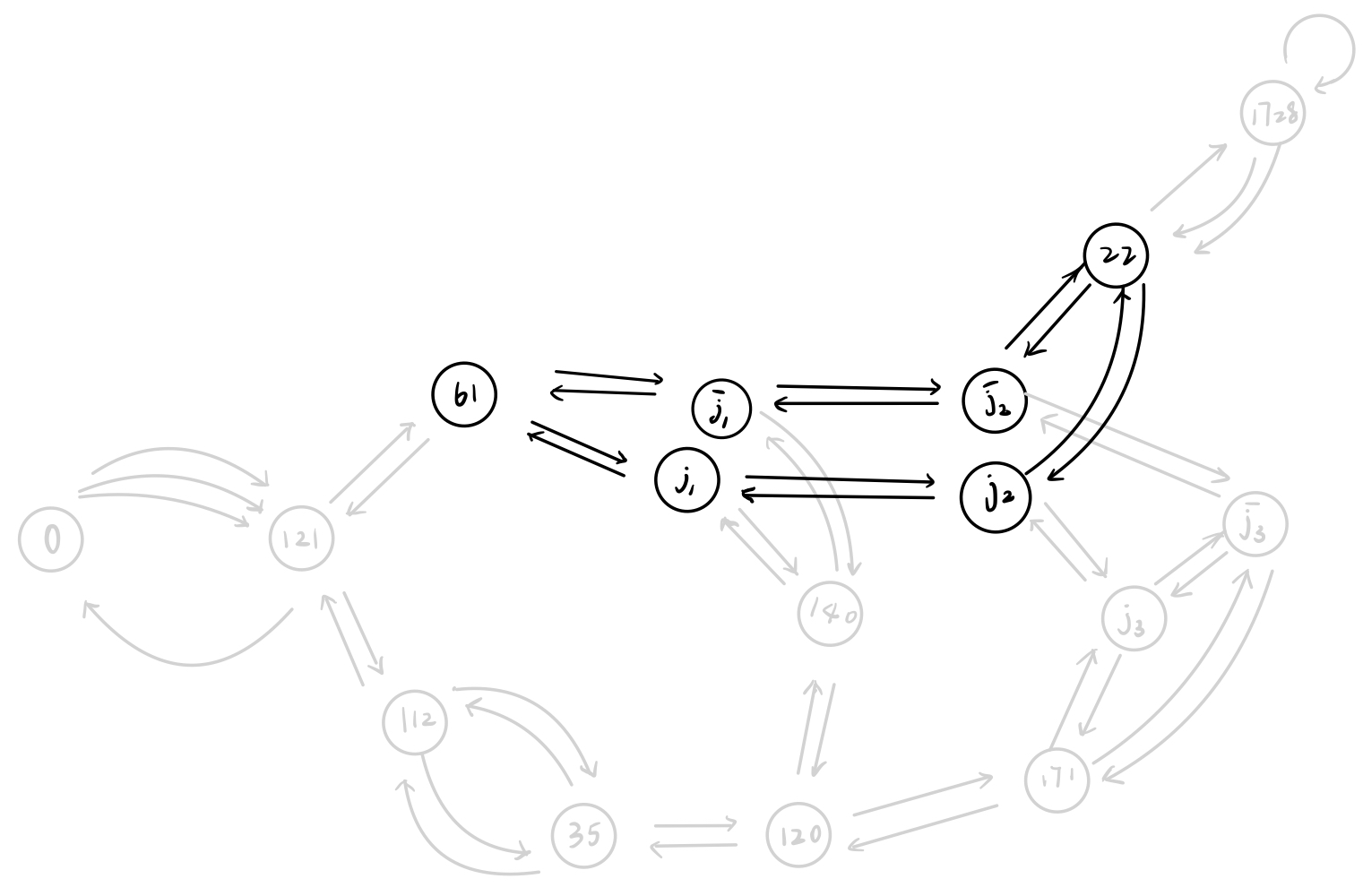}}\\
 \caption{Figure (A) is the supersingular $2$-isogeny graph $\mathcal{G}_2$ over $\FF_{p^2}$, where $p=179$. Here $j_1 = 64i+5 ,\,j_2 =99i+107 ,\,j_3 =5i+109$, where $i$ denotes a square root of $-1$ in $\FF_{p^2}$. Figures (B) -- (G) are the isogeny cycles of length 6 as listed in Table~\ref{table:cycles}. There are two distinct isogeny cycles included in (E), both sharing the same sequence of vertices.}\label{fig:2isogenygraphp179} 
\end{center}
\end{figure}

\textbf{The cycle $(0,121,112,35,112,121)$.}
The case of the $j$-invariant cycle $(0,121,112,35,112,121)$ in Figure~\ref{fig:2isogenygraphp179}~(E) is particularly interesting.  This sequence of $j$-invariants forms a $6$-cycle in 12 possible ways, as follows.  There are 4 ways to travel from $112$ to $35$ and back.  There are 3 ways to travel from $121$ to $0$ and back, producing three distinct endomorphisms of $j=121$ (but not canonically), each composing to one of $\pm 2$, $\pm \widehat{\varphi} \circ \zeta_3 \circ \varphi$, or $\pm \widehat{\varphi} \circ \zeta_3^2 \circ \varphi$, as described in Figure~\ref{fig:0121}.  Thus, one endomorphism is backtracking and the other two are duals of each other.
The isogenies from $112$ to $35$ each have a  dual:  hence two of the $2$-cycles from $112$ to $35$ and back are backtracking (obtained by pairing an isogeny with its dual).  There are therefore two non-backtracking $2$-cycles to choose from among the four ways to travel from $112$ to $35$ and back. These two non-backtracking $2$-cycles are actually duals of each other, and are obtained by simply changing the direction of traversal.
If the traces of the paths that include backtracking are computed, we obtain the associated endomorphisms $\pm 8$, $4(\pm 1 \pm  \sqrt{-3})$, and $2(\pm 1 \pm \sqrt{-15})$, all of which are divisible by~$2$.  This leaves exactly four non-backtracking $6$-cycles on the $j$-invariants $(0,121,112,35,112,121)$, coming in pairs obtained by changing the direction of traversal.  These two pairs give the two endomorphisms listed in Table~\ref{table:cycles}.

\textbf{The cycle $(61,j_1,j_2,22,\overline{j_2},\overline{j_1})$.}
Another interesting cycle is $(61,j_1,j_2,22,\overline{j_2},\overline{j_1})$ in Figure~\ref{fig:2isogenygraphp179}~(G).  This cycle corresponds to an element generating the non-maximal order $\mathcal{O}_3 :=\ZZ\left[ 3\left( \frac{1 + \sqrt{-15}}{2}\right) \right]$.  That element also exists in the order $\mathcal{O}_1 := \ZZ\left[\frac{1+\sqrt{-15}}{2}\right]$.  How do we know which order the volcano rim corresponds to?  The easiest answer is that only the class number of $\mathcal{O}_3$ is divisible by $6$.  However, even without that observation (sometimes several class numbers may be divisible by the correct integer $r$), we can observe that the orientation $\iota$ given by the endomorphism on any of the $j$-invariants on the cycle is $\mathcal{O}_3$-primitive, not $\mathcal{O}_1$-primitive.  This can be seen because if $\End(E)$ contained $\frac{1+\sqrt{-15}}{2}$, which has norm $16$, then this element would be realized as a $4$-cycle from~$E$, which it is not.  (The curves $j_2, \overline{j_2}$, and $22$ have no $4$-cycle.  The curves $61$, $j_1$ and $\overline{j_1}$ have the $4$-cycle associated to $\frac{\pm 5 \pm \sqrt{-39}}{2}$.)  The moral of this observation is that each cycle can only arise from a single quadratic order, as dictated by Theorem~\ref{thm:mainbij-nobase}.

\textbf{Repeating $j$-invariants.}
A final remark is the observation that repeating $j$-invariants in a cycle can only happen when the discriminant is large with respect to the prime.  This is a consequence of a result by Kaneko which gives the necessary condition $|\disc(\mathcal{O})|\geq p$~\cite[Theorem 2']{Kaneko}.  In our case, we do get a cycle with repeating $j$-invariants, but the associated discriminants are quite large at $-247$ and $-255$, both bigger in absolute value than the prime $p = 179$.

\textbf{Comparison with Gross \cite{Gross_Heights}}
Working with the same example, Gross counts cycles of length $\log_\ell(m)$ in the supersingular $\ell$-isogeny graph $\mathcal{G}_\ell$ via the formula provided in \cite[Proposition 1.9]{Gross_Heights}.  In the first row of Table \ref{table:cycles}, we count one isogeny cycle of length three in the 2-isogeny graph, namely $(j_3,\overline{j_3},171)$ arising from the order of discriminant $-31$. Gross counts nine via the formula \cite[Proposition 1.9]{Gross_Heights}. In particular, six of this count come from the order of discriminant $-31$. The three-cycle $(j_3,\overline{j_3},171)$, which we count once, is counted six times by Gross because it is a valid three-cycle starting at any of $j_3, 171, \overline{j_3}$, in either direction. Two more of Gross's count come the order of discriminant $-16$.  The cycle corresponding to the order of discriminant $-16$ is necessarily $(j_{22},j_{1728},j_{1728})$.  This cycle is only without backtracking when we start at 22, so Gross counts it once in each direction. We do not count this cycle, as it contains backtracking as soon as we forget the basepoint. The last one of Gross's count comes from the order of discriminant $-4$, as $j_{1728}$ has an endomorphism of degree-8 corresponding to composing the loop at $j_{1728}$ three times, with appropriate automorphisms in between to avoid backtracking.

\section{Counting isogeny cycles in $\mathcal{G}_\ell$}
\label{sec:counting}

Theorem~\ref{thm:mainbij-fibre} makes possible explicit formulas for the number of isogeny cycles of length $r$ in the $\ell$-isogeny graph $\mathcal{G}_\ell$.  In this section, we do the following:
\begin{enumerate}
    \item Theorem~\ref{thm:numcycles-randwalk} uses spectral graph theory to give an asymptotic for the expected number of length~$r$ isogeny cycles, and this estimate is compared to empirical data in Figure~\ref{fig:cycledata}. The use of spectral graph theory requires us to restrict to $p\equiv1\pmod{12}$, but for any large enough $p$ we still expect the estimates of this theorem to hold.

    \item Corollary~\ref{cor:h} gives an explicit formula for the number of length $r$ isogeny cycles in terms of class numbers of certain quadratic orders, and Theorem~\ref{thm:QNR} reformulates this as a sum over these class numbers with simpler conditions on which quadratic orders to include.
    
    \item Theorem~\ref{thm:QNR} enables an explicit upper bound on the length $r$ isogeny cycle count, given in Corollary~\ref{cor:cyclesbound}.  Unfortunately a lower bound would seem to be connected to an open problem in number theory (Remark~\ref{remark:burgess}).
\end{enumerate}

\subsection{Asymptotic number of isogeny cycles.}
\label{sec:asymptotic}

It is a standard fact that for $(\ell+1)$-regular Ramanujan graphs, the number of basepointed non-backtracking closed walks of length $r$ (where the non-backtracking condition does not rule out entering the final vertex along the initial edge of the walk) is
\begin{equation*}
(\ell+1) \ell^{r-1} +  \Np \,O\left(r (\ell+1)^{r/2} \right),
\end{equation*}
where the implied constant does not depend on $\Np$.
This follows, for example, from \cite[Theorem 1.4.6]{DavidoffSarnakValette}; some details are provided at 
\cite{mathoverflow}.  If we disallow entering the final vertex along the initial edge and then forget basepoints, then one would heuristically expect this to become
\begin{equation}
\label{eqn:ramanujancount}
\ell^{r}/2r +  \Np \,O\left((\ell+1)^{r/2} \right).
\end{equation}
For small $r$, estimating the number of graph-theoretic $r$-cycles is similar (recall that a cycle allows no repeated edges or vertices); this is because random $d$-regular graphs on $n$ vertices have on average $(d-1)^r/2r$ such $r$-cycles \cite[Theorem 2]{BOLLOBAS1980311}.  In our case, this would give $\ell^r/2r$.  The number of $r$-cycles in some Ramanujan graphs, including LPS graphs, but not supersingular graphs, has been analysed, and these graphs appear to behave as random graphs \cite{TillichZemor}.

In what follows, we give a brief proof of an asymptotic version of \eqref{eqn:ramanujancount}.

\begin{theorem}
\label{thm:numcycles-randwalk}
Let $p\equiv1\pmod{12}$. Let $\mathcal{G}_{\ell}$ be the supersingular $\ell$-isogeny graph over $\overline{\FF}_p$ and let $\Np$  denote its number of vertices. 
 The number of non-backtracking closed walks of length $r$ in $\mathcal{G}_{\ell}$, taken up to order of traversal and starting point, asymptotically approaches $\ell^r/2r$
as $r \rightarrow \infty$. Thus, the number of isogeny cycles in $\mathcal{G}_\ell$ is also asymptotically $\ell^{r}/2r$ as $r \rightarrow \infty$.
\end{theorem}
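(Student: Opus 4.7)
The plan is to apply the standard spectral theory of non-backtracking walks on regular graphs. The hypothesis $p \equiv 1 \pmod{12}$ guarantees by \eqref{eq:number_supersingular_js} that neither $j=0$ nor $j=1728$ is supersingular, so $\mathcal{G}_\ell$ has no vertices with extra automorphisms and is a genuine $(\ell+1)$-regular undirected graph, to which the Ramanujan bound \eqref{eqn:spectralgap} applies in full force.

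First I would introduce the Hashimoto non-backtracking edge matrix $B$, indexed by the directed edges of $\mathcal{G}_\ell$, with $B_{e,f} = 1$ if the head of $e$ equals the tail of $f$ and $f \neq \bar{e}$, and $0$ otherwise. Then $\operatorname{tr}(B^r)$ counts non-backtracking closed walks of length $r$ together with a marked initial directed edge. The Ihara--Bass determinant formula relates the spectrum of $B$ to that of the adjacency matrix $A$: each adjacency eigenvalue $\lambda$ contributes the two roots of $x^2 - \lambda x + \ell$ to the spectrum of $B$, together with $\Np(\ell-1)/2$ further eigenvalues at each of $+1$ and $-1$. The Perron eigenvalue $\ell+1$ thereby yields the $B$-eigenvalues $\ell$ and $1$, while every other nontrivial $B$-eigenvalue has absolute value at most $\sqrt{\ell}$ by \eqref{eqn:spectralgap}. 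Summing $r$-th powers gives
\[
\operatorname{tr}(B^r) = \ell^r + O\!\left(\Np\,\ell^{r/2}\right),
\]
with an absolute implied constant.

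Next I would convert $\operatorname{tr}(B^r)$ to the desired count up to starting point and direction of traversal. A non-backtracking closed walk of length $r$ without rotational symmetry contributes exactly $r$ to $\operatorname{tr}(B^r)$ (one per choice of initial directed edge), and exactly $2r$ when also allowing its reverse, so the quantity we want is asymptotically $\operatorname{tr}(B^r)/(2r)$. The corrections arise from walks with extra symmetry: walks that are proper powers of shorter primitive closed walks contribute a total of at most $\sum_{d \mid r,\, d < r} \operatorname{tr}(B^d) = O(\ell^{r/2})$, and self-dual (barbell) walks, which coincide with their own reverses, number at most $\Np(\ell+1)\ell^{(r-2)/2}$ by Lemma~\ref{lemma:barbellcount}. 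Since each of these corrections is of strictly smaller order than $\ell^r/r$ as $r \to \infty$, the number of non-backtracking closed walks of length $r$ up to starting point and direction is asymptotic to $\ell^r/(2r)$. The statement for isogeny cycles follows at once, since isogeny cycles are precisely the primitive such walks and the non-primitive walks have already been absorbed into the error.

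The main obstacle is not any single hard step but rather the careful bookkeeping of several lower-order contributions: the $\pm 1$ eigenvalues of $B$, the non-primitive walks, and the self-dual barbell walks. Each is controlled by a combination of $\Np$ and $\ell^{r/2}$, both strictly dominated by $\ell^r/r$ for fixed $p$ and $\ell$ as $r \to \infty$.
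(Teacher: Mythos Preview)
Your proof is correct and in fact somewhat sharper than the paper's, but it proceeds along a genuinely different line. The paper argues probabilistically: it interprets powers of the non-backtracking matrix $B$ as a Markov chain on directed edges, invokes a mixing result of Kempton to say that the endpoint of a long non-backtracking walk is asymptotically uniform over the $\Np(\ell+1)$ directed edges, and then counts the walks that close up by multiplying $\Np(\ell+1)\ell^{r-1}$ by the limiting probability $\ell/(\Np(\ell+1))$ of hitting one of the $\ell$ admissible return edges. Barbells and proper powers are then discarded as lower order, exactly as you do.

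Your route via the Ihara--Bass identity bypasses the mixing argument entirely: you read off the full spectrum of $B$ from that of $A$, and the Ramanujan bound immediately gives $\operatorname{tr}(B^r)=\ell^r+O(\Np\,\ell^{r/2})$ with an explicit constant. This is more self-contained and yields a quantitative error term that the paper only says could be extracted ``by reference to'' the mixing literature. One small point of bookkeeping: your bound $\sum_{d\mid r,\,d<r}\operatorname{tr}(B^d)=O(\ell^{r/2})$ should strictly carry a factor for the number of divisors of $r$, but this is harmless for the asymptotic. Otherwise the two arguments handle the passage from $\operatorname{tr}(B^r)$ to the undirected, basepoint-free, primitive count identically.
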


Before giving a formal proof, we give a heuristic description.  Choose one of $\Np$ initial vertices $j_0$.  Label its $\ell+1$ neighbours $j_1, j_2, \ldots, j_{\ell+1}$.  Choose one of these neighbours for the first step, say $j_1$ (relabelling without loss of generality).  Consider the $\ell^{r-1}$ non-backtracking length $r-1$ walks that could follow the edge $j_0 \rightarrow j_1$.  Such a walk provides a non-backtracking closed walk of length $r$ through $j_0 \rightarrow j_1$ if and only if the walk terminates at $j_0$ via one of $j_2, \ldots, j_{\ell+1}$.  The probability of this termination vertex $j_0$ should be expected to be $1/\Np$.  However, only $\ell$ of the $\ell+1$ incoming edges reach $j_0$ via one of $j_2, \ldots, j_{\ell+1}$.  Finally, we obtain a typical closed  non-backtracking walk of length $r$ exactly $2r$ times in this way, since there are $r$ choices of initial vertex and $2$ directions.  Hence the number of closed non-backtracking walks of length $r$ should be obtained by combining the steps in the outline above:
\[
 \Np (\ell+1) \ell^{r-1} (1/\Np) (\ell/(\ell+1))/2r = \ell^r/2r.
\]

\begin{proof}[Proof of Theorem~\ref{thm:numcycles-randwalk}]
The natural object for studying non-backtracking walks is a Markov chain on the \emph{non-backtracking matrix} $B$, which is the adjacency matrix of an auxiliary graph $\mathcal{G}'$ whose vertices are the directed edges of $\mathcal{G}_\ell$, and whose edges indicate adjacency between edges of $\mathcal{G}_\ell$ that respects the directedness and does not backtrack.  In other words, the matrix is a scaling of the transition probability matrix of a non-backtracking random walk on the directed edges of $\mathcal{G}_\ell$.  In particular, the entries of $B^r$ give the number of non-backtracking walks of length $r+1$ that start and end on certain edges.  This is studied in \cite{Kempton2016NonbacktrackingRW}, where it is shown that this Markov chain converges to a uniform stationary distribution (the mixing property).  There are $\Np(\ell+1)$ directed edges in $\mathcal{G}_\ell$.   Choose one of these as the initial directed edge of a walk, originating at a vertex which we may call $j_0$.  From that point, take a random non-backtracking walk of length $r-1$ (there are $\ell^{r-1}$ of these).  Then we have created a closed non-backtracking walk if and only if we end on one of the $\ell$ edges that are directed into $j_0$ without backtracking the initial edge.  The mixing property implies that the probability of ending on of these $\ell$ incoming edges is asymptotically $\ell/(\Np(\ell+1))$.  Therefore, asymptotically, we obtain 
\[
\Np(\ell+1)\ell^{r-1}\left( \frac{\ell}{\Np(\ell+1)} \right)  = \ell^r
\]
total non-backtracking closed walks with a direction and a basepoint.  Most of these can be directed in two ways and a basepoint can be chosen in $r$ ways.  The exceptions, namely the barbells (Definition~\ref{defn:barbell}) are those closed walks that involve the same edges when the direction is reversed.  These exceptions are rare enough (Lemma~\ref{lemma:barbellcount}) that they do not affect the asymptotic.  Finally, isogeny cycles are those non-backtracking closed walks which are not powers of smaller non-backtracking closed walks; this requirement clearly also does not affect the asymptotic.
\end{proof}

\begin{remark}
In Theorem~\ref{thm:numcycles-randwalk}, we restrict to $p\equiv 1\pmod{12}$ because the graph theory involved requires an $(\ell+1)$-regular graph. We have remarked at length the difficulties which arise when vertices $j=0,1728$ appear in $\mathcal{G}_\ell$. However, these issues are very localized -- the $(\ell+1)$-regularity of the graph may only be affected at vertices adjacent to $j=0,1728$. For $p\gg \ell$, this is a very small number of potential deviations from regularity, and we do not expect the estimates provided here to be greatly affected. 
\end{remark}

By reference to \cite{Kempton2016NonbacktrackingRW} and \cite{walk_mix}, the mixing rate of the Markov chain could be used to analyse the rate of convergence of the asymptotic and make this theorem more precise.

The proof of Theorem~\ref{thm:numcycles-randwalk} lends itself to computation:  using powers of the non-backtracking matrix, we can compute the number of isogeny cycles in $\mathcal{G}_\ell$, at least up to the issue of barbells.  Figure~\ref{fig:cycledata} shows some data collected in SageMath for the number of non-backtracking closed walks of length $r$ on some small supersingular $\ell$-isogeny graphs chosen to avoid loops (so $\mathcal{G}_\ell$ is $(\ell+1)$-regular without any barbells), and its agreement with the asymptotic.

\begin{figure}
    \centering
    \includegraphics[width=0.4\textwidth]{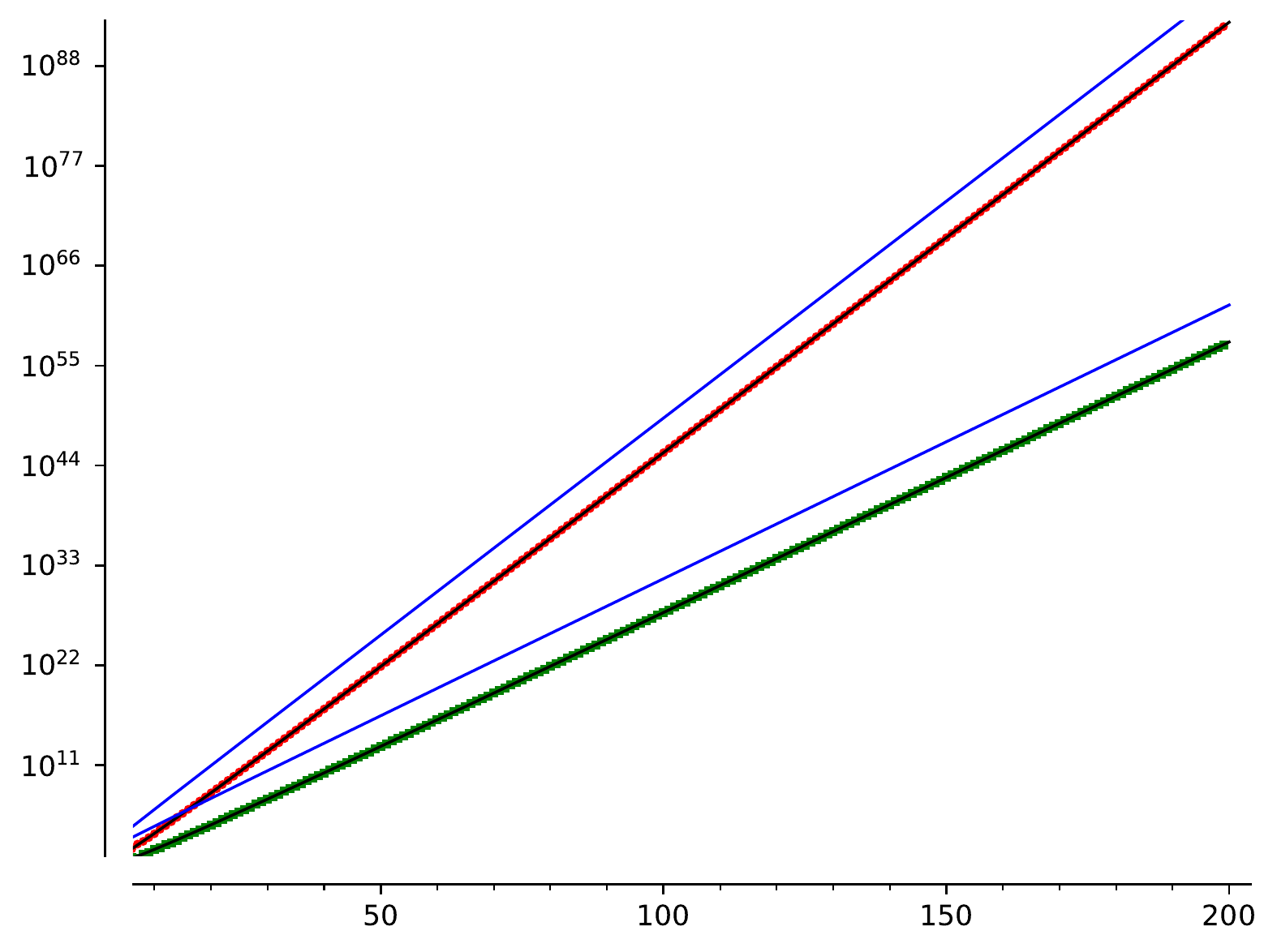}
    \includegraphics[width=0.4\textwidth]{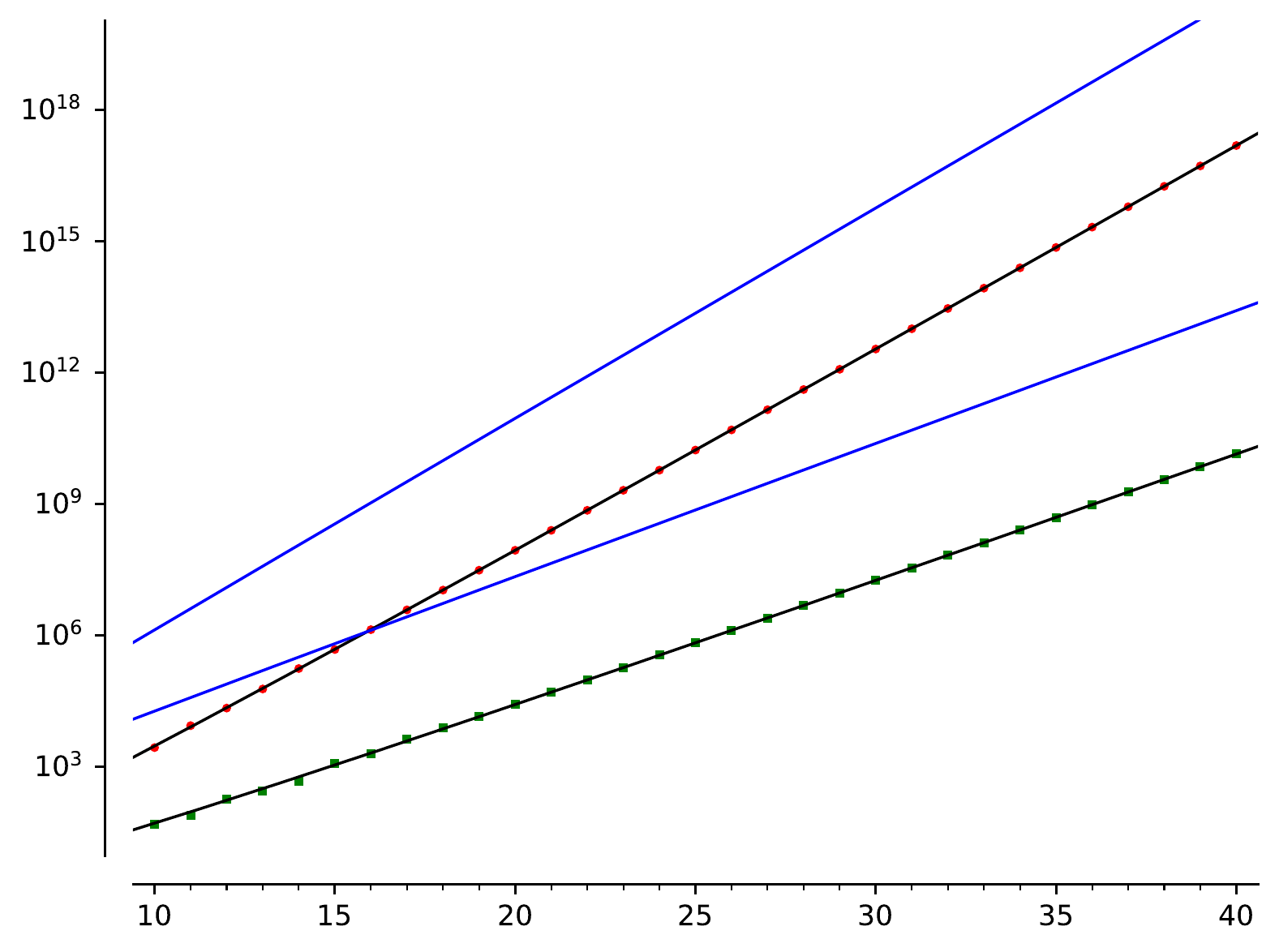}
    \caption{A semilog plot of cycle counts for $\mathcal{G}_\ell$ for $p=3361$ and $\ell=2$ (green squares, lower graph), and for $p =3229$ and $\ell=3$ (red circles, upper graph).  The black curves upon which the data points appear to sit are $\ell^x/(2 x)$.  The blue lines above the data in each case indicate the dominant term of Corollary~\ref{cor:cyclesbound}, indicating our proved upper bound.  On the left, the full data.  On the right, a close-up of the behaviour for small cycle sizes.  The graphs were generated in part using code from \cite{arpin2019adventures}.}
    \label{fig:cycledata}
\end{figure}

\subsection{Counting isogeny cycles with class numbers}

A corollary to Theorems~\ref{thm:mainbij-nobase} and \ref{thm:mainbij-fibre} counts cycles exactly as a sum of class numbers of certain quadratic orders.

\begin{corollary}
\label{cor:h}
Let $r > 2$ be fixed and consider primes $\ell \neq p$.  Let $\mathcal{I}_r$ be as in Theorem~\ref{thm:mainbij-fibre}.

Let $c_r = |\mathcal{C}_r|$ denote the number of directed isogeny cycles of length $r$.
Then
\[ c_r = \frac{1}{r} \sum_{\mathcal{O} \in \mathcal{I}_r} \epsilon_{\mathcal{O},\ell} \hO,
\]
where $\epsilon_{\mathcal{O},\ell}$ is as defined in Theorem~\ref{thm:mainbij-fibre}.  In particular, $1 \leq \epsilon_{\mathcal{O},\ell} \leq 2$ and $\epsilon_{\mathcal{O},\ell} = 2$ if $p$ is inert in the quadratic field containing $\mathcal{O}$.  If $\ell^r < p$, then $\epsilon_{\mathcal{O},\ell} = 2$ for all $\mathcal{O} \in \mathcal{I}_r$.
\end{corollary}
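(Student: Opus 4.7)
The proof is a direct consequence of Theorems~\ref{thm:mainbij-nobase} and \ref{thm:mainbij-fibre}. By the former, $\mathcal{C}_r \cong \mathcal{R}_r/{\sim}$, and by the latter the induced map $\mathcal{C}_r \to \mathcal{I}_r$ has fibre of cardinality $\epsilon_{\mathcal{O},\ell}\hO/r$ above each $\mathcal{O} \in \mathcal{I}_r$. Summing these cardinalities yields the claimed identity
\[
c_r = \sum_{\mathcal{O} \in \mathcal{I}_r} \frac{\epsilon_{\mathcal{O},\ell}\hO}{r}.
\]
The inequalities $1 \le \epsilon_{\mathcal{O},\ell} \le 2$ and the equality $\epsilon_{\mathcal{O},\ell} = 2$ whenever $p$ is inert in $K$ are a verbatim restatement of the case analysis in Theorem~\ref{thm:mainbij-fibre}, since both ramified subcases there produce $\epsilon_{\mathcal{O},\ell} \in \{1, 1 + rg_\mathcal{O}/(2\hO)\} \subseteq [1,2]$.

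For the final sentence, it suffices in view of Theorem~\ref{thm:mainbij-fibre} to show that the hypothesis on $\ell^r$ precludes the ramified case for every $\mathcal{O} \in \mathcal{I}_r$. I would argue by contradiction. Suppose $\mathcal{O} \in \mathcal{I}_r$ and $p$ ramifies in the imaginary quadratic field $K$ containing $\mathcal{O}$; then $p \mid \Delta_K$, hence $p \le |\Delta_K| \le |\Delta_\mathcal{O}|$, since the conductor of $\mathcal{O}$ is a positive integer. Because $[\mathfrak{l}]$ has exact order $r$ in $\Cl(\mathcal{O})$, there exists $\alpha \in \mathcal{O}$ with $(\alpha) = \mathfrak{l}^r$ and $N(\alpha) = \ell^r$. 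Since $\ell$ splits in $\mathcal{O}$, the primes $\mathfrak{l}$ and $\bar{\mathfrak{l}}$ are distinct, so $\alpha$ cannot be a rational integer (otherwise $(\alpha) = (\bar\alpha)$ would force $\mathfrak{l}^r = \bar{\mathfrak{l}}^r$ and hence $\mathfrak{l} = \bar{\mathfrak{l}}$). Expanding $\alpha$ in an integral basis of $\mathcal{O}$ and invoking the identity $4N(\alpha) = (\operatorname{tr}\alpha)^2 + b^2|\Delta_\mathcal{O}|$ with $b \in \ZZ \setminus \{0\}$ then gives $|\Delta_\mathcal{O}| \le 4\ell^r$, so $p \le 4\ell^r$.

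The main obstacle lies in the exact constant in the last step: the elementary norm-form argument yields the implication $4\ell^r < p \Rightarrow \epsilon_{\mathcal{O},\ell} = 2$, whereas the statement asserts the sharper threshold $\ell^r < p$. To recover the stated constant, one would need to exploit that $[\mathfrak{l}]$ has order \emph{exactly} $r$ (so $\alpha$ is not of the form $u\beta^k$ for a proper divisor $k$ of $r$ and a unit $u$), together with a case split on $p \bmod 4$ that controls whether $|\Delta_K|$ equals $p$ or $4p$ in the ramified case; non-maximal orders automatically satisfy $|\Delta_\mathcal{O}| \ge 4|\Delta_K| \ge 4p$, so only the case $\mathcal{O} = \mathcal{O}_K$ with $|\Delta_K|=p$ genuinely needs the refinement.
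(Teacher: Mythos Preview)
Your derivation of the class-number formula is identical to the paper's: invoke the bijection $\mathcal{C}_r\cong\mathcal{R}_r/{\sim}$ from Theorem~\ref{thm:mainbij-nobase}, then sum the fibre sizes $\epsilon_{\mathcal{O},\ell}h_\mathcal{O}/r$ from Theorem~\ref{thm:mainbij-fibre}. The paper's proof is equally terse and cites exactly these two theorems; your restatement of the bounds on $\epsilon_{\mathcal{O},\ell}$ is also just a quotation of Theorem~\ref{thm:mainbij-fibre}.

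Your caution about the final sentence is well placed. The paper's proof does not address the assertion ``$\ell^r<p\Rightarrow\epsilon_{\mathcal{O},\ell}=2$'' at all, and your norm-form argument showing only $|\Delta_\mathcal{O}|\le 4\ell^r$ (hence $4\ell^r<p\Rightarrow p$ unramified) is the natural one. In fact the gap you identify is real: take $p=31$, $\ell=2$, $r=3$, and $\mathcal{O}=\ZZ[(1+\sqrt{-31})/2]$. Then $\Delta_\mathcal{O}=-31$, $p$ ramifies in $K=\QQ(\sqrt{-31})$, $2$ splits (since $-31\equiv 1\pmod 8$), $h_\mathcal{O}=3$ with $[\mathfrak{l}]$ of order~$3$, so $\mathcal{O}\in\mathcal{I}_3$; yet $\ell^r=8<31=p$, and since $r$ is odd Theorem~\ref{thm:mainbij-fibre} forces case~2(a), giving $\epsilon_{\mathcal{O},\ell}=1$. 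So the stated threshold $\ell^r<p$ is not sufficient; your bound $4\ell^r<p$ is the correct one obtainable by this method, and the refinement you sketch (splitting on $p\bmod 4$) cannot rescue the constant in general.
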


\begin{proof}
This follows from Theorems~\ref{thm:mainbij-nobase} and \ref{thm:mainbij-fibre}.  The cycles $\mathcal{C}_r$ of length $r$ in $\mathcal{G}_\ell$ are in bijection with $\mathcal{R}_r/{\sim}$.  Each $[R]$ then maps to some order $\mathcal{O} \in \mathcal{I}_r$.  Summing the elements in $\mathcal{I}_r$ weighted by the fibre sizes counts $\# \mathcal{C}_r$.  The fibre sizes are given in Theorem~\ref{thm:mainbij-fibre}.
\end{proof}

On the graph side, it is clear that this count is finite.  On the side of ideal classes, finiteness of $\mathcal{I}_r$ follows from the observation that if $\operatorname{ord}([\mathfrak{l}]) = r$, then $\ell^r$ is represented by the norm form of the quadratic field.  For sufficiently large discriminant, this representation is impossible except as $\ell^r = (\ell^{r/2})^2$ if $r$ is even; but in this case, $\ell$ is ramified.  This observation can be turned into a counting argument based on quadratic residues.

\begin{theorem}
\label{thm:QNR}
Assume $p > 2$. For any positive integer $N$, define
\[
\mathcal{Q}(N) =
\left\{ 0 < x < 2\ell^{N/2} : \substack{
\mbox{$x \not\equiv 0 \pmod \ell$}, \\
\mbox{$x^2 - 4\ell^N$ is not a quadratic residue modulo $p$}, \\[2pt]
\mbox{and has valuation $\le 1$ at $p$}
}\right\},
\]
and 
\[ Q_N = \sum_{x \in \mathcal{Q}(N)} \sum_{f^2 \mid (x^2 - 4\ell^N)} \epsilon_{\mathcal{O},\ell} \, h\left ( \frac{x^2-4\ell^N}{f^2} \right ) , \]
where $h(D)$ denotes the class number of the order of discriminant $D$.
 Let $c_N$, $N \ge 3$, be as in Corollary~\ref{cor:h}. Then 
there are values for $c_1$ and $c_2$ such that
 \[
 \sum_{r \mid N} r c_r = Q_N, \quad c_N = \frac{1}{N} \sum_{r \mid N} \mu(r) Q_{N/r}.
\]
\end{theorem}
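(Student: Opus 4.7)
The plan is to establish the first identity $\sum_{r \mid N} r c_r = Q_N$ by a direct reparameterization of both sides, and then obtain the second identity by Möbius inversion.

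First, I would reparameterize each pair $(x, f)$ contributing to $Q_N$ as a pair $(\mathcal{O}, \alpha)$, where $\alpha = (x + \sqrt{x^2 - 4\ell^N})/2$ is an imaginary quadratic integer of norm $\ell^N$ and trace $x$, and $\mathcal{O}$ is the unique order of discriminant $(x^2 - 4\ell^N)/f^2$ containing $\alpha$. The arithmetic conditions defining $\mathcal{Q}(N)$ should translate into the ring-theoretic conditions placing $\mathcal{O}$ in $\mathcal{I}_N^\ast := \bigsqcup_{r \mid N}\mathcal{I}_r$: the condition $x \not\equiv 0 \pmod{\ell}$ forces $\ell \nmid \alpha$, so $\ell$ splits in $\mathcal{O}$ and $(\alpha) = \mathfrak{l}^N$ for some prime $\mathfrak{l}$ above $\ell$ (whence $[\mathfrak{l}]$ has some order $r$ dividing $N$); and the joint condition that $x^2 - 4\ell^N$ is a non-residue modulo $p$ of $p$-adic valuation at most $1$ is equivalent, via the factorization $x^2 - 4\ell^N = f_0^2 \Delta_K$ (with $f_0$ the conductor of $\mathbb{Z}[\alpha]$) and Lemma~\ref{lemma:tower-split}, to $p$ not splitting in $K = \mathbb{Q}(\alpha)$ and being coprime to the conductor of $\mathcal{O}$. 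Regrouping $Q_N$ by the underlying order $\mathcal{O}$ then yields
\[
Q_N = \sum_{\mathcal{O} \in \mathcal{I}_N^\ast} m_\mathcal{O}\, \epsilon_{\mathcal{O},\ell}\, h_\mathcal{O},
\]
where $m_\mathcal{O}$ counts the number of distinct positive integer traces obtained from unit multiples $u\alpha$ (for $u \in \mathcal{O}^\times$) of the two conjugate generators of $\mathfrak{l}^N$ and $\bar{\mathfrak{l}}^N$. A short computation shows $m_\mathcal{O} = 1$ whenever $\mathcal{O}^\times = \{\pm 1\}$, and that $m_\mathcal{O} > 1$ only for the two exceptional orders $\mathbb{Z}[i]$ and $\mathbb{Z}[\zeta_3]$. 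Since these have class number one, their prime $[\mathfrak{l}]$ is principal, and so they belong only to $\mathcal{I}_1$; in particular $m_\mathcal{O} = 1$ for every $\mathcal{O} \in \mathcal{I}_r$ with $r \ge 2$. For $r \ge 3$ this reproduces $r c_r = \sum_{\mathcal{O} \in \mathcal{I}_r} \epsilon_{\mathcal{O},\ell} h_\mathcal{O}$ from Corollary~\ref{cor:h}, while defining $r c_r := \sum_{\mathcal{O} \in \mathcal{I}_r} m_\mathcal{O}\, \epsilon_{\mathcal{O},\ell}\, h_\mathcal{O}$ for the two remaining cases $r = 1, 2$ absorbs the exceptional multiplicities into $c_1$ and delivers $Q_N = \sum_{r \mid N} r c_r$ for every $N \ge 1$.

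Finally, Möbius inversion applied to $Q_N = \sum_{r \mid N} r c_r$, viewed as $f(N) = \sum_{r \mid N} g(r)$ with $g(r) = r c_r$ and $f(N) = Q_N$, immediately yields $g(N) = \sum_{r \mid N} \mu(r) Q_{N/r}$, equivalently $c_N = \tfrac{1}{N}\sum_{r \mid N} \mu(r) Q_{N/r}$. The hardest step will be the careful verification of the equivalence between the number-theoretic conditions in $\mathcal{Q}(N)$ and the ring-theoretic conditions in $\mathcal{I}_N^\ast$, together with the correct accounting of the multiplicity $m_\mathcal{O}$ at the two exceptional orders $\mathbb{Z}[i]$ and $\mathbb{Z}[\zeta_3]$, where the natural counts of short cycles do not suffice and force the nonstandard values for $c_1$ and $c_2$ allowed by the statement of the theorem.
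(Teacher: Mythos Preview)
Your proposal is correct and follows essentially the same route as the paper: both translate each $x \in \mathcal{Q}(N)$ into an imaginary quadratic element $\alpha = \tfrac{x+\sqrt{x^2-4\ell^N}}{2}$ of norm $\ell^N$, verify that the conditions on $x$ are equivalent to $\mathcal{O} \in \bigcup_{r\mid N}\mathcal{I}_r$ for each order $\mathcal{O} \supseteq \ZZ[\alpha]$, regroup the double sum by order, and finish by M\"obius inversion. Your treatment is in fact slightly more careful than the paper's on one point: the paper asserts that each $\mathcal{O}$ corresponds to a unique $x$, whereas you observe that this can fail precisely for $\ZZ[i]$ and $\ZZ[\zeta_3]$ (where the extra units produce multiple positive traces), and you correctly note that these orders can only lie in $\mathcal{I}_1$, so the extra multiplicity is absorbed into the free parameter $c_1$.
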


\begin{proof}
Let $r \ge 3$. Recall from Theorems~\ref{thm:mainbij-nobase} and \ref{thm:mainbij-fibre} that we have maps
\[
\mathcal{C}_r \longrightarrow \mathcal{R}_r/{\sim}\; \longrightarrow \mathcal{I}_r.
\]
The first map is a bijection, and the second map has fibre sizes $\epsilon_{\mathcal{O},\ell} \hO/r$ above $\mathcal{O}$, as described in Theorem~\ref{thm:mainbij-fibre}.  Using the fibre sizes, we obtain
\[
c_N = \frac{1}{N} \sum_{\mathcal{O} \in \mathcal{I}_N} \epsilon_{\mathcal{O},\ell} \hO.
\]
Then
\begin{align*}
\sum_{r \mid N} r c_r
&= \sum_{\mathcal{O} \in \bigcup_{r \mid N} \mathcal{I}_r} \epsilon_{\mathcal{O},\ell} \hO.
\end{align*}
By definition, $\mathcal{O} \in \bigcup_{r \mid N} \mathcal{I}_r$ if and only if there exists an imaginary quadratic integer $\alpha \in \mathcal{O}$, not divisible by~$\ell$, such that $N(\alpha) = \ell^r$, $p$ does not split in $\QQ(\alpha)$, and $\ell$ is invertible and splits in $\mathcal{O}$.
Since these properties of $\alpha$ are invariant up to conjugation and sign, we can write  $\alpha = \frac{x + \sqrt{\Delta}}{2}$, where $\Delta$ is the discriminant of the order $\ZZ[\alpha]$, and $x \ge 0$. The condition that $\alpha$ has norm $\ell^N$ is equivalent to $x^2 - \Delta = 4\ell^N$.  The fact that $\alpha$ is imaginary is equivalent to $0 > \Delta = x^2 - 4 \ell^N$, or equivalently, $x < 2\ell^{N/2}$.  Next we show that the prime $\ell$ is invertible and splits in $\ZZ[\alpha]$ if and only if $x \not\equiv 0 \pmod \ell$.  To that end, invertibility is equivalent to the condition that $\ell$ does not divide the conductor of $\ZZ[\alpha]$, which is equivalent to $x \not\equiv 0 \pmod \ell$ (since $x^2 - \Delta = 4\ell^N$), and if these equivalent conditions hold, then $1 = \left( \frac{\Delta}{\ell} \right)=\left( \frac{x^2 - 4\ell^N}{\ell} \right)$, which holds if and only if  $\ell$ splits in the maximal order containing $\mathcal{O}$ (by Lemma~\ref{lemma:tower-split}).  
The prime $p$ does not split in this maximal order if and only if $x^2 - 4\ell^N$ is not a quadratic residue modulo $p$.  Finally, $\alpha$ is not divisible by $\ell$ if and only if $\ell$ does not divide $\Delta$.  To conclude then,
$\mathcal{O} \in \bigcup_{r \mid N} \mathcal{I}_r$
if and only if $\Delta_{\mathcal{O}} = x^2 - 4\ell^N$ for $x \in \mathcal{Q}(N)$ of the theorem statement.  Note that each $\mathcal{O}$ corresponds to one $\Delta_\mathcal{O}$, which in turn corresponds to one value of $x$. Therefore,
\[ \sum_{r \mid N} r c_r
= \sum_{\mathcal{O} \in \bigcup_{r \mid N} \mathcal{I}_r} \epsilon_{\mathcal{O},\ell} \hO  = \sum_{x \in \mathcal{Q}(N)} \sum_{f^2 \mid x^2 - 4\ell^N} \epsilon_{\mathcal{O},\ell} h\left( \frac{x^2 - 4\ell^N}{f^2} \right) 
= Q_N. \]
Calling upon M\"obius inversion, we have proved the theorem.
\end{proof}

\begin{remark} 
The key observation we make here -- that small cycles arise from a combination of splitting behaviour between $p$ and $\ell$ -- is used in \cite[Section 5.3.4]{CharlesGorenLauter} to eliminate small cycles by use of congruence conditions on $p$.
\end{remark}

The advantage of Theorem~\ref{thm:QNR} is that in contrast to the sum of class numbers in Corollary~\ref{cor:h}, we obtain a sum of class numbers that has no conditions on the class groups. The condition on the order of $[\mathfrak{l}]$ present in Corollary~\ref{cor:h} is removed when summing over $x \in \mathcal{Q}(N)$, since $\mathcal{Q}_N$ is entirely given in terms of `elementary' conditions, namely Legendre symbols and valuations at $p$ and $\ell$. 

\begin{example}
We illustrate the theorem and its proof with the example of Section~\ref{sec:examples}, where $\ell=2$ and $p=179$.  We have, from Table~\ref{table:cycles}, the directed isogeny cycle counts
\[
c_3 = 2, \ c_4 = 2, \ c_5 = 2, \ c_6 = 14.
\]
Using Sage to compute $\mathcal{Q}(N)$, we obtain, showing the data as tuples $(x, x^2 - 4\ell^n, h_{x^2-4\ell^n})$,
\[
\mathcal{Q}(1) = \emptyset,
\mathcal{Q}(2) = \{ (1,-15, 2) \},
\mathcal{Q}(3) = \{ (1,-31,3) \},
\mathcal{Q}(4) = \{ (5,-39,4), (7,-15,2) \},
\mathcal{Q}(5) = \{ (9,-47,5) \},
\]
\[
\mathcal{Q}(6) = \{ (1,-255,12), (3,-247,6), (5,-231,12), (11, -135,6), (13, -87,6), (15, -31,3) \}.
\]
Observe that the last tuple $(15,-31,3)$ in $\mathcal{Q}(6)$ corresponds to $\alpha = \frac{\pm 15 \pm \sqrt{-31}}{2} = \pm\left( \frac{1 \pm \sqrt{-31}}{2} \right)^2$.  The element $\beta = \frac{\pm 1 \pm \sqrt{-31}}{2}$ corresponds to the unique $3$-cycle in Table~\ref{table:cycles}.  Note that $\ZZ[\alpha] = \ZZ[\beta] = \mathcal{O}_{\QQ(\sqrt{-31})}$.  Accordingly, $\alpha$ corresponds to the $3$-cycle repeated twice.  

The fourth tuple $(11,-135,6)$ in $\mathcal{Q}(6)$ corresponds to $\alpha = \frac{\pm 11 \pm 3\sqrt{-15}}{2} = \left( \frac{\pm 1 \pm \sqrt{-15}}{2} \right)^3$.  The element $\frac{\pm 1 \pm \sqrt{-15}}{2}$ corresponds to the $2$-cycle $(112,35)$.  In this case, $\ZZ[\beta] = \mathcal{O}_{\QQ(\sqrt{-15})}$, but $\ZZ[\alpha]$ is a suborder of $\ZZ[\beta]$ of conductor $3$.  Thus, $\alpha$ corresponds to two different types of $6$-cycles:  the $2$-cycle repeated $3$ times (therefore not an isogeny cycle); and also an isogeny cycle of size $6$, as in Table~\ref{table:cycles}.

Therefore, 
\begin{align*}
Q_1 &= 0, \\
Q_2 &= 2h \left  (\ZZ\left[ \frac{1+\sqrt{-15}}{2} \right] \right ) = 4,\\
Q_3 &= 2h \left (\ZZ\left[ \frac{1+\sqrt{-31}}{2} \right] \right ) = 6,\\
Q_4 &= 2h \left (\ZZ\left[ \frac{1+\sqrt{-15}}{2} \right] \right ) + 2h \left (\ZZ\left[ \frac{1+\sqrt{-39}}{2} \right] \right ) = 4+8 = 12, \\
Q_5 &= 2h \left ( \ZZ\left[ \frac{1+\sqrt{-47}}{2} \right] \right ) = 10,\\
Q_6 &= 
2h \left (\ZZ\left[ \frac{1+\sqrt{-255}}{2} \right] \right )+
2h \left (\ZZ\left[ \frac{1+\sqrt{-247}}{2} \right] \right )+
2h \left (\ZZ\left[ \frac{1+\sqrt{-231}}{2} \right] \right )+ \\
& \quad 2h \left (\ZZ\left[ \frac{1+\sqrt{-15}}{2} \right] \right )+
2h \left (\ZZ\left[ 3 \frac{1+\sqrt{-15}}{2} \right] \right )+
2h \left (\ZZ\left[ \frac{1+\sqrt{-87}}{2} \right] \right )+ \\
&\quad 2h \left (\ZZ\left[ \frac{1+\sqrt{-31}}{2} \right] \right ) = 24 + 12 + 24 + 4 + 12 + 12 + 6
= 94.
\end{align*}

Recall that for the M\"obius function, $\mu(1) = 1$, $\mu(2) = -1$, $\mu(3) = -1$, $\mu(4) = 0$, $\mu(5) = -1$, $\mu(6) = 1$.   So the sum $S(N) := \frac{1}{N} \sum_{r \mid N} \mu(r) Q_{N/r}$ takes on the following values, which we can verify to match $c_N$ for $3 \le N \le 6$:
\[
S(1) = Q_1 = 0, \quad  
S(2) = \frac{1}{2}(Q_2 - Q_1) = 2, \quad
S(3) = \frac{1}{3}(Q_3 - Q_1) = 2 = c_3,
\]
\[
S(4) = \frac{1}{4}(Q_4 - Q_2) = 2 = c_4, \quad
S(5) = \frac{1}{5}(Q_5 - Q_1) = 2 = c_5, \quad
\]
\[
S(6) = \frac{1}{6}(Q_6 - Q_3 - Q_2 + Q_1) 
= 14 = c_6.
\]
\end{example}

We briefly confirm that the conclusion of Theorem~\ref{thm:QNR} is in heuristic agreement with Theorem~\ref{thm:numcycles-randwalk}.  Heuristically, we expect $\#\mathcal{Q}(N) \approx
\ell^{N/2}$.  Estimating the inner sum in $Q_N$ with a Hurwitz class number heuristically of size $\sqrt{4\ell^N-x^2}$ (ignoring log factors), we have
\[
Q_N \approx
\sum_{x=1}^{\lfloor 2\ell^{N/2} \rfloor} \sqrt{4\ell^N-x^2} 
\approx
\int_1^{2\ell^{N/2}} \sqrt{ 4\ell^N - x^2} \, dx 
\approx \ell^{N}, 
\]
from which follows $c_N \approx \ell^{N}/ N$ as expected.  In fact, a rigorous upper bound of magnitude $\ell^N \log N$ on $c_N$ is given in  Corollary \ref{cor:cyclesbound}, whose proof is based on this rough idea.

\begin{corollary} \label{cor:cyclesbound}
With the notation of Theorem \ref{thm:QNR}, 
\[ 
Q_N < B_N := \frac{2}{3} (e^{\gamma} \log \log (2\ell^{N/2}) + 7/3) \log(4\ell^N)(\pi\ell^N + 2\ell^{3N/4}) 
\]
and 
\begin{align*}
c_N &< \frac{B_N}{N} + \left (e^{\gamma} \log \log N + 7/3 - \frac{1}{N} \right ) B_{N/2} \\
&< \frac{2\pi e^{\gamma} \log (4\ell)}{3} \, \ell^N (\log N + \log \log (2\sqrt{\ell}) + 7/3) + O(\ell^{3N/4}\log N) \mbox{ as $N \rightarrow \infty$},
\end{align*}
where $\gamma =  0.577 \ldots$ is the Euler-Mascheroni constant.
\end{corollary}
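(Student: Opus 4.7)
The strategy is to bound $Q_N$ by estimating the double sum in Theorem~\ref{thm:QNR} using effective analytic bounds on class numbers and divisor sums, and then to derive the bound on $c_N$ via M\"obius inversion.

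I would begin with the inner sum. For each fixed $x \in \mathcal{Q}(N)$, set $D = x^2 - 4\ell^N$ and write $D = F^2 D_0$ with $D_0$ a fundamental discriminant. Re-indexing via $g = F/f$, the inner sum becomes $\sum_{g \mid F} h(g^2 D_0)$. The classical formula for class numbers of non-maximal orders gives $h(g^2 D_0) \le g\, h(D_0)$ whenever $|D_0| > 4$; the exceptional fundamental discriminants $D_0 \in \{-3, -4\}$ contribute constant factors that can be absorbed. Hence
\[
\sum_{f^2 \mid D} h(D/f^2) \;\le\; h(D_0)\, \sigma_1(F).
\]
Then I would apply Dirichlet's class number formula $h(D_0) \le \frac{\sqrt{|D_0|}}{\pi}\, L(1, \chi_{D_0})$ together with an unconditional effective upper bound on $L(1,\chi_{D_0})$ of the form $c\log|D_0|$, and Robin's explicit bound $\sigma_1(F)/F \le e^\gamma \log\log F + c'/\log\log F$. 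Since $F\sqrt{|D_0|} = \sqrt{|D|}$, this yields a pointwise inequality
\[
\sum_{f^2 \mid D} h(D/f^2) \;\le\; \frac{\sqrt{|D|}}{\pi}\,(e^\gamma \log\log F + c_1)\, L(1, \chi_{D_0}),
\]
whose logarithmic factors consist of one $\log\log F$ (from $\sigma_1(F)$) and one $\log|D_0|$ (from $L(1,\chi_{D_0})$).

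Next I would sum over $x$. Using $\epsilon_{\mathcal{O},\ell} \le 2$ from Theorem~\ref{thm:mainbij-fibre}, and bounding $\log F \le \log(2\ell^{N/2})$ and $\log|D_0| \le \log(4\ell^N)$, the logarithmic factors pull out in front of the sum. The problem reduces to estimating $S := \sum_{x \in \mathcal{Q}(N)} \sqrt{4\ell^N - x^2}$, which is a Riemann sum for $\int_0^{2\ell^{N/2}} \sqrt{4\ell^N - t^2}\, dt = \pi \ell^N$. The main discrepancy between $S$ and $\pi\ell^N$ comes from the square-root singularity of the integrand at $x = 2\ell^{N/2}$. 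Splitting the range of summation, for instance at $x = 2\ell^{N/2} - \ell^{N/4}$, and bounding the two contributions separately, namely the bulk part by the integral and the boundary part by a product bound, yields $S \le \pi\ell^N + O(\ell^{3N/4})$. Combining these estimates with care for constants produces $Q_N < B_N$; the $\tfrac{2}{3}$ prefactor and the $7/3$ inside the $\log\log$ term arise from the specific explicit forms of the analytic bounds chosen.

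For the bound on $c_N$: Theorem~\ref{thm:QNR} gives $c_N = \tfrac{1}{N}\sum_{r \mid N} \mu(r) Q_{N/r}$. The $r=1$ contribution is at most $B_N/N$. For $r \ge 2$, use $|\mu(r)| \le 1$, $Q_{N/r} \le B_{N/r}$, and monotonicity of $B_n$ in $n$ to get $B_{N/r} \le B_{N/2}$, so the remaining terms are bounded by $\tfrac{\tau(N)-1}{N}\, B_{N/2}$. The elementary inequality $\tau(N) \le \sigma_1(N)$ implies $(\tau(N)-1)/N \le \sigma_{-1}(N) - 1/N$, which by an explicit form of Robin's theorem is at most $e^\gamma \log\log N + 7/3 - 1/N$; this yields the stated upper bound on $c_N$. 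The asymptotic form follows from $B_N/N \sim \tfrac{2\pi e^\gamma \log(4\ell)}{3}\,\ell^N\,\log N$ as $N \to \infty$, while $B_{N/2} = O(\ell^{N/2} N \log\log N)$ is absorbed into the error term $O(\ell^{3N/4}\log N)$.

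The main obstacle is the bookkeeping of explicit numerical constants. The $\tfrac{2}{3}$ prefactor, the $7/3$, and the coefficient $2$ of $\ell^{3N/4}$ each depend on the precise choice of effective bounds on $L(1, \chi_{D_0})$ and on $\sigma_{-1}$, on how the Riemann-sum-to-integral comparison is performed, and on the treatment of the exceptional small fundamental discriminants. None of these steps is conceptually deep, but aligning every constant with the stated bound requires considerable care.
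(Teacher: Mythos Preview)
Your overall strategy matches the paper's: bound each class number, control the resulting divisor sum via $\sigma(y)/y$ and a Robin-type inequality, compare the sum over $x$ to an integral, then handle $c_N$ by M\"obius inversion with the same divisor-sum bound applied to $N$. The $c_N$ portion is identical to the paper's. Two sub-steps differ, and in both the paper's route is more direct, which is why the explicit constants fall out cleanly there while you flag them as the main obstacle.

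For the inner sum, the paper does not pass through the fundamental discriminant or Dirichlet's formula. It applies the effective bound $h_\mathcal{O} < (\sqrt{|\Delta|}\log|\Delta|)/3$ directly to each term $h((x^2-4\ell^N)/f^2)$, pulls out $\log(4\ell^N)$, and recognizes the remaining sum $\sum_{f\mid y} 1/f = \sigma(y)/y$, where $4\ell^N - x^2 = y^2 d$ with $d$ squarefree. This yields the factor $1/3$ immediately. Then $\sigma(y)/y \le y/\varphi(y)$ (from $\sigma(n)\varphi(n)\le n^2$) together with the Rosser--Schoenfeld bound $n/\varphi(n) < e^\gamma\log\log n + 2.50637/\log\log n$ and a numerical check for $y\le 18$ produces the $7/3$.

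For the sum over $x$, no splitting is needed: since $x\mapsto\sqrt{4\ell^N - x^2}$ is decreasing on $[0,X]$, one has $\sum_{x=1}^X \le \int_0^X$, and the integral evaluates exactly to $\tfrac{1}{2}X\sqrt{4\ell^N - X^2} + 2\ell^N\arcsin(X/2\ell^{N/2})$, whose two pieces are bounded by $2\ell^{3N/4}$ and $\pi\ell^N$ respectively. Your range-splitting argument would also work, but monotonicity makes it a one-line estimate.
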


\begin{proof}
For brevity, put $X = \lfloor 2\ell^{N/2} \rfloor$. For our bound, we simply ignore all the conditions on $x$ imposed by membership in the set $\mathcal{Q}_N$ of Theorem \ref{thm:QNR} and use $\epsilon_{\mathcal{O},\ell} \le 2$. That is, we upper-bound $Q_N$ by $Q_N' \geq Q_N$, where
\[ Q_N' = 2 \sum_{x=1}^X \sum_{f^2 \mid x^2 - 4\ell^N} h \left (\frac{x^2-4\ell^N}{f^2} \right ) .\]
Fix an integer $x$ with $0 \le x \le X$ and write $4\ell^N - x^2 = y^2d$  for positive integers $d, y$ with $d$ squarefree. For any imaginary quadratic order $\mathcal{O}$ of discriminant $\Delta$, we have $h_{\mathcal{O}} < (\sqrt{|\Delta|} \log|\Delta|)/3$ by \cite[Equation (8.11)]{Pell}, so
\begin{align*}
\sum_{f^2 \mid (x^2 - 4\ell^N)} h \left (\frac{x^2-4\ell^N}{f^2} \right )
    &< \frac{1}{3} \sum_{f^2 \mid (x^2-4\ell^N)} \frac{\sqrt{4\ell^N-x^2}}{f} \log \left ( \frac{4\ell^N-x^2}{f} \right ) \\
    & < \frac{1}{3}\log (4\ell^N) \sum_{f^2 \mid y^2d} \frac{\sqrt{y^2d}}{f}  \\
    & = \frac{1}{3} \log(4\ell^N) \sqrt{y^2d}\, \sum_{f \mid y} \frac{1}{f} \\
    &= \frac{1}{3} \sqrt{4\ell^N - x^2} \, \log(4\ell^N) \, \frac{\sigma(y)}{y} ,
\end{align*}
where $\sigma(\cdot)$ denotes the sum of divisors function. It is known (see \cite[Exercise 3.9 (a)]{Apostol} for example) that $\sigma(n)\varphi(n) \le n^2$ for all $n\ge 1$, where $\varphi(\cdot)$ is Euler's totient function. By \cite[Theorem 15]{RosserSchoenfeld}, we have
\[ \frac{n}{\varphi(n)} < e^{\gamma} \log \log n + \frac{2.50637}{\log \log n} \]
for all $n \ge 3$. If $y \ge 19$, then
\[ \frac{2.50637}{\log \log y} \leq \frac{2.50637}{\log \log (19)} < \frac{7}{3} , \]
so $y = \sqrt{(4\ell^N-x^2)/d} < 2\ell^{N/2}$ yields
\[ \frac{\sigma(y)}{y} \le \frac{y}{\varphi(y)} < e^{\gamma} \log \log y+ \frac{2.50637}{\log \log (y)}
     < e^{\gamma} \log \log (2\ell^{N/2}) + \frac{7}{3} .
\]
Simple numerical verification shows that $\sigma(y)/y \le 7/3$ for $1 \le y \le 18$ (the largest value is $\sigma(12)/12 = 7/3$), so this upper bound holds for all $y > 0$. It follows that 
\[ \sum_{f^2 \mid x^2 - 4\ell^N} h \left (\frac{x^2-4\ell^N}{f^2} \right )
    < \frac{1}{3} (e^{\gamma} \log \log (2\ell^{N/2}) + 7/3) \log(4\ell^N) \sqrt{4\ell^N - x^2} . \]
It remains to bound $\sum_{x=1}^X \sqrt{4\ell^N - x^2}$. Since $f(x) = 4\ell^N - x^2$ is monotonically decreasing, we have
\[ \sum_{x=1}^X \sqrt{4\ell^N - x^2} \le \int_0^X \sqrt{4\ell^N - x^2} \, dx
     = \frac{1}{2} X \sqrt{4\ell^N - X^2} + 2\ell^N \arcsin \left ( \frac{X}{2\ell^{N/2}} \right ) . \]
     
Now $2\arcsin(X/2\ell^{N/2}) \le 2\arcsin(1) = \pi$. Furthermore, $0 \le 2\ell^{N/2} - X < 1$ and
$0 < 2\ell^{N/2} + X < 4\ell^{N/2}$, so
\[ \frac{1}{2} X \sqrt{4\ell^N - X^2} = \frac{1}{2} X \sqrt{(2\ell^{N/2} - X)(2\ell^{N/2} + X)} < \frac{1}{2} \, 2\ell^{N/2} \sqrt{4\ell^{N/2}} =  2 \ell^{3N/4} . \]
It follows that $Q_N' < B_N$ where  
\[ B_N = \frac{2}{3} \big (e^{\gamma} \log \log (2\ell^{N/2}) + 7/3 \big ) \log(4\ell^N)(\pi\ell^N + 2\ell^{3N/4}) . \]
Next, we bound $c_N$. The quantity $B_N$ is monotonically increasing as a function of $N$, so $Q_{N/r} < B_{N/r} \le B_{N/2}$ for all divisors $r$ of $N$ with $r \geq 2$. Thus,
\[ c_N = \frac{1}{N} \left (Q_N + \sum_{1 < r \mid N}\mu(r) Q_{N/r}\right ) \le \frac{1}{N} \left (Q_N + \sum_{1 < r \mid N} Q_{N/r}\right )
    < \frac{1}{N} \left (B_N +(\sigma(N)-1) B_{N/2}\right ).  \]
Since $N \ge 3$, we can bound $\sigma(N)$ as before, i.e.\  $\sigma(N) < N (e^{\gamma} \log \log N + 7/3)$, to obtain
\[ c_N <  \frac{1}{N} \bigg (B_N + (N(e^{\gamma} \log \log N + 7/3)-1) B_{N/2} \bigg )
    =  \frac{B_N}{N} + \left (e^{\gamma} \log \log N + 7/3 - \frac{1}{N} \right ) B_{N/2}. \]
The asymptotic bound on $c_N$ is obtained by bounding the quantity $2\ell^{N/2}$ in the double log by $(2\sqrt{\ell})^N$.
\end{proof}

\begin{remark}
\label{remark:burgess}
The proof of Corollary~\ref{cor:cyclesbound} simply ignores the restriction that $p$ should not split and $\ell$ should split in any of the orders under consideration.  If we wish to prove a lower bound, these restrictions must be addressed.
There is a widely believed heuristic that a Legendre symbol $\left( \frac{f(x)}{p} \right)$ should take values $\pm 1$ with equal probability, independent of the size of $x$.  However, known bounds on the smallest $x$ for which a polynomial $f(x)$ is a non-residue modulo $p$ are comparatively weak.  There is a bound following from work of Weil; for example, Burgess mentioned this in 1967, observing there is a non-residue $x < H$ for $H \gg \sqrt{p} \log p$~\cite{Burgess_QuadChar}.  Improvements have only been made in general for polynomials in at least two variables (in contrast to the special case of $f(x)=x$, where this problem is known as the least non-residue problem); for a discussion and literature review, see \cite{PierceXu_BurgessBds}. 
\end{remark}

\section{Path finding with oriented isogeny volcanoes}
\label{sec:pathfinding}

 In this section, we review the previous work of the present authors \cite{paperone} which provides inspiration for the subject of this paper.
 
As discussed in the introduction, finding a path between two curves in a supersingular $\ell$-isogeny graph compromises the security of a variety of cryptographic systems. The bijection of Theorem~\ref{thm:mainbij-nobase} serves as inspiration for producing such a path, by highlighting the many  volcano structures present in the graph, hinting that one might navigate the graph by reference to these oriented volcanoes. In fact, we saw that every cycle in the supersingular $\ell$-isogeny graph is a rim of a volcano in some oriented isogeny graph. In previous work \cite{paperone}, the authors presented explicit classical and quantum algorithms for finding a path from any elliptic curve $E$ to an initial curve $E_{\text{init}}$. Here, $\Einit$ is generally taken to be the supersingular elliptic curve of $j$-invariant 1728, but other choices for $\Einit$ are also possible. The algorithms use the knowledge of one endomorphism $\theta$ on $E$, or equivalently, an explicit orientation of $E$ by some imaginary quadratic field~$K$, to navigate the oriented $\ell$-isogeny graph $\mathcal{G}_{K,\ell}$. Their complexity is subject to certain plausible heuristics, further detailed below, and is in part governed by the time it takes to evaluate $\theta$ on points on $E$.  The reader is encouraged to consult \cite{WesolowskiOrientations} for a different approach to solving the endomorphism ring problem using the data of an orientation.

The runtime of the classical algorithm additionally depends subexponentially on the degree $d$ of $\theta$ and linearly on a certain class number that can be significantly smaller than the class number of the quadratic order of discriminant~$\Delta$ defined by $\theta$. 
 This runtime can be substantially improved in many cases, leading in particular to certain new families of endomorphisms on \emph{every} supersingular elliptic curve, whose exposure gives rise to a classical polynomial-time algorithm for finding an $\ell$-isogeny path from $E$ to $E_{\text{init}}$. While this fact may on first glance seem alarming, the proof is non-constructive and thus does not provide an efficient strategy for finding such an offending endomorphism. So this result does not pose a threat to the security of isogeny based cryptosystems.

The quantum path finding algorithm relies on two different quantum subroutines:  solving an oriented variant of the \textsc{Vectorization} problem, which asks to find a class group element acting to take one oriented curve to another (see also \cite{chenu2021higherdegree, WesolowskiOrientations}), and a new problem called \textsc{PrimitiveOrientation}, which asks, for a given endomorphism, to determine the quadratic order for which the orientation it induces is primitive.  The resulting algorithm finds a smooth $\ell$-isogeny from $E$ to $\Einit$ such that the smoothness bound and the runtime are subexponential in $\log |\Delta|$.

For completeness, we reproduce the key results of \cite{paperone} here and give a brief road map of the path finding algorithms. Let $E/\Fpbar$ be a supersingular elliptic curve and $\theta$ an endomorphism on $E$ of degree $d$ and discriminant $\Delta$ not divisible by $p$. Denote by $T_{\theta}(k,p)$ the time it takes to evaluate $\theta$ on points on $E$ defined over $\mathbb{F}_{p^k}$. If $\theta$ is given by rational maps, then $T_{\theta}(k,p)$ is a polynomial in $d$, $k$ and $\log p$. But if $\theta$ is given as the product of endomorphisms of smaller degree, then the dependence of $T_{\theta}(k,p)$ on $d$ is much more favourable. All the algorithms of \cite{paperone} assume in practice one of these two representations, which are typical in applications involving elliptic curve endomorphisms. 

For any discriminant $\Delta$ of some quadratic order, the \emph{$\ell$-fundamental part} of $\Delta$ is the unique discriminant~$\Delta'$ such that $\Delta = \ell^{2m} \Delta'$, with $v_{\ell}(\Delta') \le 1$ when $\ell$ or $\Delta$ are odd and $v_{\ell}(\Delta'/4) \le 1$ when $\ell = 2$ and $\Delta$ is even. For brevity, write $L_x(1/2) = \exp(O(\sqrt{\log x \log \log x}))$.

\begin{theorem}[{\cite[Theorem 11.2]{paperone}}] \label{thm:paperoneThm1.1}
Let $E/\Fpbar$ be a supersingular elliptic curve and $\theta$ an endomorphism on $E$ of degree $d$ and discriminant $\Delta$ coprime to $p$. Let $\ell \ne p$ be a prime (assumed to be constant in the runtime estimate),  $\Delta'$ the $\ell$-fundamental part of $\Delta$, and $h_{\Delta'}$ the class number of the quadratic order of discriminant~$\Delta'$. If $L_d(1/2)$ is bounded below by a polynomial in $\log p$ and $|\Delta'| \le p^2$, then there is a classical algorithm that finds an $\ell$-isogeny path of length $O(\log p + h_{\Delta'})$  from $E$ to the curve $\Einit$ of $j$-invariant $j=1728$ in heuristic runtime $T_\theta(L_d(1/2),p) +h_{\Delta'} L_d(1/2)\poly(\log p)$. 
\end{theorem}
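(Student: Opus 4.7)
My plan is to use the endomorphism $\theta$ to place $E$ into the oriented volcano structure of $\mathcal{G}_{K,\ell}$ where $K = \QQ(\theta)$, and then navigate to $\Einit = E_{1728}$ via the class group action on the rim. The endomorphism $\theta$ determines an embedding $\iota\colon K \hookrightarrow \End^0(E)$ that is primitive for a unique order $\mcO \supseteq \ZZ[\theta]$. Letting $\mcO'$ be the $\ell$-fundamental overorder of discriminant $\Delta'$, the vertex $(E,\iota)$ sits at depth $m = \tfrac{1}{2}\log_\ell(|\Delta_{\mcO}|/|\Delta'|) \le \tfrac{1}{2}\log_\ell(4d)$ in some volcano of $\mathcal{G}_{K,\ell}$, and $m = O(\log p)$ under the hypothesis $|\Delta'| \le p^2$ since $|\Delta_{\mcO}|$ is then polynomially bounded in $p$ (after possibly first reducing to the $\ell$-fundamental case by bounding $d$ in terms of the rim position).

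The algorithm proceeds in three phases. First, I ascend $(E,\iota)$ to a rim vertex $(E^\star,\iota^\star)$ by repeatedly locating the unique ascending $\ell$-isogeny guaranteed by Proposition~\ref{prop:volcanostructure}, identified at each depth as a zero eigenspace of the appropriate scaling of $\iota(\theta)$ acting on the $\ell$-torsion (cf.\ \cite[Prop.\ 4.8]{paperone}); this costs at most $m = O(\log p)$ steps and contributes a $T_\theta(\cdot,p)$ factor via the evaluations of $\theta$ on small-degree torsion points. Second, I orient $\Einit$ by $K$ using the explicit description \eqref{eq:1728EndoRing} of $\End(E_{1728})$ (such an embedding exists because $p$ is inert or ramified in $K$, since $K \hookrightarrow B_{p,\infty}$), and ascend to a rim vertex $(\Einit^\star,\iotainit^\star) \in \SS^{\text{pr}}_{\mcO'}$ in $O(\log p)$ steps. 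Third, invoking the transitivity of the $\Cl(\mcO')$-action on $\SS^{\text{pr}}_{\mcO'}$ (Proposition~\ref{prop:cl-frob}), I compute the connecting ideal class $[\mathfrak{a}]$ via a subexponential class group algorithm running in time $L_{|\Delta'|}(1/2)\poly(\log p) \le L_{p^2}(1/2)\poly(\log p) \subseteq L_d(1/2)\poly(\log p)$ using the hypothesis $L_d(1/2) \ge \poly(\log p)$, then realize its action as a walk along the rim generated by the prime $\mathfrak{l}$ above $\ell$, giving a path of length at most $\operatorname{ord}([\mathfrak{l}]) \le h_{\Delta'}$. Aggregating the three phases yields total path length $O(\log p + h_{\Delta'})$ and runtime $T_\theta(L_d(1/2),p) + h_{\Delta'} L_d(1/2)\poly(\log p)$ as claimed.

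The \textbf{main obstacle} is the final rim-connectivity step: if $(E^\star,\iota^\star)$ and $(\Einit^\star,\iotainit^\star)$ do not lie in the same $\langle[\mathfrak{l}]\rangle$-orbit within $\SS^{\text{pr}}_{\mcO'}$, no purely $\ell$-isogeny walk along a single rim connects them, and one must either briefly descend off one rim and reascend onto another rim of the same cordillera, or decompose $[\mathfrak{a}]$ via a short factor base of small-norm primes and translate this product back into $\ell$-isogeny steps by invoking the relations defining the rim structure in $\Cl(\mcO')$. Ensuring that either approach matches the stated $O(h_{\Delta'})$ length bound, rather than something larger, is precisely where the heuristics on equidistribution of class group elements among rim orbits and on the smoothness of class group relations enter, and where the word ``heuristic'' in the statement must be taken seriously.
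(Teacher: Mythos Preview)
This theorem is quoted from \cite{paperone} and the present paper does not give a proof; it only outlines the strategy in the paragraphs following the statement. So your task is really to match that outline, and your phases 1 and 2 (ascend $E$ using the eigenspace criterion of \cite[Prop.~4.8]{paperone}, then orient and ascend $\Einit$) are in line with what the paper describes.

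Your phase 3, however, diverges in a way that introduces a genuine gap. You propose to ``compute the connecting ideal class $[\mathfrak{a}]$ via a subexponential class group algorithm.'' But knowing the structure of $\Cl(\mcO')$ (which is what Hafner--McCurley-style subexponential algorithms give you) is not the same as knowing \emph{which} class carries $(E^\star,\iota^\star)$ to $(\Einit^\star,\iotainit^\star)$. The latter is precisely the oriented \textsc{Vectorization} problem, and no classical subexponential algorithm for it is known; indeed, the paper explicitly reserves \textsc{Vectorization} for the \emph{quantum} algorithm of Theorem~\ref{thm:paperoneThm1.3}. The classical algorithm of Theorem~\ref{thm:paperoneThm1.1} instead \emph{enumerates the entire rim}: it applies the $\mathfrak{l}$-action repeatedly to $(E^\star,\iota^\star)$, stores all resulting $j$-invariants in a list $L$, and then tests whether the ascent from $\Einit$ lands in $L$. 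This is where the linear factor $h_{\Delta'}$ in both the path length and the runtime comes from, not from any clever ideal arithmetic.

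Your ``main obstacle'' paragraph correctly spots the issue that $(E^\star,\iota^\star)$ and $(\Einit^\star,\iotainit^\star)$ may lie on different volcanoes of the $\mcO'$-cordillera. The paper's resolution is different from either of the fixes you suggest: rather than descending and re-ascending, one loops over embeddings of successively larger $\ell$-power-index suborders into $\End(\Einit)$ (increasing the depth parameter $r$) until the induced orientation of $\Einit$ lands on the rim already enumerated. This is exactly the step governed by Heuristic~\ref{heur:uniform-volcanoes}. You should also be aware that the $L_d(1/2)$ appearing in $T_\theta(L_d(1/2),p)$ comes from a sieving/smoothing step in which $\theta$ is rewritten in factored form with controlled trace; your sketch does not account for this, and your depth bound $m = O(\log p)$ is not justified without it (a priori $m = O(\log d)$, and $d$ is not bounded by the hypotheses).
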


\begin{theorem}[{\cite[Theorem 11.3]{paperone}}] 
\label{thm:paperoneThm1.3}
Let $E/\Fpbar$ be a supersingular elliptic curve and $\theta$ an endomorphism on $E$ of discriminant $\Delta$ coprime to $p$ such that $T_{\theta}(k,p)$ is bounded below by a polynomial in $\log(p^k)$. If $|\Delta| \le p^2$, then there is a quantum algorithm that finds an $L_{|\Delta|}(1/2)$-smooth isogeny of norm $O(\sqrt{|\Delta|})$ from $E$ to $\Einit$ in heuristic runtime $T_\theta(O(\log^2d),p)L_{|\Delta|}(1/2)$.
\end{theorem}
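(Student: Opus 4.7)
The plan is to combine the orientation structure developed in this paper with a quantum algorithm for the class group action (the oriented \textsc{Vectorization} problem).  First, use $\theta$ to induce a $K$-orientation $\iota_\theta$ on $E$, where $K = \QQ(\theta)$, and let $\mathcal{O}$ be the $\ell$-fundamental order for which the ascended orientation is primitive.  By Proposition~\ref{prop:volcanostructure} and the algorithms for ascending/descending referenced in Section~\ref{sec:pathfinding}, one can compute an $\ell$-isogeny path from $(E,\iota_\theta)$ up to a vertex $(E_0, \iota_0)$ on the rim of its volcano, costing at most $O(\log_\ell|\Delta|)$ evaluations of $\theta$ on torsion points defined over small extensions of $\FF_p$.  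In parallel, endow $\Einit$ with a $K$-orientation $\iota_{\Einit}$, which is possible because $\End(\Einit)$ is known explicitly (Section~\ref{sec:back:ss}) and an embedding of $K$ into $B_{p,\infty}$ can be produced efficiently.  Ascending if necessary yields $(\Einit', \iota')$ primitively oriented by $\mathcal{O}$, i.e.\ a vertex of $\SSO$ lying on the rim of the same $\mathcal{O}$-cordillera as $(E_0,\iota_0)$.

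Next, invoke a quantum algorithm (e.g.\ Kuperberg-style hidden-shift / abelian hidden subgroup methods applied to the class group action on $\SSO$) to solve the oriented \textsc{Vectorization} problem: find $[\mathfrak{a}] \in \ClO$ such that $\mathfrak{a} \ast (E_0,\iota_0) = (\Einit', \iota')$, using the free transitive action of $\ClO$ on $\rho(\Ell(\mathcal{O}))$ (and Frobenius if one must cross between $\rho(\Ell(\mathcal{O}))$ and its complement; see Proposition~\ref{prop:cl-frob}).  The class group evaluation oracle is provided by Definition~\ref{def:idealaction}, which requires computing $E_0[\iota_0(\mathfrak{b})]$ for prime ideals $\mathfrak{b}$; this is where evaluations of $\theta$ on $\ell'$-torsion over extensions of degree $O(\log^2 d)$ enter, and the running time becomes $T_\theta(O(\log^2 d), p) \cdot L_{|\Delta|}(1/2)$ when the hidden-shift step is accounted for.

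After obtaining $[\mathfrak{a}]$, reduce it to an $L_{|\Delta|}(1/2)$-smooth representative $\mathfrak{a} = \prod_{\mathfrak{l}_i} \mathfrak{l}_i^{e_i}$ of norm $O(\sqrt{|\Delta|})$.  This can be done either by a second quantum subroutine for computing short smooth representatives of ideal classes, or by a Minkowski-style lattice argument inside $\ClO$; since $|\Delta| \le p^2$, Minkowski gives a representative of norm $O(\sqrt{|\Delta|})$ in each class, and standard index-calculus type heuristics yield smoothness at bound $L_{|\Delta|}(1/2)$.  Each prime factor $\mathfrak{l}_i$ then yields a concrete $N(\mathfrak{l}_i)$-isogeny via Definition~\ref{def:idealaction}, and composing them (together with the ascending/descending steps computed at the start) produces the desired smooth isogeny from $E$ to $\Einit$ of the claimed norm.

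The main obstacle is the \textsc{Vectorization} step: the quantum subroutine must act on a superposition of vertices of $\SSO$, and this requires a uniform way to evaluate the $\ClO$-action given only the data $(\theta, E)$.  Providing this oracle efficiently is exactly where the hypothesis on $T_\theta$ is used, and controlling the class group structure (notably accommodating the possible distinction between $\rho(\Ell(\mathcal{O}))$ and $\SSO$ that we analysed in Section~\ref{sec:classfieldtheory}) is the delicate point.  Once this oracle is in place, invoking a hidden-shift quantum algorithm yields the subexponential factor $L_{|\Delta|}(1/2)$, and the remaining steps—smoothing, Minkowski reduction, and path reconstruction on $\mathcal{G}_{K,\ell}$—proceed with only polynomial overhead in $\log p$.
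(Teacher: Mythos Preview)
This theorem is not proved in the present paper: it is quoted verbatim from the authors' earlier work \cite{paperone} (their Theorem~11.3), and the paper here offers only a one-paragraph summary of the underlying strategy in Section~\ref{sec:pathfinding}.  So there is no proof in this paper to compare your proposal against; the detailed argument lives entirely in \cite{paperone}.

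That said, your sketch is broadly in line with the high-level description the paper does give, with two discrepancies worth noting.  First, the paper explicitly names \emph{two} quantum subroutines --- oriented \textsc{Vectorization} and a separate problem called \textsc{PrimitiveOrientation} (determining the quadratic order for which $\iota_\theta$ is primitive) --- whereas you absorb the latter into the phrase ``let $\mathcal{O}$ be the $\ell$-fundamental order for which the ascended orientation is primitive'' as if it were a free preprocessing step; in \cite{paperone} it is isolated as its own quantum problem with its own cost.  Second, the theorem statement contains no prime $\ell$: the output is an $L_{|\Delta|}(1/2)$-smooth isogeny, not an $\ell$-isogeny path.  Your opening move of ascending an $\ell$-isogeny volcano via Proposition~\ref{prop:volcanostructure} is borrowed from the \emph{classical} algorithm (Theorem~\ref{thm:paperoneThm1.1}); the quantum version as summarized here is organized around the class group action and smooth ideal representatives directly, without routing through a fixed-$\ell$ volcano climb.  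For the actual proof you would need to consult \cite{paperone}.
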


The basic strategy of the algorithm of Theorem~\ref{thm:paperoneThm1.1} (\cite[Algorithm 8.1]{paperone}) is as follows. Use the orientation induced by $\theta$ as an ``orienteering tool'' to climb from $E$ to the rim of the oriented $\ell$-isogeny volcano containing~$E$. Generate this entire rim via the action of the class group of the associated~$\ell$-fundamental order $\mathcal{O}$ on the rim curves and store all their $j$-invariants in a list $L$. Orient the curve $\Einit$ (whose endomorphism ring is known) by the same orientation as $E$ and use it to climb its volcano. Hoping to reach the same rim as the path originating at $E$, i.e.\ a vertex in $L$, construct the desired path from $E$ to $\Einit$ from the two ascending paths and one of the two rim segments connecting them.  

Throughout the computation, it is imperative to keep the size of the endomorphisms and the discriminants under consideration reasonable. To that end, endomorphisms are maintained in factored form as a  product of endomorphisms of bounded prime power degree, and a technique akin to sieving produces such a representation for which the trace of the endomorphism is also of manageable size. Navigating the oriented $\ell$-isogeny graph requires effective subroutines for detecting whether an edge is ascending/descending/horizontal, traversing an edge and carrying along the orientation, dividing an endomorphism by $\ell$ (to traverse an ascending edge), evaluating an endomorphism on $\ell$-torsion points (required for the class group action), and factoring an endomorphism into a product of endomorphims of prime power degree while controlling the size of its reduced trace. Algorithms for performing all these computational tasks, accompanied by proofs of correctness and a complete complexity analysis, can be found in~\cite{paperone}. Code is available at \cite{github} and was used for a proof-of-concept example to generate an explicit $2$-isogeny path of length 5 from $E: y^2 = x^3 + (7i+86)x + (45i+174)$ to $\Einit: y^2 = x^3 - x$ over $\FF_{179^2}$, where $i^2 = -1$. 

A crucial ingredient in the path finding technique is orienting $\Einit$ appropriately. Given an $\ell$-fundamental order $\mathcal{O} \subset K$ of discriminant $\Delta$, this requires finding an endomorphism $\theta \in \End(\Einit)$ that generates an order $\mathcal{O}' \subseteq \mathcal{O}$.
The relative index $[\mathcal{O}:\mathcal{O}']$ is a power of $\ell$, say $\ell^r$ for some $r \geq 0$. For reasons of efficiency, $r$  should be small, situating $\mathcal{O}'$ as close to a rim as possible. At the same time, to guarantee the existence of a path from the starting curve $E$ to $\Einit$ in $\mathcal{G}_{K,\ell}$, the orientation induced by $\theta$ must place $\Einit$ in the same connected component (i.e.\ volcano) of $\mathcal{G}_{K,\ell}$ as the oriented starting curve. A natural strategy is thus to loop over $r = 0, 1, 2, \ldots$ until the desired endomorphism $\theta$ and order $\mathcal{O}'$ are found. This raises the key question of how large $r$ needs to be to ensure that a suitable orientation is encountered, at least with high probability. The answer is governed by how vertices corresponding to a fixed curve are spread around the cordillera. We will return to this issue in a moment. 

A number of the algorithms outlined above depend on heuristics. Quoting from the introductory text of~\cite{paperone}:

\begin{quote}
We rely on a number of heuristic assumptions: 
\begin{enumerate*}[label=(\roman*)]
\item The Generalized Riemann Hypothesis.
\item Powersmoothness in a quadratic sequence or form is as for random integers (a powersmooth analogue of the heuristic assumption underlying the quadratic sieve; see \cite[Heuristics~5.10 and 9.3]{paperone}).
\item \label{item:j} The orientations of a fixed $j$-invariant are distributed reasonably across all suitable volcanoes (\cite[Heuristic 3.7]{paperone}).
\item This distribution is independent of a certain integer factorization (\cite[Heuristic 6.7]{paperone}).
\item The aforementioned integer factorization is prime with the same probability as a random integer (\cite[Heuristic~6.4]{paperone}; this heuristic is similar to those used in \cite{deQuehenEtAl_ImprovedTorPt} and \cite{KLPT}).
\end{enumerate*}
\end{quote}

Among these, item (i) is standard, and items (ii) and (v) are purely number theoretical questions.  This leaves items (iii) and (iv) as the only heuristics which concern oriented isogeny graphs.  Unfortunately, item~(iv) seems out of reach at the moment, so we will focus on item (iii) which indeed addresses our earlier question. It asks us to consider, for a fixed $j$-invariant, how its various orientations appear in a cordillera of volcanoes:  does it prefer certain volcanoes over others? At the same time, it is natural to consider the `opposite' question:  for a fixed volcano, how do the various $j$-invariants lie upon it?  Do some $j$-invariants sit at greater depth than others?  This can be rephrased as a question about the covering map from an oriented volcano to the usual supersingular $\ell$-isogeny graph $\mathcal{G}_\ell$. In Section \ref{sec:randwalk}, we address some of these questions.

\section{Random walks and consequences for oriented isogeny volcanoes}
\label{sec:randwalk}

 In this section, we use a standard random walk result for Ramanujan graphs to address some questions about the appearance of $j$-invariants on oriented $\ell$-isogeny volcanoes.

\subsection{Random walks in an $\ell$-isogeny graph}

 In this entire section, as a matter of convenience, \textbf{we assume that $p \equiv 1 \pmod{12}$ to avoid issues of extra automorphisms}.  In particular, strictly speaking, Proposition~\ref{prop:randwalk2} depends upon results about regular undirected Ramanujan graphs.  Of course, the result can be expected to hold for all $\mathcal{G}_\ell$, regardless of $p$, which are at worst very nearly undirected regular graphs, having some perturbations around the two vertices $j=0$ and $j=1728$ if these belong to $\mathcal{G}_\ell$.
 
 We refer to the length (number of $\ell$-isogenies) of a shortest path between two curves in the $\ell$-isogeny graph $\mathcal{G}_\ell$ as the \emph{distance} between the two curves .   The diameter of $\mathcal{G}_\ell$ (the maximum distance between pairs of points) is known to be bounded by $2\log p$ \cite[Theorem 1]{Pizer_RamGraphsHecke}; see also \cite{LubetzkyPeres}. 
 One way to obtain such results is using the Ramanujan property and standard methods for expander graphs \cite{ExpanderGraphsAndApps}; our version and proof here closely follow \cite[Theorem 1]{GalbraithPetitSilva_IdProtocols}.

\begin{proposition}
\label{prop:randwalk2}
Let $p \equiv 1 \pmod{12}$.
Let $\ell$ be a prime and $\mathcal{G}_\ell$ be the supersingular $\ell$-isogeny graph over~$\overline{\FF}_p$, with $\Np$ vertices.  Let $\mathcal{S}$ represent a subset of $\Sp$ initial vertices.  Let $j_1$ represent a target vertex.  Let $j$ represent the vertex reached by a random walk of $t$ steps from a random vertex in $\mathcal{S}$.  We denote by $\operatorname{Pr}_t$ the probability of an event over all such random walks of length $t$.  Then we have
\[
\left|\operatorname{Pr}_t(j = j_1) - \frac{1}{\Np} \right| \le \frac{1}{\Sp}\left( \frac{2\sqrt{\ell}}{\ell+1} \right)^t.
\]
\end{proposition}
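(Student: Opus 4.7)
I would follow the standard spectral-gap proof of mixing for random walks on regular Ramanujan graphs. Because $p \equiv 1 \pmod{12}$, the graph $\mathcal{G}_\ell$ contains no vertex of $j$-invariant $0$ or $1728$, so it is a genuine undirected $(\ell+1)$-regular graph. Let $A$ be its adjacency matrix and $M := A/(\ell+1)$ the (symmetric) transition matrix of the random walk, with orthonormal eigenbasis $v_1 = \mathbf{1}/\sqrt{\Np}, v_2, \ldots, v_{\Np}$ and eigenvalues $1 = \mu_1 > \mu_2 \geq \cdots \geq \mu_{\Np}$ satisfying the Ramanujan bound $|\mu_i| \leq \lambda := 2\sqrt{\ell}/(\ell+1)$ for every $i \geq 2$ (i.e.\ the normalized form of~\eqref{eqn:spectralgap}).

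The first step is to encode the quantity of interest as a matrix element. I would let $p_0 \in \mathbb{R}^{\Np}$ be the initial distribution, with $p_0(j) = 1/\Sp$ for $j \in \mathcal{S}$ and $p_0(j) = 0$ otherwise, so that $\operatorname{Pr}_t(j = j_1) = e_{j_1}^{\top} M^t p_0$. Since $\langle p_0, v_1 \rangle = 1/\sqrt{\Np}$, expanding $p_0$ in the eigenbasis and peeling off the stationary component gives
\[
\operatorname{Pr}_t(j = j_1) - \frac{1}{\Np}
= \sum_{i \geq 2} \mu_i^{t}\, \langle p_0, v_i \rangle\, (v_i)_{j_1}.
\]
Equivalently, setting $\tilde{M} := M - (1/\Np) \mathbf{1}\mathbf{1}^\top$, which coincides with $M$ on $\mathbf{1}^\perp$ and vanishes on $\mathbf{1}$, one can write the deviation as $e_{j_1}^{\top} \tilde{M}^t p_0$, where the Ramanujan bound becomes the operator-norm estimate $\|\tilde{M}\|_{\mathrm{op}} \leq \lambda$.

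The second step is to bound this deviation by pulling $\lambda^t$ out and controlling the remaining bilinear form. I would use the identities
\[
\sum_{i \geq 2} \langle p_0, v_i \rangle^2 = \|p_0\|_2^2 - \tfrac{1}{\Np} = \tfrac{1}{\Sp} - \tfrac{1}{\Np},
\qquad
\sum_{i \geq 2} (v_i)_{j_1}^{\,2} = 1 - \tfrac{1}{\Np},
\]
and apply Cauchy-Schwarz to obtain the basic spectral estimate. The main obstacle is that a pure $\ell^2$ Cauchy-Schwarz step only yields $\lambda^t\sqrt{(1/\Sp - 1/\Np)(1 - 1/\Np)}$, giving a prefactor of order $1/\sqrt{\Sp}$; reaching the sharper $1/\Sp$ prefactor in the statement requires incorporating the additional $\ell^\infty$ information $\|p_0\|_\infty = 1/\Sp$, for example by estimating $e_{j_1}^{\top} \tilde{M}^t p_0$ through a mixed-norm bound that pairs the $\ell^2$ operator-norm decay of $\tilde{M}^t$ on $\mathbf{1}^\perp$ against the uniform structure of $p_0$ on $\mathcal{S}$. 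This last hybrid-norm manoeuvre is the technical point I would expect to occupy the proof, while the overall spectral gap strategy is otherwise the standard one used, for instance, in \cite[Theorem 1]{GalbraithPetitSilva_IdProtocols}.
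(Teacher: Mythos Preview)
Your approach is exactly the paper's: write the deviation as $\|\mathbf{v}_t-\mathbf{u}\|_\infty\le\|\mathbf{v}_t-\mathbf{u}\|_2$, contract the $\ell^2$ norm by $\lambda=2\sqrt{\ell}/(\ell+1)$ at each step, and bound $\|\mathbf{v}_0-\mathbf{u}\|_2$. You are also right that this yields only a prefactor $1/\sqrt{\Sp}$, since $\|p_0-\mathbf{u}\|_2^2=\tfrac{1}{\Sp}-\tfrac{1}{\Np}$.

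The point you flag is in fact a slip in the paper's proof: the displayed computation there evaluates $\|\mathbf{v}_0-\mathbf{u}\|_2^{\,2}=\tfrac{1}{\Sp}\bigl(1-\tfrac{\Sp}{\Np}\bigr)$ but labels it $\|\mathbf{v}_0-\mathbf{u}\|_2$, and then carries this through the final chain of inequalities. With the norm taken correctly the argument gives
\[
\left|\operatorname{Pr}_t(j=j_1)-\tfrac{1}{\Np}\right|\le \tfrac{1}{\sqrt{\Sp}}\left(\tfrac{2\sqrt{\ell}}{\ell+1}\right)^t,
\]
not $1/\Sp$. So there is no hidden ``mixed-norm manoeuvre'' in the paper that you are missing; the $1/\Sp$ constant is not actually established there, and your suggestion of recovering it via an $\ell^\infty$ trick is left vague and, as far as I can see, does not go through (the operator norm of $\tilde M$ in $\ell^\infty$ is not controlled by $\lambda$). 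For the paper's downstream applications (e.g.\ Corollary~\ref{cor:allcurves}) this only perturbs the threshold on $t$ by replacing $\log\Sp$ with $\tfrac12\log\Sp$, so the qualitative conclusions are unaffected.
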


\begin{proof} 
Choose a basis of a vector space of dimension $\Np$ to correspond to the vertices of $\mathcal{G}_\ell$.
Let $A_{\ell}$ represent the $\Np \times \Np$ adjacency matrix of $\mathcal{G}_\ell$.
Let $\mathbf{v}_0$ represent an initial vector whose entry for vertex $v$ is $1/\Sp$ if $v \in \mathcal{S}$ and $0$ otherwise.  This is the probability vector for the initial vertex of the random walk.  Let $\lambda_{1}$ be the largest eigenvalue of $A_{\ell}$, and $\lambda_{2}$ the second largest.  Recursively define $\mathbf{v}_t = \frac{1}{\lambda_1} A_\ell \mathbf{v}_{t-1}$, so $\mathbf{v}_t$ is the probability vector for the $t$-th vertex of the random walk.

Let $\mathbf{u}$ have all entries $1/\Np$, so $\mathbf{u}$ is the  probability vector for the uniform distribution over the vertices of~$\mathcal{G}_\ell$.  Then $\mathbf{u}$ is an eigenvector for $\lambda_1$, which has an eigenspace of dimension $1$.  Choose an orthogonal basis of eigenvectors $\mathbf{e}_i$ for $\mathbb{R}^{\Np}$ such that $\mathbf{e}_1 = \mathbf{u}$. Then since $\mathbf{v}_{t-1}$ is a probability distribution, $\langle \mathbf{v}_{t-1}, \mathbf{1} \rangle = \langle \mathbf{u}, \mathbf{1} \rangle = 1$.  Writing $\mathbf{v}_{t-1} = \sum_{i} \alpha_i \mathbf{e}_i$, this implies that $\alpha_1 = 1$.  Hence $\mathbf{v}_{t-1} - \mathbf{u} = \sum_{i=2}^{\Np} \alpha_i \mathbf{e}_i$.  Therefore,
\[
A_\ell (\mathbf{v}_{t-1} - \mathbf{u})
= \sum_{i=2}^{\Np} \lambda_i \alpha_i \mathbf{e}_i.
\]
Consequently, we have
\begin{equation*}
    \|\mathbf{v}_t - \mathbf{u} \|_2 = \left|\left| \frac{1}{\lambda_1} A_\ell (\mathbf{v}_{t-1} - \mathbf{u}) \right|\right|_2 
    \le \frac{1}{\lambda_1} \lambda_2 \| \mathbf{v}_{t-1} - \mathbf{u} \|_2.
\end{equation*}
We also have
\begin{equation*}
    \|\mathbf{v}_0 - \mathbf{u}\|_2 =
    \Sp \left( \frac{1}{\Sp} - \frac{1}{\Np} \right)^2 + (\Np - \Sp) \left( \frac{1}{\Np} \right)^2 
    = \frac{1}{\Sp} \left( 1 - \frac{\Sp}{\Np} \right)
    \le \frac{1}{\Sp}.
\end{equation*}
Iterating, we have
\begin{equation*}
    \left| \Pr(j=j_1) - \frac{1}{\Np} \right|
    = \| \mathbf{v}_t - \mathbf{u} \|_\infty
    \le \| \mathbf{v}_t - \mathbf{u} \|_2 
    \le \frac{\lambda_2^t}{\lambda_1^t} || \mathbf{v}_0 - \mathbf{u} ||_2 
    \le \frac{\lambda_2^t}{\Sp \lambda_1^t} 
    \le \frac{1}{\Sp} \left( \frac{2 \sqrt{\ell} }{\ell+1} \right)^t.
\end{equation*}
The final inequality is the bound on the spectral gap \eqref{eqn:spectralgap} for these graphs.
\end{proof}

\begin{corollary}
\label{cor:allcurves}
Let $j_1$ be a $j$-invariant and $\mathcal{S}$ a subset of the vertices of $\mathcal{G}_\ell$. If 
\[ t > \frac{\log \Np - \log \Sp}{\log ((\ell + 1)/2 \sqrt{\ell})}  \]
then there exists a path of length $\le t$ in $\mathcal{G}_\ell$ starting in $\mathcal{S}$ and ending at $j_1$.  In particular, for
\[
t > \log_{\sqrt{\ell}/2} \Np - \log_{\sqrt{\ell}/2} \Sp,
\]
all curves are within distance $t$ of $\mathcal{S}$.
\end{corollary}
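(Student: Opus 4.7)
The plan is to apply Proposition~\ref{prop:randwalk2} directly, exploiting the elementary principle that a strictly positive probability of reaching $j_1$ forces the existence of an actual walk to $j_1$, which in turn contains a path.

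First I would rearrange the hypothesis on $t$. Since $(\ell+1)/(2\sqrt{\ell}) > 1$ by the AM-GM inequality, $\log\bigl((\ell+1)/(2\sqrt{\ell})\bigr)$ is positive, so the assumed lower bound on $t$ is equivalent to
\[
\frac{1}{\Sp}\left(\frac{2\sqrt{\ell}}{\ell+1}\right)^t \;<\; \frac{1}{\Np}.
\]
Applying Proposition~\ref{prop:randwalk2} with target vertex $j_1$ then gives $\Pr_t(j = j_1) > 0$, so at least one length-$t$ walk from some starting vertex of $\mathcal{S}$ to $j_1$ exists in $\mathcal{G}_\ell$. Shortening this walk by successively removing cycles yields a path in $\mathcal{G}_\ell$ from $\mathcal{S}$ to $j_1$ of length at most $t$, proving the first assertion.

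For the ``in particular'' statement, note that neither the hypothesis on $t$ nor the conclusion depend on the choice of $j_1$, so the same $t$ handles every vertex of $\mathcal{G}_\ell$ simultaneously, showing that all curves lie within distance $t$ of $\mathcal{S}$. The simplified bound in terms of $\log_{\sqrt{\ell}/2}$ follows as a loosening (for $\ell$ large enough that $\sqrt{\ell}/2 > 1$) by using the inequality $\sqrt{\ell}/2 \le (\ell+1)/(2\sqrt{\ell})$ and reciprocating the resulting positive logarithms.

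The main obstacle is essentially nil: this is a direct corollary of the quantitative mixing estimate combined with the pigeonhole-style observation that nonzero probability entails existence. The only subtlety is keeping track of the direction of inequality when dividing by the positive logarithm $\log\bigl((\ell+1)/(2\sqrt{\ell})\bigr)$.
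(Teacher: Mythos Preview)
Your proof is correct and follows essentially the same route as the paper: apply Proposition~\ref{prop:randwalk2}, choose $t$ large enough that the error term is strictly less than $1/\Np$, so that $\Pr_t(j=j_1)>0$, and conclude existence of a walk (hence a path of length at most $t$) from $\mathcal{S}$ to $j_1$. Your explicit remarks on shortening the walk to a path and on the loosening $(\ell+1)/(2\sqrt{\ell})\ge \sqrt{\ell}/2$ (valid only once $\sqrt{\ell}/2>1$) are details the paper leaves implicit.
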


\begin{proof}
This follows from Proposition~\ref{prop:randwalk2} by multiplying through by $\Np$, to obtain
\[
| \Np \operatorname{Pr}_t(j=j_1)  -1 | \le \frac{\Np}{\Sp} \left( \frac{2 \sqrt{\ell} }{\ell+1} \right)^t
\]
and then letting $t$ be large enough so the right side is less than $1$. 
\end{proof}


\subsection{The depth of $j$-invariants on an oriented volcano}
\label{sec:depth-of-j}

Each volcano of the oriented $\ell$-isogeny graph includes every $j$-invariant of the supersingular graph (to see this, fix a $j$-invariant $j_0$ of the volcano, and then consider an $\ell^n$-isogeny to the desired $j$-invariant $j_1$; this generates a path in the oriented graph).  It is natural to ask how far down a volcano one must go to see all $j$-invariants.  We may use Corollary~\ref{cor:allcurves} to answer this.

\begin{corollary}
\label{cor:j-rim}
Let $\mathcal{V}$ be an oriented volcano, and set $\mathcal{S} := \{ j \in \mathcal{G}_\ell : (E_j,\iota) \text{ is on the rim of $\mathcal{V}$ for some $\iota$} \}$.  
Then all $j$-invariants appear on $\mathcal{V}$ at depth less than or equal to
\[
\lceil \log_{\sqrt{\ell}/2}\Np - \log_{\sqrt{\ell}/2}\Sp \rceil.
\]

\end{corollary}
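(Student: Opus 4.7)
The plan is to combine Corollary~\ref{cor:allcurves} with a path-lifting argument from $\mathcal{G}_\ell$ to $\mathcal{V}$. Applying Corollary~\ref{cor:allcurves} to the set $\mathcal{S}$ of rim $j$-invariants, every $j$-invariant $j_1 \in \mathcal{G}_\ell$ is within distance at most $t := \lceil \log_{\sqrt{\ell}/2}\Np - \log_{\sqrt{\ell}/2}\Sp \rceil$ of some $j_0 \in \mathcal{S}$. Fix such a $j_0$ together with a shortest walk from $j_0$ to $j_1$ in $\mathcal{G}_\ell$.

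Since $j_0 \in \mathcal{S}$, there is at least one orientation $\iota$ such that $(E_{j_0}, \iota)$ lies on the rim of $\mathcal{V}$. The next step is to lift the chosen walk to $\mathcal{V}$, starting at $(E_{j_0}, \iota)$. Because $p \equiv 1 \pmod{12}$, neither $j=0$ nor $j=1728$ is supersingular, so no vertex in $\mathcal{G}_\ell$ or $\mathcal{V}$ carries an extra automorphism. By Proposition~\ref{prop:ellplusone}, every vertex of $\mathcal{V}$ has out-degree exactly $\ell+1$, matching $\mathcal{G}_\ell$. A directed edge in $\mathcal{V}$ out of $(E,\iota)$ is determined by its kernel in $E[\ell]$, with the target orientation obtained via pushforward, so forgetting orientations induces a bijection between outgoing edges of $\mathcal{V}$ at $(E,\iota)$ and outgoing edges of $\mathcal{G}_\ell$ at $E$. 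Consequently, the walk from $j_0$ to $j_1$ lifts uniquely to a walk in $\mathcal{V}$ of the same length, starting at $(E_{j_0},\iota)$ and terminating at some oriented curve $(E_{j_1}, \iota')$.

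Every edge of $\mathcal{V}$ is horizontal, ascending, or descending, so it changes the depth by at most one. Since the lifted walk has length at most $t$ and begins at depth $0$, the terminal vertex $(E_{j_1}, \iota')$ sits at depth at most $t$. Hence $j_1$ appears as the $j$-invariant of some vertex of $\mathcal{V}$ at depth no greater than $t$, which is the desired bound.

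The main obstacle, easily handled, is confirming that a walk in $\mathcal{G}_\ell$ lifts uniquely to $\mathcal{V}$ starting at a prescribed vertex. This reduces to the local fact that outgoing $\ell$-isogenies correspond bijectively under the map $\mathcal{V} \to \mathcal{G}_\ell$, which holds cleanly because the hypothesis $p \equiv 1 \pmod{12}$ rules out the extra automorphisms that would otherwise complicate edge equivalence between the oriented and unoriented graphs.
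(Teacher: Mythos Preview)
Your proof is correct and follows the same approach as the paper, which derives the result directly from Corollary~\ref{cor:allcurves}: every $j$-invariant lies within distance $t$ of the rim set $\mathcal{S}$ in $\mathcal{G}_\ell$, and the corresponding path lifts (via the forgetful map) to a walk of the same length in $\mathcal{V}$ from a rim vertex, so the endpoint sits at depth at most $t$. Your write-up supplies more detail than the paper (which leaves the lifting implicit), in particular the observation that the standing hypothesis $p\equiv 1\pmod{12}$ removes extra automorphisms so that Proposition~\ref{prop:ellplusone} gives the needed local bijection of outgoing edges.
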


\begin{remark}
The proof of \cite[Proposition 6.5]{paperone}, particularly equation (3), provides a sort of computational proof of this behaviour in the case of the Gaussian field.
\end{remark}

It is an interesting question to ask about the cardinality of $\mathcal{S}$, which is related to the question of the multiplicity of $j$-invariants in $\SSO$.  That is, how many $\mathcal{O}$-primitive orientations are there for a fixed curve $E$?   Kaneko showed that a curve cannot have more than one embedding when the discriminant is at most $p$ in absolute value \cite[Theorem 2']{Kaneko}.  On the other hand, \cite[Proof of Theorem 1.4]{EOY} shows that for any fixed curve $E$ and integer $t$, orders $\mathcal{O}$ of sufficiently large discriminant will always embed into $\End(E)$ at least $t$ times.  The number of curves with an $\mathcal{O}$-orientation is bounded below in \cite{Antonin_lowerbound2022}.

\subsection{The volcanoes upon which the orientations of a fixed $j$-invariant lie}
\label{sec:volcanoes-upon-which}

Every $j$-invariant occurs in every volcano (as described at the beginning of Section~\ref{sec:depth-of-j}).  In \cite{paperone}, the algorithms depended upon the following heuristic about the relative frequency of a fixed $j$-invariant across different volcanoes (quoted as item \eqref{item:j} in Section~\ref{sec:pathfinding} above).

 \begin{heuristic}[{\cite[Heuristic 3.7]{paperone}}]
 \label{heur:uniform-volcanoes}
 Let $\mathcal{O}$ be an $\ell$-fundamental order in a quadratic field $K$.
 Consider the finite union $\mathcal{S}$ of $\mathcal{O}'$-cordilleras in the oriented supersingular $\ell$-isogeny graph $\mathcal{G}_{K,\ell}$ for all $\mathcal{O'} \supseteq \mathcal{O}$.  Let $d(v)$ denote the depth of a vertex $v$ on its volcano.  Let $j(v)$ denote its $j$-invariant.  Define:
  \begin{itemize} \itemsep 0pt
     \item $R_{\mathcal{V}}$, the number of edges descending from the rim of the volcano $\mathcal{V} \in \mathcal{S}$;
     \item $R_{\mathcal{S}}$, the number of edges descending from all rims in $\mathcal{S}$.
\end{itemize}
 Then for any $j$-invariant $j_0$ and any volcano $\mathcal{V} \in \mathcal{S}$, the ratio
 \[
 \frac{
\#\{v \in \mathcal{V}: j(v)=j_0, d(v) \le t \}
}{
\#\{ v \in \mathcal{S}: j(v)=j_0, d(v) \le t \}
}
\] 
approaches $R_\mathcal{V}/R_\mathcal{S}$ as $t \rightarrow \infty$.
 \end{heuristic}

However, na\"ive methods can only prove a weighted version of this result, where $j$-invariants at smaller depth are weighted more heavily.  Our result also has a restriction on the rim sizes.

\begin{theorem} \label{thm:uniform-volcanoes}
  Consider a cordillera $\mathcal{C}$ in the oriented supersingular $\ell$-isogeny graph $\mathcal{G}_{K,\ell}$ whose volcanoes are all isomorphic as graphs and $(\ell+1)$-regular (e.g.\ a single $\mathcal{O}$-cordillera).
  Let $\mathcal{V}$ be one volcano of the cordillera $\mathcal{C}$ and $\mathcal{S}_{\mathcal{V}}$ the vertex set of its rim.
  Let $\mathcal{S}_\mathcal{C}$ be the set of vertices in the rims of the full cordillera.
  Let $d(v)$ denote the depth of the vertex $v$, and let $f_t(d)$ denote the function that counts the number of paths of length $t$ starting at the rim of any volcano and terminating at any fixed vertex at depth $d$ of that volcano (by symmetry, this is independent of the vertex but depends on the depth and rim size).
If we define
\begin{equation}
\label{eqn:s}
S_{\mathcal{V},j_1,t} =  \frac{ \displaystyle \sum_{v \in \mathcal{V}, \, j(v)=j_1} f_t(d(v)) }{ \displaystyle \sum_{v \in \mathcal{C}, \, j(v)=j_1} f_t(d(v))  } \, ,
\end{equation}
then
\[
\left| S_{\mathcal{V},j_1, t} - \frac{\#\mathcal{S}_\mathcal{V}}{\#\mathcal{S}_\mathcal{C}} \right| \ll \left( \frac{2 \sqrt{\ell}}{\ell+1} \right)^t,
\]
where the implied constant depends on $\mathcal{C}$ and $\ell$, but not $t$.
\end{theorem}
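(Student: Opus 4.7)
The plan is to convert the weighted counts in \eqref{eqn:s} into counts of walks in $\mathcal{G}_\ell$, and then apply Proposition~\ref{prop:randwalk2} to these counts.

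The first step is a combinatorial identity obtained by lifting walks. Given a rim vertex $(E_{j_0},\iota)$ of a volcano in the cordillera, every length-$t$ walk in $\mathcal{G}_\ell$ starting at $j_0$ lifts uniquely to a length-$t$ walk in $\mathcal{G}_{K,\ell}$ starting at $(E_{j_0},\iota)$, because each outgoing $\ell$-isogeny is determined by its kernel, with the orientation propagated via \eqref{eq:induced_orientation}. The standing assumption $p\equiv1\pmod{12}$ together with the hypothesis that the volcanoes are $(\ell+1)$-regular rules out any extra-automorphism subtleties. Writing $r_\mathcal{V}(j_0)$ for the number of rim vertices of $\mathcal{V}$ with $j$-invariant $j_0$, and $W_t(j_0,j_1)$ for the number of length-$t$ walks from $j_0$ to $j_1$ in $\mathcal{G}_\ell$, the lifting bijection gives
$$A_\mathcal{V}\;:=\;\sum_{v\in\mathcal{V},\,j(v)=j_1} f_t\bigl(d(v)\bigr) \;=\; \sum_{j_0} r_\mathcal{V}(j_0)\,W_t(j_0,j_1),$$
and similarly for $A_\mathcal{C}$ defined with $\mathcal{C}$ in place of $\mathcal{V}$.

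The second step is a weighted generalization of Proposition~\ref{prop:randwalk2}. I would take $\mathbf{v}_0$ to be the probability distribution on the vertices of $\mathcal{G}_\ell$ with mass $r_\mathcal{V}(j_0)/\#\mathcal{S}_\mathcal{V}$ at $j_0$. Since each coordinate lies in $[0,1]$, one has $\|\mathbf{v}_0\|_2^{2}\le\sum_{j_0}r_\mathcal{V}(j_0)/\#\mathcal{S}_\mathcal{V}=1$, so $\|\mathbf{v}_0-\mathbf{u}\|_2\le 1$, and the argument in the proof of Proposition~\ref{prop:randwalk2} goes through verbatim to yield
$$A_\mathcal{V} = \frac{\#\mathcal{S}_\mathcal{V}\,(\ell+1)^t}{\Np} + E_\mathcal{V},\qquad |E_\mathcal{V}|\le \#\mathcal{S}_\mathcal{V}\,(\ell+1)^t\left(\tfrac{2\sqrt{\ell}}{\ell+1}\right)^{\!t},$$
and the analogous asymptotic holds for $A_\mathcal{C}$ with $\mathcal{S}_\mathcal{C}$ in place of $\mathcal{S}_\mathcal{V}$.

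Finally I would take the ratio. Expanding
$$S_{\mathcal{V},j_1,t}-\frac{\#\mathcal{S}_\mathcal{V}}{\#\mathcal{S}_\mathcal{C}} \;=\; \frac{A_\mathcal{V}\,\#\mathcal{S}_\mathcal{C}-A_\mathcal{C}\,\#\mathcal{S}_\mathcal{V}}{A_\mathcal{C}\,\#\mathcal{S}_\mathcal{C}} \;=\; \frac{\#\mathcal{S}_\mathcal{C}\,E_\mathcal{V}-\#\mathcal{S}_\mathcal{V}\,E_\mathcal{C}}{A_\mathcal{C}\,\#\mathcal{S}_\mathcal{C}},$$
the main terms cancel; the numerator is bounded by $2\,\#\mathcal{S}_\mathcal{V}\,\#\mathcal{S}_\mathcal{C}(\ell+1)^t(2\sqrt{\ell}/(\ell+1))^t$, while for $t$ large enough that the error term in $A_\mathcal{C}$ is smaller than the main term, the denominator is at least $\#\mathcal{S}_\mathcal{C}^{2}(\ell+1)^t/(2\Np)$, yielding the stated bound with an implied constant depending only on the cordillera and $\ell$. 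For the finitely many small values of $t$ where this asymptotic fails, the trivial estimate $|S_{\mathcal{V},j_1,t}|\le 1$ can be absorbed into the constant. The principal technical point is justifying the lifting bijection in the first step with the correct multiplicities when several rim vertices of $\mathcal{V}$ share a $j$-invariant; once that identity is in hand, the remainder reduces to the spectral-gap calculation already carried out in Proposition~\ref{prop:randwalk2}.
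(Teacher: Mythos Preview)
Your approach is essentially the same as the paper's: interpret the weighted sums in \eqref{eqn:s} as walk counts from the rim via the lifting bijection, apply the spectral bound of Proposition~\ref{prop:randwalk2} to numerator and denominator separately, and take the ratio. The paper's proof is terser---it asserts the walk-count interpretation in one sentence and writes ``combining with the analogous fact for the denominator gives the result'' for the final step---whereas you spell out the weighted initial distribution $r_\mathcal{V}(j_0)/\#\mathcal{S}_\mathcal{V}$ (handling repeated $j$-invariants on the rim, which the paper glosses over by writing $\mathcal{S}=\mathcal{S}_\mathcal{V}$) and the algebra of the ratio; these refinements are correct and welcome, but the underlying argument is the same.
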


\begin{proof}
The numerator of \eqref{eqn:s} is the number of times we encounter $j=j_1$ among all paths of length $t$ originating at the rim of $\mathcal{V}$.  The denominator is the corresponding number for all paths of length $t$ originating at the rims of $\mathcal{C}$.  
The number of paths of length $t$ from the rim set of $\mathcal{V}$ is $(\ell+1)\ell^{t-1} \#\mathcal{S}_\mathcal{V}$.
   So, applying Proposition~\ref{prop:randwalk2} with $\mathcal{S} = \mathcal{S}_\mathcal{V}$, 
   we have
\[
\frac{1}{(\ell+1)\ell^{t-1}} \sum_{v \in \mathcal{V}, j(v)=j_0} f_t(d(v))
= Pr(j = j_0) \#\mathcal{S}_\mathcal{V} = \frac{ \#\mathcal{S}_\mathcal{V}}{\Np}
+ O\left(  \left( \frac{2\sqrt{\ell}}{\ell+1} \right)^t \right).
\]
Combining with the analogous fact for the denominator gives the result.
\end{proof}

The restriction to $(\ell+1)$-regularity rules out the rims having oriented curves with automorphisms (see Remark~\ref{prop:gkl-auto}); it is merely a convenience and the result could be adjusted to accommodate these cases.

This result is insufficient for the purposes of \cite{paperone} in that it doesn't rule out the possibility that some volcanoes have few occurrences of $j=j_0$ at small depth, while others have more such occurrences at greater depth.  If this is indeed the case, it may be that Algorithm 6.1  of \cite{paperone}, which proceeds by depth, preferentially finds certain volcanoes early on. (Nevertheless, Corollary~\ref{cor:j-rim} above provides some reassurance that all $j$-invariants begin to appear after a while.)

\bibliographystyle{abbrv}   
\bibliography{refs} 

\begin{thebibliography}{10}

\bibitem{AAM}
G.~Adj, O.~Ahmadi, and A.~Menezes.
\newblock On isogeny graphs of supersingular elliptic curves over finite
  fields.
\newblock {\em Finite Fields Appl.}, 55:268--283, 2019.

\bibitem{walk_mix}
N.~Alon, I.~Benjamini, E.~Lubetzky, and S.~Sodin.
\newblock Non-backtracking random walks mix faster.
\newblock {\em Commun. Contemp. Math.}, 09(04):585--603, 2007.

\bibitem{Apostol}
T.~M. Apostol.
\newblock {\em Introduction to analytic number theory}.
\newblock Undergraduate Texts in Mathematics. Springer-Verlag, New
  York-Heidelberg, 1976.

\bibitem{arpin2019adventures}
S.~Arpin, C.~Camacho-Navarro, K.~Lauter, J.~Lim, K.~Nelson, T.~Scholl, and
  J.~Sotáková.
\newblock Adventures in {S}upersingularland.
\newblock {\em Exp. Math.}, 0(0):1--28, 2021.

\bibitem{paperone}
S.~Arpin, M.~Chen, K.~E. Lauter, R.~Scheidler, K.~E. Stange, and H.~T.~N. Tran.
\newblock Orienteering with one endomorphism, 2022.
\newblock \url{https://arxiv.org/abs/2201.11079}.

\bibitem{github}
S.~Arpin, M.~Chen, K.~E. Lauter, R.~Scheidler, K.~E. Stange, and H.~T.~N. Tran.
\newblock Win5 github repository, 2022.
\newblock \url{https://github.com/SarahArpin/WIN5}.

\bibitem{bank2019cycles}
E.~Bank, C.~Camacho-Navarro, K.~Eisentr{\"a}ger, T.~Morrison, and J.~Park.
\newblock Cycles in the supersingular l-isogeny graph and corresponding
  endomorphisms.
\newblock In {\em Research Directions in Number Theory}, pages 41--66.
  Springer, 2019.

\bibitem{mathoverflow}
{BharatRam (\url{https://mathoverflow.net/users/15311/bharatram})}.
\newblock What does this connection between {C}hebyshev, {R}amanujan, {I}hara
  and {R}iemann mean?
\newblock MathOverflow.
\newblock \url{https://mathoverflow.net/q/271786 (version: 2018-11-26)}.

\bibitem{BOLLOBAS1980311}
B.~Bollobás.
\newblock A probabilistic proof of an asymptotic formula for the number of
  labelled regular graphs.
\newblock {\em European J. Combin.}, 1(4):311--316, 1980.

\bibitem{Burgess_QuadChar}
D.~A. Burgess.
\newblock On the quadratic character of a polynomial.
\newblock {\em J. Lond. Math. Soc.}, s1-42(1):73--80, 1967.

\bibitem{castryck-sidh-attack}
W.~Castryck and T.~Decru.
\newblock An efficient key recovery attack on {SIDH} (preliminary version).
\newblock 2022.

\bibitem{CHVW}
W.~Castryck, M.~Houben, F.~Vercauteren, and B.~Wesolowski.
\newblock On the decisional {D}iffie-{H}ellman problem for class group actions
  on oriented elliptic curves.
\newblock Cryptology ePrint Archive, Paper 2022/345, 2022.
\newblock \url{https://ia.cr/2022/345}.

\bibitem{CSIDH}
W.~Castryck, T.~Lange, C.~Martindale, L.~Panny, and J.~Renes.
\newblock C{SIDH}: an efficient post-quantum commutative group action.
\newblock In {\em Advances in {C}ryptology---{ASIACRYPT} 2018. {P}art {III}},
  volume 11274 of {\em Lecture Notes in Comput. Sci.}, pages 395--427.
  Springer, Cham, 2018.

\bibitem{CPV}
W.~Castryck, L.~Panny, and F.~Vercauteren.
\newblock Rational isogenies from irrational endomorphisms.
\newblock In {\em Advances in Cryptology---{EUROCRYPT} 2020. {P}art {II}},
  volume 12106 of {\em Lecture Notes in Comput. Sci.}, pages 523--548.
  Springer, Cham, 2020.

\bibitem{CharlesGorenLauter}
D.~X. Charles, E.~Z. Goren, and K.~E. Lauter.
\newblock Cryptographic hash functions from expander graphs.
\newblock {\em J. Cryptology}, 22(1):93--113, 2009.
\newblock \url{https://eprint.iacr.org/2006/021}.

\bibitem{chenu2021higherdegree}
M.~Chenu and B.~Smith.
\newblock {Higher-degree supersingular group actions}.
\newblock {\em Math. Cryptology}, 1(1):1--15, 2021.

\bibitem{colo2019orienting}
L.~Col\`o and D.~Kohel.
\newblock Orienting supersingular isogeny graphs.
\newblock {\em J. Math. Cryptol.}, 14(1):414--437, 2020.

\bibitem{Cox_primesoftheform}
D.~A. Cox.
\newblock {\em Primes of the form {$x^2 + ny^2$}}.
\newblock Pure and Applied Mathematics (Hoboken). John Wiley \& Sons, Inc.,
  Hoboken, NJ, second edition, 2013.

\bibitem{DavidoffSarnakValette}
G.~P. Davidoff, P.~Sarnak, and A.~Valette.
\newblock {\em Elementary Number Theory, Group Theory, and Ramanujan Graphs}.
\newblock Cambridge University Press, 2003.

\bibitem{SETA}
L.~De~Feo, C.~Delpech~de Saint~Guilhem, T.~B. Fouotsa, P.~Kutas, A.~Leroux,
  C.~Petit, J.~Silva, and B.~Wesolowski.
\newblock {\em S{\'ETA}: Supersingular Encryption from Torsion Attacks}, volume
  13090 of {\em Lecture Notes in Comput. Sci.}, pages 249--278.
\newblock Springer International Publishing, Cham, 2021.

\bibitem{dFJP}
L.~de~Feo, D.~Jao, and J.~Pl\^{u}t.
\newblock Towards quantum-resistant cryptosystems from supersingular elliptic
  curve isogenies.
\newblock {\em J. Math. Cryptol.}, 8(3):209--247, 2014.

\bibitem{sqisign}
L.~De~Feo, D.~Kohel, A.~Leroux, C.~Petit, and B.~Wesolowski.
\newblock Sqisign: Compact post-quantum signatures from quaternions and
  isogenies.
\newblock In {\em Advances in Cryptology – ASIACRYPT 2020: 26th International
  Conference on the Theory and Application of Cryptology and Information
  Security, Daejeon, South Korea, December 7–11, 2020, Proceedings, Part I},
  page 64–93, Berlin, Heidelberg, 2020. Springer-Verlag.

\bibitem{deQuehenEtAl_ImprovedTorPt}
V.~de~Quehen, P.~Kutas, C.~Leonardi, C.~Martindale, L.~Panny, C.~Petit, and
  K.~E. Stange.
\newblock {\em Improved Torsion-Point Attacks on SIDH Variants}, volume 12827
  of {\em Lecture Notes in Comput. Sci.}, pages 432--470.
\newblock Springer International Publishing, Cham, 2021.

\bibitem{EHLMP_reductions}
K.~Eisentr\"{a}ger, S.~Hallgren, K.~Lauter, T.~Morrison, and C.~Petit.
\newblock Supersingular isogeny graphs and endomorphism rings: reductions and
  solutions.
\newblock In {\em Advances in Cryptology---{EUROCRYPT} 2018. {P}art {III}},
  volume 10822 of {\em Lecture Notes in Comput. Sci.}, pages 329--368.
  Springer, Cham, 2018.

\bibitem{EOY}
N.~Elkies, K.~Ono, and T.~Yang.
\newblock Reduction of {CM} elliptic curves and modular function congruences.
\newblock {\em Int. Math. Res. Not.}, 2005(44):2695--2707, 2005.

\bibitem{FouquetMorain}
M.~Fouquet and F.~Morain.
\newblock Isogeny volcanoes and the {SEA} algorithm.
\newblock In {\em Algorithmic number theory ({S}ydney, 2002)}, volume 2369 of
  {\em Lecture Notes in Comput. Sci.}, pages 276--291. Springer, Berlin, 2002.

\bibitem{GalbraithPetitSilva_IdProtocols}
S.~D. Galbraith, C.~Petit, and J.~Silva.
\newblock Identification protocols and signature schemes based on supersingular
  isogeny problems.
\newblock {\em J. Cryptology}, 33(1):130--175, 2020.

\bibitem{Gross_Heights}
B.~H. Gross.
\newblock Heights and the special values of {$L$}-series.
\newblock In {\em Number theory ({M}ontreal, {Q}ue., 1985)}, volume~7 of {\em
  CMS Conf. Proc.}, pages 115--187. Amer. Math. Soc., Providence, RI, 1987.

\bibitem{ExpanderGraphsAndApps}
S.~Hoory, N.~Linial, and A.~Wigderson.
\newblock Expander graphs and their applications.
\newblock {\em Bull. Amer. Math. Soc. (N.S.)}, 43(4):439--561, 2006.

\bibitem{Kaneko}
M.~Kaneko.
\newblock Supersingular {$j$}-invariants as singular moduli {${\rm mod}\, p$}.
\newblock {\em Osaka J. Math.}, 26(4):849--855, 1989.

\bibitem{Kempton2016NonbacktrackingRW}
M.~Kempton.
\newblock Non-backtracking random walks and a weighted {I}hara's theorem.
\newblock {\em Open J. Discrete Math.}, 06:207--226, 2016.

\bibitem{KLPT}
D.~Kohel, K.~Lauter, C.~Petit, and J.-P. Tignol.
\newblock On the quaternion $\ell$-isogeny path problem.
\newblock {\em LMS J.Comput. Math.}, 2014.

\bibitem{KohelThesis}
D.~R. Kohel.
\newblock {\em Endomorphism rings of elliptic curves over finite fields}.
\newblock ProQuest LLC, Ann Arbor, MI, 1996.
\newblock Thesis (Ph.D.)--University of California, Berkeley.

\bibitem{Antonin_lowerbound2022}
A.~Leroux.
\newblock An effective lower bound on the number of orientable supersingular
  elliptic curves.
\newblock Cryptology ePrint Archive, Paper 2022/357, 2022.
\newblock \url{https://ia.cr/2022/357}.

\bibitem{LOX}
S.~Li, Y.~Ouyang, and Z.~Xu.
\newblock Endomorphism rings of supersingular elliptic curves over {$\Bbb
  F_p$}.
\newblock {\em Finite Fields Appl.}, 62:101619, 24, 2020.

\bibitem{LOX2}
S.~Li, Y.~Ouyang, and Z.~Xu.
\newblock Neighborhood of the supersingular elliptic curve isogeny graph at
  {$j=0$} and 1728.
\newblock {\em Finite Fields Appl.}, 61:101600, 16, 2020.

\bibitem{BonehLove}
J.~Love and D.~Boneh.
\newblock Supersingular curves with small noninteger endomorphisms.
\newblock In {\em A{NTS} {XIV}---{P}roceedings of the {F}ourteenth
  {A}lgorithmic {N}umber {T}heory {S}ymposium}, volume~4 of {\em Open Book
  Ser.}, pages 7--22. Math. Sci. Publ., Berkeley, CA, 2020.

\bibitem{LubetzkyPeres}
E.~Lubetzky and Y.~Peres.
\newblock Cutoff on all {R}amanujan graphs.
\newblock {\em Geom. Funct. Anal.}, 26(4):1190--1216, 2016.

\bibitem{Pell}
J.~M.~J.~Jacobson and H.~C. Williams.
\newblock {\em Solving the {P}ell equation}.
\newblock CMS Books in Mathematics/Ouvrages de Math\'{e}matiques de la SMC.
  Springer, New York, 2009.

\bibitem{maino-attack-sidh}
L.~Maino and C.~Martindale.
\newblock An attack on sidh with arbitrary starting curve.
\newblock 2022.

\bibitem{mcmurdy2014explicit}
K.~McMurdy.
\newblock Explicit representation of the endomorphism rings of supersingular
  elliptic curves.
\newblock
  \url{https://phobos.ramapo.edu/~kmcmurdy/research/McMurdy-ssEndoRings.pdf},
  2014.

\bibitem{onuki2021}
H.~Onuki.
\newblock On oriented supersingular elliptic curves.
\newblock {\em Finite Fields App.}, 69, 2021.

\bibitem{Onuki2020TheEO}
H.~Onuki, Y.~Aikawa, and T.~Takagi.
\newblock The existence of cycles in the supersingular isogeny graphs used in
  {SIKE}.
\newblock In {\em 2020 International Symposium on Information Theory and Its
  Applications (ISITA)}, pages 358--362, 2020.

\bibitem{YZ}
Y.~Ouyang and Z.~Xu.
\newblock Loops of isogeny graphs of supersingular elliptic curves at {$j=0$}.
\newblock {\em Finite Fields Appl.}, 58:174--176, 2019.

\bibitem{PierceXu_BurgessBds}
L.~B. Pierce and J.~Xu.
\newblock Burgess bounds for short character sums evaluated at forms.
\newblock {\em Algebra Number Theory}, 14(7):1911--1951, 2020.

\bibitem{Pizer_RamGraphsHecke}
A.~K. Pizer.
\newblock Ramanujan graphs and {H}ecke operators.
\newblock {\em Bull. Amer. Math. Soc. (N.S.)}, 23(1):127--137, 1990.

\bibitem{damien-attack}
D.~Robert.
\newblock Breaking sidh in polynomial time.
\newblock Cryptology ePrint Archive, Paper 2022/1038, 2022.
\newblock \url{https://eprint.iacr.org/2022/1038}.

\bibitem{RosserSchoenfeld}
J.~B. Rosser and L.~Schoenfeld.
\newblock Approximate formulas for some functions of prime numbers.
\newblock {\em Illinois J. Math.}, 6:64--94, 1962.

\bibitem{SilvermanAdv}
J.~H. Silverman.
\newblock {\em Advanced topics in the arithmetic of elliptic curves}, volume
  151 of {\em Graduate Texts in Mathematics}.
\newblock Springer-Verlag, New York, 1994.

\bibitem{silverman2009arithmetic}
J.~H. Silverman.
\newblock {\em The arithmetic of elliptic curves}, volume 106 of {\em Graduate
  Texts in Mathematics}.
\newblock Springer, Dordrecht, second edition, 2009.

\bibitem{kate1728note}
K.~E. Stange.
\newblock Frobenius and the endomorphism ring of {$j = 1728$}, 2021.
\newblock \url{http://math.colorado.edu/~kstange/papers/1728.pdf}.

\bibitem{Isogeny_volcanoes}
A.~V. Sutherland.
\newblock Isogeny volcanoes.
\newblock In {\em A{NTS} {X}---{P}roceedings of the {T}enth {A}lgorithmic
  {N}umber {T}heory {S}ymposium}, volume~1 of {\em Open Book Ser.}, pages
  507--530. Math. Sci. Publ., Berkeley, CA, 2013.

\bibitem{sagemath}
{The Sage Developers}.
\newblock {\em {S}ageMath, the {S}age {M}athematics {S}oftware {S}ystem
  ({V}ersion 9.4)}, 2022.
\newblock {\tt https://www.sagemath.org}.

\bibitem{TillichZemor}
J.-P. Tillich and G.~Zémor.
\newblock Optimal cycle codes constructed from {R}amanujan graphs.
\newblock {\em SIAM J. Discrete Math.}, 10(3):447--459, 1997.

\bibitem{voight}
J.~Voight.
\newblock {\em Quaternion algebras}, volume 288 of {\em Graduate Texts in
  Mathematics}.
\newblock Springer, Cham, [2021] \copyright 2021.

\bibitem{WesolowskiOrientations}
B.~Wesolowski.
\newblock Orientations and the supersingular endomorphism ring problem.
\newblock In {\em Advances in Cryptology---{EUROCRYPT} 2022}, volume 13277 of
  {\em Lecture Notes in Comput. Sci.}, pages 345--371. Springer, Cham, 2022.

\bibitem{Wesolowski_IsogPathandEndoRing}
B.~Wesolowski.
\newblock The supersingular isogeny path and endomorphism ring problems are
  equivalent.
\newblock In {\em 2021 {IEEE} 62nd {A}nnual {S}ymposium on {F}oundations of
  {C}omputer {S}cience---{FOCS} 2021}, pages 1100--1111. IEEE Computer Soc.,
  Los Alamitos, CA, [2022] \copyright 2022.

\bibitem{XLD}
G.~Xiao, L.~Luo, and Y.~Deng.
\newblock Constructing cycles in isogeny graphs of supersingular elliptic
  curves.
\newblock {\em J. Math. Cryptol.}, 15(1):454--464, 2021.

\end{thebibliography}

\end{document}